\numberwithin{equation}{section}
\theoremstyle{plain}
\newtheorem{Theorem}{Theorem}[section]
\newtheorem{prop}[Theorem]{Proposition}
\newtheorem{Proposition}[Theorem]{Proposition}
\newtheorem{lemma}[Theorem]{Lemma}
\newtheorem{Lemma}[Theorem]{Lemma}
\newtheorem{corollary}[Theorem]{Corollary}
\theoremstyle{definition}
\newtheorem{Definition}[Theorem]{Definition}
\newtheorem{remark}[Theorem]{Remark}
\def\HH{{\bf H}}
\def\NN{{\bf N}}
\def\hh{{\bf h}}
\def\t{{\bf t}}
\def\s{{\bf s}}
\def\aa{{\bf a}}
\def\bb{{\bf b}}
\def\ii{{\bf i}}
\def\xx{{\bf x}}
\def\yy{{\bf y}}
\def\uu{{\bf u}}
\def\vv{{\bf v}}
\def\kk{{\bf k}}
\def\rr{{\bf r}}
\def\cc{{\bf c}}
\newcommand{\Prob}{\mathbb{P}}
\newcommand{\E}{\mathbb{E}}
\newcommand{\R}{\mathbb{R}}
\begin{document}

   \begin{titlepage}

 \begin{figure}
    \centering
    \subfloat{\includegraphics[scale=0.12]{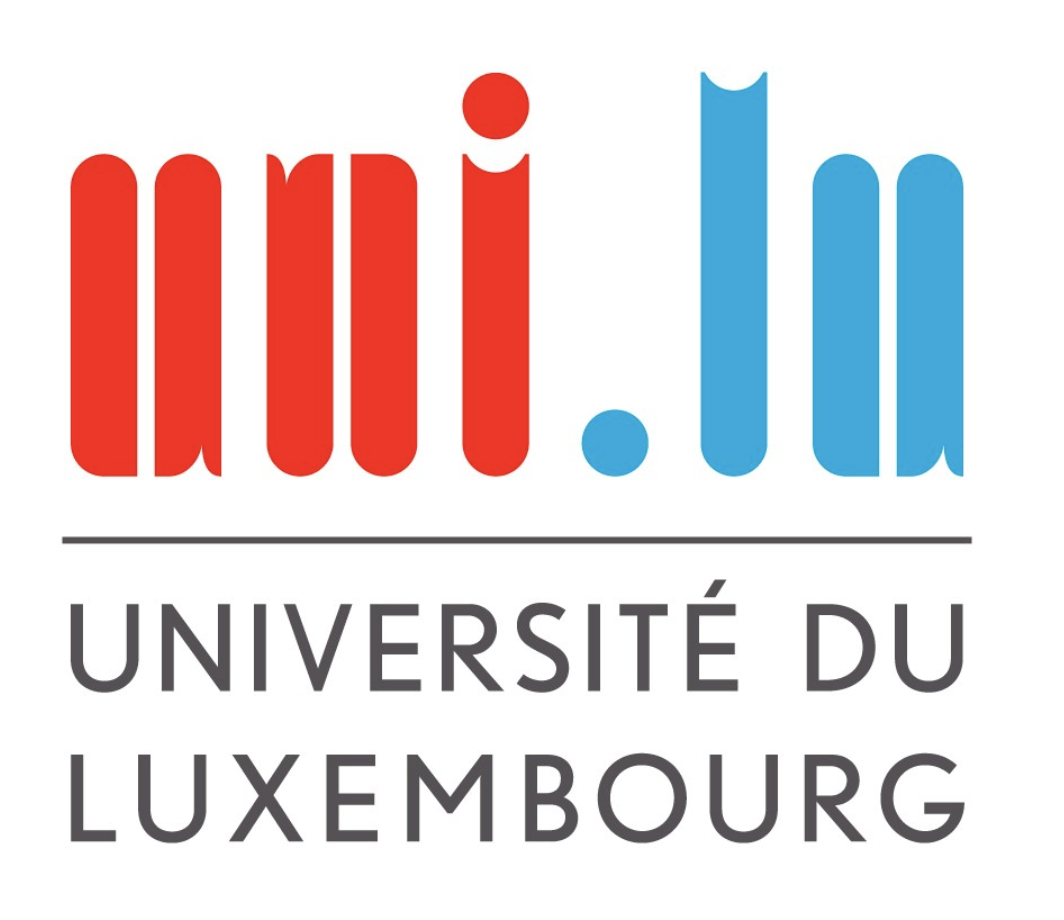} }%
    \qquad\qquad\qquad
\qquad
    \qquad
 \qquad
    \subfloat{\includegraphics[scale=0.6]{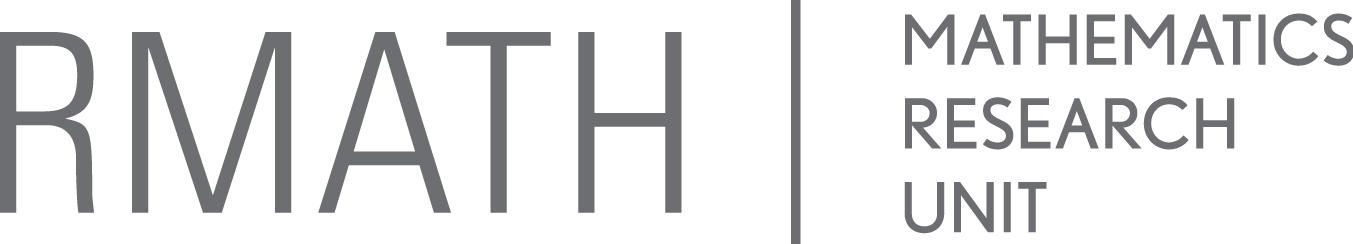} }
 \end{figure}
\noindent\hrulefill

\vspace{0.5cm}
\begin{center}
A thesis presented for the degree of  \\~\\
{\Large Doctor of Philosophy of Luxembourg University \\~\\
in Mathematics}\\~\\
by \\~ \\
\LARGE{\textbf{Thi Thanh Diu TRAN}}~\\~\\
      
\noindent\rule[0.5ex]{\linewidth}{1pt}

{\LARGE
      \textbf{Contributions to the asymptotic study of Hermite driven processes} }
  \noindent\rule[0.5ex]{\linewidth}{1pt} 
  \end{center}

\vspace{0.1cm}
\begin{center}
Defense on 25th January 2018\\
\end{center}

\vspace{0.1cm}

\begin{flushleft}
\noindent{\normalsize
\hspace{0.5cm} Members of jury:}
\begin{equation*}
\begin{aligned}
  &\textbf{Prof. Ivan Nourdin}  \\
&\textbf{Prof. Giovanni Peccati}  \\
&\textbf{Prof. Anthony R\'{e}veillac} \\
&\textbf{Prof. Anton Thalmaier} \\
&\textbf{Prof. Ciprian A. Tudor} 
\end{aligned}
\qquad
\begin{aligned}
  &\text{Supervisor}  \\
&\text{Vice Chairman}  \\
&\text{Member} \\
&\text{Chairman} \\
&\text{Member} 
\end{aligned}
\qquad
\begin{aligned}
  &\text{University of Luxembourg}  \\
&\text{University of Luxembourg}  \\
&\text{INSA de Toulouse} \\
&\text{University of Luxembourg} \\
&\text{University of Lille 1} 
\end{aligned}
\end{equation*}
\end{flushleft}

\vspace{1.2cm}
\noindent\hrulefill \\
{\footnotesize Universit\'{e} du Luxembourg, Unit\'{e} de Recherche en Math\'{e}matiques, Maison du Nombre, 6 avenue de la Fonte, L-4364 Esch-sur-Alzette, Luxembourg.}

\newpage
\thispagestyle{empty}

\vspace{3.5cm}

\begin{flushright}
To my parents and brothers for all their  love!\\
\vspace{0.8cm}

\selectlanguage{vietnamese}
Gửi cha Điển, mẹ Hiền và hai em Khoa, Quang

\end{flushright}

\newpage
 
\end{titlepage}

\pagestyle{plain}
\setlength{\baselineskip}{20pt}

\selectlanguage{english}
\tableofcontents
\chapter*{Acknowledgement}
\addcontentsline{toc}{chapter}{Acknowledgement}
\pagenumbering{roman} 

First and foremost, I would like to express my sincere gratitude and special appreciation to my PhD advisor Prof. Ivan Nourdin for his continuous and enthusiastic support of my Ph.D study and related research, for his amazing guidance and discussion in all the time of research and writing of this thesis, for his encouragement and patience during these past three years. I will forever be thankful to Ivan for inspiring me to the large mathematical world and encouraging me step by step to grow up as a research scientist. I also want to thank him for always noticing and helping me to complete the necessary procedures during the academic years.

Besides my advisor, my sincere thanks also go to the committee members: Prof. Giovanni Peccati, Prof. Anthony R\'{e}veillac, Prof. Anton Thalmaier and Prof. Ciprian Tudor, for generously giving their time to review my work and for their insightful questions.

I am greatly indebted to my former supervisors Prof. Nguyen Van Trao, to the directors of the Vietnamese Institute of Mathematics Prof. Ngo Viet Trung and Prof. Nguyen Viet Dung, to Prof. Patricia Reynaud-Bouret and to my PhD supervisor Prof. Ivan Nourdin who created the big turning-points both in my research and in my life. They provided me opportunities to experience the study environment abroad, to learn valuable mathematical knowledge, and to approach the modern research mathematical world. Without their precious support, it would not have been possible to achieve this PhD diploma. 

A very special gratitude goes to all professors in the Vietnamese Institute of Mathematics, University of Uppsala and Nice-Sophia-Antipolis, especially to my former advisors Prof. Ho Dang Phuc, Prof. Maciej Klimek, and Prof. C\'{e}dric Bernardin, for their great lectures and guidance during my studies there.

I gratefully acknowledge the University of Luxembourg, not only for providing the funding and comfortable working environment, but also for giving me the opportunity to attend conferences and to meet so many interesting people. It was fantastic to work in such a university.

I warmly thank Prof. M\'{a}ty\'{a}s Barczy, Prof. Christophe Ley, Prof. Gyula Pap and Prof. Frederi Viens for interesting discussions, ideas and several helpful comments and open questions about the research in this thesis.

With a special heartfelt thank to Guangqu for discussion around Malliavin calculus and Stein's method, to anh Hoang for his help on my first paper and bac Lam for his encouragements. Thanks to my labmates for the stimulating discussions and for all the fun we have had during my time at Luxembourg. Thanks to all my friends who accompanied me on every trips to discover Europe and shared with me these funny and unforgetable moments of life.

Last but not least, I would like to thank my family, my parents and my brothers, for all their love and encouragement, for supporting me spiritually throughout my study and my life in general. 
\selectlanguage{vietnamese}
Con cảm ơn bố mẹ vì những hi sinh vất vả để con có ngày hôm nay. Con cảm ơn gia đình luôn bên cạnh con, cỗ vũ, động viên ủng hộ con những lúc khó khăn. 
\selectlanguage{english}

The most surprising outcome from the last PhD year is to meet you, Jimmy. This thesis is also dedicated to you $\heartsuit$. Wish to see you again!

\chapter*{Organization of the thesis}
\addcontentsline{toc}{chapter}{Organization of the thesis}

This thesis consists of two parts.
\vspace{0.5cm}

Part I is an introduction to Hermite processes, Hermite random fields, Fisher information and to the papers constituting the thesis. More precisely, in Section 1 we introduce Hermite processes in a nutshell, as well as some of its basic properties. It is the necessary background for the articles [a] and [c]. In Section 2 we consider briefly the multiparameter Hermite random fields and we study some less elementary facts which are used in the article [b]. In section 3, we recall some terminology about Fisher information related to the article [d]. Finally, our articles [a] to [d] are summarised in Section 4.

\vspace{0.5cm}
Part II consists of the articles themselves:

[a] T.T. Diu Tran (2017): Non-central limit theorem for quadratic functionals of Hermite-driven long memory moving average processes. \textit{Stochastic and Dynamics}, \textbf{18}, no. 4.

[b] T.T. Diu Tran (2016): Asymptotic behavior for quadratic variations of non-Gaussian multiparameter Hermite random fields. Under revision for \textit{Probability and Mathematical Statistics}.

[c] I. Nourdin,  T.T. Diu Tran (2017): Statistical inference for Vasicek-type model driven by Hermite processes. Submitted to \textit{Stochastic Process and their Applications}.

[d] T.T. Diu Tran (2017+): Fisher information and multivariate Fouth Moment Theorem. Main results have already been obtained. It should be submitted soon.

\pagenumbering{arabic}
\setcounter{page}{1}
\chapter[Introduction]{Introduction}

\section{Hermite processes in a nutshell}

\subsection{Historical definition of Hermite processes } 

Hermite processes form a family of self-similar stochastic processes with long-range dependence. It includes the well-known \textit{fractional Brownian motion} (fBm in short) as a particular case, which is the only Hermite process to be Gaussian. Apart for Gaussianity, Hermite processes share a number of basic properties with the fBm, such as self similarity, stationary increments, long-range dependence and covariance structure. The lack of Gaussianity makes the Hermite process an interesting alternative candidate for modelling purposes. For instance, it can serve to understand how much a given fractional model relies on the Gaussian assumption, because we may use it to test the robustness of the model with respect to the Gaussian feature.

Originally, Hermite processes have first appeared as limits of correlated stationary Gaussian random sequences. It is, roughly speaking, what the so-called Non-Central Limit Theorem proved by Taqqu \cite{dTaqqu1975, dTaqqu1979} and Dobrushin, Major \cite{dDobrushinMajor} states. Before being in position to describe this result in more details, we first need to recall the important notion of Hermite rank.

Denote by $H_k(x)$ the Hermite polynomial of degree $k$, given by 
$$H_k(x) = (-1)^k e^{\frac{x^2}{2}} \frac{d^k}{dx^k}e^{-\frac{x^2}{2}}.$$
The first few Hermite polynomials are $H_1(x) = x, H_2(x) = x^2-1$ and $H_3(x) = x^3 -3x$. Assume on the other hand that $g$ belongs to $L^2(\mathbb{R}, \frac{1}{\sqrt{2\pi}}e^{-\frac{x^2}{2}}dx)$ and satisfies $\int_{\mathbb{R}} g(x)e^{-\frac{x^2}{2}}dx =0$. As such the function $g$ can be expressed as a linear sum of Hermite polynomials as follows
\begin{equation}\label{deq:01}
g(x) = \sum_{k=1}^\infty c_k H_k(x),
\end{equation}
where $c_k = \frac{1}{k!}E[g(N)H_k(N)]$ with $N \sim \mathcal{N}(0, 1)$. The Hermite rank of $g$ is then, by definition, the index $q$ of the first non-zero coefficient in the previous expansion (\ref{deq:01}):
$$q = \min \{k, c_k \ne 0 \}.$$

In the series of papers \cite{dDobrushinMajor, dTaqqu1975, dTaqqu1979} by Dobrushin, Major and Taqqu, the authors investigated the asymptotic behavior, as $N \to \infty$, of the following family of stochastic processes :
\begin{equation}\label{deq:TaqquDobrushinMajor}
\frac{1}{N^H} \sum_{i=1}^{[Nt]} g(X_i)
\end{equation}
where $X=(X_i)_{i \in \mathbb{Z}}$ is a stationary Gaussian sequence with mean $0$ and variance $1$ that displays long-range dependence. More precisely, let us assume that $X$ is such that its correlation function $r(n)= E[X_0X_n]$ satisfies
$$r(n)= n^{2H_0-2}L(n)$$
for some $H_0 \in (1 - \frac{1}{2q}, 1)$, with $q$ the Hermite rank of $g$ and $L$ a slowly varying function. The main result of \cite{dDobrushinMajor, dTaqqu1975, dTaqqu1979} is that the sequence (\ref{deq:TaqquDobrushinMajor}) converges, in the sense of finite-dimensional distributions, to a self-similar stochastic process with stationary increments belonging to the $q$-th Wiener chaos, called \emph{Hermite process} of order $q$ and self-similar parameter $H = q(H_0 -1) + 1$. Since $1 - \frac{1}{2q} < H_0 < 1$, note that the parameter $H$ belongs to $(\frac{1}{2}, 1)$ for all $q \geq 1$. 

The Hermite process of order $q=1$ is nothing but the fBm; it is the only Hermite process to be defined for any value of $H \in (0, 1]$. The Hermite process of order $q=2$ is called \textit{the Rosenblatt process}; it was introduced in Rosenblatt \cite{dRosenblatt} but its current name comes from Taqqu \cite{dTaqqu1975}.

Recently, Hermite processes have received a lot of attention, not only from a theoretical point of view but also because of their great potential for applications. We would liek to highlight the following references.
\begin{enumerate}
\item In Tudor and Viens \cite{dTudorViens} and Chronopoulou, Tudor and Viens \cite{dChronopoulouTudorViens2009}, the authors constructed strong consistent statistical estimators for the self-similar parameter of the Rosenblatt process, by means of discrete observations after a careful analysis of the asymptotic behavior of its quadratic variations. Later, Chronopoulou, Tudor and Viens \cite{dChronopoulouTudorViens2011} extended the study in \cite{dTudorViens} to cover the case of all Hermite processes.
\item Maejima and Tudor \cite{dMaejimaTudor} introduced Wiener-It\^{o} integrals with respect to the Hermite process. As an application, they studied stochastic differential equations with this process as driving noise. They proved the existence and investigated some properties of the so-called Hermite Ornstein-Uhlenbeck process, which is a natural generalization of the celebrated fractional Ornstein-Uhlenbeck process.
\item Bertin, Torres and Tudor \cite{dBertinTorresTudor} were among the first to do some statistical inference for a model involving the Rosenblatt process. They constructed a strong consistent maximum likelihood estimator for the drift parameter. To do so, they used a method based on the random walk approximation of the Rosenblatt process. 
\end{enumerate}   

\subsection{Hermite processes viewed as multiple Wiener-It\^{o} integrals}

We now define Hermite processes by means of their time-indexed representation. We only focus on the definition and properties that will be needed throughout this thesis. For an in-depth introduction to Hermite processes, we refer the reader to the recent book by Tudor \cite{dTudorbook}. 

Let $ q \geq 1$ be an integer. Denote by $B = (B_t)_{t\in \mathbb{R}}$ a two-sided Brownian motion defined on some probability space $(\Omega, \mathcal{F}, P)$. The $q$-th \textit{multiple Wiener-It\^{o} integral} of kernel $f \in L^2(\mathbb{R}^q)$ with respect to $B$ is written in symbols as
\begin{equation}\label{deq:multipleintegral}
I_q^B(f) = \int_{\mathbb{R}^q} f(\xi_1, \ldots, \xi_q)dB_{\xi_1} \ldots dB_{\xi_q}.
\end{equation}
For the construction of (\ref{deq:multipleintegral}) and its main properties, we refer the reader to \cite{dNourdinbook} or \cite{dNualartbook}. Here, we only point out some basic facts. For any $f \in L^2(\mathbb{R}^q)$ and $g \in L^2(\mathbb{R}^p)$, we have $E[I_q^B(f) ] = 0$ and
\begin{equation}\label{deq:orthogonal}
E[I_q^B(f)I_p^B(g)] =
\begin{cases}
 & q! \big\langle \widetilde{f}, \widetilde{g} \big\rangle_{L^2(\mathbb{R}^p)} \qquad\text{if } q=p\\
& 0 \qquad\qquad\qquad\quad \text{ if } q \ne p,
\end{cases}
\end{equation}
where $\widetilde{f}$ is the symmetrization of $f$ defined by
$$\widetilde{f}(\xi_1, \ldots, \xi_q) = \frac{1}{q!}\sum_{\sigma \in \mathfrak{S}_q} f(\xi_{\sigma(1)}, \ldots, \xi_{\sigma(q)}).$$
Furthermore, $I_q^B(f)$ satisfies the so-called \textit{hypercontractivity} property:
 \begin{equation}\label{dhypercontractivity1}
E[|I_q^B(f)|^k]^{1/k} \leq (k-1)^{q/2}E[|I_q^B(f)|^2]^{1/2} \text{ for any } k \in [2, \infty).
 \end{equation}
The set $\mathscr{H}_q^B$ of random variables of the form $I_q^B(f), f \in L^2(\mathbb{R}^q)$, is called the $q$-th \textit{Wiener chaos} associated with $B$. As a convention, we set $\mathscr{H}_0^B = \mathbb{R}$.  

\begin{Definition}\label{dDef1.1.1}
Let $(B_t)_{t \in \mathbb{R}}$ be a two-sided standard Brownian motion. \textit{The Hermite process} $(Z_t^{q, H})_{t \geq 0}$ of order $q \geq 1$ and self-similarity parameter $H \in (\frac{1}{2}, 1)$ is defined as
\begin{equation}\label{dH1}
Z^{q, H}_t= c(H, q) \int_{\mathbb{R}^q} \bigg( \int_0^t \prod_{j=1}^q(s- \xi_j)_+^{H_0 - \frac{3}{2}}ds\bigg) dB_{\xi_1}\ldots dB_{\xi_q},
\end{equation}
where 
\begin{equation}\label{dH2}
c(H, q) = \sqrt{\frac{H(2H - 1)}{q! \beta^q(H_0 - \frac{1}{2}, 2-2H_0)}} \quad \text{and} \quad H_0 = 1+\frac{H-1}{q} \in \Big(1-\frac{1}{2q}, 1\Big).
\end{equation}
The integral (\ref{dH1}) is a multiple Wiener-It\^{o} integral of order $q$ with respect to the Brownian motion $B$, as considered in (\ref{deq:multipleintegral}). The positive constant $c(H, q)$ in (\ref{dH2}) is calculated to ensure that $E[(Z_1^{q, H})^2] = 1$. A random variable which has the same law as $Z^{q, H}_1$ is called a \textit{Hermite random variable}.
\end{Definition}

\subsection{Basic properties of Hermite processes}

Apart for Gaussianity, Hermite processes of any order $q \geq 2$ share many basic properties with the fractional Brownian motion. We make this statement more precise in the following result.

\begin{Proposition} Let $Z^{q, H}$ be a Hermite process of order $q \geq 1$ and self-similarity parameter $H \in (\frac{1}{2}, 1)$. Then,
\begin{enumerate}[(i)] 
\item  $[$ Self-similarity $]$  For all  $c > 0, (Z^{q, H}_{ct})_{t \geq 0} \overset{law}{=}  (c^H Z^{q, H}_t)_{t \geq 0}$. 
\item $[$ Stationarity of increments $]$ For any $h >0$, $(Z^{q, H}_{t+h} - Z^{q, H}_h)_{t \geq 0} \overset{law}{=} (Z^{q, H}_t)_{t \geq 0}$.
\item $[$ Covariance function $]$ For all $s, t \geq 0$, $E[Z_t^{q, H}Z_s^{q,H}]= \frac{1}{2}(t^{2H} + s^{2H} - |t-s|^{2H})$.
\item  $[$ Long-range dependence $]$ $\sum_{n \geq 1}|E[Z_1^{q, H}(Z_{n+1}^{q, H} - Z_n^{q, H})]| = \infty$.
\item $[$ H\"{o}lder continuity $]$ For any $\beta \in (0, H)$, Hermite process $Z^{q, H}$ admits a version with H\"{o}lder continuous sample paths of order $\beta$ on any compact interval.
\item $[$ Finite moments $]$ For every $p \geq 1, t\geq 0$, $E[|Z^{q, H}_t|^p] \leq C_{p, q} t^{pH}$, where $C_{p,q}$ is a positive constant depending on $p$ and $q$.
\end{enumerate}
\end{Proposition}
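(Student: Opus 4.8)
The plan is to verify the six properties in order, exploiting in each case the explicit multiple-integral representation \eqref{dH1} together with the isometry formula \eqref{deq:orthogonal} and the scaling behaviour of the integrand. For (i), the key observation is a change of variables: in $Z^{q,H}_{ct}$ substitute $s = cs'$ in the inner integral and $\xi_j = c\xi_j'$ in the outer integral. Using $(cs'-c\xi_j')_+^{H_0-3/2} = c^{H_0-3/2}(s'-\xi_j')_+^{H_0-3/2}$, the $ds$-integral picks up a factor $c^{q(H_0-3/2)+1}$, while the formal scaling $dB_{c\xi_j'} \overset{\text{law}}{=} c^{1/2}dB_{\xi_j'}$ (made rigorous via the isometry: rescaling the kernel by $c^{q/2}$ leaves the law of the integral unchanged) contributes $c^{q/2}$. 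Collecting exponents and using $H_0 = 1 + (H-1)/q$ gives total power $q(H_0-1/2) + 1 = c^H$, which is exactly (i). Moreover the same computation, applied jointly at finitely many times $t_1,\dots,t_m$, shows the equality in law holds at the level of finite-dimensional distributions, since a multiple Wiener--It\^o integral is determined in law by its kernel.

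Property (ii) follows similarly: $Z^{q,H}_{t+h} - Z^{q,H}_h = c(H,q)I_q^B\big(\int_h^{t+h}\prod_j(s-\xi_j)_+^{H_0-3/2}ds\big)$, and the substitution $s = s'+h$, $\xi_j = \xi_j'+h$ turns the kernel into the one defining $Z^{q,H}_t$; again one checks this simultaneously at several times. For (iii), I would compute $E[Z^{q,H}_tZ^{q,H}_s]$ directly from \eqref{deq:orthogonal}, which yields a constant times $\int_{\mathbb{R}^q}\big(\int_0^t\prod_j(u-\xi_j)_+^{H_0-3/2}du\big)\big(\int_0^s\prod_j(v-\xi_j)_+^{H_0-3/2}dv\big)d\xi$; after Fubini this becomes $\text{const}\cdot\int_0^t\int_0^s\big(\int_{\mathbb{R}}(u-\xi)_+^{H_0-3/2}(v-\xi)_+^{H_0-3/2}d\xi\big)^q\,du\,dv$, and the inner one-dimensional integral evaluates, via the Beta integral, to a constant multiple of $|u-v|^{2H_0-2}$. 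The constant $c(H,q)$ in \eqref{dH2} is precisely designed so that the resulting double integral of $|u-v|^{q(2H_0-2)} = |u-v|^{2H-2}$ gives $\frac{1}{2}(t^{2H}+s^{2H}-|t-s|^{2H})$; in particular the normalization $E[(Z^{q,H}_1)^2]=1$ asserted in Definition \ref{dDef1.1.1} is the special case $s=t=1$.

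Property (iv) is then a routine consequence of (iii): writing $\rho(n) = E[Z^{q,H}_1(Z^{q,H}_{n+1}-Z^{q,H}_n)]$, the covariance formula gives $\rho(n) = \frac12\big((n+1)^{2H}-2n^{2H}+(n-1)^{2H}\big) \sim H(2H-1)n^{2H-2}$ as $n\to\infty$, and since $H>\frac12$ we have $2H-2 > -1$, so $\sum_n|\rho(n)| = \infty$. For (v), I would use the hypercontractivity bound \eqref{dhypercontractivity1}: from (i)--(iii), $E[|Z^{q,H}_t - Z^{q,H}_s|^2] = |t-s|^{2H}$, hence by \eqref{dhypercontractivity1}, $E[|Z^{q,H}_t-Z^{q,H}_s|^p] \le C_{p,q}|t-s|^{pH}$ for every $p\ge 2$; taking $p$ large enough that $pH > 1$ and $(pH-1)/p = H - 1/p > \beta$, the Kolmogorov--Chentsov continuity criterion yields a modification with H\"older paths of any order $\beta < H$. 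Finally (vi) is immediate from the same moment estimate together with self-similarity, $E[|Z^{q,H}_t|^p] = t^{pH}E[|Z^{q,H}_1|^p] \le C_{p,q}t^{pH}$, the $p\in[1,2)$ case following from Jensen.

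The only genuinely delicate point is (iii): one must carry out the Fubini interchange carefully (checking integrability of $\prod_j(u-\xi_j)_+^{H_0-3/2}(v-\xi_j)_+^{H_0-3/2}$, which is exactly why one needs $H_0 > 1 - 1/(2q)$, i.e. $H_0 - 3/2 > -1 - 1/(2q) + \text{something}$ making the Beta integral converge) and then verify that the Beta-function constant matches $c(H,q)^{-2}$. Everything else reduces to bookkeeping with exponents or to standard facts (hypercontractivity, Kolmogorov's criterion) quoted from the references.
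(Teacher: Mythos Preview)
Your proof is correct and, for parts (i), (ii), (iv), (v), (vi), follows essentially the same route as the paper: the same changes of variables for self-similarity and stationarity, the same second-difference asymptotic for long-range dependence, and the same hypercontractivity-plus-Kolmogorov argument for H\"older regularity and moments.

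The one genuine divergence is in (iii). You propose a direct computation: apply the isometry, use Fubini and the Beta-integral identity to reduce to $\int_0^t\int_0^s |u-v|^{2H-2}\,du\,dv$, then verify that the normalizing constant $c(H,q)$ makes everything match. This works, but it is exactly the ``delicate point'' you flag, and your parenthetical about why $H_0 > 1 - 1/(2q)$ is needed is slightly off (the Beta integral converges for $H_0 \in (1/2,1)$, which holds automatically here; the sharper lower bound on $H_0$ is what makes $|u-v|^{q(2H_0-2)} = |u-v|^{2H-2}$ locally integrable on $[0,t]\times[0,s]$). The paper instead takes a soft route: once (i) and (ii) are established, the covariance is forced by the general fact that \emph{any} $H$-self-similar process with stationary increments and unit variance at time $1$ has covariance $\frac12(t^{2H}+s^{2H}-|t-s|^{2H})$, via the polarization identity $E[Z_tZ_s] = \frac12\big(E[Z_t^2]+E[Z_s^2]-E[(Z_t-Z_s)^2]\big)$. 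This bypasses the kernel computation entirely and explains structurally why the covariance depends only on $H$ and not on $q$. Your direct calculation has the compensating virtue of being self-contained and of independently confirming that $c(H,q)$ really is the right normalization.
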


\begin{proof}
Point $(i)$ follows from the self-similarity of $B$ with index $1/2$, that is, $dB_{c\xi}$ has the same law as $c^{1/2}dB_\xi$ for all $c > 0$. Indeed, as a process,
\begin{align*}
Z^{q, H}_{ct}&= c(H, q) \int_{\mathbb{R}^q} \bigg( \int_0^{ct} \prod_{j=1}^q(s- \xi_j)_+^{H_0 - \frac{3}{2}}ds\bigg) dB_{\xi_1}\ldots dB_{\xi_q}\\ 
& = c(H, q) \int_{\mathbb{R}^q} \bigg( c \int_0^{t} \prod_{j=1}^q(cs- \xi_j)_+^{H_0 - \frac{3}{2}}ds\bigg) dB_{\xi_1}\ldots dB_{\xi_q}\\
&= c(H, q) \int_{\mathbb{R}^q} \bigg( c \int_0^{t} \prod_{j=1}^q(cs- c\xi_j)_+^{H_0 - \frac{3}{2}}ds\bigg) dB_{c\xi_1}\ldots dB_{c\xi_q}\\
&\overset{(d)}{=} c c^{q(H_0-3/2)}c^{q/2} c(H, q) \int_{\mathbb{R}^q} \bigg( c \int_0^{t} \prod_{j=1}^q(s- \xi_j)_+^{H_0 - \frac{3}{2}}ds\bigg) dB_{\xi_1}\ldots dB_{\xi_q}\\
&= c^H Z^{q, H}_t.
\end{align*}
Point $(ii)$ is as a consequence of the definition (\ref{dH1}) of Hermite process. In fact, for any $h >0$ we have, as a process,
{\allowdisplaybreaks
\begin{align*}
Z^{q, H}_{t+h} - Z^{q, H}_h & = c(H, q) \int_{\mathbb{R}^q} \bigg( \int_h^{t+h} \prod_{j=1}^q(s- \xi_j)_+^{H_0 - \frac{3}{2}}ds\bigg) dB_{\xi_1}\ldots dB_{\xi_q} \\
& = c(H, q) \int_{\mathbb{R}^q} \bigg( \int_0^t \prod_{j=1}^q(s + h- \xi_j)_+^{H_0 - \frac{3}{2}}ds\bigg) dB_{\xi_1}\ldots dB_{\xi_q} \\
& = c(H, q) \int_{\mathbb{R}^q} \bigg( \int_0^t \prod_{j=1}^q(s - \xi_j)_+^{H_0 - \frac{3}{2}}ds\bigg) dB_{\xi_1 + h}\ldots dB_{\xi_q + h} \\
& \overset{(d)}{=} c(H, q) \int_{\mathbb{R}^q} \bigg( \int_0^t \prod_{j=1}^q(s - \xi_j)_+^{H_0 - \frac{3}{2}}ds\bigg) dB_{\xi_1}\ldots dB_{\xi_q } \\
& = Z^{q, H}_t.
\end{align*}
}Furthermore, all self-similar processes with stationary increments have the same covariance function, see e.g., \cite[Prop. A.1]{dTudorbook}, which is given by
$$E[Z_t^{q, H}Z_s^{q,H}]= \frac{1}{2}E[(Z^{q, H}_1)^2](t^{2H} + s^{2H} - |t-s|^{2H}), \qquad t, s \geq 0.$$
Since $E[(Z^{q, H}_1)^2] = 1$, Point $(iii)$ is proved. For any integer $n \geq 1$, we compute from $(iii)$ that
\begin{align*}
|E[Z_1^{q, H}(Z_{n+1}^{q, H} - Z_n^{q, H})]|& = \Big|\frac{1}{2} \big((n+1)^{2H} + (n-1)^{2H} -2n^{2H} \big)\Big|\\ 
& \sim H(2H-1) n^{2H-2}.
\end{align*}
Since $H > \frac{1}{2}$, the Hermite process $Z^{q, H}$ exhibits long-range dependence $(iv)$. We now turn to the proofs of $(v)$ and $(vi)$. From $(iii)$ and the hypercontractivity property (\ref{dhypercontractivity1}), it comes that, for any $p \geq 1$,
$$E[|Z_t^{q, H} - Z_s^{q, H}|^p] \leq C_{p, q} \big(E[(Z_t^{q, H} - Z_s^{q, H})^2] \big)^{\frac{p}{2}}= C_{p, q} |t-s|^{pH}.$$
It follows from Kolmogorov's continuity criterion that $Z^{q, H}$ admits a version with H\"{o}lder continuous sample paths of any order $\beta$  with $0 < \beta < H$, which proves the point $(v)$. Furthermore, it also proves $(vi)$. 
\end{proof}

\subsection{Two further stochastic representations of Hermite processes}

Hermite processes can be represented as multiple Wiener-It\^{o} integrals in at least three different ways. 

The first one is given by (\ref{dH1}); it is the \emph{time-indexed representation}, supported on the real line and in the time domain.

The second one is the \emph{spectral representation} on the real line. It was obtained by Taqqu \cite{dTaqqu1979}; his finding is that, as a process,
 \begin{equation}\label{dH3}
Z_t^{q,H}\overset{(d)}{=} A(q, H) \int_{\mathbb{R}^q} \frac{e^{(\lambda_1+\ldots + \lambda_q)t} -1}{i(\lambda_1+\ldots + \lambda_q)} |\lambda_1|^{\frac{1}{2}-H_0} \ldots |\lambda_q|^{\frac{1}{2}-H_0} W(d\lambda_1) \ldots dW(d\lambda_q),
\end{equation}
where $W$ is a Gaussian complex-valued random spectral measure, $H_0$ is given by (\ref{dH2}) and
$$A(q, H):= \bigg( \frac{H(2H-1)}{q![2\Gamma(2-2H_0) \sin (H_0 - \frac{1}{2})\pi]^q} \bigg).$$

Finally, we introduce the \emph{time interval representation}. It turns out to be of particular interest when we want to simulate $Z^{q, H}$ or when we aim to construct a stochastic calculus with respect to it, see e.g., \cite{dChronopoulouTudorViens2011, dTudorViens}. In the case of fractional Brownian motion $(q=1)$, it is well-known that
$$Z^{1, H}_t \overset{(d)}{=} \int_0^t K^H(t, s)dB_s,$$
with $(B_s)_{t  \geq 0}$ a standard Brownian motion,
\begin{equation}\label{deq:KH}
K^H(t, s) = c_H s^{\frac{1}{2} - H} \int_s^t  (u-s)^{H -\frac{3}{2}}u^{H -\frac{1}{2}}du, \qquad t > s,
\end{equation}
and $c_H = \big( \frac{H(H-1)}{\beta(2-2H, H-\frac{1}{2})}\big)^{\frac{1}{2}}$. The time interval representation of the Hermite process makes also use of the kernel $K^H$ given by (\ref{deq:KH}). More precisely, it was shown in Pipiras and Taqqu \cite{dPipirasTaqqu2} that, as a process,
\begin{align}\label{H4}
Z_t^{q,H}&\overset{(d)}{=} b_{q, H} \int_0^t \ldots \int_0^t  \bigg(\int_{u_1 \vee \ldots \vee u_q}^t  \partial_1K^{H_0}(s, u_1) \ldots  \partial_1K^{H_0}(s, u_q) ds\bigg)dB_{u_1} \ldots dB_{u_q} \nonumber\\
& =  b_{q, H} \int_0^t \ldots \int_0^t \bigg( \prod_{i=1}^q u_i^{\frac{1}{2} - H_0} \int_0^t s^{q\big(H_0 -\frac{1}{2}\big)} \prod_{i=1}^q (s-u_i)_+^{H_0 -\frac{3}{2}} ds\bigg) dB_{u_1} \ldots dB_{u_q},
\end{align}
where the positive constant $b_{q, H}$ is chosen so that $E[(Z_1^{q, H})^2] = 1$ and $H_0$ given by (\ref{dH2}). 

\subsection{Wiener integrals with respect to Hermite process}

We now introduce Wiener integrals of a deterministic function with respect to the Hermite process, following the construction done in Maejima and Tudor \cite{dMaejimaTudor}. Due to the equivalence in distribution of the three previous stochastic representations for Hermite processes, we can choose the one we want. In the sequel, we deal with the representation (\ref{dH1}). 

Firstly, let $f$ be an elementary function on $\mathbb{R}$ of the form
$$f(u) = \sum_{j=1}^n a_j \mathbbm{1}_{(t_j, t_{j+1}]}(u).$$
We naturally define the Wiener integral of $f$ with respect to $Z^{q, H}$ as
$$ \int_{\mathbb{R}} f(u) dZ^{q, H}_u = \sum_{j=1}^n a_j (Z^{q, H}_{t_{j+1}} - Z^{q, H}_{t_j}).$$
Observe that the Hermite process given by formula (\ref{dH1}) can equivalently be written this way: 
$$Z^{q, H}_t = \int_{\mathbb{R}^q} I(\mathbbm{1}_{[0, t]})(y_1, \ldots, y_q)dB(y_1) \ldots dB(y_q)$$
where $B$ is a two-sided standard Brownian motion and $I$ is the mapping from the set of functions $f: \mathbb{R} \to \mathbb{R}$ to the set of functions $g: \mathbb{R}^q \to \mathbb{R}$ given by
$$I(f)(y_1, \ldots, y_q): = c(H, q)\int_{\mathbb{R}}f(u) \prod_{i=1}^q (u-y_i)_+^{H_0-\frac{3}{2}}du$$
with $c(H, q)$ and $H_0$ defined as in (\ref{dH2}). It follows that the Wiener integral of $f$ with respect to $Z^{q, H}$ can be expressed as the following $q$-th multiple Wiener integral
\begin{equation}\label{deq:10}
\int_{\mathbb{R}} f(u) dZ^{q, H}_u = \int_{\mathbb{R}^q} I(f)(y_1, \ldots, y_q) dB(y_1) \ldots dB(y_q).
\end{equation}
For every step function $f$, it is easily seen that $E\Big[\int_{\mathbb{R}} f(u) dZ^{q, H}_u \Big] = 0$ and
\begin{align*}
E\bigg[\bigg(\int_{\mathbb{R}} f(u) dZ^{q, H}_u \bigg)^2 \bigg] & = q! \int_{\mathbb{R}^q} (I(f)(y_1, \ldots, y_q))^2 dy_1 \ldots dy_q \\ 
& = H(2H-1) \int_{\mathbb{R}}\int_{\mathbb{R}} f(u)f(v) |u-v|^{2H-2}dudv.
\end{align*}
Let us now introduce the linear space $\mathcal{H}$ of measurable functions $f$ on $\mathbb{R}$ such that
$$||f||_{\mathcal{H}}^2 := q! \int_{\mathbb{R}^q} (I(f)(y_1, \ldots, y_q))^2 dy_1 \ldots dy_q  < \infty.$$
It is immediate to compute that
$$||f||_{\mathcal{H}}^2 = H(2H-1)\int_{\mathbb{R}}\int_{\mathbb{R}} f(u)f(v)|u - v|^{2H-2} < \infty.$$
Observe that the mapping
\begin{equation}\label{deq:isometry}
f \longmapsto \int_{\mathbb{R}}f(u)dZ_u^{q, H}
\end{equation}
is an isometry from the space of elementary functions equipped with the norm $\|.\|_{\mathcal{H}}$ to $L^2(\Omega)$. Furthermore, it was proved in \cite{dPipirasTaqqu} that, for every $f\in \mathcal{H}$, there exists a sequence of step functions $(f_n)_{n \ge 1}$ in $\mathcal{H}$ such that $f_n \to f$ in $\mathcal{H}$. For each $n$, the integral $\int_{\mathbb{R}}f_n(u)dZ^{q, H}_u$ is well-defined and, for all $n , m \geq 0$, one has
\begin{align*}
&E\bigg[\bigg(\int_{\mathbb{R}}f_n(u)dZ^{q, H}_u -\int_{\mathbb{R}}f_m(u)dZ^{q, H}_u \bigg)^2\bigg]\\
&=E\bigg[\bigg(\int_{\mathbb{R}}(f_n-f_m)(u)dZ^{q, H}_u\bigg)^2\bigg]=||f_n - f_m||_{\mathcal{H}}^2 \xrightarrow{m, n \to \infty} 0.
\end{align*}
Hence $\Big\{\int_{\mathbb{R}}f_n(u)dZ^{q, H}_u\Big\}_{n \ge 1}$ is a Cauchy sequence in $L^2(\Omega)$ and thus admits a limit. It allows one to define the Wiener integral of any deterministic functions in the space $\mathcal{H}$ with respect to the Hermite process $Z^{q, H}$ as
$$ \int_{\mathbb{R}}f(u)dZ^{q, H}_u = \lim_{n\to \infty}\int_{\mathbb{R}}f_n(u)dZ^{q, H}_u.$$
By construction, the isometry mapping (\ref{deq:isometry}) as well as the relation (\ref{deq:10}) still hold for any function in $\mathcal{H}$.

\subsection{A particular case: the Rosenblatt process}\label{Rosenblattsection}

The Rosenblatt process, usually denoted by $R^H$ in the litterature, is the other name given to the Hermite process of order $q=2$. For a given $H \in (\frac{1}{2}, 1)$, according to Definition \ref{dDef1.1.1} it is defined as follows:
\begin{equation}\label{dR1}
R^H_t= c(H, 2) \int_{\mathbb{R}^q} \bigg( \int_0^t (s- \xi_1)_+^{H_0 - \frac{3}{2}}(s- \xi_2)_+^{H_0 - \frac{3}{2}}ds\bigg) dB_{\xi_1}dB_{\xi_2},
\end{equation}
where $B$ is a standard Brownian motion on $\mathbb{R}$, and where the positive constants $C(H, 2)$ and $H_0$ are defined by (\ref{dH2}). This stochastic process is $H$-self-similar with stationary increments, exhibits long-range dependence and lives in the second Wiener chaos. As a result, it is not a Gaussian process. In the last few years, the Rosenblatt process has been studied a lot. Among others, we would like to mention several papers related to some topics of interest in this thesis: Tudor \cite{dTudor2008}, Pipiras and Taqqu \cite{dPipirasTaqqu2}, Tudor and Viens \cite{dTudorViens}, Veillette and Taqqu \cite{dVeilletteTaqqu}, Maejima and Tudor \cite{dMaejimaTudor2013}. In the sequel, we discuss more closely about Rosenblatt distribution and the finite time interval representation of a Rosenblatt process. 

The Rosenblatt distribution is the marginal distribution of $R^H_t$ evaluated at time $t = 1$, i.e., the distribution of the Rosenblatt random variable $R^H_1$. Using Monte Carlo simulations, Torres and Tudor \cite{dTorresTudor} have been able to draw empirical histograms for the density of the Rosenblatt distribution, see Figure \ref{fig:2} below.
\begin{figure}[H]
\centering
\includegraphics[height=3.5cm,width=12cm]{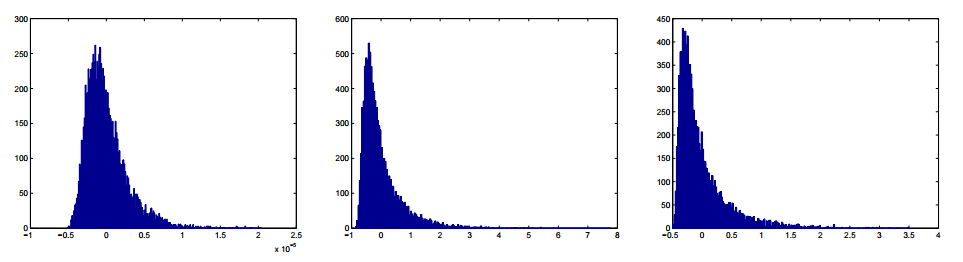}
\caption{Density of the Rosenblatt distribution for $H = 0.5, H = 0.8$ and $H = 0.9$.}\label{fig:2}
\end{figure}

Furthermore, the authors of \cite{dTorresTudor} simulated some sample paths of the Rosenblatt process for different values of the parameter $H$, see Figure \ref{fig:3}.
\begin{figure}[H]
\centering
\includegraphics[height=4.2cm,width=12cm]{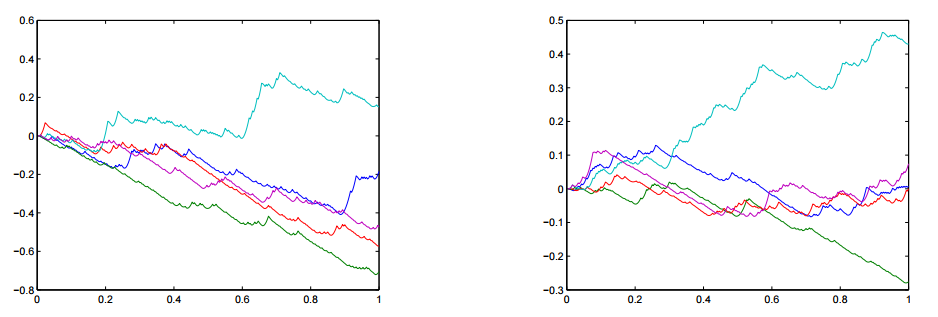}
\caption{Simulations of sample trajectories for the Rosenblatt process with $H = 0.8$ (left) and $H = 0.9$ (right).}\label{fig:3}
\end{figure}

Since computing an explicit expression for the density function of the Rosenblatt random variable is still an open problem, Veillette and Taqqu \cite{dVeilletteTaqqu} developed a technique to evaluate it numerically. The authors plotted the PDF and CDF of the Rosenblatt distribution shown in Figure \ref{fig:1}.
\begin{figure}[H] 
\centering
\includegraphics[height=7cm,width=8cm]{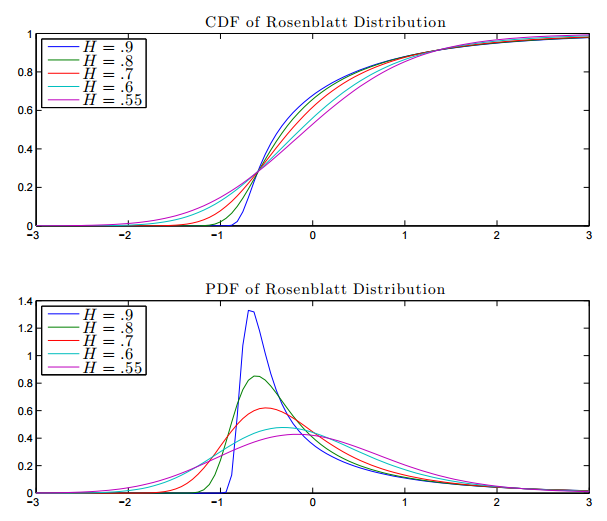}
\caption{CDF and PDF of Rosenblatt distribution.} \label{fig:1}
\end{figure}

A careful look at the CDF of the Rosenblatt distribution $R_1^H$ given in Figure \ref{fig:1} leads to the natural but very mysterious conjecture (borrowed from Taqqu \cite{dTaqqu2014}) that, whatever the value of $H$, 
\begin{align} \label{dconjecture}
P(R_1^H \leq -0.6256)& = 0.2658 \nonumber\\ 
P(R_1^H \leq 1.3552) &= 0.9123.
\end{align}
To understand and prove (\ref{dconjecture}) is still an open problem, the main obstacle being the lack of an explicit expression for the density of $R_1^H$.

Let us now turn to the finite time interval representation of the Rosenblatt process.
\begin{Proposition} (\cite[Prop. 3.7]{dTudorbook}) \label{dProp3.7}
Let $K^H$ be the kernel (\ref{deq:KH}) and let $(R^H_t)_{t \in [0, T]}$ be the Rosenblatt process given by (\ref{dR1}) with parameter $H \in (\frac{1}{2}, 1)$. Then
\begin{equation}\label{deq:R1equiv}
R^H_t \overset{(d)}{=} b_H \int_0^t \int_0^t  \bigg(\int_{u_1\vee u_2}^t  \partial_1K^{H_0}(s, u_1) \partial_1K^{H_0}(s, u_2) ds\bigg)dB_{u_1}dB_{u_2},
\end{equation}
where $B$ is a Brownian motion, $H_0 = \frac{H+1}{2}$ and $b_H = \frac{1}{H+1}\sqrt{\frac{2(2H-1)}{H}}$.
\end{Proposition}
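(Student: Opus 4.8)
The plan is to recognise Proposition~\ref{dProp3.7} as the case $q=2$ of the Pipiras--Taqqu time-interval representation~(\ref{H4}): once the two kernels are matched explicitly, the only genuine task left is to pin down the normalising constant $b_H$.

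\textbf{Step 1: differentiating the Molchan kernel.} Starting from~(\ref{deq:KH}), I would write $K^{H_0}(s,u) = c_{H_0}\, u^{\frac12-H_0}\int_u^s (v-u)^{H_0-\frac32} v^{H_0-\frac12}\,dv$ and differentiate in the first variable to get, for $s>u$,
$\partial_1 K^{H_0}(s,u) = c_{H_0}\, u^{\frac12-H_0}(s-u)^{H_0-\frac32} s^{H_0-\frac12}= c_{H_0}\,(s/u)^{H_0-\frac12}(s-u)^{H_0-\frac32}$.
Forming the product $\partial_1K^{H_0}(s,u_1)\partial_1K^{H_0}(s,u_2)$ and integrating $s$ over $[u_1\vee u_2,t]$ --- equivalently over $[0,t]$, since the factors $(s-u_i)_+$ kill the part $s<u_1\vee u_2$ --- reproduces, up to the multiplicative factor $c_{H_0}^2$, exactly the bracketed kernel in the second line of~(\ref{H4}) with $q=2$. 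Consequently~(\ref{deq:R1equiv}) coincides with~(\ref{H4}) for $q=2$ provided $b_H\, c_{H_0}^2 = b_{2,H}$, with $b_{2,H}$ the constant in~(\ref{H4}); in particular $R^H_t$ as defined by~(\ref{dR1}) has, as a process, the announced law, with $b_H$ still to be identified.

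\textbf{Step 2: identifying $b_H$.} Since $b_{2,H}$ in~(\ref{H4}) is chosen so that $E[(Z_1^{2,H})^2]=1$, it suffices to check that the right-hand side of~(\ref{deq:R1equiv}) at $t=1$, with the stated $b_H$, has unit variance. I would write that expression as $b_H\, I_2^B(h)$ with $h$ the (already symmetric) kernel $h(u_1,u_2)=\int_{u_1\vee u_2}^1 \partial_1K^{H_0}(s,u_1)\partial_1K^{H_0}(s,u_2)\,ds$, so that by~(\ref{deq:orthogonal}) one has $E[(R_1^H)^2]=2\,b_H^2\,\|h\|_{L^2([0,1]^2)}^2$. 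Expanding the square defining $\|h\|^2$ and applying Fubini turns it into $\int_0^1\!\int_0^1\big(\int_0^{s\wedge s'}\partial_1K^{H_0}(s,u)\partial_1K^{H_0}(s',u)\,du\big)^2 ds\,ds'$, and the inner integral is the mixed partial $\frac{\partial^2}{\partial s\,\partial s'}R_{H_0}(s,s')=H_0(2H_0-1)|s-s'|^{2H_0-2}$ of the fBm-$H_0$ covariance $R_{H_0}$. Using $\int_0^1\!\int_0^1|s-s'|^{\alpha}\,ds\,ds'=\frac{2}{(\alpha+1)(\alpha+2)}$ with $\alpha=4H_0-4=2H-2$, together with the identities $2H_0-1=H$ and $H_0=\frac{H+1}{2}$ coming from~(\ref{dH2}), one obtains $\|h\|^2 = \frac{(H+1)^2H}{4(2H-1)}$ and hence $b_H^2=\frac{2(2H-1)}{H(H+1)^2}$, which is the stated value.

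\textbf{Main obstacle.} The only conceptually substantive ingredient is~(\ref{H4}) itself: it is what guarantees that the two representations agree \emph{in law}, and not merely in covariance --- a distinction that matters here, since laws of second-chaos processes are not determined by their covariance alone. Granting~(\ref{H4}), the remaining difficulties are purely computational: justifying the Fubini interchange and the differentiation under the integral sign in the presence of the integrable singularities of $\partial_1K^{H_0}$ (which is where $H>\tfrac12$, equivalently $H_0>\tfrac12$, enters), and carrying the Beta/Gamma constants $c_{H_0}$, $c(H,2)$ and $b_{2,H}$ through Step~1 so that they collapse to the closed form $b_H=\frac{1}{H+1}\sqrt{\frac{2(2H-1)}{H}}$; the self-contained variance computation of Step~2 is the clean way to sidestep the latter bookkeeping.
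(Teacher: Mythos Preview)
Your argument is correct, but it follows a different route from the paper's own proof.

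You reduce the statement to the general Pipiras--Taqqu representation~(\ref{H4}), which the paper has already quoted, and then identify $b_H$ by a clean variance computation at $t=1$; the calculus in both steps is fine (including the use of $\int_0^{s\wedge s'}\partial_1K^{H_0}(s,u)\partial_1K^{H_0}(s',u)\,du=H_0(2H_0-1)|s-s'|^{2H_0-2}$, which follows from differentiating the fBm isometry). The paper, by contrast, does \emph{not} invoke~(\ref{H4}) here: it gives a direct, self-contained proof for $q=2$ via cumulants. Writing $R'^H_t$ for the right-hand side of~(\ref{deq:R1equiv}), the paper uses that the law of any element of the second Wiener chaos is determined by its sequence of cumulants, together with the explicit circular formula~(\ref{deq:cummulants}) for $\kappa_p(I_2^B(f))$. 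It then shows that for arbitrary linear combinations $\alpha R^H_t+\beta R^H_s=I_2^B(g_{s,t})$ and $\alpha R'^H_t+\beta R'^H_s=I_2^B(f_{s,t})$ all cumulants coincide, both being equal to a common expression involving iterated integrals of $|u_i-u_{i+1}|^{2H_0-2}$; this yields equality of all finite-dimensional distributions.

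What each approach buys: yours is shorter and leverages a result already on the table, but it is not self-contained --- the real work is hidden in~(\ref{H4}), which you rightly flag as the only substantive ingredient. The paper's cumulant argument is independent of~(\ref{H4}) and illustrates a technique specific to the second chaos (the circular cumulant formula), which is precisely why the Rosenblatt case is singled out in Section~\ref{Rosenblattsection}. Your remark that covariance alone does not determine second-chaos laws is exactly the point the cumulant method addresses head-on.
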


Since the Rosenblatt process belongs to the second Wiener chaos, its distribution is characterized by its mixed \textit{cumulants}, see e.g., \cite[Prop. 2.7.13]{dNourdinPeccatibook}. We recall that, given a random variable $X$ such that $E[|X|^p] < \infty, \forall p \geq 1$, the sequence of the cumulants of $X$, denoted by $\kappa_p(X), p \geq 1$, is defined as follows
$$ \log E[e^{itX}] = \sum_{p=1}^\infty \frac{(it)^p}{p!} \kappa_p(X), \qquad t \in \mathbb{R}.$$
The first cumulant $\kappa_1$ is the mean and the second one $\kappa_2$ is the variance. Let us consider a double Wiener-It\^{o} integral $I_2^B(f)$ with $f \in L^2(\mathbb{R}^2)$ symmetric. Then, for all $p \geq 2$, the $p$-th cumulant of $I_2^B(f)$ can be easily computed as a circular integral of $f$, see \cite[Prop. 2.7.13]{dNourdinPeccatibook}:
\begin{equation}\label{deq:cummulants}
\kappa_p(I_2^B(f)) = 2^{p-1}(p-1)! \int_{\mathbb{R}^p} f(s_1, s_2)f(s_2, s_3) \ldots f(s_{p-1}, s_p) f(s_p, s_1)ds_1 \ldots ds_p.
\end{equation}

Note that the circular shape for the cumulants of double Wiener-It\^{o} integrals becomes wrong for higher order multiple Wiener-It\^{o} integrals.

\textit{Sketch of the proof of Proposition \ref{dProp3.7}:} We follows Tudor's arguments from \cite{dTudor2008} and make use of the cumulants. Let us denote by $R'^H_t$ the right-hand side of (\ref{deq:R1equiv}). Since the law of a double Wiener-It\^{o} integral is completely  determined by its cumulants (\ref{deq:cummulants}), we are left to show that
$$\sum_{i=1}^n b_i R^H_{t_i}, \quad \text{and} \quad  \sum_{i=1}^n b_i R'^H_{t_i}$$
share the same cumulants. We only consider the case $n=2$, because it is representative of the difficulty. More precisely, let us show that, for every $s, t \in [0, T]$ and $\alpha, \beta \in \mathbb{R}$, the random variables $\alpha R^H_t+ \beta R^H_s$ and $\alpha R'^H_t+ \beta R'^H_s$ have the same cumulants. We can write, for all $s, t \in [0, T]$,
$$\alpha R'^H_t+ \beta R'^H_s  = I^B_2(f_{s, t})$$
where
\begin{align*}
f_{t, s}(y_1, y_2)& = \alpha \mathbbm{1}_{[0, t]}(y_1) \mathbbm{1}_{[0, t]}(y_2) \int_{y_1 \vee y_2}^t \partial_1K^{H_0}(u, y_1) \partial_1K^{H_0}(u, y_2) du\\ 
& +  \beta \mathbbm{1}_{[0, s]}(y_1) \mathbbm{1}_{[0, s]}(y_2) \int_{y_1 \vee y_2}^s \partial_1K^{H_0}(u, y_1) \partial_1K^{H_0}(u, y_2) du,
\end{align*}
and 
$$\alpha R^H_t+ \beta R^H_s  = I^B_2(g_{s, t})$$
where
$$g_{s, t}(y_1, y_2) = c(H, 2) \bigg( \alpha \int_0^t (u- y_1)_+^{\frac{H}{2}-1}(u- y_2)_+^{\frac{H}{2} -1}du + \beta \int_0^s (u- y_1)_+^{\frac{H}{2}-1}(u- y_2)_+^{\frac{H}{2} -1}du  \bigg).$$
Following computations in \cite{dTudor2008} or \cite[Prop.3.7]{dTudorbook}, both random variables $I^B_2(f_{s, t})$ and $I^B_2(g_{s, t})$ share the same cumulants given by, for all $p \geq 2$,
\begin{align*}
\kappa_p(I^B_2(f_{s, t})) &= \kappa_p(I^B_2(g_{s, t})) \\ 
& = (p-1)!2^{p-1}b_H^p (H_0(2H_0 -1))^p \sum_{t_j \in \{t, s\}} \int_0^{t_1} \ldots \int_0^{t_p} du_1\ldots du_p \\
&\hspace{1.0cm} \times \alpha^{\sharp \{t_j=t\}} \beta^{\sharp \{t_j =s\}} |u_1-u_2|^{2H_0 -2}|u_2-u_3|^{2H_0 -2} \ldots |u_p-u_1|^{2H_0 -2}.
\end{align*}
This concludes our proof. \hspace{10cm} $\square $
%
%

\section{Multiparameter Hermite random fields}

\subsection{Where our interest for multiparameter Hermite random fields comes from} 

Multiparameter Hermite random fields (aka Hermite sheets) are a generalization of Hermite processes, but instead of a time interval we now deal with a subset of $\mathbb{R}^d$. The family of Hermite sheets share several properties with the family of Hermite processes, including self-similarity, stationary increments and H\"{o}lder continuity. Hermite sheet is parametrized by the order $q \geq 1$ and the self-similarity parameter $\HH = (H_1, \ldots, H_d)$. It includes the well-known fractional Brownian motion (if $q=1, d=1$) as well as the fractional Brownian sheet (if $q=1, d \geq 2$). These latter are the only Gaussian fields in the class of Hermite sheets. When $q =2$, it contains the Rosenblatt process (if $d=1$) and the Rosenblatt sheet (if $d \geq 2$). 

Hermite random fields have been introduced as limits of some Hermite variations of the fractional Brownian sheet. We refer the reader to \cite{dPakkanenReveillac1} or \cite{dReveillacStauchTudor}. Among various aspects of the fractional Brownian sheet, we focus here on the study of its weighted power variations. We start with some historical facts.
\begin{enumerate}
\item In Nourdin, Nualart and Tudor \cite{dNourdinNualartTudor}, see also the references therein, the authors gave a complete description of the convergence of normalized weighted power variations of the fractional Brownian motion for any Hurst parameter $H \in (0, 1)$.
\item R\'{e}veillac \cite{dReveillac1} proved the convergence in the sense of finite-dimensional distributions of the weighted quadratic variation of a two-parameter fractional Brownian sheet. Generalized results for any fractional Brownian sheet are announced in a work in progress by Pakkanen and R\'{e}veillac \cite{dPakkanenReveillac2} (private communication).
\item R\'{e}veillac, Stauch and Tudor \cite{dReveillacStauchTudor} proved central and non-central limit theorems for the Hermite variations of the two-parameter fractional Brownian sheet. Later, generalized variations of $d$-parameter fractional Brownian sheet were studied by Pakkanen and R\'{e}veillac \cite{dPakkanenReveillac1}. The multiparameter Hermite random field appeared in the limit of non-central limit theorems. Furthermore, in the case of non-central asymptotics, Breton \cite{dBreton} gave the rate of convergence for the Hermite variations of fractional Brownian sheet. The study of weighted power variations of fractional Brownian sheet is still an open problem. We have investigated it in our paper in progress \cite{dTran2} (not included in this thesis).
\end{enumerate}   

The study of power variations of multiparameter non-Gaussian Hermite random fields, including Hermite processes, has received less attention: see \cite{dChronopoulouTudorViens2011, dTudorViens} for quadratic variations of Hermite processes. Our main achievement on this aspect is to extend the result of \cite{dChronopoulouTudorViens2011} to the family of Hermite random \textit{fields}.

\subsection{Fractional Brownian sheet}

The fractional Brownian sheet (in short fBs) $B^\HH$ with Hurst parameter $\HH \in (0, 1)^d$ is one particular example of Hermite random fields. It can also be viewed as a generalization of the well-known fractional Brownian motion. In the sequel, we introduce the definition of $B^\HH$ as well as some of its basic properties. From now on, we fix $d \geq 1$ in $\mathbb{N}$.

\begin{Definition}
A $d$-parameter \textit{fractional Brownian sheet} $B^\HH = (B^{H_1, \ldots, H_d}_{t_1, \ldots, t_d})_{(t_1, \ldots, t_d) \in [0, \infty)^d}$ with Hurst indices $\HH = (H_1, \ldots,H_d) \in (0, 1)^d$ is a centered $d$-parameter Gaussian process whose covariance function is given by
$$E[B^\HH_\t B^\HH_\s] = \prod_{i=1}^d \frac{1}{2}(t_i^{2H_i} + s_i^{2H_i} - |t_i-s_i|^{2H_i}).$$
\end{Definition}

There also exists a version of fractional Brownian sheet $B^\HH$ whose covariance is defined by
$$E[B^\HH_\t B^\HH_\s] = \frac12 (\|\t \|^{2H} + \|\s \|^{2H} -\|\t -s\|^{2H} ),$$
where $\| . \|$ denotes the Euclidian norm, (see e.g., \cite{dAdler}).

When $H_1 = \ldots = H_d =\frac{1}{2}$, it is nothing but \textit{the Brownian sheet}, that is, a centered Gaussian process $(B_\t)_{\t \geq 0} = (B_{t_1, \ldots, t_d})_{(t_1, \ldots, t_d) \geq 0}$ with covariance
$$E[B_\t B_\s] = \prod_{i=1}^d (t_i \wedge s_i).$$

Note that the covariance structure of fBs is defined as the tensor product of the covariance of a fBm. Thanks to this fact, fBs shares some properties with fBm such as self-similarity, stationary increments and H\"{o}lder continuity. Precisely, the following proposition states what happens only for \textit{the two-parameter fractional Brownian sheet}, for the sake of simplicity.  

\begin{Proposition}
Let $B^{H_1, H_2}$ be a two-parameter fractional Brownian sheet with Hurst parameter $(H_1, H_2) \in (0, 1)^2$. Then,
\begin{enumerate}[(i)] 
\item  $[$ Self-similarity $]$  For all  $h, k > 0, (B^{H_1, H_2}_{hs, kt})_{s, t \geq 0} \overset{law}{=}  (h^{H_1}k^{H_2} B^{H_1, H_2}_{s, t})_{s, t \geq 0}$. 
\item $[$ Stationarity of increments $]$ For any $h, k >0,$
$$ (B^{H_1, H_2}_{s+h, t+k} - B^{H_1, H_2}_{h, t+k} - B^{H_1, H_2}_{s+h, k} + B^{H_1, H_2}_{h,k})_{s, t \geq 0} \overset{law}{=} (B^{H_1, H_2}_{s, t})_{s, t \geq 0}.$$
\item $[$ H\"{o}lder continuity $]$ The fBs $B^{H_1, H_2}$ admits a version with H\"{o}lder continuous sample paths of order $(\beta_1, \beta_2)$ on any compact set, for any $\beta_1 \in (0, H_1)$ and $ \beta_2 \in (0, H_2)$.
\end{enumerate}
\end{Proposition}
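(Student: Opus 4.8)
The plan is to exploit the fact, already emphasized in the excerpt, that the covariance of the two-parameter fractional Brownian sheet is the tensor product of two one-dimensional fractional Brownian motion covariances, so that each of the three properties reduces to the corresponding classical statement for fBm applied coordinate-wise. Since $B^{H_1,H_2}$ is a centered Gaussian field, it suffices in each case to check that the two Gaussian families appearing on the two sides of the claimed equality in law have the same covariance function; the equality of all finite-dimensional distributions then follows automatically.

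For $(i)$, I would fix $h,k>0$ and compute, for $(s,t)$ and $(s',t')$ in $[0,\infty)^2$,
\begin{align*}
E\big[B^{H_1,H_2}_{hs,kt}\,B^{H_1,H_2}_{hs',kt'}\big]
&=\tfrac12\big((hs)^{2H_1}+(hs')^{2H_1}-|hs-hs'|^{2H_1}\big)\cdot\tfrac12\big((kt)^{2H_2}+(kt')^{2H_2}-|kt-kt'|^{2H_2}\big)\\
&=h^{2H_1}k^{2H_2}\,\tfrac12\big(s^{2H_1}+s'^{2H_1}-|s-s'|^{2H_1}\big)\cdot\tfrac12\big(t^{2H_2}+t'^{2H_2}-|t-t'|^{2H_2}\big),
\end{align*}
which is exactly the covariance of $\big(h^{H_1}k^{H_2}B^{H_1,H_2}_{s,t}\big)$; since both processes are centered Gaussian, this proves $(i)$. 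For $(ii)$ I would argue the same way: the rectangular increment $\Delta_{h,k}B^{H_1,H_2}_{s,t}:=B^{H_1,H_2}_{s+h,t+k}-B^{H_1,H_2}_{h,t+k}-B^{H_1,H_2}_{s+h,k}+B^{H_1,H_2}_{h,k}$ is a centered Gaussian field, and a direct expansion of $E[\Delta_{h,k}B^{H_1,H_2}_{s,t}\,\Delta_{h,k}B^{H_1,H_2}_{s',t'}]$ using the product form of the covariance factors as a product of the two one-dimensional fBm increment covariances $\tfrac12(\text{\dots})$, each of which is translation-invariant by the stationarity of fBm increments; the resulting expression equals $E[B^{H_1,H_2}_{s,t}\,B^{H_1,H_2}_{s',t'}]$, giving $(ii)$.

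For $(iii)$ I would use a Kolmogorov-type continuity criterion for two-parameter fields, exactly as in the proof of point $(v)$ of the Hermite process proposition. The key estimate is a bound on the $L^2$-norm (hence, by Gaussianity, on every $L^p$-norm) of a rectangular increment: writing $f_{H}(a,b)=\tfrac12(a^{2H}+b^{2H}-|a-b|^{2H})$ for the fBm covariance, one gets $E[(\Delta_{h,k}B^{H_1,H_2}_{s,t})^2]=\big(f_{H_1}\text{-increment in the first variable}\big)\cdot\big(f_{H_2}\text{-increment in the second variable}\big)\le C\,h^{2H_1}k^{2H_2}$, using the standard fBm bound $E[(B^{H}_{u+h}-B^{H}_u)^2]=h^{2H}$ in each factor. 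By Gaussian hypercontractivity, $E[|\Delta_{h,k}B^{H_1,H_2}_{s,t}|^{p}]\le C_p\,h^{pH_1}k^{pH_2}$ for every $p\ge1$, and the multiparameter Kolmogorov--Chentsov theorem (see e.g.\ the reference \cite{dAdler} cited above) then yields a modification with sample paths Hölder of any order $(\beta_1,\beta_2)$ with $\beta_i<H_i$. The only mildly delicate point is that a bound on rectangular increments alone does not control the field near the axes $\{s=0\}$ and $\{t=0\}$; this is handled by also bounding the "degenerate" increments $B^{H_1,H_2}_{s+h,t}-B^{H_1,H_2}_{s,t}$ (which, since $B^{H_1,H_2}_{s,0}=0$, is controlled by $E[(\cdot)^2]\le C t^{2H_2}h^{2H_1}$ on any compact set, and symmetrically), so that the full increment of the field between two nearby points is bounded by the sum of a rectangular increment and two one-directional increments, each admitting the required power bound. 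I expect this axis bookkeeping in $(iii)$ to be the only step requiring any care; parts $(i)$ and $(ii)$ are routine covariance computations.
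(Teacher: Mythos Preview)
Your proposal is correct. The paper does not actually supply a proof for this particular proposition; it is stated as a standard fact about the fractional Brownian sheet, and the reader is implicitly referred to the analogous (more general) proposition for multiparameter Hermite random fields a few pages later, whose proof uses the multiple Wiener--It\^{o} integral representation (\ref{dDefHermiterandomfield}): self-similarity and stationarity of increments are obtained there by changes of variables inside the integral and the scaling of the Brownian sheet, and H\"older continuity follows from the moment bound $E[|\Delta Z^{q,\HH}_{[\s,\t]}|^p]=E[|Z^{q,\HH}_{\textbf{1}}|^p]\prod_j|t_j-s_j|^{pH_j}$ together with Kolmogorov's criterion for random fields.

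Your route is different and, for the Gaussian case $q=1$, arguably more elementary: since a centered Gaussian field is determined by its covariance, you reduce $(i)$ and $(ii)$ to a direct check that the tensor-product covariance is invariant under the relevant scaling and shift, which is immediate from the one-dimensional fBm identities. This avoids any appeal to a stochastic integral representation and works for all $(H_1,H_2)\in(0,1)^2$, not just $(\tfrac12,1)^2$. For $(iii)$ you reach the same Kolmogorov endpoint as the paper, with the added (and correct) observation that rectangular-increment bounds alone do not control the field near the axes and must be supplemented by one-directional increment bounds; the paper's general proof sidesteps this by working only with rectangular increments and invoking a multiparameter Kolmogorov criterion stated for such increments, but your bookkeeping is the honest way to get joint H\"older continuity in the usual sense.
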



\subsection{Definition of multiparameter Hermite random fields}

We now introduce the definition of multiparameter \textit{Hermite} random fields, following Tudor \cite{dTudorbook}. 

Let $ q \geq 1$ be an integer. Denote by $B = (B_\t)_{\t \in \mathbb{R}^d}$ a Brownian sheet. The $q$-th \textit{multiple Wiener-It\^{o} integral} of kernel $f \in L^2((\mathbb{R}^d)^q)$ with respect to $B$ is written in symbols as
\begin{equation}\label{deq:multipleintegral2}
I_q^B(f) = \int_{(\mathbb{R}^d)^q} f(\uu_1, \ldots, \uu_q)dB_{\uu_1} \ldots dB_{\uu_q}.
\end{equation}
For the construction of (\ref{deq:multipleintegral2}) and its main properties, we refer the reader to \cite{dNualartbook} (chapter 1 therein) or \cite[Section 3]{dPakkanenReveillac1}. It is readily verified that, for any $f \in L^2((\mathbb{R}^d)^q)$ and $g \in L^2((\mathbb{R}^d)^p)$, we have $E[I_q^B(f) ] = 0$ and
\begin{equation}\label{deq:orthogonal}
E[I_q^B(f)I_p^B(g)] =
\begin{cases}
 & q! \big\langle \widetilde{f}, \widetilde{g} \big\rangle_{L^2((\mathbb{R}^d)^p)} \qquad\text{if } q=p\\
& 0 \qquad\qquad\qquad\quad \text{ if } q \ne p,
\end{cases}
\end{equation}
where $\widetilde{f}$ is the symmetrization of $f$ defined by
$$\widetilde{f}(\uu_1, \ldots, \uu_q) = \frac{1}{q!}\sum_{\sigma \in \mathfrak{S}_q} f(\uu_{\sigma(1)}, \ldots, \uu_{\sigma(q)}).$$

\begin{Definition}
Let $(B_\t)_{\t \in \mathbb{R}^d}$ be a standard Brownian sheet. \textit{The $d$-parameter Hermite random field} $(Z_\t^{q, \HH})_{\t \geq 0}$ of order $q \geq 1$ and self-similarity parameter $\HH = (H_1, \ldots, H_d) \in (\frac{1}{2}, 1)^d$ is defined as
\begin{align}\label{dDefHermiterandomfield}
Z^{q, \HH}_\t&= c_{q, \HH} \int_{(\mathbb{R}^{d})^{ q}} dB_{u_{1,1}, \ldots, u_{1,d}} \ldots dB_{u_{q,1}, \ldots, u_{q,d}} \nonumber\\ 
&\qquad\times \bigg(\int_0^{t_1} da_1 \ldots \int_0^{t_d} da_d \prod_{j=1}^q (a_1-u_{j,1})_+^{-(\frac{1}{2} + \frac{1-H_1}{q})}\ldots (a_d-u_{j,d})_+^{-(\frac{1}{2} + \frac{1-H_d}{q})}\bigg)\nonumber\\
& = c_{q, \HH}\int_{(\mathbb{R}^{d})^{q}}dB_{\uu_1} \ldots dB_{\uu_q} \int_0^\t d\aa \prod_{j=1}^q (\aa - \uu_j)_+^{-(\frac{1}{2} + \frac{1-\HH}{q})},
\end{align}
where $x_+ = \max(x, 0)$, and $c(q, \HH)$ is the unique positive constant depending only on $q$ and $\HH$ chosen so that $E[Z^{q, \HH}(\textbf{1})^2] = 1$. The integral (\ref{dDefHermiterandomfield}) is a $q$-th multiple Wiener-It\^{o} integral of order $q$ with respect to the Brownian sheet $B$, as considered in (\ref{deq:multipleintegral2}). 
\end{Definition}

\subsection{Basic properties of multiparameter Hermite random fields}

Similarly as Hermite processes, apart for Gaussianity the multiparameter Hermite random fields of any order $q \geq 2$ share most of the basic properties of the fractional Brownian sheet. Let us make this statement more precise.

\begin{Proposition} Fix an integer $d \geq 1$. Let $Z^{q, \HH}$ be a $d$-parameter Hermite random field of order $q \geq 1$ and self-similarity parameter $\HH \in (\frac{1}{2}, 1)^d$. Then,
\begin{enumerate}[(i)] 
\item  $[$ Self-similarity $]$  For all  $\cc > 0, (Z^{q, \HH}_{\cc\t})_{\t \geq 0} \overset{law}{=}  (\cc^\HH Z^{q, \HH}_\t)_{\t \geq 0}$. 
\item $[$ Stationarity of increments $]$ For any $\hh >0, \hh \in \mathbb{R}^d$, $ \Delta Z^{q, \HH}_{[\hh, \hh + \t]} \overset{law}{=} \Delta Z^{q, \HH}_{[0, \t]}$, where $\Delta Z^{q, \HH}_{[\s, \t]}$ denotes the increment of $Z^{q, \HH}$ given by
\begin{equation}\label{deq:generalincrements}
\Delta Z^{q, \HH}_{[\s, \t]} = \sum_{\rr \in \{0, 1\}^d} (-1)^{d-\sum_{i=1}^d r_i} Z^{q, \HH}_{\s + \rr \times (\t -\s)}.
\end{equation}
\item $[$ Covariance function $]$ For all $\s, \t \geq 0$,
$$E[Z_\t^{q, \HH}Z_\s^{q,\HH}]= \prod_{i=1}^d \frac{1}{2}(t_i^{2H_i} + s_i^{2H_i} - |t_i-s_i|^{2H_i}).$$
\item $[$ H\"{o}lder continuity $]$ Hermite process $Z^{q, \HH}$ admits a version with H\"{o}lder continuous sample paths of order $\beta = (\beta_1, \ldots, \beta_d)$ for any $\beta \in (0, \HH)$.
\end{enumerate}
\end{Proposition}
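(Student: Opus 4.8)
The plan is to transpose, coordinate by coordinate, the proof of the analogous proposition for Hermite processes given above; the only new feature is keeping track of the tensor (product) structure of both the kernel in \eqref{dDefHermiterandomfield} and the Brownian sheet $B$.

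For $(i)$ and $(ii)$ I would argue directly on the multiple Wiener--It\^o integral. For self-similarity, in $Z^{q,\HH}_{\cc\t}$ I substitute $\aa=\cc\aa'$ in the inner Lebesgue integral and $\uu_j=\cc\vv_j$ in the stochastic integral, using the scaling $dB_{\cc\vv}\overset{(d)}{=}\big(\prod_{i=1}^d c_i^{1/2}\big)\,dB_{\vv}$ of the Brownian sheet and the homogeneity of $(\cc\aa'-\cc\vv_j)_+^{-(\frac12+\frac{1-\HH}{q})}$ in each coordinate; the Jacobian $\prod_i c_i$ from $d\aa$, the factor $\prod_i c_i^{q/2}$ from the $q$ differentials and the factor $\prod_i c_i^{-q(\frac12+\frac{1-H_i}{q})}$ from the $q$ homogeneous kernels combine into $\prod_{i=1}^d c_i^{H_i}=\cc^\HH$, which is the claim as a process. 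For stationarity of increments, I first note that $\mathbbm{1}_{[0,\vv]}$ is a product of one-dimensional indicators, so the alternating sum \eqref{deq:generalincrements} defining $\Delta Z^{q,\HH}_{[\hh,\hh+\t]}$ has kernel $c_{q,\HH}\int_{[\hh,\hh+\t]}d\aa\,\prod_{j=1}^q(\aa-\uu_j)_+^{-(\frac12+\frac{1-\HH}{q})}$; the substitutions $\aa=\bb+\hh$ and then $\uu_j=\vv_j+\hh$ (the latter legitimate by translation invariance of the white noise on $\mathbb{R}^d$) turn this into the kernel of $\Delta Z^{q,\HH}_{[0,\t]}$, and since these substitutions are uniform in $\t$ the identity holds for finite-dimensional distributions.

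Part $(iii)$ is the computational core. By the isometry \eqref{deq:orthogonal} and the symmetry of the kernel in $\uu_1,\dots,\uu_q$,
\[
E\big[Z^{q,\HH}_\t Z^{q,\HH}_\s\big]=q!\,c_{q,\HH}^2\int_0^\t\! d\aa\int_0^\s\! d\bb\,\prod_{j=1}^q\Big(\int_{\mathbb{R}^d}(\aa-\uu)_+^{-(\frac12+\frac{1-\HH}{q})}(\bb-\uu)_+^{-(\frac12+\frac{1-\HH}{q})}\,d\uu\Big),
\]
and the inner integral splits as a product over the $d$ coordinates. On each coordinate I apply the classical identity $\int_{\mathbb{R}}(a-u)_+^{H_0-\frac32}(b-u)_+^{H_0-\frac32}\,du=\beta(H_0-\tfrac12,2-2H_0)\,|a-b|^{2H_0-2}$, which converges precisely because $H_{0,i}=1+\frac{H_i-1}{q}\in(\frac12,1)$, together with $q(2H_{0,i}-2)=2H_i-2$; this reduces the expression to a constant times $\prod_{i=1}^d\int_0^{t_i}\!\!\int_0^{s_i}|a-b|^{2H_i-2}\,da\,db$. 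Using $\int_0^{t}\!\int_0^{s}|a-b|^{2H-2}\,da\,db=\frac{1}{2H(2H-1)}(t^{2H}+s^{2H}-|t-s|^{2H})$ for $H>\frac12$, and then fixing the otherwise irrelevant constants through the normalization $E[(Z^{q,\HH}_{\mathbf 1})^2]=1$, I obtain $E[Z^{q,\HH}_\t Z^{q,\HH}_\s]=\prod_{i=1}^d\frac12(t_i^{2H_i}+s_i^{2H_i}-|t_i-s_i|^{2H_i})$. (Alternatively, one could invoke the $d$-parameter analogue of \cite[Prop.~A.1]{dTudorbook} once $(i)$--$(ii)$ are established.)

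Finally, for $(iv)$ I repeat the computation of $(iii)$ with the rectangle $[\s,\t]$ in place of $[0,\t]$ and $[0,\s]$, using $\int_s^t\!\int_s^t|a-b|^{2H-2}\,da\,db=\frac{1}{H(2H-1)}(t-s)^{2H}$, to get $E\big[(\Delta Z^{q,\HH}_{[\s,\t]})^2\big]=\prod_{i=1}^d|t_i-s_i|^{2H_i}$; since $Z^{q,\HH}$ lives in the $q$-th Wiener chaos, the hypercontractivity bound \eqref{dhypercontractivity1} (valid verbatim for the sheet) upgrades this to $E\big[|\Delta Z^{q,\HH}_{[\s,\t]}|^p\big]\le C_{p,q}\prod_{i=1}^d|t_i-s_i|^{pH_i}$ for all $p\ge1$, and a $d$-parameter version of Kolmogorov's continuity criterion (see e.g. \cite{dAdler}) yields a modification that is H\"older of order $\beta_i<H_i$ in the $i$-th variable on compacts. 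I expect no conceptual obstacle: the only genuinely delicate points are pure bookkeeping --- correctly propagating the tensor structure through the changes of variables in $(i)$--$(ii)$, checking that the alternating sum of indicators collapses to $\mathbbm{1}_{[\hh,\hh+\t]}$, and verifying that the exponents line up so that the Beta integrals in $(iii)$ converge and reproduce the fractional-Brownian covariance on each coordinate.
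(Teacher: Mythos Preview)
Your proposal is correct and follows essentially the same route as the paper: changes of variables plus Brownian-sheet scaling for $(i)$, the collapse of the alternating sum to $\mathbbm{1}_{[\hh,\hh+\t]}$ plus translation invariance for $(ii)$, and Kolmogorov's criterion fed by chaos moment bounds for $(iv)$. The only difference is that for $(iii)$ the paper simply cites \cite[Chapter~4]{dTudorbook} whereas you spell out the isometry/Beta-integral computation, and for $(iv)$ the paper obtains the moment bound by combining $(i)$ and $(ii)$ directly (writing $E[|\Delta Z^{q,\HH}_{[\s,\t]}|^p]=E[|Z^{q,\HH}_{\mathbf 1}|^p]\prod_i|t_i-s_i|^{pH_i}$) rather than recomputing the second moment --- both are equivalent and equally short.
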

Observe that (\ref{deq:generalincrements}) reduces to $\Delta Z^{q, H}_{[s, t]} = Z^{q, H}_t - Z^{q, H}_s$ when $d=1$, and to $\Delta Z^{q, H}_{[\s, \t]} = Z^{q, H}_{t_1, t_2} - Z^{q, H}_{t_1, s_2} - Z^{q, H}_{s_1, t_2} + Z^{q, H}_{s_1, s_2}$ when $d=2$.
\begin{proof}
Point $(i)$ follows from the self-similarity of the Brownian sheet $B$ with index $1/2$, that is, $dB_{\cc \t}$ has the same law as $\cc^{1/2}dB_\t$ for all $\cc = (c_1, \ldots, c_d) > 0$. Indeed,
{\allowdisplaybreaks
\begin{align*}
Z^{q, \HH}_{\cc \t}&= c_{q, \HH} \int_{(\mathbb{R}^d)^q} \bigg( \int_0^{\cc \t} \prod_{j=1}^q (\s- \yy_j)_+^{-\big(\frac{1}{2} + \frac{1-\HH}{q}\big)}d\s \bigg) dB_{\yy_1}\ldots dB_{\yy_q}\\ 
& = c_{q, \HH} \int_{\mathbb{R}^q} \bigg( \cc \int_0^{\t} \prod_{j=1}^q(\cc \s- \yy_j)_+^{-\big(\frac{1}{2} + \frac{1-\HH}{q}\big)}d\s \bigg) dB_{\yy_1}\ldots dB_{\yy_q}\\
&= c_{q, \HH}\int_{\mathbb{R}^q} \bigg( \cc \int_0^{\t} \prod_{j=1}^q(\cc \s- \cc\yy_j)_+^{-\big(\frac{1}{2} + \frac{1-\HH}{q}\big)}d\s \bigg) dB_{\cc \yy_1}\ldots dB_{\cc \yy_q}\\
&\overset{(d)}{=} \cc \cc^{-q(\frac{1}{2} + \frac{1-\HH}{q})}\cc^{q/2} c_{q, \HH} \int_{\mathbb{R}^q} \bigg( \cc \int_0^{t} \prod_{j=1}^q(\s- \yy_j)_+^{-\big(\frac{1}{2} + \frac{1-\HH}{q}\big)} d\s\bigg) dB_{\yy_1}\ldots dB_{\yy_q}\\
&= \cc^\HH Z^{q, \HH}_\t.
\end{align*}
}To prove point $(ii)$, we deal with the increments of $Z^{q, \HH}$ and then use the change of variable $\s' = \s - \hh$. For the sake of simplicity, we will only check the case $d=2$. In fact, for any $h_1, h_2  >0$ we have
\begin{align*}
\Delta Z^{q, \HH}_{[\hh, \hh + \t]}& = Z^{q, H_1, H_2}_{h_1+t_1, h_2 + t_2} -   Z^{q, H_1, H_2}_{h_1+t_1, h_2} -  Z^{q, H_1, H_2}_{h_1, h_2 + t_2} +  Z^{q, H_1, H_2}_{h_1, h_2}\\ 
& = c_{q, \HH} \int_{(\mathbb{R}^2)^q} dB_{y_{1, 1}, y_{1, 2}} \ldots dB_{y_{q, 1}, y_{q, 2}} \\
& \qquad \times \bigg( \int_{h_1}^{h_1+t_1}ds_1 \int_{h_2}^{h_2+t_2}ds_2 \prod_{j=1}^q (s_1 - y_{j,1})_+^{-(\frac{1}{2} + \frac{1-H_1}{q})} (s_2 - y_{j,2})_+^{-(\frac{1}{2} + \frac{1-H_2}{q})} \bigg).
\end{align*}
{\allowdisplaybreaks
The change of variables $s'_1 = s_1 -h_1, s'_2 = s_2-h$ gives
\begin{align*}
\Delta Z^{q, \HH}_{[\hh, \hh + \t]}& = c_{q, \HH} \int_{(\mathbb{R}^2)^q} dB_{y_{1, 1}, y_{1, 2}} \ldots dB_{y_{q, 1}, y_{q, 2}} \\
&\times \bigg( \int_0^{t_1}ds_1 \int_0^{t_2}ds_2 \prod_{j=1}^q (s_1 +h_1 - y_{j,1})_+^{-(\frac{1}{2} + \frac{1-H_1}{q})} (s_2 +h_2 - y_{j,2})_+^{-(\frac{1}{2} + \frac{1-H_2}{q})} \bigg)\\
& = c_{q, \HH} \int_{(\mathbb{R}^2)^q} dB_{y_{1, 1} + h_1, y_{1, 2}+h_2} \ldots dB_{y_{q, 1}+h_1, y_{q, 2}+h_2} \\
&\qquad\times \bigg( \int_0^{t_1}ds_1 \int_0^{t_2}ds_2 \prod_{j=1}^q (s_1  - y_{j,1})_+^{-(\frac{1}{2} + \frac{1-H_1}{q})} (s_2  - y_{j,2})_+^{-(\frac{1}{2} + \frac{1-H_2}{q})} \bigg)\\
& \overset{(d)}{=} c_{q, \HH} \int_{(\mathbb{R}^2)^q} dB_{y_{1, 1}, y_{1, 2}} \ldots dB_{y_{q, 1}, y_{q, 2}} \\
&\qquad\times \bigg( \int_0^{t_1}ds_1 \int_0^{t_2}ds_2 \prod_{j=1}^q (s_1  - y_{j,1})_+^{-(\frac{1}{2} + \frac{1-H_1}{q})} (s_2  - y_{j,2})_+^{-(\frac{1}{2} + \frac{1-H_2}{q})} \bigg)\\
& = \Delta Z^{q, \HH}_{[0, \t]}.
\end{align*}
}For point $(iii)$, we refer the reader to \cite[Chapter 4]{dTudorbook} for the details of calculation. From $(i)$ and $(ii)$, we have for all $p \geq 2$,
$$E[|\Delta Z^{q, \HH}_{[\s, \t]}|^p] = E[|Z^{q, \HH}_{\textbf{1}}|^p] |t_1-s_1|^{pH_1} \ldots |t_d-s_d|^{pH_d}.$$
Applying Kolmogorov's criterion for Wiener random fields depending on several parameters, $Z^{q, \HH}$ admits a version with H\"{o}lder continuous sample paths of any order $\beta = (\beta_1, \ldots, \beta_d)$  with $0 < \beta_j < H_j$ for $j=1, \ldots, d$, which proves the point $(iv)$.
\end{proof}

\subsection{A further stochastic representation of Hermite random fields}

We now introduce the \emph{finite-time representation} for the Hermite sheet $Z^{q, \HH}$. The equivalence in the sense of finite-dimensional distributions between $Z^{q, H}$ in (\ref{dDefHermiterandomfield}) and the following representation is shown in \cite[Chapter 4]{dTudorbook}:
\begin{align}\label{dDefHermiterandomfield2}
&Z^{q, \HH}(\t) \overset{(d)}{=} b_{q, \HH} \int_0^{t_1} \ldots \int_0^{t_d} dW_{u_{1,1}, \ldots, u_{1,d}} \ldots \int_0^{t_1}\ldots\int_0^{t_d} dW_{u_{q,1}, \ldots, u_{q,d}} \nonumber\\ 
&\hspace{2.5cm}\times \bigg(\int_{u_{1,1} \vee \ldots \vee u_{q,1}}^{t_1} da_1 \partial_1K^{H_1'}(a_1, u_{1,1}) \ldots \partial_1K^{H_1'}(a_1, u_{q,1})\bigg) \nonumber \\
& \hspace{6cm} \vdots \nonumber\\
& \hspace{2.5cm}\times \bigg(\int_{u_{1,d} \vee \ldots \vee u_{q,d}}^{t_d} da_d \partial_1K^{H_d'}(a_d, u_{1,d}) \ldots \partial_1K^{H_d'}(a_d, u_{q,d})\bigg) \nonumber\\
& = b_{q, \HH}\int_{[0, \t]^{q}}dW_{\uu_1}\ldots dW_{\uu_q} \prod_{j=1}^d\int_{u_{1,j} \vee \ldots \vee u_{q,j}}^{t_j} da \partial_1K^{H'_j}(a, u_{1,j}) \ldots \partial_1K^{H'_j}(a, u_{q,j}),
\end{align}
where $K^{H}$ stands for the usual kernel appearing in the classical expression of  the fractional Brownian motion $B^H$ as a Volterra integral with respect to Brownian motion given by (\ref{deq:KH}), the positive constant $b_{q, \HH}$ is chosen to ensure that $E[Z^{q, \HH}(\textbf{1})^2] = 1$ and $\HH' = 1+ \frac{\HH-1}{q}$.

\subsection{Wiener integrals with respect to Hermite random  fields}

We now introduce Wiener integrals of a deterministic function with respect to the $d$-parametric Hermite random field $(Z^{q, \HH}_\t)_{\t \in \mathbb{R}^d}$, following the construction done in Clarke De la Cerda and Tudor \cite{dClarkeDelaCerdaTudor}. When $d=1$, notice that we recover the construction of Wiener integrals with respect to Hermite processes.

Firstly, let $f$ be an elementary function on $\mathbb{R}^d$ of the form
$$f(\uu)  = \sum_{j=1}^n a_j \mathbbm{1}_{(\t_j, \t_{j+1}]}(\uu)= \sum_{j=1}^n a_j \mathbbm{1}_{(t_{1, j}, t_{1, j+1}] \times \ldots \times (t_{d, j}, t_{d, j+1}] }(u_1, \ldots, u_d).$$
We define naturally the Wiener integral of $f$ with respect to $Z^{q, \HH}$ as
$$ \int_{\mathbb{R}^d} f(\uu) dZ^{q, \HH}_\uu = \sum_{j=1}^n a_j \Delta Z^{q, \HH}_{[\t_j, \t_{j+1}]},$$
where $\Delta Z^{q, \HH}_{[\t_j, \t_{j+1}]}$ is the generalized increment of $Z^{q, \HH}$ given by (\ref{deq:generalincrements}).
Observe that the Hermite sheet given by formula (\ref{dDefHermiterandomfield}) can equivalently be written as follows
$$Z^{q, \HH}_\t = \int_{(\mathbb{R}^d)^q} J(\mathbbm{1}_{[0, t_1] \times \ldots \times [0, t_d]})(\yy_1, \ldots, \yy_q)dB(\yy_1) \ldots dB(\yy_q),$$
where $B$ is a Brownian sheet and $J$ is the mapping from the set of functions $f: \mathbb{R}^d \to \mathbb{R}$ to the set of functions $g: (\mathbb{R}^d)^q \to \mathbb{R}$ given by
\begin{align*}
J(f)(\yy_1, \ldots, \yy_q): &= c(\HH, q)\int_{\mathbb{R}^d}f(\uu) \prod_{i=1}^q (\uu-\yy_i)_+^{-\big(\frac{1}{2} + \frac{1-\HH}{q}\big)}d\uu\\
& = c(\HH, q)\int_{\mathbb{R}^d}f(u_1, \ldots, u_d) \prod_{i=1}^q \prod_{j=1}^d (u_j-y_{i, j})_+^{-\big(\frac{1}{2} + \frac{1-H_i}{q}\big)}du_1 \ldots u_d.
\end{align*}
It follows that the Wiener integral for step functions $f$ with respect to $Z^{q, \HH}$ can be expressed as the following $q$-th multiple Wiener integral
\begin{equation}\label{deq:101}
\int_{\mathbb{R}^d} f(\uu) dZ^{q, \HH}_\uu = \int_{(\mathbb{R}^d)^q} J(f)(\yy_1, \ldots, \yy_q) dB(\yy_1) \ldots dB(\yy_q).
\end{equation}
For every step function $f$, it is readily verified that $E\Big[\int_{\mathbb{R}^d} f(\uu) dZ^{q, \HH}_\uu \Big] = 0$ and
\begin{align*}
&E\bigg[\bigg(\int_{\mathbb{R}^d} f(\uu) dZ^{q, \HH}_\uu \bigg)^2 \bigg]  = q! \int_{(\mathbb{R}^d)^q} (J(f)(\yy_1, \ldots, \yy_q))^2 d\yy_1 \ldots d\yy_q \\ 
& = \HH(2\HH-1) \int_{\mathbb{R}^d}\int_{\mathbb{R}^d} f(\uu)f(\vv) |\uu-\vv|^{2\HH-2}d\uu d\vv\\
&= \int_{\mathbb{R}^d}\int_{\mathbb{R}^d} f(u_1, \ldots, u_d)f(v_1, \ldots, v_d) \prod_{i=1}^d H_i(2H_i-1) |u_i-v_i|^{2H_i-2} du_1 \ldots du_d dv_1 \ldots dv_d.
\end{align*}
Let us now introduce the linear space $\mathcal{H}$ of measurable functions $f$ on $\mathbb{R}^d$ such that
$$\|f\|_{\mathcal{H}}^2 := q! \int_{(\mathbb{R}^d)^q} (I(f)(\yy_1, \ldots, \yy_q))^2 d\yy_1 \ldots d\yy_q  < \infty.$$
Playing with the expression of the norm yields
$$\|f \|_{\mathcal{H}}^2 = \HH(2\HH-1)\int_{\mathbb{R}^d}\int_{\mathbb{R}^d} f(\uu)f(\vv)|\uu - \vv|^{2\HH-2}d\uu d\vv < \infty.$$
Observe that the mapping
\begin{equation}\label{deq:isometry2}
f \longmapsto \int_{\mathbb{R}^d}f(\uu)dZ_\uu^{q, \HH}
\end{equation}
is an isometry from the space of elementary functions equipped with the norm $\|.\|_{\mathcal{H}}$ to $L^2(\Omega)$. Furthermore, it was shown in \cite{dPipirasTaqqu} that the set of elementary functions is dense in $\mathcal{H}$. Then the isometry mapping (\ref{deq:isometry2}) and the relation (\ref{deq:101}) still hold for any function in $\mathcal{H}$.

\section{Introduction to Fisher information}

We now leave the world of Hermite processes and fields, to introduce the definitions of entropy and Fisher information for continuous random variables or vectors, two notions at the heart of the work in progress \cite{dTran3} (see Section \ref{Section1.4}). We then describe the relationships between the different induced forms of convergences. For the sake of simplicity, we first start with the one-dimensional case.
  
 \subsection{Entropy and Fisher information for real-valued random variables}

\begin{Definition}(\cite[Def. 1.4, 1.5]{dJohnson2004}) 
The \textit{differential entropy} (or simply, the \textit{entropy}) of a continuous random variable $F$ with density $f$ is defined by:
\begin{equation}\label{deq:1.15}
H(F) = H(f) := - \int f(x) \log{ f(x)}dx = - E[\log{ f(F)}].
\end{equation}
We use the convention that $0\log{0}\equiv 0$. For two continuous random variables $F$ and $Z$ with densities $f$ and $\phi$ respectively, the measure of the discrepancy between the distributions of $F$ and $Z$ is the \textit{relative entropy} (or the so-called \textit{Kullback-Leibler distance}) defined by
\begin{equation}\label{deq:1.27}
D(F\|Z) = D(f \| \phi):= \int f(x) \log{\bigg(\frac{f(x)}{\phi(x)}\bigg)}dx.
\end{equation}
Note that if $\text{supp}(f) \nsubseteq \text{supp}(\phi)$, then $D(f \| \phi) = \infty$.
\end{Definition}

\begin{remark} 

The relative entropy is non-negative: $D(F \| Z) \geq 0$ for any random variables $F, Z$ with densities $f$ and $\phi$ respectively. Indeed, using Jensen's inequality for the convex function $-\log$, we have 
\begin{align*}
D(F \| Z)& =  - \int f(x) \log{\bigg(\frac{\phi(x)}{f(x)}\bigg)}dx = E\bigg[- \log \bigg( \frac{\phi(F)}{f(F)}\bigg)\bigg] \\ 
& \geq -\log \bigg(E\bigg[ \frac{\phi(F)}{f(F)}\bigg] \bigg) = -\log \bigg( \int \frac{\phi(x)}{f(x)}f(x)dx \bigg)\\
&=  -\log \bigg( \int \phi(x)dx \bigg) = 0.
\end{align*}
\end{remark}

\begin{Definition}(\cite[Def. 1.12]{dJohnson2004})
For a random variable $F$ with continuously differentiable density $f$, we define the \textit{score function} $\rho_F$ of $F$ as the $\mathbb{R}$-valued function given by
$$\rho_F(x) = \frac{f'(x)}{f(x)} = \frac{d}{dx} (\log{f(x)}).$$
Additionally, if we assume that $F$ has variance $\sigma^2$, we define the \textit{Fisher information} $J(F)$ and the standardised Fisher information $J_{st}(F)$ as follows:
\begin{align}
J(F) &= E[\rho_F(F)^2]  \label{deq:1.90}\\
J_{st}(F)&=\sigma^2 E[(\rho_F(F))^2] - 1 = \sigma^2 J(F) -1. \label{deq:1.109}
\end{align}
\end{Definition}
It is easily seen that the score function $\rho_F$ is uniquely determined by the so-called Stein identity (see e.g. \cite[C1]{dJohnson2004}). That is, $\rho_F$ is the only function satisfying
\begin{equation}\label{deq:1.94}
E[\rho_F(F)g(F)] = -E[g'(F)]
\end{equation}
for any test function $g: \mathbb{R} \to \mathbb{R}$. Moreover, if $Z$ is $\mathcal{N}(0, \sigma^2)$-distributed then $J(Z)= \sigma^{-2}$. Hence, $ \sigma^{-2}J_{st}(F) = J(F) -  \sigma^{-2} = J(F) - J(Z)$ is the difference between the Fisher information of $F$ and $Z$.
 
\vspace{0.5cm}
We now turn to the study of relationships between convergences in the sense of entropy, Fisher information, total variation and $L^p$-distances. 

Throughout the sequel, we denote by $F$ a centered real-valued random variable with unit variance and smooth density $f$, and we let $Z \sim \mathcal{N}(0, 1)$ be a standard Gaussian with density $\phi(x) = \frac{1}{\sqrt{2\pi}}e^{-x^2/2}, x \in \mathbb{R}$.

The $L^p$-distance (resp. supremum norm) between densities of $F$ and $Z$ is given by
\begin{align*}
&\|f - \phi\|_{L^p} = \bigg(\int_{\mathbb{R}}|f(x) - \phi(x)|^p dx  \bigg)^{1/p},\\ 
& \Big(\text{resp. } \|f - \phi\|_\infty = \sup_{x \in \mathbb{R}}\|f(x) - \phi(x) \|\Big).
\end{align*}

The \emph{total variation distance} $d_{TV}(F, Z)$ between $F$ and $Z$ is defined as 
$$d_{TV}(F, Z) = \sup_{A \in \mathcal{B}(\mathbb{R})}|\mathbb{P}(F \in A) - \mathbb{P}(Z \in A)|.$$
It is known that the convergence in total variation distance is stronger than the convergence in distribution, see e.g., \cite[Proposition C.3.1]{dNourdinPeccatibook}. Moreover, for any $p \in [1, \infty)$, 
$$\|f - \phi\|_{L^p} \leq \|f - \phi\|_{L^1}^{1/p}\|f - \phi\|_{\infty}^{1-1/p}.$$
Thus, a bound for $L^p$-distance may be always deduced from a bound for $L^1$ and supremum distances. Furthermore, we have the following useful identity for total variation distance, see e.g., \cite{dNourdinPeccatibook},
\begin{align*}
d_{TV}(F, Z)&= \frac{1}{2} \sup_{\|h\|_\infty \leq 1}|E[h(F)] - E[h(Z)]| \\ 
& = \frac{1}{2} \int_{\mathbb{R}}|f(x) - \phi(x)|dx = \frac{1}{2}\|f -\phi\|_{L^1}.
\end{align*}
As a result, controlling both the total variation distance and the supremum distance implies the $L^p$-convergence for any $p \in (1, \infty)$.

It is worth pointing out that a link between relative entropy and total variation distance is provided by the celebrated \textit{Csiszár-Kullback-Pinsker inequality}, implying that for any probability densities, the convergence in the sense of relative entropy is stronger than convergence in total variation distance. More precisely: 

\begin{Proposition}( \cite[Lemma 1.8]{dJohnson2004}) For any random variables $F$ and $ Z$, we have
$$2 (d_{TV}(F, Z))^2  \leq D(F\| Z).$$
\end{Proposition}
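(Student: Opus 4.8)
The plan is to reduce the claim to a two-point ($\{0,1\}$-valued) inequality by coarse-graining, and then to settle that scalar inequality by a one-variable calculus argument. Throughout write $f$ and $\phi$ for the densities of $F$ and $Z$. If $\mathrm{supp}(f)\nsubseteq\mathrm{supp}(\phi)$ then $D(F\|Z)=\infty$ and there is nothing to prove, so I assume the law of $F$ is absolutely continuous with respect to that of $Z$. Recall from the discussion above that $d_{TV}(F,Z)=\tfrac12\|f-\phi\|_{L^1}$, so it suffices to prove $\tfrac12\big(\int_{\mathbb{R}}|f-\phi|\big)^2\le\int_{\mathbb{R}} f\log(f/\phi)$.

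First I would introduce the partition into the two cells $A=\{x:f(x)\ge\phi(x)\}$ and $A^c$, and set $p=\Prob(F\in A)=\int_A f$, $q=\Prob(Z\in A)=\int_A\phi$, so that $0\le q\le p\le 1$. A short computation using $\int f=\int\phi=1$ gives $\int_{\mathbb{R}}|f-\phi|=2(p-q)$, hence $d_{TV}(F,Z)=p-q$. Next I would show the data-processing (coarse-graining) bound $D(F\|Z)\ge h(p,q)$, where
\begin{equation*}
h(p,q):=p\log\frac pq+(1-p)\log\frac{1-p}{1-q},
\end{equation*}
with the usual conventions when $q\in\{0,1\}$ (the degenerate cases $q=0$ or $q=1$ force $p=q$ by absolute continuity, so the inequality is trivial there). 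This bound follows from the log-sum inequality applied separately on $A$ and on $A^c$, i.e.\ $\int_A f\log(f/\phi)\ge p\log(p/q)$ and $\int_{A^c}f\log(f/\phi)\ge(1-p)\log\tfrac{1-p}{1-q}$; equivalently, it is Jensen's inequality for the strictly convex map $u\mapsto u\log u$ applied to $f/\phi$ against the probability measure $\phi\,\mathbbm{1}_A\,dx/q$ (resp.\ $\phi\,\mathbbm{1}_{A^c}\,dx/(1-q)$).

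It then remains to prove the scalar Pinsker inequality $h(p,q)\ge 2(p-q)^2$ for all $0\le q\le p\le 1$. I would fix $p$ and set $g(q)=h(p,q)-2(p-q)^2$; then $g(p)=0$, and
\begin{equation*}
g'(q)=-\frac pq+\frac{1-p}{1-q}+4(p-q)=(p-q)\left(4-\frac{1}{q(1-q)}\right)\le 0\qquad\text{for }q\in(0,p],
\end{equation*}
since $q(1-q)\le\tfrac14$ and $p-q\ge 0$. Hence $g$ is nonincreasing on $(0,p]$, so $g(q)\ge g(p)=0$. Chaining the two bounds gives $D(F\|Z)\ge h(p,q)\ge 2(p-q)^2=2\,d_{TV}(F,Z)^2$, as claimed. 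The step most in need of care is the coarse-graining inequality $D(F\|Z)\ge h(p,q)$: one must justify the log-sum/Jensen estimate and treat the boundary conventions for $q\in\{0,1\}$ cleanly, but once that is in place the rest is the elementary calculus lemma above.
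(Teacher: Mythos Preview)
Your proof is correct and is in fact the standard proof of the Csisz\'ar--Kullback--Pinsker inequality. Note, however, that the paper does not supply its own proof of this proposition: it is simply quoted from \cite[Lemma 1.8]{dJohnson2004} without argument. There is therefore nothing to compare your argument against in the paper itself.

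For the record, your route---coarse-graining onto the two-cell partition $\{f\ge\phi\}$ and its complement via the log-sum inequality, followed by the one-variable calculus lemma $h(p,q)\ge 2(p-q)^2$---is exactly the classical proof one finds in information-theory textbooks (and essentially the one in Johnson's monograph). Every step checks out: the identity $\int|f-\phi|=2(p-q)$ is immediate, the data-processing bound $D(F\|Z)\ge h(p,q)$ is a clean application of Jensen to $u\mapsto u\log u$, and your derivative computation $g'(q)=(p-q)\big(4-\tfrac{1}{q(1-q)}\big)\le 0$ is correct. The boundary cases $q\in\{0,1\}$ are indeed handled by absolute continuity forcing $p=q$, as you say.
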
 

%

In 1975, Shimizu \cite{dShimizu} (see also \cite[Lemma E.1]{dJohnson2004}) proved that the convergence in the sense of Fisher information distance to a standard Gaussian random variable is stronger than convergence in total variation distance and supremum norm. The constants obtained by Shimizu in his original paper \cite{dShimizu} have been then improved by Johnson and Barron \cite{dBarronJohnson} and Ley and Swan \cite{dLeySwan}.

\begin{Proposition}(Shimizu's inequality) Let $F$ be a centered real-valued random variable with unit variance and continuously differentiable density $f$. Let $Z$ be a standard Gaussian random variable. Then the following two inequalities hold:
\begin{align}
&\sup_{x}|f(x) - \phi(x)| \leq \sqrt{J(F)} \label{dE.24}\\ 
&d_{TV}(F, Z) \leq \frac{1}{\sqrt{2}} \sqrt{J(F)}. \label{dE.26} 
\end{align}
\end{Proposition}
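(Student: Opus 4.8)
The plan is to derive both inequalities from the Stein identity \eqref{deq:1.94} and the standard representation of the density difference via the score function. First I would write, using the Cauchy--Schwarz inequality, a control on the ``weighted'' quantity $E[|\rho_F(F) - \rho_Z(F)|]$ where $\rho_Z(x) = -x$ is the score of the standard Gaussian. Indeed, since $F$ is centered with unit variance, $E[(\rho_F(F)+F)^2] = E[\rho_F(F)^2] + 2E[F\rho_F(F)] + E[F^2] = J(F) - 2 + 1 = J(F) - 1 = J_{st}(F)$, where I used the Stein identity with $g(x) = x$ to get $E[F\rho_F(F)] = -1$. Hence $E[|\rho_F(F)+F|] \le \sqrt{J_{st}(F)} = \sqrt{J(F)-1} \le \sqrt{J(F)}$, the key intermediate estimate.

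Next I would obtain a pointwise integral representation for $f(x) - \phi(x)$. Fix $x \in \mathbb{R}$ and apply the Stein identity \eqref{deq:1.94} to the bounded test function $g_x(y) = \mathbbm{1}_{(-\infty, x]}(y)\,/\,\phi(y)$-type antiderivative; more concretely, take $g_x$ solving $g_x'(y) - y g_x(y) = \mathbbm{1}_{(-\infty,x]}(y) - \Phi(x)$ (the Stein equation for the indicator), whose solution is explicit and bounded with $\|g_x\|_\infty \le \sqrt{\pi/2}$ and $\|g_x'\|_\infty \le 2$. Feeding this into $E[\rho_F(F)g_x(F)] = -E[g_x'(F)]$ and rearranging yields $\mathbb{P}(F \le x) - \Phi(x) = E\big[g_x(F)(\rho_F(F) + F)\big]$, so that $|\mathbb{P}(F\le x) - \Phi(x)| \le \|g_x\|_\infty\, E[|\rho_F(F)+F|]$. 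Taking the supremum over $x$ and combining with $d_{TV}(F,Z) = \sup_x |\mathbb{P}(F\le x)-\Phi(x)|$ gives \eqref{dE.26} after tracking the constant; a parallel argument differentiating in $x$ (or using the Stein solution for $h(y) = \delta$-like approximations, i.e. the density form of the Stein equation) produces the pointwise bound $|f(x)-\phi(x)| \le E[|\rho_F(F)+F|] \le \sqrt{J(F)}$, which is \eqref{dE.24}.

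The main obstacle will be getting the constants sharp enough: the bound \eqref{dE.26} asks for the factor $1/\sqrt{2}$, which forces one to use the optimal sup-norm bound on the Stein solution for indicators, namely $\|g_x\|_\infty \le \sqrt{2\pi}\,\Phi(x)(1-\Phi(x)) \le \sqrt{2\pi}/4$, together with the total-variation identity $d_{TV}(F,Z) = \frac12\|f-\phi\|_{L^1}$ rather than the crude supremum-of-indicators bound; alternatively one integrates the density estimate \eqref{dE.24} against a clever weight. Handling the sup-norm bound \eqref{dE.24} is comparatively clean once one uses the density version of the Stein equation, $g'(y) - y g(y) = \phi(y)^{-1}(f - \phi)$ evaluated appropriately, but one must justify the requisite integrability and differentiability, which is where the continuous differentiability hypothesis on $f$ and the finiteness of $J(F)$ (otherwise both right-hand sides are infinite and there is nothing to prove) enter. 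The rest is routine manipulation of Gaussian integrals.
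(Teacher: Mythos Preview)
The paper does not actually prove this proposition: it is stated as a quotation, with references to Shimizu's original paper and to Johnson's monograph (Lemma~E.1 there), and no argument is given. So there is no in-paper proof to compare your proposal against.

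Your overall strategy---represent $E[h(F)]-E[h(Z)]$ via the score identity as $-E[(\rho_F(F)+F)g_h(F)]$ and then bound $E[|\rho_F(F)+F|]\le \sqrt{J_{st}(F)}\le\sqrt{J(F)}$---is correct and is essentially the modern Stein-method route to Shimizu-type bounds (this is, for instance, the approach in Ley--Swan, one of the references the paper cites for the improved constants). Your computation $E[(\rho_F(F)+F)^2]=J(F)-1$ is right.

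There is, however, a genuine slip: you write ``$d_{TV}(F,Z)=\sup_x|\mathbb{P}(F\le x)-\Phi(x)|$'', which is the \emph{Kolmogorov} distance, not the total-variation distance. Half-line indicators do not suffice for~\eqref{dE.26}. The correct move is the one you allude to at the end: use $d_{TV}(F,Z)=\tfrac12\sup_{\|h\|_\infty\le 1}|E[h(F)]-E[h(Z)]|$, feed an arbitrary bounded $h$ into the Stein equation, and invoke the standard bound $\|g_h\|_\infty\le\sqrt{\pi/2}\,\|h\|_\infty$ to get $d_{TV}(F,Z)\le\tfrac12\sqrt{\pi/2}\,\sqrt{J_{st}(F)}\le\tfrac{1}{\sqrt 2}\sqrt{J(F)}$, since $\sqrt{\pi}/2<1$. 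For the sup-norm bound~\eqref{dE.24}, the ``differentiate in $x$'' or ``$\delta$-approximation'' idea is too vague as stated: the Stein solution for approximate point masses does not have a uniformly bounded sup-norm. A cleaner route is to observe that $u:=f-\phi$ solves the linear ODE $u'(y)+y\,u(y)=(\rho_F(y)+y)f(y)$, multiply by the integrating factor $e^{y^2/2}$, and integrate from whichever tail gives the smaller contribution; this yields a pointwise bound by $E[|\rho_F(F)+F|]$ after controlling the Mills-ratio factor. That is the piece of the argument that still needs to be written out.
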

%

The relative entropy and the Fisher information are also strongly related to each other via the so-called \textit{de Bruijn's identity}, see e.g., \cite[Lemma 1]{dBarron} or \cite[C1]{dJohnson2004}.
\begin{Lemma}(de Bruijn's identity) \label{dLemma2.3}
Let $F$ be a centered real-valued random variable with unit variance and let $Z$ be a standard Gaussian. Assume, without loss of generality, that $F$ and $Z$ are independent. Then,
\begin{equation}\label{deq:2.38}
D(F\|Z) = \int_0^1 \frac{J (\sqrt{t}F + \sqrt{1-t}Z) - 1}{2t}dt.
\end{equation}
\end{Lemma}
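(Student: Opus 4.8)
The plan is to prove de Bruijn's identity \eqref{deq:2.38} by differentiating the entropy along the Ornstein--Uhlenbeck-type interpolation. First I would introduce, for $t\in(0,1)$, the random variable $F_t := \sqrt{t}\,F + \sqrt{1-t}\,Z$, and denote by $f_t$ its density; since $Z$ is Gaussian and independent of $F$, $f_t$ is the convolution of a rescaled copy of $f$ with a Gaussian density, hence smooth and strictly positive for every $t<1$. Because $F$ and $Z$ are both centered with unit variance, $F_t$ is centered with unit variance for all $t$, so $\mathrm{Var}(F_t)=1$ and the standardized and ordinary Fisher informations agree up to the additive constant $1$. The boundary values are $F_1 \overset{d}{=} F$ and $F_0 \overset{d}{=} Z$, so $H(F_1)=H(F)$ and $H(F_0)=H(Z)=\frac12\log(2\pi e)$, while $D(F\|Z) = H(Z) - H(F)$ because $\mathrm{Var}(F)=1$ (the cross term $-\int f\log\phi$ depends only on the first two moments of $F$). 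Thus it suffices to show
\begin{equation*}
\frac{d}{dt} H(F_t) = \frac{J(F_t) - 1}{2t}, \qquad t\in(0,1),
\end{equation*}
and then integrate from $0$ to $1$.

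The core computation is the classical heat-equation argument. Writing $F_t = \sqrt{t}\,F + \sqrt{1-t}\,Z$, one checks that $g(t,x):=f_t(x)$ solves a Fokker--Planck/heat-type PDE; the cleanest route is to set $s$ so that the Gaussian part has variance $s$, use the fact that the density of $\sqrt{t}F + \sqrt{s}\,Z_0$ (with $Z_0$ standard Gaussian) satisfies $\partial_s g = \tfrac12 \partial_{xx} g$, and then account for the reparametrization $s = 1-t$ together with the scaling of the $\sqrt t F$ part. Differentiating $H(F_t) = -\int f_t \log f_t\,dx$ under the integral sign gives $\frac{d}{dt}H(F_t) = -\int (\partial_t f_t)(1 + \log f_t)\,dx = -\int (\partial_t f_t)\log f_t\,dx$, the $1$ dropping out since $\int \partial_t f_t\,dx = 0$. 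Substituting the PDE for $\partial_t f_t$ and integrating by parts twice, the $\tfrac12\partial_{xx}$ term produces $\tfrac12\int \frac{(\partial_x f_t)^2}{f_t}\,dx = \tfrac12 J(F_t)$, and the drift/scaling term produces $-\tfrac{1}{2t}$ after one more integration by parts using $\int f_t\,dx = 1$; combining gives the stated derivative. (An equivalent bookkeeping device is to differentiate the scaling relation $F_t \overset{d}{=}\sqrt t\,(F + \sqrt{(1-t)/t}\,Z)$ and invoke the one-dimensional heat-flow formula $\frac{d}{ds}H(\sqrt s\,G + \text{indep.\ noise})$ directly.) Finally I would integrate: $H(F) - H(Z) = \int_0^1 \frac{d}{dt}H(F_t)\,dt = \int_0^1 \frac{J(F_t)-1}{2t}\,dt$, and rearrange using $D(F\|Z) = H(Z) - H(F)$.

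The main obstacle is analytic rather than conceptual: justifying the differentiation under the integral sign and the integrations by parts, i.e.\ controlling the boundary terms at $x\to\pm\infty$ and the integrability of $f_t\log f_t$, $(\partial_t f_t)\log f_t$ and $(\partial_x f_t)^2/f_t$ uniformly enough on compact subsets of $t\in(0,1)$. This is where the independent Gaussian component does the essential work: for every $t<1$ the convolution with a nondegenerate Gaussian makes $f_t$ smooth, everywhere positive, and rapidly decaying with all derivatives, so $\log f_t$ grows at most quadratically and all the relevant integrals converge and can be differentiated. One also has to check integrability near the endpoint $t=0$ of $\frac{J(F_t)-1}{2t}$; here $J_{st}(F_t) = J(F_t)-1 = O(t)$ as $t\to 0$ because $F_t\to Z$ and the score function of $F_t$ converges to that of $Z$ at rate $O(\sqrt t)$, so the integrand stays bounded and the integral is well-defined. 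I would state these regularity facts as a preliminary lemma (or cite \cite{dJohnson2004}) and then present the PDE-plus-integration-by-parts calculation as the heart of the proof.
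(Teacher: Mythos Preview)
The paper does not actually prove this lemma; it is stated as a quoted result with citations to \cite{dBarron} and \cite{dJohnson2004}. Your approach via the heat/Ornstein--Uhlenbeck interpolation is precisely the standard one found in those references, and the overall architecture (write $D(F\|Z)=H(Z)-H(F)$ using that $F$ has mean zero and unit variance, differentiate $H(F_t)$ along the path, integrate back) is correct and complete modulo the regularity justifications you already flag.

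There is, however, a sign slip that propagates to your conclusion. The entropy \emph{decreases} along the path $t\mapsto F_t$ as $t$ runs from $0$ to $1$, since $F_0=Z$ maximizes entropy among unit-variance laws; the correct derivative is
\[
\frac{d}{dt}H(F_t)\;=\;-\,\frac{J(F_t)-1}{2t},
\]
not $+\frac{J(F_t)-1}{2t}$. One clean way to see this is to pass to the Ornstein--Uhlenbeck time $\tau=-\log t$: the density $p_\tau$ satisfies $\partial_\tau p=\tfrac12\partial_x(xp)+\tfrac12\partial_{xx}p$, your integration-by-parts computation gives $\frac{d}{d\tau}H=\tfrac12(J-1)\ge 0$, and the chain rule $\frac{d}{dt}=-\frac1t\frac{d}{d\tau}$ supplies the minus sign. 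With the corrected sign, integrating from $0$ to $1$ gives $H(F)-H(Z)=-\int_0^1\frac{J(F_t)-1}{2t}\,dt$, and then $D(F\|Z)=H(Z)-H(F)$ yields \eqref{deq:2.38} with the right sign. In your write-up the vague ``account for the reparametrization $s=1-t$ together with the scaling of the $\sqrt{t}F$ part'' is exactly where the sign was lost; if you keep that route, note that $s=1-t$ already contributes a minus, and the $t$-dependence of the $\sqrt{t}F$ factor contributes the drift term, so the net Laplacian coefficient in the $t$-PDE is $-\tfrac{1}{2t}$, not $+\tfrac12$.
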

Furthermore, taking into account (see e.g., \cite[Lemma 1.21]{dJohnson2004}) that 
 $$J(\sqrt{t}F + \sqrt{1-t}Z) \leq tJ(F) + (1-t)J(Z) = 1 + t(J(F) -1),$$
we obtain
 \begin{equation}\label{deq:1.7}
 D(F\| Z) \leq \frac{1}{2} (J(F) - 1).
 \end{equation}
As a consequence, convergence in the sense of Fisher information distance to a standard Gaussian random variable is stronger than convergence in the sense of relative entropy distance.

\subsection{Entropy and Fisher information for random vectors}

\begin{Definition}(\cite[Def 3.1]{dJohnson2004}) 
The \textit{differential entropy} (or simply, the \textit{entropy}) of a continuous random vector $F$ with density $f$ is defined by:
\begin{equation}\label{deq3.42}
H(F) = H(f) := - \int f(\textbf{x}) \log{ f(\textbf{x})}d\textbf{x} = - E[\log{ f(F)}].
\end{equation}
We use the convention $0\log{0}\equiv 0$. The measure of the discrepancy between the distributions of $F$ and $Z$ is the \textit{relative entropy} (aka the \textit{Kullback-Leibler distance})
\begin{equation}\label{deq:3.43}
D(F\|Z) := \int f(\textbf{x}) \log{\bigg(\frac{f(\textbf{x})}{\phi(\textbf{x})}\bigg)}d\textbf{x}.
\end{equation}
\end{Definition}

Now we can define the Fisher information matrix as follows. Given a function $p$, write $\nabla p$ for the gradient vector $(\partial p / \partial x_1, \ldots, \partial p / \partial x_n)^T$ and $\text{Hess } p$ for the Hessian matrix $(\text{Hess }p)_{ij} = \partial^2p/\partial x_i \partial x_j$.

\begin{Definition}(\cite[Def 3.2]{dJohnson2004})\label{Def3.2}
 For a random vector $F$ with differentiable density $f$ and covariance matrix $C > 0$, we define the \textit{score} $\rho_F$ of $F$ as the $\mathbb{R}^d$-valued function given by
\begin{equation}\label{deq:2.27}
\rho_F: \mathbb{R}^d \to \mathbb{R}^d: \textbf{x} \mapsto \rho_F(\textbf{x}) = (\rho_{F,1}(\textbf{x}), \ldots, \rho_{F,d}(\textbf{x}))^T: = \nabla \log{f(\textbf{x})}.
\end{equation}
We also define the \textit{Fisher information matrix} $J(F)$ and its \textit{standardised version} $J_{st}(F)$ of $F$ by
\begin{align}
J(F) &:= E[\rho_F(F) \rho_F(F)^T] \label{deq:3.44}\\
J_{st}(F)&:=C E[(\rho_F(F) + C^{-1}F)(\rho_F(F) + C^{-1}F)^T] = C(J(F) - C^{-1}) \label{deq:3.46}
\end{align}
(with components of $J(F)$ are $J(F)_{ij} = E[\rho_{F, i}(F)\rho_{F, j}(F)]$ for $1 \leq i, j \leq d$).
\end{Definition}

It is known that the score vector-function $\rho_F(F)$ is uniquely determined by the following integration by parts (see e.g., \cite[Lemma 3.3]{dJohnson2004}). That is, $\rho_F$ is the only function satisfying:
\begin{equation}\label{deq:3.50}
E[\rho_F(F)g(F)] = -E[\nabla g(F)] \qquad \text{ for all test function } g: \mathbb{R}^d \to \mathbb{R}.
\end{equation}
In particular, 
\begin{equation}\label{deq:3.49}
E[\rho_{F, i}(F)g(F)] = -E\Big[\frac{\partial g}{\partial x_i}(F)\Big] \qquad \forall 1\leq i \leq d.  
\end{equation}
Note that if $Z$ is a centered Gaussian vector with covariance $C$, then $J(Z)= C^{-1}$ and the positive semidefinite matrix $C^{-1}J_{st}(F) = J(F) - C^{-1}$ is the difference between the Fisher information matrices of $F$ and $Z$.

\vspace{0.5cm}
As in dimension one, let us now review the relationships between convergence to Gaussian vectors in the sense of entropy, Fisher information, total variation and $L^p$-distances. 
 
Similarly as in dimension one, the \emph{total variation distance} $d_{TV}(F, Z)$ between $d$-dimensional random vectors $F$ and $Z$ is defined as 
$$d_{TV}(F, Z) = \sup_{A \in \mathcal{B}(\mathbb{R}^d)}|\mathbb{P}(F \in A) - \mathbb{P}(Z \in A)|.$$
We also have a strong connection between total variation distance and $L^1$-norm of densities as follows:
\begin{align*}
d_{TV}(F, Z)&= \frac{1}{2} \sup_{\|h\|_\infty \leq 1}|E[h(F)] - E[h(Z)]| \\ 
& = \frac{1}{2} \int_{\mathbb{R}^d}|f(\xx) - \phi(\xx)|d\xx= \frac{1}{2}\|f -\phi\|_{L^1}
\end{align*}

In the multi-dimensional case, the relative entropy and the total variation distance are also linked together. Precisely: 

\begin{Proposition}(Csisz\'{a}r-Kullback-Pinsker inequality) For any random vectors $F$ and $ Z$, we have
\begin{equation}\label{deq:114}
2 (d_{TV}(F, Z))^2  \leq D(F \| Z).
\end{equation}
\end{Proposition}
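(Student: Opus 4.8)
The plan is to reduce the multi-dimensional Csisz\'{a}r-Kullback-Pinsker inequality to the classical one-dimensional estimate by a data-processing argument, exactly as in the scalar proof already recalled in the one-dimensional section. First I would recall the variational characterization of the total variation distance established just above, namely $d_{TV}(F, Z) = \frac12 \|f - \phi\|_{L^1} = \frac12 \int_{\mathbb{R}^d} |f(\xx) - \phi(\xx)|\, d\xx$, so that the claim \eqref{deq:114} is equivalent to the densitiy-level inequality $\frac12 \left( \int_{\mathbb{R}^d} |f(\xx) - \phi(\xx)|\, d\xx \right)^2 \leq \int_{\mathbb{R}^d} f(\xx) \log\!\big(f(\xx)/\phi(\xx)\big)\, d\xx$. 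If $\mathrm{supp}(f) \nsubseteq \mathrm{supp}(\phi)$ the right-hand side is $+\infty$ by the convention in the definition of relative entropy and there is nothing to prove, so I may assume $\phi > 0$ $f$-a.e.

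Next I would set $A = \{ \xx \in \mathbb{R}^d : f(\xx) \geq \phi(\xx) \}$ and push both measures forward under the two-point map $\mathbf{1}_A$; this produces two Bernoulli laws with parameters $p = \int_A f\,d\xx$ and $\widetilde p = \int_A \phi\,d\xx$. A direct computation shows that $|p - \widetilde p| = \int_A (f - \phi)\, d\xx = \frac12 \int_{\mathbb{R}^d} |f - \phi|\, d\xx = d_{TV}(F,Z)$, where the middle equality uses $\int_{\mathbb{R}^d}(f-\phi)\,d\xx = 0$. Then I would invoke the key monotonicity property of relative entropy under measurable maps (the data-processing inequality): the relative entropy of the images of two measures under any common measurable map is at most the relative entropy of the original measures, hence $D\big(\mathrm{Ber}(p)\,\|\,\mathrm{Ber}(\widetilde p)\big) \leq D(F \| Z)$. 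If one prefers to avoid citing data processing as a black box, it follows here in one line from Jensen's inequality applied to the convex function $u \mapsto u\log u$ on each of the two cells $A$ and $A^c$, which is the same convexity trick used in the Remark proving $D \geq 0$.

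It then remains to handle the scalar case, i.e. to show $2(p - \widetilde p)^2 \leq D\big(\mathrm{Ber}(p)\,\|\,\mathrm{Ber}(\widetilde p)\big) = p\log\frac{p}{\widetilde p} + (1-p)\log\frac{1-p}{1-\widetilde p}$. This is the classical Pinsker inequality for Bernoulli variables; I would prove it by fixing $p$ and studying $\psi(\widetilde p) := p\log\frac{p}{\widetilde p} + (1-p)\log\frac{1-p}{1-\widetilde p} - 2(p-\widetilde p)^2$ on $(0,1)$, checking $\psi(p) = 0$, $\psi'(p) = 0$, and $\psi'(\widetilde p)\big(\widetilde p - p\big) \leq 0$ via the bound $\frac{1}{\widetilde p(1-\widetilde p)} \geq 4$, so that $\psi \geq 0$ throughout. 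Combining the three displays $2 d_{TV}(F,Z)^2 = 2(p-\widetilde p)^2 \leq D(\mathrm{Ber}(p)\|\mathrm{Ber}(\widetilde p)) \leq D(F\|Z)$ gives \eqref{deq:114}. The only genuinely delicate point is the data-processing/convexity step: one must be careful that the map $\mathbf{1}_A$ is measurable (it is, since $A$ is Borel as $f,\phi$ are densities) and that the relative entropies involved are well-defined, including the degenerate cases $p \in \{0,1\}$ or $\widetilde p \in \{0,1\}$, which are easily disposed of separately; everything else is a routine one-variable calculus estimate.
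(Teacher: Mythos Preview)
The paper does not give its own proof of this proposition; it merely states the result and refers the reader to \cite{dBolleyVillani} and \cite{dHuLuNualart2014} for a proof and original references. Your proposal, by contrast, supplies a complete and standard argument (push forward both laws through $\mathbf{1}_A$ to reduce to a two-point space, apply the log-sum/data-processing inequality, and then prove the Bernoulli Pinsker bound by elementary calculus). This is a classical route and is correct.

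One small slip worth fixing: in the scalar step you write $\psi'(\widetilde p)(\widetilde p - p) \leq 0$, but the correct inequality is $\psi'(\widetilde p)(\widetilde p - p) \geq 0$. Indeed, differentiating gives
\[
\psi'(\widetilde p) = (\widetilde p - p)\Big(\tfrac{1}{\widetilde p(1-\widetilde p)} - 4\Big),
\]
and since $\widetilde p(1-\widetilde p)\leq \tfrac14$ the bracket is nonnegative; hence $\psi'(\widetilde p)(\widetilde p - p)\geq 0$, which is exactly what shows $\psi$ is minimized at $\widetilde p = p$ and thus $\psi\geq 0$. With that sign corrected, your argument goes through without further change.
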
 

Therefore, the convergence in the sense of relative entropy is stronger than convergence in total variation distance. In particular, note that $D(F \| Z) \geq 0$. See, e.g., \cite{dBolleyVillani} for a proof of (\ref{deq:114}) and original references or see \cite{dHuLuNualart2014}.

The analogue in dimension one of the relationship between relative entropy and Fisher information is provided by the multidimensional counterpart of \textit{de Bruijn's identity}, see e.g., \cite[Lemma 2.2]{dJohnsonSuhov}.

 \begin{Lemma}(Multivariate de Bruijn's identity) \label{dLemma2.31}
Let $F$ be a $d$-dimensional random vector with invertible covariance matrix $C$ and let $Z$ be Gaussian with covariance $C$ as well. Then,
\begin{equation}\label{deq:2.381}
D(F\|Z) = \int_0^1 \frac{\text{tr} (J_{st}(F_t))}{2t}dt,
\end{equation}
where $F_t: = \sqrt{t}F + \sqrt{1-t}Z$ is the centered random vector with covariance matrix $C$ and 'tr' is the usual trace operator.
\end{Lemma}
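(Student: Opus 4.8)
The plan is to deduce the multivariate identity from the classical scalar‑noise de Bruijn identity, by first removing the covariance matrix via a linear change of variables and then differentiating the entropy along the Gaussian interpolation. First, assume without loss of generality that $F$ and $Z$ are independent, so that $F_t = \sqrt{t}\,F + \sqrt{1-t}\,Z$ is centered with covariance $tC + (1-t)C = C$ for every $t\in[0,1]$. Set $G := C^{-1/2}F$, $Z_0 := C^{-1/2}Z \sim \mathcal{N}(0,I_d)$ and $\widetilde F_t := C^{-1/2}F_t = \sqrt{t}\,G + \sqrt{1-t}\,Z_0$. Since relative entropy is invariant under invertible linear maps, $D(F\|Z) = D(G\|Z_0)$ and $D(F_t\|Z) = D(\widetilde F_t\|Z_0)$. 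Writing the density of $\widetilde F_t$ at $\yy$ as $(\det C)^{1/2}f_t(C^{1/2}\yy)$, with $f_t$ the density of $F_t$, one reads off $\rho_{\widetilde F_t}(\yy) = C^{1/2}\rho_{F_t}(C^{1/2}\yy)$, hence $J(\widetilde F_t) = C^{1/2}J(F_t)C^{1/2}$ and, by (\ref{deq:3.46}), $\text{tr}\,J(\widetilde F_t) - d = \text{tr}(CJ(F_t)) - d = \text{tr}(J_{st}(F_t))$. It therefore suffices to treat the case where the target law is standard Gaussian.

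So take $C = I_d$. Then $D(\widetilde F_t\|Z_0) = -H(\widetilde F_t) + \tfrac{d}{2}\log(2\pi) + \tfrac{1}{2} E[|\widetilde F_t|^2]$, and since $\widetilde F_t$ has covariance $I_d$ for all $t$ the last term is the constant $d/2$; hence $\frac{d}{dt}D(\widetilde F_t\|Z_0) = -\frac{d}{dt}H(\widetilde F_t)$. Now factorize $\widetilde F_t = \sqrt{t}\,Y_s$ with $Y_s := G + \sqrt{s}\,Z_0$ and $s = (1-t)/t$, and combine three ingredients: the entropy scaling $H(\sqrt{t}\,Y_s) = H(Y_s) + \tfrac{d}{2}\log t$, the Fisher‑information scaling $J(Y_s) = t\,J(\widetilde F_t)$, and the classical de Bruijn identity $\frac{d}{ds}H(Y_s) = \tfrac{1}{2}\,\text{tr}\,J(Y_s)$. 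The last one follows because the density $u(s,\cdot)$ of $Y_s$ solves $\partial_s u = \tfrac{1}{2}\Delta u$, so $\frac{d}{ds}H(Y_s) = -\int(\partial_s u)\log u = -\tfrac{1}{2}\int(\Delta u)\log u = \tfrac{1}{2}\int |\nabla u|^2/u = \tfrac{1}{2}\,\text{tr}\,J(Y_s)$ after one integration by parts. Chaining these via $ds/dt = -1/t^2$ gives $\frac{d}{dt}H(\widetilde F_t) = -\frac{1}{2t}\big(\text{tr}\,J(\widetilde F_t) - d\big)$, whence $\frac{d}{dt}D(\widetilde F_t\|Z_0) = \frac{1}{2t}\big(\text{tr}\,J(\widetilde F_t) - d\big)$. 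Integrating over $(0,1)$ and using $\widetilde F_1 = G$ together with $D(\widetilde F_t\|Z_0)\to 0$ as $t\to 0^+$ yields the identity; undoing the first step turns $\text{tr}\,J(\widetilde F_t) - d$ back into $\text{tr}(J_{st}(F_t))$, which is exactly (\ref{deq:2.381}).

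The main obstacle is analytic rather than algebraic. One must justify differentiating $H(\widetilde F_t)$ under the integral sign, check that the boundary terms in the integration by parts vanish (here the smoothing produced by convolution with a Gaussian for $t<1$ is what is used), verify that $\text{tr}(J_{st}(F_t))/t$ is absolutely integrable near $t=0$, and establish the continuity $D(\widetilde F_t\|Z_0)\to 0$ as $t\to 0^+$. These points are handled exactly as for the one‑dimensional de Bruijn identity (Lemma \ref{dLemma2.3}); a complete argument in the vector case can be found in \cite{dJohnsonSuhov}. By contrast, the covariance and score transformation laws of the first step and the scaling identities for $H$ and $J$ of the second step are routine.
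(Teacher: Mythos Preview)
The paper does not actually prove this lemma; it merely states it and refers the reader to \cite[Lemma 2.2]{dJohnsonSuhov}. Your argument is correct and follows the standard route (reduce to $C=I_d$ by the linear change of variables $C^{-1/2}$, then differentiate the entropy along the Gaussian interpolation using the heat-equation form of the scalar de Bruijn identity), and you yourself defer the analytic justifications to the very same reference, so there is nothing to compare.
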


If $F$ and $Z$ are random vectors that have both \textit{identity covariance matrix}, a straightforward extension of \cite[Lemma 1.21]{dJohnson2004} to the multivariate setting yields that the standardised Fisher information decreases along convolutions. Precisely, for all $ 0 \leq t \leq 1$,
$$\text{tr}(J_{st}(F_t)) \leq t \text{ tr}(J_{st}(F)) + (1-t)\text{tr}(J_{st}(Z)) = t \text{ tr}(J_{st}(F)).$$
It follows that
\begin{equation}\label{deq:8}
D(F\| Z) \leq \frac{1}{2} \text{tr}(J_{st}(F)).
\end{equation}
As a consequence, convergence in the sense of Fisher information to a standard Gaussian random vector is stronger than convergence in the sense of relative entropy.

\section{Summary of the four articles that constitute this thesis}\label{Section1.4}

\subsection{[a] Non-central limit theorems for quadratic functionals of Hermite-driven long memory moving average processes}

Fractional Ornstein-Uhlenbeck process, fOU in short, is the unique strong solution of the Langevin equation, driven by the fractional Brownian motion $B^H$ as a noise. Namely,
\begin{equation}\label{deqfOU}
dX_t =  - \alpha X_tdt + \sigma dB^H_t, \qquad X_0=0.
\end{equation}
Here $\sigma > 0$ is a constant, and $\alpha > 0$ is the drift of the model. 

The fOU process has received a lot of attention recently, especially because one can use the powerful toolbox of Gaussian analysis to deal with it, see e.g., \cite{dHuNualart, dHuNualartZhou, dKleptsynaLeBreton, dTudorViens2007}. But in some practical models, the Gaussian assumption may be implausible (cf. Taqqu \cite{dTaqqu1978}). This is why we propose to add a new parameter, namely $q \geq 1$, in (\ref{deqfOU}):
\begin{equation}\label{deqHOU}
dX_t = - \alpha X_tdt + \sigma dZ^{q, H}_t, \qquad x_0= 0.
\end{equation}
In (\ref{deqHOU}), $Z^{q, H}$ is a Hermite process of order $q \geq 1$ and Hurst parameter $H \in (1/2, 1)$. Note that $q=1$ in (\ref{deqHOU}) corresponds to (\ref{deqfOU}). The stochastic differential equation (\ref{deqHOU}) has a unique strong solution that is the almost surely continuous process given by
\begin{equation}\label{dHOUprocess}
X_t =\sigma \int_0^t e^{-\alpha (t-u)}dZ^{q, H}_u, \qquad t \geq 0.
\end{equation} 
Here, the integral $\int_0^t e^{\alpha u}dZ^{q, H}_u$ must be understood in the Riemann-Stieljes sense (see \cite[Prop. 1]{dMaejimaTudor}). Following \cite{dMaejimaTudor}, the stochastic process (\ref{dHOUprocess}) is called (non-stationary) \textit{Hermite Ornstein-Uhlenbeck process} of order $q$.
 
More generally, we can consider a class of long memory moving average processes driven by Hermite process of the form
\begin{equation}\label{deq:Xt}
X_t^{(q, H)} := \int_0^t x(t-u) dZ^{q, H}(u), \qquad t \geq 0,
\end{equation}
where $x$ is a regular deterministic function. For $x(u) = \sigma e^{-\alpha u}$ with $\sigma$ and $ \alpha > 0$, one recovers the Hermite Ornstein-Uhlenbeck process (\ref{dHOUprocess}). The purpose of the article \cite{dTran} is to study the asymptotic behavior, as $T \to \infty$, of the normalized quadratic functional
\begin{equation}\label{deqGt}
G_T^{(q, H)}(t):=\frac{1}{T^{2H_0 - 1}}\int_0^{Tt}\Big(\big(X_s^{(q, H)}\big)^2 - E\Big[\big(X_s^{(q, H)}\big)^2\Big]\Big) ds,
\end{equation}
where $H_0$ given by (\ref{dH2}), because of its potential to be then used for dealing with statistical inference related to (\ref{deqHOU}).

Theorem 1.1 in \cite{dTran} proves a \textit{non}-central limit theorem for $G_T^{(q, H)}$ as $T \to \infty$. Roughly speaking, it shows the convergence, in the sense of finite-dimensional distributions, to the Rosenblatt process (up to a multiplicative constant), irrespective of the value of $q \geq 2$ and $H \in (\frac{1}{2}, 1)$.

\begin{Theorem}(\cite[Theorem 1.1]{dTran}) \label{dthm4.1}
Let $H \in (\frac{1}{2}, 1)$ and let $Z^{(q, H)}$ be a Hermite process of order $q \geq 2$ and self-similarity parameter $H$. Consider the Hermite-driven moving average process $X^{(q, H)}$ defined by (\ref{deq:Xt}), and assume that the kernel $x$ is a real-valued integrable function on $[0, \infty)$ satisfying, in addition,
\begin{equation}\label{deq:1}
\int_{\mathbb{R}_+^2} |x(u)||x(v)||u-v|^{2H-2}dudv < \infty.
\end{equation}
Then, as $T \to \infty$, the family of stochastic processes $G_T^{(q, H)}$ converges in the sense of finite-dimensional distributions to $b(H, q)R^{H'}$, where $R^{H'}$ is the Rosenblatt process of parameter $H' =  1 + (2H-2)/q$, and $b(H, q)$ is an explicit positive constant.
\end{Theorem}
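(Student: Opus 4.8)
The plan is to work within the framework of Wiener chaos expansions and the so-called Fourth Moment Theorem machinery of Nourdin--Peccati. First I would write $X_s^{(q,H)}$ explicitly as an element of the $q$-th Wiener chaos: using the Wiener integral representation \eqref{deq:10}, one has $X_s^{(q,H)} = I_q^B(h_s)$ for an explicit symmetric kernel $h_s \in L^2(\mathbb{R}^q)$ built from $x$ and the kernel $I(\cdot)$ appearing in the time-indexed representation \eqref{dH1}. Then the product formula for multiple Wiener-It\^o integrals gives
\begin{equation*}
\big(X_s^{(q,H)}\big)^2 = \sum_{r=0}^q r!\binom{q}{r}^2 I_{2q-2r}^B\big(h_s \widetilde{\otimes}_r h_s\big),
\end{equation*}
so that the centered quantity $\big(X_s^{(q,H)}\big)^2 - E[\big(X_s^{(q,H)}\big)^2]$ is the sum over $r=0,\dots,q-1$ of terms living in the chaoses of order $2q-2r$. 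Integrating in $s$ over $[0,Tt]$ and normalizing by $T^{2H_0-1}$ yields a decomposition $G_T^{(q,H)}(t) = \sum_{r=0}^{q-1} c_{q,r}\, I_{2q-2r}^B\big(g_{T,t}^{(r)}\big)$ with explicit kernels $g_{T,t}^{(r)}$.

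The next step is a chaos-by-chaos asymptotic analysis. I expect that the dominant contribution comes from the highest surviving chaos, namely $r = q-1$, which sits in the \emph{second} Wiener chaos: the term $I_2^B\big(g_{T,t}^{(q-1)}\big)$. One should show that, after the normalization $T^{-(2H_0-1)}$, the kernel $g_{T,t}^{(q-1)}$ converges in $L^2(\mathbb{R}^2)$ (or rather the associated process converges in finite-dimensional distributions) to $b(H,q)$ times the kernel of the Rosenblatt process $R^{H'}$ with $H' = 1 + (2H-2)/q$, as given by \eqref{dR1} or its finite-interval form \eqref{deq:R1equiv}; here the exponent matching is exactly $H_0 - \tfrac32 = \tfrac{H'}{2}-1$ via $H_0 = 1 + (H-1)/q$. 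Since a sequence in a fixed (second) Wiener chaos converges in distribution as soon as the kernels converge in $L^2$ (equivalently, the second moments converge; cf.\ \eqref{deq:cummulants} and the cumulant characterization), this identifies the limit. For the remaining terms $r < q-1$, which live in chaoses of order $\geq 4$, I would show that their $L^2(\Omega)$-norms, i.e.\ the $L^2$-norms of the symmetrized kernels $g_{T,t}^{(r)}$, are of smaller order than $T^{2H_0-1}$, so that they vanish in the limit; this is a matter of estimating integrals of products of $|u_i - v_i|^{2H-2}$-type kernels against the rapidly decaying weight coming from the integrability hypothesis \eqref{deq:1} on $x$. Finally, to pass from one-dimensional convergence to finite-dimensional convergence, one takes linear combinations $\sum_j \lambda_j G_T^{(q,H)}(t_j)$ and runs the same argument, the limit being the corresponding linear combination of $R^{H'}_{t_j}$.

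The main obstacle, I expect, will be the precise $L^2$-asymptotics of the second-chaos kernel $g_{T,t}^{(q-1)}$: one must compute the limit of its norm and, more delicately, verify that the \emph{whole} kernel (not just its norm) converges to the Rosenblatt kernel, which requires carefully disentangling the contraction $h_s \otimes_{q-1} h_s$ — this produces a single surviving pair of free variables together with a factor $\int_{\mathbb{R}_+^2} x(s-a)x(s-b)\,|a-b|^{2H-2}\,da\,db$-type expression whose behavior as the time variable is integrated and rescaled must reproduce exactly the structure $\int_0^t \prod_{j=1}^2 (w - \xi_j)_+^{H'/2 - 1} dw$. The self-similarity and scaling bookkeeping (tracking all powers of $T$, and checking that the exponent $2H_0 - 1 = 1 + (2H-2)/q = H'$ is precisely the self-similarity order of $R^{H'}$) is where the constant $b(H,q)$ gets pinned down, and getting this exactly right — including the role of hypothesis \eqref{deq:1} in guaranteeing all the relevant integrals are finite and in killing the higher-chaos remainders — is the crux of the argument. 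A secondary technical point is justifying the interchange of the deterministic $ds$-integration with the stochastic integral, which follows from a stochastic Fubini argument together with the moment bounds \eqref{dhypercontractivity1}.
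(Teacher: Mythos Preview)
Your overall architecture is correct and matches the paper: chaos decomposition via the product formula, identification of the $r=q-1$ term (second Wiener chaos) as the dominant one converging to the Rosenblatt process, and proof that the remaining terms $r=0,\ldots,q-2$ vanish in $L^2(\Omega)$. One terminological slip: $r=q-1$ corresponds to the \emph{lowest} surviving chaos (order $2$), not the highest.

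However, you are missing the key technical device that the paper actually relies on. The paper does \emph{not} attempt to prove $L^2(\mathbb{R}^2)$-convergence of the time-domain kernels $g_{T,t}^{(q-1)}$ directly; instead it passes to the \emph{spectral} side via an extension of Taqqu's transformation lemma (Lemma~\ref{mLemma2} in the paper). Concretely, each $F_{2q-2r,T}(t)$ is rewritten as a multiple integral with respect to a Gaussian random spectral measure $W$, with kernel involving $\prod_j |\lambda_j|^{1/2-H_0}$ times a Fourier-type integral in the $z$-variables. After the change of variables $s'=s/T$, $\lambda'=T\lambda$, $\xi' = Ts-\xi$ and the $1/2$-self-similarity of $W$, the power of $T$ in front becomes $T^{-(2-2H_0)(q-1-r)}$, which immediately shows that all terms with $r\leq q-2$ die, and for $r=q-1$ the spectral kernel converges pointwise (and is shown to be Cauchy in $L^2(\mathbb{R}^2)$) to the spectral kernel $|\lambda_1|^{1/2-H_0}|\lambda_2|^{1/2-H_0}\frac{e^{i(\lambda_1+\lambda_2)t}-1}{i(\lambda_1+\lambda_2)}$ of $R^{H'}$.

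The reason the spectral route matters is that the time-domain building block $(z-\xi)_+^{H_0-3/2}$ is neither in $L^1$ nor in $L^2$, so your proposed direct $L^2(\mathbb{R}^2)$ analysis of $g_{T,t}^{(q-1)}$ would run into integrability problems that the Fourier side resolves cleanly (the factor $\frac{e^{i\lambda t}-1}{i\lambda}$ supplies the missing decay). Also, the reference to the ``Fourth Moment Theorem machinery'' is off target here: that theory governs \emph{Gaussian} limits, whereas the mechanism in this proof is simply $L^2$-convergence of kernels within a fixed (second) chaos, which gives $L^2(\Omega)$-convergence of the integrals by isometry.
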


For $q=1$ (Gaussian case), $X^{(1, H)}$ is nothing but the fractional Volterra process, which includes fOU process as a particular case. Theorem 1.2 in \cite{dTran} shows that, for all $H \in (\frac{3}{4}, 1)$, the family of stochastic processes $G_T^{(1, H)}$ converges to the Rosenblatt process in the sense of finite-dimensional distributions, up to a multiplicative constant. The result complements a study initiated by Nourdin \textit{et al} in \cite{dNourdinNualartZintout} where a central limit theorem was established for $H \in (\frac{1}{2}, \frac{3}{4})$. 

\begin{Theorem} (\cite[Theorem 1.2]{dTran}) \label{dthm4.2}
Let $H \in (\frac{3}{4}, 1)$. Consider the fractional Volterra process $X^{(1, H)}$ given by (\ref{deq:Xt}) with $q=1$. If the function $x$ defining $X^{(1, H)}$ is an integrable function on $[0, \infty)$ and satisfies (\ref{deq:1}), then the family of stochastic processes $G_T^{(1, H)}$ converges in the sense of finite-dimensional distributions, as $T \to \infty$, to the Rosenblatt process $R^{H''}$ of parameter $H'' = 2H-1$ multiplied by an explicit positive constant $b(1, H)$.
\end{Theorem}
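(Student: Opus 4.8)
The plan is to represent $G_T^{(1,H)}(t)$ as a sum of multiple Wiener--It\^o integrals on the Wiener chaos of the underlying Brownian motion $B$ driving $Z^{(1,H)}=B^H$, and then to carry out an asymptotic analysis chaos-by-chaos. First I would write $X_s^{(1,H)} = \int_0^s x(s-u)\,dB^H_u = I_1^B(h_s)$ for a suitable kernel $h_s \in \mathcal{H}$ obtained by composing $x(s-\cdot)$ with the operator $I$ (or $K^H$) from the time-interval representation \eqref{deq:KH}. By the product formula for multiple integrals, $(X_s^{(1,H)})^2 - E[(X_s^{(1,H)})^2] = I_2^B(h_s \otimes h_s)$, so that, after integrating in $s$ over $[0,Tt]$ and dividing by $T^{2H-1}$, one obtains $G_T^{(1,H)}(t) = I_2^B(f_{T,t})$ with $f_{T,t} = \frac{1}{T^{2H-1}} \int_0^{Tt} h_s \otimes h_s \, ds$. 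Thus the whole object already lives in the \emph{second} Wiener chaos, which is the feature that makes a Rosenblatt limit natural.

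Since a sequence in a fixed Wiener chaos is characterised (in the sense of finite-dimensional distributions, after taking linear combinations $\sum_j a_j G_T^{(1,H)}(t_j)$) by its cumulants, the core of the proof is to show that, for every $p \ge 2$,
\begin{align*}
\kappa_p\!\left(\textstyle\sum_j a_j G_T^{(1,H)}(t_j)\right) \longrightarrow \kappa_p\!\left(b(1,H)\textstyle\sum_j a_j R^{H''}_{t_j}\right) \qquad \text{as } T \to \infty,
\end{align*}
with $H'' = 2H-1$. Using the circular representation \eqref{deq:cummulants} of cumulants of second-chaos integrals, $\kappa_p(I_2^B(f_{T,t}))$ is an iterated integral of a product of $p$ copies of $f_{T,t}$ arranged cyclically; substituting the definition of $f_{T,t}$ turns this into a $2p$-fold integral over the $s$-variables and the $u$-variables, in which the Brownian kernels contract to factors of the form $H(2H-1)|u-v|^{2H-2}$ (this is exactly the inner-product structure of $\mathcal{H}$ recorded after \eqref{deq:10}). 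After the substitutions $s_i = T\sigma_i$ and using that $x$ is integrable with \eqref{deq:1}, the contributions of the kernel $x$ should factor out as a constant power $\left(\int_{\mathbb{R}_+^2}x(u)x(v)|u-v|^{2H-2}\,du\,dv\right)^{p}$ times a clean scaling integral; here the condition $H>\tfrac34$ guarantees that the exponent $2(2H-1)-2 = 4H-4 > -1$, so the remaining singular kernel $|\sigma_i - \sigma_{i+1}|^{4H-4}$ is locally integrable and the Riemann-sum/dominated-convergence passage to the limit is legitimate. Identifying the limiting scaling integral with the cyclic integral \eqref{deq:cummulants} for the Rosenblatt process of parameter $H''=2H-1$ then pins down both the limit law and the constant $b(1,H)$.

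The main obstacle I expect is the bookkeeping of the \emph{double} kernel structure: $X_s^{(1,H)}$ is itself a Wiener integral against $B^H$, so before any scaling argument one must first pass through the Volterra kernel $K^H$ (or the operator $I$) and control the interplay between the $K^H$-smoothing, the moving-average kernel $x$, and the long-range weight $|u-v|^{2H-2}$. Concretely, one has to show that the dominant term in $f_{T,t}$, after rescaling, behaves like (a constant times) the Rosenblatt kernel $b_{H''}\int_0^{\cdot}\int_0^{\cdot}\big(\int K^{H''}\!\ldots\big)$, and that the error terms — coming from the truncation of the $s$-integral at $Tt$ rather than $\infty$, and from the difference between $x$ and its total mass — vanish in $L^2((\mathbb{R}^d)^2)$ after normalisation. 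A delicate point is verifying that the constant $2H-1$ indeed lands in $(\tfrac12,1)$, so that $R^{H''}$ is a bona fide Rosenblatt process; this is precisely where $H>\tfrac34$ (rather than merely $H>\tfrac12$) is essential, and it also explains the dichotomy with \cite{dNourdinNualartZintout}, where for $H\in(\tfrac12,\tfrac34)$ the analogous scaling integral diverges and one instead renormalises to obtain a Gaussian (central) limit. Once the $L^2$-convergence of the normalised kernels, together with the cumulant convergence above, is established, the conclusion follows from the multidimensional version of the fact that convergence of all cumulants of second-chaos vectors implies convergence in law.
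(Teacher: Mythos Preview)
Your overall strategy --- recognise that $G_T^{(1,H)}(t)$ lives entirely in the second Wiener chaos and identify the limit there --- is correct and matches the paper's starting point. From there, however, the paper takes a different route: rather than computing cumulants in the time domain, it passes to the \emph{spectral} side via (an extension of) Taqqu's transfer lemma, writing $G_T^{(1,H)}(t)=F_{2,T}(t)$ as a double integral with respect to a complex random spectral measure $W$. After the rescaling $\lambda'=T\lambda$, $\xi'=Ts-\xi$ the spectral kernel is explicit, and one checks directly that it converges in $L^2(\mathbb{R}^2)$ to the spectral kernel of $R^{H''}$, up to the constant $b(1,H)$. This gives $L^2(\Omega)$-convergence, hence f.d.d.\ convergence, without any cumulant computation.

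Your cumulant route can also be made to work, but two concrete errors would derail it as written. First, the constant is wrong: after the substitution $z=s_i-u$, $z'=s_{i+1}-v$ inside $\langle h_{s_i},h_{s_{i+1}}\rangle_{L^2}$, the weight $|z-z'|^{2H-2}$ becomes $|T(\sigma_i-\sigma_{i+1})+(v-u)|^{2H-2}$, so the singularity attaches to $|\sigma_i-\sigma_{i+1}|$ and \emph{not} to $x$. What factors out is $\big(\int_0^\infty x(u)\,du\big)^{2}$, not $\int_{\mathbb{R}_+^2}x(u)x(v)|u-v|^{2H-2}\,du\,dv$; compare the explicit value of $b(1,H)$ the paper obtains. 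Second, the exponent on the circular kernel for $\kappa_p$ is $2H-2$, not $4H-4$: you obtain $\prod_i|\sigma_i-\sigma_{i+1}|^{2H-2}$, which is locally integrable already for every $H>\tfrac12$. The restriction $H>\tfrac34$ does not enter here; it enters only at the level of the $L^2$-norm of the limiting kernel (equivalently in $\kappa_2$, where the integrand is $|\sigma_1-\sigma_2|^{2(2H-2)}=|\sigma_1-\sigma_2|^{4H-4}$), and it is precisely the condition $H''=2H-1>\tfrac12$ required for $R^{H''}$ to be a well-defined Rosenblatt process.
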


As a consequence of Theorem \ref{dthm4.1}, it is worth pointing out that, irrespective of the value of the self-similarity parameter $H \in (\frac{1}{2}, 1)$, the normalized quadratic functionals of any non-Gaussian Hermite-driven long memory moving average processes $(q \geq 2)$ always exhibits a convergence to a random variable belonging to the \textit{second} Wiener chaos. It is in contrast with what happens in the Gaussian case $(q=1)$, where either central or non-central limit theorems may arise depending on the value of the self-similarity parameter. This phenomenon is analogous to the one studied in the works \cite{dChronopoulouTudorViens2011, dClauselRoueffTaqquTudor2013, dClauselRoueffTaqquTudor2014, dTudorViens}.

Proof of Theorem \ref{dthm4.1} and \ref{dthm4.2} are done via the expansion of $G^{(q,H)}_T$ into a sum of components belonging to different Wiener chaoses. The asymptotic behavior of each chaos component is then analyzed and it follows that the dominant term is the term in the second Wiener chaos (i.e. other terms are negligible). The convergence of the second chaos term is studied by means of the isometry property of multiple integrals, and eventually leads to the convergence of $G^{(q,H)}_T$ to the Rosenblatt process. 
 
\subsection{[b] Non-central limit theorem for quadratic variations of non-Gaussian multiparameter Hermite random fields}

Let $Z^{q, \HH} = (Z^{q, \HH}_\t)_{\t \in [0, 1]^d}$ be a $d$-parameter Hermite random field of order $q \geq 1$ and self-similarity parameter $\HH = (H_1, \ldots, H_d) \in (\frac{1}{2}, 1)^d$. The quadratic variation of $Z^{q, \HH}$ is defined as
\begin{equation}\label{deq:quadraticHsheet}
V_{\NN}: = \frac{1}{\NN}\sum_{\ii=0}^{\NN-1}\bigg[\NN^{2\HH}\Big(\Delta Z^{q, \HH}_{[\frac{\ii}{\NN}, \frac{\ii +1}{\NN}]}\Big)^2 - 1 \bigg],
\end{equation}
where $\Delta Z^{q, \HH}_{[\s, \t]}$ is the increments of $Z^{q, \HH}$ given by (\ref{deq:generalincrements}). The bold notation is here systematically used in presence of multi-indices (we refer to \cite[Section 3.2]{dTran1} for precise definitions). Quadratic variation is often the quantity of interest when we deal with the estimation problem for the self-similarity parameter, see \cite{dChronopoulouTudorViens2011, dTudorViens}.

When $q=1$, $Z^{1, \HH}$ is either a fractional Brownian motion if $d=1$ or a fractional Brownian sheet if $d \geq 2$. The behavior of the quadratic variation of fBm is well-known since the eighties, and was analyzed in a series of seminal works by Breuer and Major \cite{dBreuerMajor}, Dobrushin and Major \cite{dDobrushinMajor}, Giraitis and Surgailis \cite{dGiraitisSurgailis} or Taqqu \cite{dTaqqu1979}. In the case $d\geq 2$, the asymptotic behavior for the quadratic variation of fBs has been actually known recently and we refer the readers to \cite{dPakkanenReveillac1, dPakkanenReveillac2} (see also \cite{dReveillac1}). In all these references, central and non-central limit theorems may arise, depending on the value of the Hurst parameter. 

Note that in the case $q \geq 2$ and $d=1$, we deal with the quadratic variation of a \textit{non-Gaussian} Hermite process. Chronopoulou, Tudor and Viens \cite{dChronopoulouTudorViens2011} (see also \cite{dTudorViens}) showed the following behavior for the sequence $V_N$:
$$ N^{(2-2H)/q}V_N \xrightarrow{L^2(\Omega)} c_{q, H}R^{H'}_1.$$
Here, $R^{H'}_1$ is a Rosenblatt random variable with Hurst parameter $H' = 1+ (2H-2)/q$ and $c_{q, H}$ is an explicit constant.

When $q \geq 2$ and $d \geq 2$, we have extended the result of \cite{dChronopoulouTudorViens2011} by studying quadratic variations for the class of non-Gaussian Hermite \textit{sheets}. Precisely, Theorem 1.1 in \cite{dTran1} proves the following non-central limit theorem.

\eject
\begin{Theorem} (\cite[Theorem 1.1]{dTran1}) \label{dthm4.3}
Fix $q \geq 2$, $d\geq 1$ and $\HH \in (\frac{1}{2}, 1)^d$. Let $Z^{q, \HH}$ be a $d$-parameter Hermite random field of order $q$ with self-similarity parameter $\HH$. Then
$$  c_{q, \HH}^{-1}\NN^{(2-2\HH)/q} V_{\NN} \xrightarrow{L^2(\Omega)}R^{\HH'}_{\textbf{1}},$$
where $R^{\HH'}_{\textbf{1}}$ is a $d$-parameter Rosenblatt sheet with Hurst parameter $\HH' = 1 + (2\HH -2)/q$ evaluated at time $\bf{1}$, and $c_{q, \HH}$ is an explicit constant.
\end{Theorem}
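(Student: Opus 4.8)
The plan is to decompose $V_\NN$ along Wiener chaoses and to show that, once multiplied by $c_{q,\HH}^{-1}\NN^{(2-2\HH)/q}$, the component living in the \emph{second} chaos converges in $L^2(\Omega)$ to $R^{\HH'}_{\textbf{1}}$, while every other component tends to $0$ in $L^2(\Omega)$. This is the $d$-parameter counterpart of the one-parameter argument of Chronopoulou, Tudor and Viens \cite{dChronopoulouTudorViens2011}, and the structural fact that makes the extension work is that the kernels defining $Z^{q,\HH}$ in \eqref{dDefHermiterandomfield} (equivalently, the finite-time-interval representation \eqref{dDefHermiterandomfield2}) factorize as tensor products over the $d$ coordinates.

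First I would write each generalized increment as a multiple Wiener--It\^o integral, $\Delta Z^{q,\HH}_{[\ii/\NN,(\ii+1)/\NN]} = I^B_q(e_{\ii,\NN})$, where $e_{\ii,\NN}\in L^2((\mathbb{R}^d)^q)$ is obtained by applying the map $J$ from the construction of Wiener integrals with respect to $Z^{q,\HH}$ to the indicator of the rectangle $\prod_l[i_l/\NN,(i_l+1)/\NN]$; because of the product structure, $e_{\ii,\NN}$ is a tensor product over $l=1,\dots,d$ of one-parameter kernels. The multiplication formula for multiple integrals then gives
\begin{equation*}
\big(\Delta Z^{q,\HH}_{[\ii/\NN,(\ii+1)/\NN]}\big)^2 = \sum_{r=0}^{q} r!\binom{q}{r}^{2}\, I^B_{2q-2r}\!\big(e_{\ii,\NN}\widetilde{\otimes}_{r}e_{\ii,\NN}\big),
\end{equation*}
and the $r=q$ summand equals $q!\,\|e_{\ii,\NN}\|^2_{L^2((\mathbb{R}^d)^q)} = E[(\Delta Z^{q,\HH}_{[\ii/\NN,(\ii+1)/\NN]})^2] = \NN^{-2\HH}$ by the stationarity of increments and self-similarity. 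Subtracting the mean and summing over $\ii$ yields $V_\NN = \sum_{k=1}^{q}T^{(\NN)}_k$ with $T^{(\NN)}_k := (q-k)!\binom{q}{q-k}^{2}\,\NN^{2\HH-1}\sum_{\ii=0}^{\NN-1} I^B_{2k}\!\big(e_{\ii,\NN}\widetilde{\otimes}_{q-k}e_{\ii,\NN}\big)\in\mathscr{H}^B_{2k}$.

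Next I would treat the leading term $T^{(\NN)}_1$. Since $e_{\ii,\NN}$ is a tensor product over the coordinates, the contraction $e_{\ii,\NN}\otimes_{q-1}e_{\ii,\NN}$ collapses $q-1$ of the Brownian variables and again factorizes over $l$; after the normalization $c_{q,\HH}^{-1}\NN^{(2-2\HH)/q}$ and summation over $\ii$, this two-variable kernel is a multidimensional Riemann-type approximation of the kernel appearing in the finite-time-interval representation \eqref{dDefHermiterandomfield2} of $R^{\HH'}_{\textbf{1}}$ with $\HH'=1+(2\HH-2)/q$; the matching of singular exponents is exactly the identity $2H'_l-2=(2H_l-2)/q$, and $c_{q,\HH}$ is the constant that makes the limit a standard Rosenblatt sheet. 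To make this rigorous I would use the isometry \eqref{deq:orthogonal}: since $c_{q,\HH}^{-1}\NN^{(2-2\HH)/q}T^{(\NN)}_1$ and $R^{\HH'}_{\textbf{1}}$ both lie in the second chaos, it suffices to check that $E[(c_{q,\HH}^{-1}\NN^{(2-2\HH)/q}T^{(\NN)}_1)^2]$ and the cross term $E[c_{q,\HH}^{-1}\NN^{(2-2\HH)/q}T^{(\NN)}_1\cdot R^{\HH'}_{\textbf{1}}]$ both converge to $E[(R^{\HH'}_{\textbf{1}})^2]$; both reductions amount to convergence of iterated Riemann sums of integrals of $\prod_l|u_l-v_l|^{2H'_l-2}$ over $[0,1]^{2d}$, which follows by dominated convergence once one notes that these exponents give integrable singularities.

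Finally I would show $c_{q,\HH}^{-1}\NN^{(2-2\HH)/q}T^{(\NN)}_k\to 0$ in $L^2(\Omega)$ for $k=2,\dots,q$. By orthogonality of chaoses and the isometry, $E[(T^{(\NN)}_k)^2]$ is, up to a constant, $\NN^{2(2\HH-1)}$ times $\sum_{\ii,\mathbf j}\langle e_{\ii,\NN}\widetilde{\otimes}_{q-k}e_{\ii,\NN},\,e_{\mathbf j,\NN}\widetilde{\otimes}_{q-k}e_{\mathbf j,\NN}\rangle$; expanding the symmetrized contractions turns this into a sum of products of the elementary correlations $\rho_\NN(\ii-\mathbf j):=\NN^{2\HH}E[\Delta Z^{q,\HH}_{[\ii/\NN,(\ii+1)/\NN]}\Delta Z^{q,\HH}_{[\mathbf j/\NN,(\mathbf j+1)/\NN]}]$, which coordinatewise obey the usual second-difference asymptotics $\asymp\prod_l|i_l-j_l|^{2H_l-2}$. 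Thanks to the tensor structure all these sums split into $d$-fold products of one-parameter sums already controlled in \cite{dChronopoulouTudorViens2011}; counting the resulting powers of $\NN$ shows that each of these terms decays strictly faster than $\NN^{-2(2-2\HH)/q}$, so the normalization is insufficient to keep them from vanishing. Combining the three steps gives $c_{q,\HH}^{-1}\NN^{(2-2\HH)/q}V_\NN\to R^{\HH'}_{\textbf{1}}$ in $L^2(\Omega)$. The main obstacle is precisely this last step: organizing the multi-index sums of products of the correlations $\rho_\NN$ for the higher chaoses and extracting the sharp power of $\NN$ uniformly near the diagonal $\ii=\mathbf j$, where the kernel $|u-v|^{2H-2}$ is singular; the $d$-parameter tensorization multiplies the bookkeeping but not its nature, and it ultimately reduces to the one-parameter estimates of Chronopoulou--Tudor--Viens.
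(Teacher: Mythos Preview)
Your strategy is correct and matches the paper's proof: chaotic decomposition via the product formula, tensorization over the $d$ coordinates, dominance of the second-chaos term with $L^2$-convergence of its kernel to the Rosenblatt-sheet kernel (reduced coordinatewise to the one-parameter result of Chronopoulou--Tudor--Viens), and vanishing of the higher chaoses after normalization. One small simplification the paper exploits that you do not mention: for the higher-chaos terms it bounds $\|\sum_\ii f_{\ii,\NN}\widetilde{\otimes}_r f_{\ii,\NN}\|^2$ by $\|\sum_\ii f_{\ii,\NN}\otimes_r f_{\ii,\NN}\|^2$ and computes the resulting inner products directly as explicit four-fold integrals (with a case split $H\gtrless 3/4$), rather than expanding the symmetrization into products of the correlations $\rho_\NN$; this avoids the combinatorial bookkeeping you flag as the main obstacle.
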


In this multiparameter setting, we observe the same phenomenon than in \cite{dTran}. Whatever the value of the self-similarity parameter, the normalized quadratic variation of a \textit{non-Gaussian} multiparameter Hermite random fields always converges to a random variable belonging to the second Wiener chaos.

Our proof of Theorem \ref{dthm4.3} is based on the use of chaotic expansion of the quadratic variation $V_{\NN}$ into multiple Wiener-It\^{o} integrals, that is, we use a similar strategy than in \cite{dTran}. Among all these chaos terms, the dominant one is the term in the second Wiener chaos. The convergence to Rosenblatt sheet evaluated at time $\bf{1}$ is then shown by applying the isometry property of multiple Wiener-It\^{o} integrals.

\subsection{[c] Statistical inference for Vasicek-type model driven by Hermite processes}

Let us now review the recent contribution \cite{dNourdinTran} about parameter estimation for Vasicek-type model driven by Hermite processes. Fractional Vasicek process is the unique almost surely continuous solution to the following SDE:
\begin{equation}\label{dfractionlVasicek}
dX_t = a(b - X_t)dt +dB_t^H,
\end{equation}
where $B^H$ is a fractional Brownian motion of index $H \in (\frac12, 1)$, and $a >0, b \in \mathbb{R}$ are real parameters.
The statistical inference for fractional Vasicek model has been analyzed recently in \cite{dXiaoYu}. This stochastic model, displaying self-similarity and long-range dependence, has been used to describe phenomenons appearing in hydrology, geophysics, telecommunication, economics or finance. 

In \cite{dNourdinTran}, we propose a new extended model of (\ref{dfractionlVasicek}), where fractional Brownian motion is replaced by a Hermite process:
\begin{equation}\label{dfracV}
dX_t = a(b - X_t)dt +dZ_t^{q, H}, \qquad t \geq 0,
\end{equation}
with initial condition $X_0=0$. Here $a > 0$ and $b \in \mathbb{R}$ are unknown drift parameters, and $Z^{q, H}$ is a Hermite process of order $q \geq 1$ with known Hurst parameter $H \in (\frac{1}{2}, 1)$. 

When $q=1$ in (\ref{dfracV}), one recovers the fractional Vasicek model. When $b=0$, the solution to (\ref{dfracV}) is nothing but a Hermite Ornstein-Uhlenbeck process. These various models have the potential to successfully model non-Gaussian data with long range dependence and self-similarity.

Our main purpose in \cite{dNourdinTran} is to construct an estimator for $(a, b)$ in (\ref{dfracV}) based on continuous-time observations of the sample paths of $X$. We prove the strong consistency and we derive rates of convergence. 
\vspace{0.1cm}

Our estimators for the drift parameters $a$ and $b$ in (\ref{dfracV}) are defined as follows:
\begin{eqnarray}\label{deq:X3estimator}
\widehat{a}_T&=&\left(\frac{\alpha_T}{H\Gamma(2H)}\right)^{-\frac1{2H}}, \quad\mbox{where $\alpha_T=\frac{1}{T}\int_0^T X_t^2dt - \left(\frac{1}{T}\int_0^T X_tdt\right)^2$},\label{dalpha_T}\\
\widehat{b}_{T}&=&\frac{1}{T}\int_0^T X_tdt.\notag
\end{eqnarray}
Before describing our result, we state the following proposition which will be needed to study the joint convergence of the estimators.  

\begin{Proposition}(\cite[Proposition 1.2]{dNourdinTran})\label{dfinfini}
Assume either ($q=1$ and $H>\frac34$) or $q\geq 2$. Fix $T>0$, and let
$U_T=(U_T(t))_{t\geq 0}$ be the process defined as $U_T(t)=\int_0^t e^{-T(t-u)}dZ^{q, H}_u$.
Finally, let $G_T$ be the random variable defined as
\[
G_T = T^{\frac{2}{q}(1-H)+2H}\int_0^1 \big(U_T(t)^2 - \E[U_T(t)^2])dt.
\]
Then $G_T$ converges in $L^2(\Omega)$ to a limit written $G_\infty$.
Moreover, $G_\infty/B_{H,q}$ is distributed according to the Rosenblatt distribution of parameter $1-\frac{2}{q}(1-H)$, where $B_{H, q}$ is an explicit constant depending only on $H$ and $q$.
\end{Proposition}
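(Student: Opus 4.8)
The plan is to reduce everything to a statement about a single multiple Wiener--It\^o integral and then use the $L^2(\Omega)$-isometry together with a careful asymptotic analysis of the relevant kernels. First I would write $U_T(t)=\int_0^t e^{-T(t-u)}dZ^{q,H}_u$ explicitly as a $q$-th multiple integral with respect to the underlying Brownian motion $B$, using the representation $dZ^{q,H}_u \rightsquigarrow I(\mathbbm 1_{[0,u]})$ from Section 1.1.5; this expresses $U_T(t)$ as $I_q^B(h_{T,t})$ for an explicit kernel $h_{T,t}$. Then $U_T(t)^2-\E[U_T(t)^2]$ is expanded by the product formula for multiple integrals into a sum $\sum_{r=0}^{q-1} c_r I_{2q-2r}^B(h_{T,t}\widetilde\otimes_r h_{T,t})$ (the $r=q$ term being exactly the expectation we subtract). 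Integrating in $t\in[0,1]$ and multiplying by the normalisation $T^{\frac2q(1-H)+2H}$, we obtain $G_T=\sum_{r=0}^{q-1} G_T^{(2q-2r)}$ with $G_T^{(2q-2r)}$ living in the fixed chaos of order $2q-2r$.

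The core of the argument is then to show that the top-order pairing $r=q-1$, which produces the term in the \emph{second} Wiener chaos, is the only one that survives in the limit, and that it converges in $L^2(\Omega)$ to the announced Rosenblatt random variable. For the second-chaos term I would compute its kernel as an explicit integral involving the exponential weights $e^{-T(\cdot)}$ and the fractional power kernels $(s-\xi)_+^{H_0-3/2}$, perform the change of variables that rescales time by $T$, and identify the pointwise limit of the rescaled kernel with (a constant multiple of) the kernel $g_{1}(y_1,y_2)=\int_0^1(u-y_1)_+^{H_0'-1}(u-y_2)_+^{H_0'-1}du$ defining the Rosenblatt random variable of parameter $1-\frac2q(1-H)$, cf.\ the representation \eqref{dR1}. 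Convergence of the kernels in $L^2(\R^2)$, combined with the isometry \eqref{deq:orthogonal}, then gives $G_T^{(2)}\to G_\infty$ in $L^2(\Omega)$ with $G_\infty/B_{H,q}$ Rosenblatt-distributed; the constant $B_{H,q}$ is read off from the normalisation needed to have the exact Rosenblatt kernel. For the lower-order chaos terms $r<q-1$ I would bound the $L^2$-norm of each kernel directly via \eqref{deq:orthogonal} and show it is $o(1)$ under the chosen normalisation: here one uses the integrability hypothesis implicit in the Hermite/exponential setting (the analogue of \eqref{deq:1} with $x(u)=e^{-Tu}$, which is integrable and satisfies the $|u-v|^{2H-2}$ condition uniformly after rescaling) to get summable bounds, exactly as in the proofs of Theorems \ref{dthm4.1} and \ref{dthm4.3}.

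The main obstacle I anticipate is the double asymptotics in $T$: unlike the situation of Theorem \ref{dthm4.1} where $T\to\infty$ scales a fixed driving process, here the exponential kernel $e^{-T(t-u)}$ itself degenerates as $T\to\infty$ (it concentrates near $u=t$), so one must track very carefully how the normalisation $T^{\frac2q(1-H)+2H}$ balances this concentration against the long-memory singularity $|u-v|^{2H-2}$. Getting the rescaled second-chaos kernel to converge --- rather than vanish or blow up --- requires the exponent $\frac2q(1-H)+2H$ to be exactly right, and verifying the $L^2(\R^2)$-convergence of the rescaled kernels (with a dominating function to justify passing to the limit inside the integral) is the delicate technical point. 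Once that is in place, controlling the negligible chaoses and extracting the constant $B_{H,q}$ is routine bookkeeping analogous to \cite{dTran}.
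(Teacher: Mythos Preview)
Your chaos-decomposition approach is sound and would ultimately work, but the paper takes a different and more economical route that neatly dodges the ``double asymptotics'' obstacle you flag.

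For the $L^2(\Omega)$-convergence, rather than showing that each chaos component $G_T^{(2q-2r)}$ has a limit (which, as you note, requires identifying the limiting second-chaos kernel explicitly while the exponential $e^{-T(\cdot)}$ degenerates), the paper simply checks the Cauchy criterion: it computes $\E[G_TG_S]$ directly via a covariance identity of Nourdin--Rosi\'nski for squares of multiple integrals, expressing everything in terms of contractions $\|g_S(s,\cdot)\otimes_r g_T(t,\cdot)\|^2$ and their symmetrized versions, and then shows that $\lim_{S,T\to\infty}\E[G_TG_S]$ exists (with only one surviving configuration of indices). No limiting kernel needs to be identified at this stage.

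For the distribution of $G_\infty$, the paper uses a scaling trick instead of rescaling kernels. Setting $\widetilde Y_t=U_1(t)=\int_0^t e^{-(t-u)}dZ^{q,H}_u$ (a \emph{fixed} Hermite Ornstein--Uhlenbeck process, with no $T$ in the kernel), self-similarity of $Z^{q,H}$ gives $(\widetilde Y_{tT})_{t\geq 0}\overset{\rm law}{=}T^H(U_T(t))_{t\geq 0}$, whence
\[
G_T\ \overset{\rm law}{=}\ T^{\frac2q(1-H)-1}\int_0^T\big(\widetilde Y_t^2-\E[\widetilde Y_t^2]\big)\,dt.
\]
The right-hand side is precisely the normalized quadratic functional of Theorem~\ref{dthm4.1} with $x(u)=e^{-u}$, whose limiting Rosenblatt distribution is already established there. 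So the paper invokes Theorem~\ref{dthm4.1} as a black box, whereas your plan amounts to re-deriving it inside the present proof. The trade-off: your route would exhibit $G_\infty$ explicitly as a second-chaos random variable (slightly more than the paper claims), but at the cost of the delicate dominated-convergence argument for the rescaled kernels that you correctly anticipate as the hard step.
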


We can now describe the asymptotic behavior of $(\widehat{a}_T,\widehat{b}_T)$ as
$T\to\infty$.

\begin{Theorem}(\cite[Theorem 1.3]{dNourdinTran})\label{dmain3}
Let $X=(X_t)_{t\geq 0}$ be given by (\ref{dfracV}), where $Z^{q, H}=(Z^{q, H}_t)_{t\geq 0}$ is a Hermite process of order $q\geq 1$ and parameter $H\in(\frac12,1)$, and where $a>0$ and $b\in\R$
are (unknown) real parameters.
The following convergences take place as $T\to\infty$.
\begin{enumerate}
\item{} {\rm [Consistency]} 
$
(\widehat{a}_T,\widehat{b}_{T})\overset{\rm a.s.}{\to} (a,b).
$
\item{} {\rm [Fluctuations]}
They depend on the values of $q$ and $H$.
\begin{itemize}
\item {\rm (Case $q=1$ and $H<\frac34$)} 
\begin{eqnarray}
\left(\sqrt{T}\{\widehat{a}_T-a\},
T^{1-H}\{\widehat{b}_{T}-b\}
\right)&\overset{\rm law}{\to}&\left(-\frac{a^{1+4H}\sigma_H}{2H^2\Gamma(2H)}\,N,
\frac{1}{a}N'\right), \nonumber\\ \label{dG1}
\end{eqnarray}
where $N,N'\sim \mathcal{N}(0,1)$ are independent and $\sigma_H$ is given by 
\begin{equation}\label{dsigma}
\sigma_H=\frac{2H-1}{H\Gamma(2H)^2}\,\sqrt{\int_\R\left(
\int_{\R_+^2}e^{-(u+v)}|u-v-x|^{2H-2}dudv
\right)^2dx}.
\end{equation}
\item {\rm (Case $q=1$ and $H=\frac34$)} 
\begin{equation}\label{d34}
\left(\sqrt{\frac{T}{\log T}}\{\widehat{a}_T-a\},T^{\frac14}\big\{\widehat{b}_T - b\} \right)
\to
\left(\frac{3}4\sqrt{\frac{a}\pi}\, N,\frac{N'}{a}\right),
\end{equation}
where $N,N'\sim \mathcal{N}(0,1)$ are independent.
\item {\rm (Case $q=1$ and $H>\frac34$)} 
\begin{equation}
\left(T^{2(1-H)}\{\widehat{a}_T-a\},T^{1-H}\big\{\widehat{b}_T - b\} \right)\overset{\rm law}{\to}
\left(-\frac{a^{2H-1}}{2H^2\Gamma(2H)}\Big(G_\infty-(B^H_1)^2\Big),\frac{B^H_1}{a}\right),
\label{dR4}
\end{equation}
where $B^H=Z^{1,H}$ is the fractional Brownian motion and $G_\infty$ is defined in Proposition \ref{dfinfini}.

\item {\rm (Case $q\geq 2$ and any $H$)} 
\begin{equation}
\left(T^{\frac2q(1-H)}\{\widehat{a}_T-a\},T^{1-H}\big\{\widehat{b}_T - b\} \right)
\overset{\rm law}{\to}
\left(-\frac{a^{1-\frac2q(1-H)}}{2H^2\Gamma(2H)}\,G_\infty,\frac{Z^{q, H}_1}{a}\right),
\label{dR}
\end{equation}
where $G_\infty$ is defined in Proposition \ref{dfinfini}.
\end{itemize}
\end{enumerate}
\end{Theorem}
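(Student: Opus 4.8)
Solving the linear equation (\ref{dfracV}) with $X_0=0$ gives $X_t=b(1-e^{-at})+Y_t$, where $Y_t:=\int_0^te^{-a(t-u)}dZ^{q,H}_u$ is the Hermite Ornstein--Uhlenbeck process (a $q$-th chaos random variable). After the self-similar change of time $u\mapsto u/(aT)$, $Y$ is a rescaled copy of the process $U_{aT}$ of Proposition \ref{dfinfini}; moreover $T^{-H}Z^{q,H}_T\overset{(d)}{=}Z^{q,H}_1$ and $\E[Y_t^2]\to H\Gamma(2H)a^{-2H}=:\lambda$, the last identity coming from the isometry for Wiener integrals with respect to $Z^{q,H}$ and $H(2H-1)\Gamma(2H-1)=H\Gamma(2H)$. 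A stochastic Fubini computation gives $\int_0^TY_t\,dt=\tfrac1a(Z^{q,H}_T-Y_T)$, whence
\[
\widehat b_T-b=-b\,\frac{1-e^{-aT}}{aT}+\frac{1}{aT}\big(Z^{q,H}_T-Y_T\big),
\]
and, substituting $X_t=b(1-e^{-at})+Y_t$ into $\alpha_T$ and simplifying (the linear contributions of $Z^{q,H}_T$ cancel, it survives only through $(\widehat b_T-b)^2$),
\[
\alpha_T-\lambda=\frac1T\int_0^T\big(Y_t^2-\E[Y_t^2]\big)dt-\big(\widehat b_T-b\big)^2+R_T,
\]
where $R_T$ gathers deterministic $O(1/T)$ terms and $\tfrac1T\int_0^Te^{-at}Y_t\,dt=O_{L^2}(1/T)$. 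Thus everything is reduced to the asymptotics of $\tfrac1T\int_0^T(Y_t^2-\E[Y_t^2])dt$ and of $Z^{q,H}_T$.

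For the consistency I would first record $T^{-1}Z^{q,H}_T\to0$ and $T^{-1}Y_T\to0$ almost surely: hypercontractivity (\ref{dhypercontractivity1}) gives $\E[|Z^{q,H}_T|^p]\le C_pT^{pH}$ and $\sup_t\E[|Y_t|^p]<\infty$, so Borel--Cantelli along the integers together with the H\"older continuity of $Z^{q,H}$ (to interpolate) suffices. The formula for $\widehat b_T-b$ then gives $\widehat b_T\overset{\rm a.s.}{\to}b$. For $\widehat a_T$, continuity of $\alpha\mapsto(\alpha/(H\Gamma(2H)))^{-1/(2H)}$ reduces matters to $\alpha_T\overset{\rm a.s.}{\to}\lambda$, i.e. $\tfrac1T\int_0^TY_t^2\,dt\overset{\rm a.s.}{\to}\lambda$, which follows from the ergodic theorem applied to the stationary Hermite Ornstein--Uhlenbeck process (or from $\mathrm{Var}\big(\int_0^T(Y_t^2-\E[Y_t^2])dt\big)=o(T^2)$ plus a Borel--Cantelli/oscillation argument).

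For the fluctuations, the $\widehat b_T$-coordinate is immediate: multiplying the formula for $\widehat b_T-b$ by $T^{1-H}$ and using $T^{-H}Z^{q,H}_T\overset{(d)}{=}Z^{q,H}_1$ (while $T^{-H}Y_T\to0$ in $L^2$ and the deterministic term is $O(T^{-H})$) yields $T^{1-H}(\widehat b_T-b)\overset{\rm law}{\to}Z^{q,H}_1/a$, equal to $B^H_1/a$ when $q=1$. For the $\widehat a_T$-coordinate, the delta method gives $\widehat a_T-a=-\tfrac{a^{2H+1}}{2H^2\Gamma(2H)}(\alpha_T-\lambda)+\text{(negligible)}$, so it remains to analyse $\alpha_T-\lambda$. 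Writing $Y_t=I^B_q(g_t)$ and using the product formula, $Y_t^2-\E[Y_t^2]=\sum_{r=0}^{q-1}c_{q,r}\,I^B_{2q-2r}(g_t\widetilde\otimes_rg_t)$; integrating in $t$ and estimating $L^2$-norms of the kernels, I would show the terms with $r<q-1$ are negligible at the relevant rate, so the behaviour is dictated by the second-chaos term $r=q-1$. For $q\ge2$ (any $H$) and for $q=1$ with $H>\tfrac34$, this second-chaos term --- after the self-similar time change relating $Y$ to $U_{aT}$ --- is exactly what Proposition \ref{dfinfini} controls, giving $T^{\frac2q(1-H)}\cdot\tfrac1T\int_0^T(Y_t^2-\E[Y_t^2])dt\overset{\rm law}{\to}a^{-\frac2q(1-H)-2H}G_\infty$; combined with the delta method this yields (\ref{dR}). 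In the special case $q=1$, $H>\tfrac34$ one must additionally keep the term $-(\widehat b_T-b)^2$, which at rate $T^{2(1-H)}$ converges to $-(B^H_1)^2/a^2$ and produces the $(B^H_1)^2$ in (\ref{dR4}); for $q\ge2$, and for $q=1$ with $H\le\tfrac34$, this square is negligible at the relevant rate. Finally, for $q=1$ and $H\le\tfrac34$ the contraction $g_t\widetilde\otimes_0g_t$ gives a Gaussian limit: here I would invoke the fourth moment theorem, computing $\lim_T\mathrm{Var}\big(\tfrac1{\sqrt T}\int_0^T(Y_t^2-\E[Y_t^2])dt\big)$ --- which is where the constant $\sigma_H$ of (\ref{dsigma}) comes out --- and checking that the relevant contraction norm tends to $0$, yielding (\ref{dG1}); the critical case $H=\tfrac34$ is the same scheme except that the variance integral diverges logarithmically, forcing the normalization $\sqrt{T/\log T}$ and producing (\ref{d34}).

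The main obstacle is the \emph{joint} convergence, since (\ref{dG1})--(\ref{dR}) prescribe a precise joint law of the limiting vector, not just the two marginals. In the Gaussian regime ($q=1$, $H\le\tfrac34$) the limits $N$ and $N'$ are independent: the $\widehat a_T$-side leading term lives in the second Wiener chaos of $B$ while $B^H_1$ lives in the first, so their covariance vanishes for every $T$ by orthogonality of distinct chaoses, and a multivariate fourth-moment criterion upgrades joint second-order convergence to convergence in law toward an independent Gaussian pair. In the non-central regimes one cannot use Proposition \ref{dfinfini} as a black box for the pair, because the $\widehat b_T$-coordinate converges only in law (its kernel ``spreads out'' and has no $L^2$-limit); instead I would realise both normalised leading terms as multiple Wiener--It\^o integrals with respect to the \emph{same} Brownian motion $B$ (of orders $2$ and $q$, plus, for $q=1$ and $H>\tfrac34$, the square $(Z^{q,H}_T/T^H)^2$) and establish convergence of all their joint moments; everything living in a finite sum of fixed chaoses, this determines the joint limiting law and pins down the coupled limit $\big(G_\infty,Z^{q,H}_1\big)$ --- respectively $\big(G_\infty,(B^H_1)^2,B^H_1\big)$ --- on the canonical Wiener space. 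Carrying the remainder terms through all these steps uniformly in $T$, together with the sharp variance asymptotics at $H=\tfrac34$ and the contraction estimates, is where the bulk of the technical work lies.
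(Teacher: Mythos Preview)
Your proposal follows essentially the same architecture as the paper: the decomposition $X_t=b(1-e^{-at})+Y_t$, the identity $\int_0^T Y_t\,dt=\tfrac1a(Z^{q,H}_T-Y_T)$, the reduction of $\alpha_T-\lambda$ to $A_T:=\tfrac1T\int_0^T(Y_t^2-\E[Y_t^2])dt$ minus $(Z^{q,H}_T)^2/(a^2T^2)$ plus $O(1/T)$, and the delta method for $\widehat a_T$. The paper does not re-derive the limit theorems for $A_T$ via chaos expansion as you outline; it simply \emph{cites} them (from \cite{dTran} for the non-central regime, and from Nourdin--Nualart--Zintout and Hu--Nualart--Zhou for $q=1$, $H<\tfrac34$ and $H=\tfrac34$), but your sketched route is exactly how those references proceed, so this is not a real divergence. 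One small slip: the ergodic theorem does not apply directly because $Y_t=\int_0^t e^{-a(t-u)}dZ^{q,H}_u$ (starting at $0$) is \emph{not} stationary; the paper uses your second option (variance bound $+$ Borel--Cantelli along integers $+$ interpolation), which is fine.

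The one place where you make life harder than necessary is the joint convergence in the non-central regimes. You propose computing joint moments of the pair of multiple integrals. The paper's argument is much shorter: apply the self-similarity of the underlying Brownian motion \emph{jointly} to the pair, obtaining
\[
\big(T^{\frac2q(1-H)}A_T,\ T^{-H}Z^{q,H}_T\big)\ \overset{\rm law}{=}\ \big(a^{-\frac2q(1-H)-2H}G_{aT},\ Z^{q,H}_1\big).
\]
After this single rescaling the second coordinate is the \emph{fixed} random variable $Z^{q,H}_1$, and the first converges in $L^2(\Omega)$ by Proposition~\ref{dfinfini}; joint convergence is then immediate (a sequence converging in $L^2$ paired with a constant-in-$T$ random variable). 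The point you were worried about --- that ``the kernel of $T^{-H}Z^{q,H}_T$ spreads out'' --- disappears because you are not trying to make each coordinate converge in $L^2$ separately, but rather applying one global scaling that freezes the second coordinate. For $q=1$, $H\le\tfrac34$, your Peccati--Tudor argument for asymptotic independence of $N$ and $N'$ (first and second chaos, orthogonal) is exactly what the paper invokes.
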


We see from Theorem \ref{dmain3} that the strong consistency of $(\widehat{a}_T, \widehat{b}_T)$ is universal for any Vasicek type model driven by Hermite process as a noise, no matter that it is Gaussian or not. Very differently, the fluctuations of our estimators around the true value of the drift parameters depend heavily on the order $q$ and Hurst parameter $H$ of the underlying Hermite process. This gives us some hints to understand how much the fractional model (\ref{dfractionlVasicek}) relies on the Gaussian feature.

\subsection{[d] Fisher information and multivariate Fourth Moment Theorem}

Fix an integer $d \geq 1$. Let $F = (F_1, \ldots, F_d )$ be a $d$-dimensional centered random vector with invertible covariance matrix $C$. We assume that the law of $F$ admits a density $f = f_F$ with respect to the Lebesgue measure. Let $Z = (Z_1, \ldots, Z_d)$ be a $d$-dimensional centered Gaussian vector which has the same covariance matrix $C$ as $F$ and admits the density $\phi = \phi_d( .; C)$. Without loss of generality, we may and will assume that the vectors $F$ and $Z$ are stochastically independent. 

In the first part of \cite{dTran3}, we extend the relationship (\ref{deq:8}) for any covariance matrix $C$ of $F$. Precisely, the convergence in the sense of standardised Fisher information is always stronger than the convergence in the sense of relative entropy. 

\begin{Proposition}(\cite[Ch. 5, Prop. 5.2.1]{dTran3}) Let the above notation. Then,
\begin{equation}\label{deq:new1}
D(F\| Z) \leq \|C\|_{op} \times \frac{1}{2} \text{tr}(C^{-1}J_{st}(F)) = \|C\|_{op} \times \frac{1}{2}\Big(\text{tr} (J(F)) - \text{tr}(J(Z))\Big)
\end{equation}
As a consequence,
\begin{equation}\label{deq:new2}
\|f - \phi \|^{2}_{L^1(\mathbb{R}^d)} = 4(d_{TV}(F, Z))^2 \leq 2D(F\|Z) \leq \| C\|_{op}\text{tr}(C^{-1}J_{st}(F)).
\end{equation}
\end{Proposition}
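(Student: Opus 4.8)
The plan is to reduce everything to the identity-covariance case already settled in (\ref{deq:8}), and then weaken the resulting bound to the form stated. First I would set $\widetilde F = C^{-1/2}F$ and $\widetilde Z = C^{-1/2}Z$, with $C^{1/2}$ the symmetric positive square root of $C$. Then $\widetilde F$ is centered with covariance $C^{-1/2}CC^{-1/2} = I_d$ and has a density (being an invertible linear image of $F$), $\widetilde Z\sim\mathcal N(0,I_d)$ is a standard Gaussian vector, and $\widetilde F$, $\widetilde Z$ are still independent. Since the relative entropy is invariant under invertible linear maps — the Jacobian factor $(\det C)^{1/2}$ appears in both $f_{\widetilde F}$ and $\phi_{\widetilde Z}$ and cancels inside the logarithm of the ratio — one gets $D(F\|Z) = D(\widetilde F\|\widetilde Z)$.

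Next I would track the Fisher information through this change of variables. From $f_{\widetilde F}(\xx) = (\det C)^{1/2}f(C^{1/2}\xx)$ one obtains, by taking $\nabla\log$, that $\rho_{\widetilde F}(\xx) = C^{1/2}\rho_F(C^{1/2}\xx)$; evaluating at $\widetilde F = C^{-1/2}F$ gives $\rho_{\widetilde F}(\widetilde F) = C^{1/2}\rho_F(F)$, hence $J(\widetilde F) = C^{1/2}J(F)C^{1/2}$ and $\text{tr}\,J(\widetilde F) = \text{tr}(CJ(F))$. As $\widetilde F$ has covariance $I_d$, its standardised Fisher information is $J_{st}(\widetilde F) = J(\widetilde F) - I_d$, so $\text{tr}\,J_{st}(\widetilde F) = \text{tr}(CJ(F)) - d$; on the other hand $J_{st}(F) = C(J(F) - C^{-1})$ by Definition \ref{Def3.2}, so $\text{tr}\,J_{st}(F) = \text{tr}(CJ(F)) - d$ as well. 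Thus $\text{tr}\,J_{st}(\widetilde F) = \text{tr}\,J_{st}(F)$, and applying (\ref{deq:8}) to the pair $(\widetilde F,\widetilde Z)$ yields $D(F\|Z) = D(\widetilde F\|\widetilde Z) \leq \tfrac12\,\text{tr}\,J_{st}(F)$.

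To reach (\ref{deq:new1}) from here I would use the elementary fact that $\text{tr}(CM) \leq \|C\|_{op}\,\text{tr}(M)$ whenever $M$ is symmetric positive semidefinite (since $M^{1/2}CM^{1/2} \preceq \|C\|_{op}M$, take traces). The matrix $M := C^{-1}J_{st}(F) = J(F) - C^{-1}$ is symmetric and positive semidefinite — this is recorded just after Definition \ref{Def3.2} — and $J_{st}(F) = CM$, so $\text{tr}\,J_{st}(F) \leq \|C\|_{op}\,\text{tr}(C^{-1}J_{st}(F))$, which combined with the previous display gives the first inequality in (\ref{deq:new1}); the equality $\text{tr}(C^{-1}J_{st}(F)) = \text{tr}(J(F)) - \text{tr}(J(Z))$ is immediate from $J(Z) = C^{-1}$. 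Finally (\ref{deq:new2}) follows by chaining three already available facts: $\|f-\phi\|_{L^1}^2 = 4(d_{TV}(F,Z))^2$ (from $d_{TV}(F,Z) = \tfrac12\|f-\phi\|_{L^1}$), the Csisz\'ar--Kullback--Pinsker inequality (\ref{deq:114}) in the form $4(d_{TV}(F,Z))^2 \leq 2D(F\|Z)$, and (\ref{deq:new1}) in the form $2D(F\|Z) \leq \|C\|_{op}\,\text{tr}(C^{-1}J_{st}(F))$.

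The argument is essentially soft; the one place deserving care is the second paragraph — checking that the density, the score, and the relative entropy all transform consistently under $\xx\mapsto C^{-1/2}\xx$, so that $\widetilde F$ genuinely has identity covariance and the clean bound (\ref{deq:8}) applies verbatim. A more self-contained alternative, avoiding the change of variables, would be to redo the de Bruijn computation of Lemma \ref{dLemma2.31} directly: one first establishes the matrix Fisher information inequality $J(\sqrt tF + \sqrt{1-t}\,Z) \preceq tJ(F) + (1-t)C^{-1}$ — from the fact that the score of a sum of independent vectors is a conditional expectation of a linear combination of the individual scores, together with conditional Jensen in the positive semidefinite order — which gives $\text{tr}(C^{-1}J_{st}(F_t)) \leq t\,\text{tr}(C^{-1}J_{st}(F))$, and then bounds $\text{tr}\,J_{st}(F_t) \leq \|C\|_{op}\,\text{tr}(C^{-1}J_{st}(F_t))$ under the integral sign in (\ref{deq:2.381}).
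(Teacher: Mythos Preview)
Your proof is correct, and your primary route is genuinely different from the paper's. The paper does \emph{not} reduce to the identity-covariance case; it works directly with general $C$ and carries out the de Bruijn computation from scratch: first it bounds $\text{tr}(J_{st}(F_t)) \leq \|C\|_{op}\,\text{tr}(C^{-1}J_{st}(F_t)) = \|C\|_{op}\bigl(\text{tr}(J(F_t)) - \text{tr}(C^{-1})\bigr)$ via the trace inequality $\text{tr}(AB)\leq \|B\|_{op}\,\text{tr}(A)$ for $A\succeq 0$; then it establishes the conditional-expectation representation $\rho_t(F_t) = E[\sqrt t\,\rho_F(F)+\sqrt{1-t}\,\rho_Z(Z)\mid F_t]$ and applies conditional Jensen componentwise to obtain $\text{tr}(J(F_t)) \leq t\,\text{tr}(J(F)) + (1-t)\,\text{tr}(C^{-1})$; finally it integrates against $dt/(2t)$ in the multivariate de Bruijn identity (Lemma~\ref{dLemma2.31}). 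This is exactly the ``more self-contained alternative'' you sketch in your last paragraph --- so you anticipated the paper's argument, but chose another one as your main line.

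Your change-of-variables reduction via $\widetilde F = C^{-1/2}F$ is cleaner in that it recycles (\ref{deq:8}) wholesale and, as a side benefit, produces the intermediate inequality $D(F\|Z) \leq \tfrac12\,\text{tr}\,J_{st}(F)$ \emph{without} the factor $\|C\|_{op}$, which is strictly sharper than (\ref{deq:new1}) whenever $C\neq I_d$; you then deliberately weaken it to match the stated form. The paper's direct route avoids having to check the transformation laws for densities, scores, and relative entropy under linear maps, at the price of redoing the convolution/Jensen step. Both arguments are short and standard; the trade-off is a change-of-variables lemma versus a conditional-Jensen lemma.
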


The study of normal approximations for sequences of multiple stochastic integrals has received a lot of attention recently. In the main part of \cite{dTran3}, we are interested in estimating the discrepancy between the distributions of $F$ and the Gaussian vector $Z$ by working with $L^p$- norms, total variation distance, relative entropy or Fisher information, when $F$ is a $d$-dimensional centered random vector whose components are multiple stochastic integrals. 

In \cite{dNourdinPeccatiSwan}, Nourdin, Peccati and Swan have obtained an upper bound for the total variation distance between the distributions of the sequences of $d$-dimensional random vector $F_n$ and the standard Gaussian vector $Z_n$, via an evaluation of the relative entropy involving Malliavin calculus. Precisely, suppose that $F_n =(I_{q_1}(f_{1, n}), \ldots, I_{q_d}(f_{d, n}))$ is a random vector with unit covariance matrix and $ q_i \geq 1, \forall i = 1, \ldots, d$. Then, 
$$  2(d_{TV}(F_n, Z_n))^2 \leq D(F_n \| Z_n) \leq O(1) \Delta_n|\log{\Delta_n}|, $$
where $\Delta = E[\|F_n\|^4] - E[\|Z_n\|^4] $. Here the constant depends on $d, q_1, \ldots, q_d$ and on the sequence $(F_n)$, but not $n$. The notation $\| . \|$ denotes the Euclidian norm on $\mathbb{R}^d$.

Furthermore, in the one-dimensional case, Nourdin and Nualart \cite{dNourdinNualart2013} exhibited a sufficient condition, in terms of the negative moments of the norm of the Malliavin derivative, under which convergence in Fisher information to the standard Gaussian of sequences belonging to a given Wiener chaos is actually equivalent to convergence of only the fourth moment. That is, if $F = I_q(f), q \geq 2$ has unit variance, then under assumption $E[\| DF\|^{-4-\epsilon}] \leq \eta$ for some $\epsilon > 0$ and $\eta \geq 1$ we have
$$J(F) - 1 \leq \text{cst} (E[F^4] - 3).$$
Here the constant depends on $q, \epsilon$ and $\eta$ but not on $F$. As a direct consequence of this upper bound, together with the Fourth Moment Theorem (see \cite[Theorem 5.2.7]{dNourdinPeccatibook}), we obtain the equivalence of various forms of convergence. More precisely, given a sequence of random variables $(F_n)$ of multiple stochastic integrals with unit variance, one has, under the assumption that $\limsup_{n \to \infty}E[\| DF_n\|^{-4-\epsilon}] < \infty$ and with $N \sim \mathcal{N}(0, 1)$: 
\begin{description}
\item[\hspace{0.8cm}]Convergence of the fourth moments: $E[F_n^4] \to 3$;
\item[$\Longleftrightarrow $]  Convergence in distribution: $F_n \xrightarrow{(d)} N$;
\item[$\Longleftrightarrow $]  Convergence in total variation distance: $d_{TV}(F_n, N) \to 0$;
\item[$\Longleftrightarrow $] Convergence in the sense of relative entropy: $D(F_n \| N) \to 0$;
\item[$\Longleftrightarrow $] Convergence in the sense of Fisher information: $J(F_n) \to 1$;
\item[$\Longleftrightarrow $]  Uniform convergence of densities: $\|f_{F_n} - \phi\|_\infty \to 0$, where $f_{F_n}$ and $\phi$ are densities of $F_n$ and $N$ respectively.
\end{description}

In the multi-dimensional case, that is when $F = (I_{q_1}(f_1), \ldots, I_{q_d}(f_d))$, one can naturally wonder whether under suitable sufficient conditions we could obtain an upper bound on Fisher information and deduce from them a list of equivalences between different forms of convergence.
 
Before stating our results, we recall that a random vector $F = (F_1, \ldots, F_d)$ in $\mathbb{D}^\infty$ is called \textit{non-degenerate} if its \textit{Malliavin matrix} $\gamma_F = (\left\langle DF_i, DF_j \right\rangle_{\mathfrak{H}})_{1 \leq i,j \leq d}$ is invertible a.s. and $(\text{det}\gamma_F)^{-1} \in \cap_{p \geq 1}L^p(\Omega)$.

\begin{Theorem}(\cite[Ch. 5, Thm 5.2.3]{dTran3})\label{dThm222} Let $F = (F_1, \ldots, F_d) = (I_{q_1}(f_1), \ldots, I_{q_d}(f_d))$ be a non-degenerate random vector with $1 \leq q_1 \leq \ldots \leq q_d$ and $f_i \in \mathfrak{H}^{\odot q_i}$. Let $\gamma_F$ be the Malliavin matrix of $F$. Denote by $C: = (E[F_iF_j])_{1 \leq i, j \leq d}$ the covariance matrix of $F$ and set $Q: = \text{diag}(q_1, \ldots, q_d)$. Then, for any real number $p > 12$,
\begin{equation}\label{deq:result1}
\text{tr}(C^{-1}J_{st}(F)) \leq \text{cst}(C, Q, d) \|(\text{det}\gamma_F)^{-1}\|_{p}^4 \sum_{j=1}^d \Big\| \|DF_j\|_{\mathfrak{H}}^2 - q_jc_{jj}\Big\|_{L^2(\Omega)},
\end{equation}
where $\text{cst}(C, Q, d)$ means a positive constant depending only on $d, C$ and $Q$.
\end{Theorem}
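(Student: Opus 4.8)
\section*{Proof proposal}

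The plan is to translate the standardised Fisher information into an $L^2$‑control of the \emph{standardised score} $\rho_F(F)+C^{-1}F$, and then to compute that score through the Malliavin integration‑by‑parts formula available for non‑degenerate vectors. First, from $J_{st}(F)=C\,\E[(\rho_F(F)+C^{-1}F)(\rho_F(F)+C^{-1}F)^T]$ we get $C^{-1}J_{st}(F)=\E[(\rho_F(F)+C^{-1}F)(\rho_F(F)+C^{-1}F)^T]$, hence
\[
\mathrm{tr}\big(C^{-1}J_{st}(F)\big)=\E\big[\|\rho_F(F)+C^{-1}F\|^2\big]=\sum_{i=1}^d\E\big[(\rho_{F,i}(F)+(C^{-1}F)_i)^2\big].
\]
So it suffices to show that each component of $\rho_F(F)+C^{-1}F$ is small in $L^2(\Omega)$.

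The main step is to produce a workable formula for the score. Since $F$ is non‑degenerate, the classical Malliavin integration by parts gives $\E[\partial_i g(F)]=\E\big[g(F)\,\delta\big(\sum_k(\gamma_F^{-1})_{ik}DF_k\big)\big]$ for all smooth $g$; comparing with the Stein identity $\E[\rho_{F,i}(F)g(F)]=-\E[\partial_i g(F)]$ yields $\rho_{F,i}(F)=-\E\big[\delta\big(\sum_k(\gamma_F^{-1})_{ik}DF_k\big)\,\big|\,F\big]$. On the other hand, because $F_k=I_{q_k}(f_k)$ lives in a single chaos one has $\delta(DF_k)=q_kF_k$, so $(C^{-1}F)_i=\delta\big(\sum_k((QC)^{-1})_{ik}DF_k\big)$; moreover the same single‑chaos structure gives $\E[\gamma_F]=QC$ (recall $c_{ij}=0$ whenever $q_i\neq q_j$, so $QC=CQ$ is symmetric and invertible). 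Writing $A:=\gamma_F-QC=\gamma_F-\E[\gamma_F]$ for the \emph{fluctuation} of the Malliavin matrix, using that $(C^{-1}F)_i$ is $\sigma(F)$‑measurable, and invoking the resolvent identity $(QC)^{-1}-\gamma_F^{-1}=\gamma_F^{-1}A(QC)^{-1}$, I obtain the key representation
\[
\rho_{F,i}(F)+(C^{-1}F)_i=\E\Big[\delta\Big(\textstyle\sum_k\big(\gamma_F^{-1}A(QC)^{-1}\big)_{ik}DF_k\Big)\,\Big|\,F\Big].
\]
Thus the standardised score is an explicit Skorokhod integral driven by $A$: were $F$ Gaussian, $A$ would vanish and the score would be exactly $-C^{-1}F$.

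It remains to estimate the right‑hand side. Applying conditional Jensen, the triangle inequality, the $L^2$‑boundedness of $\delta$ on $\mathbb{D}^{1,2}(\mathfrak{H})$ (i.e.\ $\E[\delta(u)^2]\le \E[\|u\|_\mathfrak{H}^2]+\E[\|Du\|_{\mathfrak{H}^{\otimes2}}^2]$), and the product/chain rules for $D$ (in particular $D\gamma_F^{-1}=-\gamma_F^{-1}(D\gamma_F)\gamma_F^{-1}$) to $u_{ik}=\big(\gamma_F^{-1}A(QC)^{-1}\big)_{ik}DF_k$, one is left with a finite sum of terms, each a product of: entries of $\gamma_F^{-1}$ with multiplicity at most $2$ in the integrand (hence at most $4$ after taking the $L^2$‑norm), entries of $A$ or of $D\gamma_F$, and iterated Malliavin derivatives of the $F_k$. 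Each $(\gamma_F^{-1})_{ab}$ is a cofactor of $\gamma_F$ divided by $\det\gamma_F$, the cofactors being polynomials in the $\langle DF_a,DF_b\rangle_\mathfrak{H}$, which have finite moments of all orders by hypercontractivity. Hölder's inequality then isolates $\|(\det\gamma_F)^{-1}\|_p$ with total exponent $4$ — allocating $4/p<1/3$ of the integrability budget to it, which is exactly what the hypothesis $p>12$ permits — while the remaining chaos factors absorb the rest. Finally, the off‑diagonal fluctuations $\|A_{jk}\|_{L^2}=\|\langle DF_j,DF_k\rangle_\mathfrak{H}-q_jc_{jk}\|_{L^2}$ (and the corresponding norms of derivatives) are dominated by the diagonal ones $\big\|\,\|DF_j\|_\mathfrak{H}^2-q_jc_{jj}\big\|_{L^2}$ through the standard comparison inequalities between Wiener‑chaos contraction norms of the Nourdin–Peccati theory; collecting the constants, which depend only on $d$, $Q$ and $C$, gives the asserted bound.

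The conceptual heart is the second step: setting up the Malliavin integration by parts and performing the cancellation of the ``main terms'' (thanks to $\delta(DF_k)=q_kF_k$ and $\E[\gamma_F]=QC$) so that only the fluctuation $A=\gamma_F-\E[\gamma_F]$ survives. The technically heaviest part is the third step — bookkeeping the multiplicities of $\gamma_F^{-1}$ so that exactly the fourth power of $\|(\det\gamma_F)^{-1}\|_p$ appears, and then compressing all the off‑diagonal chaos fluctuations into the diagonal ones via contraction‑norm estimates. One must also verify throughout that the $\mathfrak{H}$‑valued processes to which $\delta$ is applied lie in $\mathbb{D}^{1,2}(\mathfrak{H})\subset\mathrm{dom}\,\delta$, which is guaranteed by non‑degeneracy together with $F\in\mathbb{D}^\infty$.
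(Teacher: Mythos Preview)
Your approach is essentially the same as the paper's: both compute the score via the Malliavin integration-by-parts formula $\rho_{F,i}(F)=-\E\big[\delta\big(\sum_k(\gamma_F^{-1})_{ik}DF_k\big)\,\big|\,F\big]$, use $\delta(DF_k)=q_kF_k$ to rewrite $(C^{-1}F)_i$, and reduce $\mathrm{tr}(C^{-1}J_{st}(F))$ to an $L^2$-bound on $\delta$ of a process governed by $\gamma_F^{-1}-(QC)^{-1}$, which is then controlled via Meyer's inequality and H\"older. The only difference is packaging: the paper stops at the bound $\mathrm{tr}(C^{-1}J_{st}(F))\le c\,\|\gamma_F^{-1}-C^{-1}Q^{-1}\|_{\mathbb{D}^{1,2p}}^2$ and then invokes \cite[Lemma~5.7]{tHLNualart} as a black box to obtain the factor $\|(\det\gamma_F)^{-1}\|_p^4\sum_j\big\|\,\|DF_j\|_\mathfrak{H}^2-q_jc_{jj}\big\|_{L^2}$, whereas you unpack that lemma by hand through the resolvent identity $\gamma_F^{-1}-(QC)^{-1}=-\gamma_F^{-1}A(QC)^{-1}$ and the contraction-norm comparison of off-diagonal to diagonal fluctuations. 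Your route is more self-contained but requires more bookkeeping; the paper's is shorter but relies on an external reference for the crucial estimate.
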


As a direct result of Theorem \ref{dThm222} and the Fourth Moment Theorem, we obtain the following equivalence between different ways of converging to the normal distribution for random vectors whose components are multiple stochastic integrals.

\begin{corollary} (\cite[Ch.5, Corrollary 5.2.4]{dTran3})
Let $d \geq 2$ and let $q_1, \ldots, q_d \geq 1$ be some fixed integers. Consider vectors
$$F_n = (F_{1, n}, \ldots, F_{d, n}) = (I_{q_1}(f_{1, n}), \ldots, I_{q_d}(f_{d, n}) ), \quad n \geq 1,$$
with $f_{i, n} \in \mathfrak{H}^{\odot q_i}$. Let $C = (c_{ij})_{1 \leq i, j \leq d}$ be a symmetric non-negative definite matrix, and let $Z \sim \mathcal{N}_d(0, C)$. Assume that $F_n$ is \emph{uniformly non-degenerate} (in the sense that $\gamma_{F_n}$ is invertible a.s. for all $n$ and $\limsup_{n \to \infty} \|(\text{det }\gamma_{F_n})^{-1}\|_{L^p} < \infty$ for all $p > 12$) and that
$$\lim_{n\to \infty} E[F_{i,n}F_{j,n}] =c_{ij}, \qquad 1 \leq i, j \leq d.$$
Then, as $n \to \infty$, the following assertions are equivalent:
\begin{enumerate}[(a)]
\item $F_n$ converges in law to $Z$;
\item For every $1 \leq i \leq d$, $F_{i,n}$ converges in law to $\mathcal{N}(0, c_{ii})$;
\item $tr (J(F_n)) \to tr (J(Z))$, that is $F_n$ converges to $Z$ in the sense of Fisher information distance;
\item $D (F_n \| Z) \to 0$;
\item $d_{TV}(F_n, Z) \to 0$;
\item  $\|f_{F_n} - \phi\|_\infty \to 0$, where $f_{F_n}$ and $\phi$ are densities of $F_n$ and $Z$ respectively, that is the uniform convergence of densities.
\end{enumerate}
\end{corollary}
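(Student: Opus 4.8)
The proof will be an assembly: the conceptual content is already contained in Theorem \ref{dThm222} and in the one- and multi-dimensional fourth moment theorems, so the plan is to organize the six assertions into the cycle $(a)\Rightarrow(b)\Rightarrow(c)\Rightarrow(d)\Rightarrow(e)\Rightarrow(a)$ and then graft $(f)$ onto it. Several links are immediate: $(a)\Rightarrow(b)$ is just convergence of marginals; conversely $(b)\Rightarrow(a)$ is the multivariate fourth moment (Peccati--Tudor) theorem, which applies because $E[F_{i,n}F_{j,n}]\to c_{ij}$; $(d)\Rightarrow(e)$ is the Csisz\'ar--Kullback--Pinsker inequality \eqref{deq:114}, namely $2\,d_{TV}(F_n,Z)^2\le D(F_n\|Z)$; and $(e)\Rightarrow(a)$ holds because total variation convergence is stronger than convergence in law and pins the limit down to $Z$. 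So the substance is $(b)\Rightarrow(c)$ and $(c)\Rightarrow(d)$, plus the handling of $(f)$.

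For $(b)\Rightarrow(c)$: write $C_n=(E[F_{i,n}F_{j,n}])_{1\le i,j\le d}$, so that $C_n\to C$ and $C_n$ is invertible for $n$ large. Fixing $j$, since $F_{j,n}=I_{q_j}(f_{j,n})$ converges in law to $\mathcal N(0,c_{jj})$ with $E[F_{j,n}^2]\to c_{jj}$, the one-dimensional fourth moment theorem gives $E[F_{j,n}^4]\to 3c_{jj}^2$, and the standard identity bounding $\mathrm{Var}\big(q_j^{-1}\|DF_{j,n}\|^2_{\mathfrak H}\big)$ by a constant multiple of $E[F_{j,n}^4]-3E[F_{j,n}^2]^2$ then yields $\big\|\,\|DF_{j,n}\|^2_{\mathfrak H}-q_j\,E[F_{j,n}^2]\,\big\|_{L^2(\Omega)}\to 0$, hence $\big\|\,\|DF_{j,n}\|^2_{\mathfrak H}-q_j c_{jj}\,\big\|_{L^2(\Omega)}\to 0$. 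Feeding $F_n$ into Theorem \ref{dThm222} with target covariance $C_n$,
\[
\mathrm{tr}(C_n^{-1}J_{st}(F_n))\le \mathrm{cst}(C_n,Q,d)\,\|(\det\gamma_{F_n})^{-1}\|_p^4\,\sum_{j=1}^d\big\|\,\|DF_{j,n}\|^2_{\mathfrak H}-q_j c_{jj,n}\,\big\|_{L^2(\Omega)},
\]
and since $\mathrm{cst}(C_n,Q,d)$ stays bounded (it depends continuously on the nonsingular matrix $C_n\to C$), $\|(\det\gamma_{F_n})^{-1}\|_p$ stays bounded by uniform non-degeneracy, and the sum tends to $0$, we obtain $\mathrm{tr}(C_n^{-1}J_{st}(F_n))\to 0$. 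By \eqref{deq:3.46} this quantity equals $\mathrm{tr}(J(F_n))-\mathrm{tr}(C_n^{-1})$, and $\mathrm{tr}(C_n^{-1})\to\mathrm{tr}(C^{-1})=\mathrm{tr}(J(Z))$, so $\mathrm{tr}(J(F_n))\to\mathrm{tr}(J(Z))$, which is $(c)$.

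For $(c)\Rightarrow(d)$: apply the Proposition yielding \eqref{deq:new1} to $F_n$ with its own Gaussian companion $Z_n\sim\mathcal N_d(0,C_n)$, obtaining $D(F_n\|Z_n)\le\|C_n\|_{op}\cdot\frac12\mathrm{tr}(C_n^{-1}J_{st}(F_n))\to 0$. To pass from $Z_n$ to the fixed $Z$, write $D(F_n\|Z)=D(F_n\|Z_n)+E[\log(\phi_{C_n}(F_n)/\phi_{C}(F_n))]$ and compute the second term explicitly (it equals $\frac12\log\frac{\det C}{\det C_n}+\frac12(\mathrm{tr}(C^{-1}C_n)-d)$), which tends to $0$ as $C_n\to C$; hence $D(F_n\|Z)\to 0$, i.e.\ $(d)$. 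Finally, for $(f)$: on one hand $(f)\Rightarrow(a)$, since uniform convergence of the densities together with tightness — guaranteed by the hypercontractive uniform moment bounds $\sup_n E[\|F_n\|^p]<\infty$ — forces $F_n\Rightarrow Z$; on the other hand, under $(c)$ (equivalently, under $(a)$ together with uniform non-degeneracy) Malliavin integration by parts gives uniform bounds on $\|f_{F_n}\|_\infty$ and on $\|\nabla f_{F_n}\|_\infty$ in terms of $\|(\det\gamma_{F_n})^{-1}\|_p$ and of Malliavin--Sobolev norms of $F_n$, so $\{f_{F_n}\}$ is relatively compact in $(C_0(\mathbb{R}^d),\|\cdot\|_\infty)$; weak convergence then identifies the unique cluster point as $\phi$, giving $\|f_{F_n}-\phi\|_\infty\to 0$. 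Alternatively, one invokes the multivariate refinement of Shimizu's inequality \eqref{dE.24}, bounding $\|f_{F_n}-\phi\|_\infty$ directly by a quantity controlled by $\mathrm{tr}(C_n^{-1}J_{st}(F_n))$.

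The main obstacle is not conceptual but one of bookkeeping: Theorem \ref{dThm222} and the Proposition behind \eqref{deq:new1} are phrased with the Gaussian matched to the covariance of the vector at hand, whereas the corollary compares against a fixed $Z\sim\mathcal N_d(0,C)$; one must verify that every quantity produced by the mismatch $C_n\to C$ — the constant $\mathrm{cst}(C_n,Q,d)$, the traces $\mathrm{tr}(C_n^{-1})$, the operator norms $\|C_n\|_{op}$, and the Gaussian--Gaussian corrections — is continuous in $C_n$ and therefore harmless, which in particular forces $C$ to be (eventually) positive definite for the statement to be non-vacuous. The only genuinely analytic point is the uniform density regularity needed for $(a)\Rightarrow(f)$, but this is by now a standard consequence of the uniform non-degeneracy hypothesis.
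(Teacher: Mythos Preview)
Your proof is correct and follows the same cycle $(a)\Leftrightarrow(b)\Rightarrow(c)\Rightarrow(d)\Rightarrow(e)\Rightarrow(a)$ as the paper, with the same key inputs (Peccati--Tudor, Theorem \ref{dThm222}, inequality \eqref{deq:new1}, Csisz\'ar--Kullback--Pinsker). You are in fact more careful than the paper about the $C_n$ versus $C$ bookkeeping in $(b)\Rightarrow(c)$ and $(c)\Rightarrow(d)$; the paper applies Theorem \ref{dThm222} and \eqref{deq:new1} directly without spelling out the passage from $Z_n\sim\mathcal N_d(0,C_n)$ to $Z\sim\mathcal N_d(0,C)$.

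The one genuine difference is in the treatment of $(f)$. You argue via Malliavin regularity plus compactness (uniform bounds on $\|f_{F_n}\|_\infty$ and $\|\nabla f_{F_n}\|_\infty$, then Arzel\`a--Ascoli and identification of the limit), or alternatively a multivariate Shimizu-type bound. The paper instead invokes a ready-made quantitative estimate of Hu--Lu--Nualart \cite{dHuLuNualart2014}, namely
\[
\sup_{x\in\mathbb{R}^d}|f_{F_n}(x)-\phi(x)|\le \mathrm{cst}\bigg(\big|(E[F_{i,n}F_{j,n}])_{i,j}-C\big|+\sum_{j=1}^d\sqrt{E[F_{j,n}^4]-3(E[F_{j,n}^2])^2}\bigg),
\]
which under $(b)$ and the covariance assumption gives $(f)$ immediately. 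Your compactness route is softer and self-contained; the paper's route is shorter but imports a nontrivial external result.
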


\chapter[Non-central limit theorems for quadratic functionals of Hermite-driven long memory moving average processes]{Non-central limit theorems for quadratic functionals of Hermite-driven long memory moving average processes}

\begin{center}
T. T. Diu Tran \\
Universit\'{e} du Luxembourg
\end{center}

\begin{tcolorbox}
This article is published on \textit{Stochastics and Dynamics}, \textbf{18}, no. 4, 2017. arXiv:1607.08278.
\end{tcolorbox}

\begin{center}
\textbf{Abstract}
\end{center}

Let $(Z_t^{q, H})_{t \geq 0}$ denote a Hermite process of order $q \geq 1$ and self-similarity parameter $H  \in (\frac{1}{2}, 1)$. Consider the Hermite-driven moving average process
$$X_t^{q, H} = \int_0^t x(t-u) dZ^{q, H}_u, \qquad t \geq 0.$$ 
In the special case of $x(u) = e^{-\theta u}, \theta > 0$, $X$ is the non-stationary Hermite Ornstein-Uhlenbeck process of order $q$. Under suitable integrability conditions on the kernel $x$, we prove that as $T \to \infty$, the normalized quadratic functional
$$G_T^{q, H}(t)=\frac{1}{T^{2H_0 - 1}}\int_0^{Tt}\Big(\big(X_s^{q, H}\big)^2 - E\Big[\big(X_s^{q, H}\big)^2\Big]\Big) ds , \qquad t \geq 0,$$
where $H_0 = 1 + (H-1)/q$, converges in the sense of finite-dimensional distribution to the Rosenblatt process of parameter $H' =  1 + (2H-2)/q$, up to a multiplicative constant, irrespective of self-similarity parameter whenever $q \geq 2$. In the Gaussian case $(q=1)$, our result complements the study started by Nourdin \textit{et al} in \cite{mIvan2}, where either central or non-central limit theorems may arise depending on the value of self-similarity parameter. A crucial key in our analysis is an extension of the connection between the classical multiple Wiener-It\^{o} integral and the one with respect to a random spectral measure (initiated by Taqqu (1979)), which may be independent of interest.

%

\section{Motivation and main results}

Let $(Z_t^{q, H})_{t \geq 0}$ be a Hermite process of order $q \geq 1$ and self-similarity parameter $H  \in (\frac{1}{2}, 1)$. It is a $H$-self-similar process with stationary increments, exhibits long-range dependence and can be expressed as a multiple Wiener-It\^{o} integral of order $q$ with respect to a two-sided standard Brownian motion $(B(t))_{t \in \mathbb{R}}$ as follows:
\begin{equation}\label{meq:H1}
Z^{q, H}_t= c(H, q) \int_{\mathbb{R}^q} \bigg( \int_0^t \prod_{j=1}^q(s- \xi_j)_+^{H_0 - \frac{3}{2}}ds\bigg) dB(\xi_1)\ldots dB(\xi_q),
\end{equation}
where
\begin{equation}\label{meq:H2}
c(H, q) = \sqrt{\frac{H(2H - 1)}{q! \beta^q(H_0 - \frac{1}{2}, 2-2H_0)}} \qquad \text{and} \qquad H_0 = 1+\frac{H-1}{q}.
\end{equation}
Particular examples include the fractional Brownian motion $(q=1)$ and the Rosenblatt process $(q=2)$. For $q \geq 2$, it is no longer Gaussian. All Hermite processes share the same basic properties with fractional Brownian motion such as self-similarity, stationary increments, long-range dependence and even covariance structure. The Hermite process has been pretty much studied in the last decade, due to its potential to be good model for various phenomena.

A theory of stochastic integration with respect to $Z^{q, H}$, as well as stochastic differential equation driven by this process, have been considered recently. We refer to \cite{mIvan, mNualart} for a recent account of the fractional Brownian motion and its large amount of applications. We refer to \cite{mTaqqu2, mTudor2,mTudor} for different aspects of the Rosenblatt process. Furthermore, in the direction of stochastic calculus, the construction of Wiener integrals with respect to $Z^{q, H}$ is studied in \cite{mTudor1}. According to this latter reference, stochastic integrals of the form
\begin{equation}\label{meq:12}
\int_{\mathbb{R}}f(u)dZ^{q, H}_u
\end{equation}
are well-defined for elements of $\mathcal{H} = \{f: \mathbb{R} \to \mathbb{R}: \int_{\mathbb{R}}\int_{\mathbb{R}} f(u)f(v)|u-v|^{2H-2}dudv < \infty\}$, endowed with the norm
\begin{equation}\label{meq:14}
||f||_{\mathcal{H}}^2 = H(2H-1)\int_{\mathbb{R}}\int_{\mathbb{R}} f(u)f(v)|u-v|^{2H-2}dudv.
\end{equation}
Moreover, when $f \in \mathcal{H}$, the stochastic integral (\ref{meq:12}) can be written as
\begin{equation}\label{meq:13}
\int_{\mathbb{R}}f(u)dZ^{q, H}_u = c(H, q) \int_{\mathbb{R}^q}\bigg(\int_{\mathbb{R}}f(u) \prod_{j=1}^q(u- \xi_j)_+^{H_0 - \frac{3}{2}}du\bigg)dB(\xi_1)\ldots dB(\xi_q)
\end{equation}
where $c(H, q)$ and $H_0$ are as in (\ref{meq:H2}). Since the elements of $\mathcal{H}$ may be not functions but distributions (see \cite{mNualart}), it is more practical to work with the following subspace of $\mathcal{H}$, 
which is a set of functions:
$$ |\mathcal{H}| = \bigg\{f: \mathbb{R} \to \mathbb{R}: \int_{\mathbb{R}}\int_{\mathbb{R}} |f(u)||f(v)||u-v|^{2H-2}dudv < \infty \bigg\}.$$

Consider the stochastic integral equation
\begin{equation}\label{meq:SDE}
X(t) = \xi  - \lambda \int_0^tX(s)ds + \sigma Z^{q, H}_t, \qquad t \geq 0,
\end{equation}
where $ \lambda, \sigma > 0$ and where the initial condition $\xi$ can be any random variable. By \cite[Prop. 1]{mTudor1}, the unique continuous solution of (\ref{meq:SDE}) is given by
$$X(t) = e^{-\lambda t} \bigg( \xi + \sigma \int_0^t e^{\lambda u} dZ^{q, H}_u \bigg), \qquad t \geq 0.$$
In particular, if  for $\xi$ we choose $\xi = \sigma \int_{-\infty}^0 e^{\lambda u} dZ^{q, H}(u)$, then
\begin{equation}\label{meq:17}
X(t) = \sigma \int_{-\infty}^t e^{-\lambda(t-u)}dZ^{q, H}_u, \qquad t\geq 0.
\end{equation}
According to \cite{mTudor1}, the process $X$ defined by (\ref{meq:17}) is referred to as the Hermite Ornstein-Uhlenbeck process of order $q$. On the other hand, if the initial condition $\xi$ is set to be zero, then the unique continuous solution of (\ref{meq:SDE}) is this time given by
\begin{equation}\label{meq:18}
X(t) = \sigma \int_0^t e^{-\lambda(t-u)}dZ^{q, H}_u, \qquad t\geq 0.
\end{equation}
In this paper, we call the stochastic process (\ref{meq:18}) the \textit{non-stationary} Hermite Ornstein-Uhlenbeck process of order $q$. It is a particular example of a wider class of moving average processes driven by Hermite process, of the form
\begin{equation}\label{meq:Xt}
X_t^{q, H} := \int_0^t x(t-u) dZ^{q, H}_u, \qquad t \geq 0.
\end{equation}

In many situations of interests (see, e.g., \cite{mViens1, mViens2}), we may have to analyze the asymptotic behavior of the \textit{quadratic functionals of $X_t^{(q, H)}$} for statistical purposes. More precisely, let us consider
\begin{equation}\label{meq:Gt}
G_T^{q, H}(t):=\frac{1}{T^{2H_0 - 1}}\int_0^{Tt}\Big(\big(X_s^{q, H}\big)^2 - E\Big[\big(X_s^{q, H}\big)^2\Big]\Big) ds.
\end{equation}
In this paper, we will show that $G_T^{q, H}$  converges in the sense of finite-dimensional distribution to the Rosenblatt process (up to a multiplicative constant), irrespective of the value of $q \geq 2$ and $H \in (\frac{1}{2}, 1)$. The case $q=1$ is apart, see Theorem \ref{mTheorem2} below.

\begin{Theorem}\label{mTheorem1}
Let $H \in (\frac{1}{2}, 1)$ and let $Z^{q, H}$ be a Hermite process of order $q \geq 2$ and self-similarity parameter $H$. Consider the Hermite-driven moving average process $X^{q, H}$ defined by (\ref{meq:Xt}), and assume that the kernel $x$ is a real-valued integrable function on $[0, \infty)$ satisfying, in addition,
\begin{equation}\label{meq:1}
\int_{\mathbb{R}_+^2} |x(u)||x(v)||u-v|^{2H-2}dudv < \infty.
\end{equation}
Then, as $T \to \infty$, the family of stochastic processes $G_T^{q, H}$ converges in the sense of finite-dimensional distribution to $b(H, q)R^{H'}$, where $R^{H'}$ is the Rosenblatt process of parameter $H' =  1 + (2H-2)/q$ (which is the \emph{second-order} Hermite process of parameter $H'$), and the multiplicative constant $b(H, q)$ is given by
\begin{equation}\label{meq:19}
b(H, q) = \frac{H(2H-1)}{\sqrt{(H_0 -\frac{1}{2})(4H_0 - 3)}}\int_{\mathbb{R}_+^2}x(u)x(v)|u-v|^{(q-1)(2H_0 -2)}dudv.
\end{equation}
(The fact that (\ref{meq:19}) is well-defined is part of the conclusion of the theorem.)
\end{Theorem}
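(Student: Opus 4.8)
The plan is to expand $G_T^{q,H}(t)$ into a finite sum of multiple Wiener-It\^{o} integrals of decreasing chaos order, show that all but the lowest-order term vanish in $L^2(\Omega)$ after the prescribed normalisation, and identify the surviving second-chaos term as a constant multiple of the Rosenblatt process. Concretely, by the Wiener-integral representation (\ref{meq:13}) one has $X_s^{q,H}=I_q^B(f_s)$ for each fixed $s\ge 0$, where
$$f_s(\xi_1,\dots,\xi_q)=c(H,q)\int_0^s x(s-u)\prod_{j=1}^q(u-\xi_j)_+^{H_0-\frac32}\,du$$
is symmetric and lies in $L^2(\mathbb{R}^q)$ by (\ref{meq:1}). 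The product formula for multiple integrals gives $\big(X_s^{q,H}\big)^2-E[(X_s^{q,H})^2]=\sum_{r=0}^{q-1}r!\binom{q}{r}^2 I_{2q-2r}^B(f_s\widetilde{\otimes}_r f_s)$, so, exchanging the deterministic $ds$-integral with the stochastic integrals,
$$G_T^{q,H}(t)=\sum_{r=0}^{q-1}r!\binom{q}{r}^2 I_{2q-2r}^B\!\big(g_{T,r,t}\big),\qquad g_{T,r,t}:=\frac{1}{T^{2H_0-1}}\int_0^{Tt}f_s\widetilde{\otimes}_r f_s\,ds .$$
Every iterated contraction is made explicit by the identity $\int_{\mathbb{R}}(u-\xi)_+^{H_0-\frac32}(u'-\xi)_+^{H_0-\frac32}d\xi=\beta(H_0-\tfrac12,2-2H_0)\,|u-u'|^{2H_0-2}$, which turns $f_s\otimes_r f_s$ into an explicit kernel in $2q-2r$ variables carrying the factor $|u-u'|^{r(2H_0-2)}$.

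For $0\le r\le q-2$ I would bound $\|g_{T,r,t}\|_{L^2}^2$ head-on: after the substitution $s=Tw$ and a dilation of the remaining integration variables by $T$, the integrability of $x$, the hypothesis (\ref{meq:1}) and the constraint $H\in(\tfrac12,1)$ give $\|g_{T,r,t}\|_{L^2}\to 0$, and by the isometry property of multiple integrals each such term tends to $0$ in $L^2(\Omega)$. Only the $r=q-1$ term, which lives in the second chaos, survives. For it I would use the scaling property of Brownian motion to write $I_2^B(g_{T,q-1,t})\overset{(d)}{=}I_2^B(\widetilde g_{T,t})$, with $\widetilde g_{T,t}(\eta_1,\eta_2)=T\,g_{T,q-1,t}(T\eta_1,T\eta_2)$; substituting $u=Tw-p$, $u'=Tw-p'$ inside the contraction isolates all of the $T$-dependence into a single power of $T$ and into the factors $(T(w-\eta_i)-p)_+^{H_0-\frac32}$, after which a dominated-convergence argument (with an $L^2(\mathbb{R}^2)$ majorant coming from $x\in L^1$ and (\ref{meq:1})) yields
$$\widetilde g_{T,t}(\eta_1,\eta_2)\ \longrightarrow\ \kappa\int_0^t(w-\eta_1)_+^{H_0-\frac32}(w-\eta_2)_+^{H_0-\frac32}\,dw\quad\text{in }L^2(\mathbb{R}^2),$$
where $\kappa$ is the product of $c(H,q)^2$, of $\beta(H_0-\tfrac12,2-2H_0)^{q-1}$, and of $\int_{\mathbb{R}_+^2}x(u)x(v)|u-v|^{(q-1)(2H_0-2)}dudv$; this last integral is finite --- so that $b(H,q)$ in (\ref{meq:19}) is well-defined --- because near the diagonal $|u-v|^{(q-1)(2H_0-2)}\le|u-v|^{2H-2}$ so that (\ref{meq:1}) applies, while off the diagonal $|u-v|^{(q-1)(2H_0-2)}\le 1$ and $x\in L^1$.

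Since $1+\tfrac{H'-1}{2}=H_0$, the limit kernel above is exactly $\kappa/c(H',2)$ times the kernel of $R^{H'}_t$ read off from (\ref{meq:H1}) with order $2$ and parameter $H'$; by continuity of $I_2^B:L^2(\mathbb{R}^2)\to L^2(\Omega)$ it follows that $(q-1)!\,q^2\,I_2^B(g_{T,q-1,t})$ converges in law to $b(H,q)R^{H'}_t$, once one checks --- a routine but somewhat delicate computation with Beta functions --- that the accumulated constant reduces to (\ref{meq:19}). For the finite-dimensional statement I would run the same decomposition on an arbitrary linear combination $\sum_i\alpha_i G_T^{q,H}(t_i)$: the chaoses of order $\ge 4$ still tend to $0$ in $L^2(\Omega)$, while, after the Brownian rescaling, the second-chaos part converges in law to $b(H,q)\sum_i\alpha_i R^{H'}_{t_i}$ because the limit kernel depends linearly and $L^2(\mathbb{R}^2)$-continuously on $t$; Slutsky's lemma and the Cram\'er--Wold device then give convergence of the finite-dimensional distributions to those of $b(H,q)R^{H'}$. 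The hard part is the $r=q-1$ analysis --- producing the uniform $L^2(\mathbb{R}^2)$ majorant that legitimates the limit of $\widetilde g_{T,t}$, and matching the normalising constants exactly; for the constant it is cleanest to pass to Taqqu's spectral representation (\ref{dH3}) of the Hermite process, where contractions become products and the limiting constant is transparent (this is the ``extension of Taqqu's connection'' alluded to in the abstract). The $L^2$-estimates for the lower-order chaoses, though routine, likewise require care because the contractions generate several overlapping power-type singularities.
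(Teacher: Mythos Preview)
Your strategy is exactly the paper's: expand $G_T^{q,H}(t)$ via the product formula into a sum of multiple integrals $F_{2q-2r,T}$ of orders $2$ through $2q$, show that all terms with $r\le q-2$ vanish in $L^2(\Omega)$, and prove that the surviving second-chaos term converges to a multiple of the Rosenblatt process. The chaotic decomposition, the explicit form of the contractions, and the identification of the limit are all as you describe.

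The one substantive difference is in how the analysis is carried out. You propose to stay in the time domain, rescaling the Brownian motion and producing an explicit $L^2(\mathbb{R}^2)$ majorant for $\widetilde g_{T,t}$; you invoke the spectral representation only at the end, to clean up the constant. The paper instead passes to the spectral side \emph{immediately} and does everything there: a dedicated Lemma (an extension of Taqqu's Lemma~6.1 to functions of the form $\int\phi(z_1,z_2)\prod(z_1-\xi_j)_+^{H_0-3/2}\prod(z_2-\xi_k)_+^{H_0-3/2}dz_1dz_2$) converts each $F_{2q-2r,T}$ into an integral with respect to a random spectral measure, after which the rescaling and the limits become transparent because the oscillatory factors $e^{-i\lambda\xi/T}$ are uniformly bounded. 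The reduction lemma for $r\le q-2$ is proved in the spectral picture (the extra power of $T^{-(2-2H_0)(q-1-r)}$ drops out cleanly), and the $L^2$ convergence of the second-chaos kernel is obtained not via a dominating function but via the Cauchy criterion on the spectral kernels $f_T(t,\lambda_1,\lambda_2)$. Your time-domain route should work, but the dominated-convergence step you flag as ``hard'' is precisely what the spectral detour sidesteps; in the paper the spectral lemma is not an afterthought for the constant but the engine of the whole proof.
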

Theorem \ref{mTheorem1} only deals with $q \geq 2$, because $q=1$ is different. In this case, $Z^{1, H}$ is nothing but the fractional Brownian motion of index $H$ and $X^{1, H}$ is the fractional Volterra process, as considered by Nourdin, Nualart and Zintout in \cite{mIvan2}. In this latter reference, a Central Limit Theorem for $G_T^{1, H}$ has been established for $H \in (\frac{1}{2}, \frac{3}{4})$. Here, we rather study the situation where $H \in (\frac{3}{4}, 1)$ and, in contrast to \cite{mIvan2}, we show a Non-Central Limit Theorem. More precisely, we have the following theorem. 

\begin{Theorem}\label{mTheorem2}
Let $H \in (\frac{3}{4}, 1)$. Consider the fractional Volterra process $X^{1, H}$ given by (\ref{meq:Xt}) with $q=1$. If the function $x$ defining $X^{1, H}$ is an integrable function on $[0, \infty)$ and satisfies (\ref{meq:1}), then the family of stochastic processes $G_T^{1, H}$ converges in the sense of finite-dimensional distribution, as $T \to \infty$, to the Rosenblatt process $R^{H''}$ of parameter $H'' = 2H-1$ multiplied by $b(1, H)$ as above.
\end{Theorem}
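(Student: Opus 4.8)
Here is how I would attack Theorem \ref{mTheorem2}. The plan is to exploit the fact that the whole process $G_T^{1,H}$ lives in the \emph{second} Wiener chaos of the underlying Brownian motion $B$, so that convergence in law can be read off from cumulants. Indeed $X^{1,H}_s$ is a centred Gaussian variable: by representation (\ref{meq:13}) with $q=1$ (hence $H_0=H$) one has $X^{1,H}_s=I_1^B(g_s)$ with $g_s(\xi)=c(H,1)\int_0^s x(s-u)(u-\xi)_+^{H-\frac{3}{2}}\,du$, and the product formula for multiple integrals gives $(X^{1,H}_s)^2-E[(X^{1,H}_s)^2]=I_2^B(g_s\otimes g_s)$, so that $G_T^{1,H}(t)=I_2^B(f_{T,t})$ with the (automatically symmetric) kernel $f_{T,t}(\xi_1,\xi_2)=T^{1-2H}\int_0^{Tt}g_s(\xi_1)g_s(\xi_2)\,ds$. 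By the Cram\'{e}r--Wold device it then suffices to prove that for every finite family $t_1,\dots,t_k\ge0$ and $a_1,\dots,a_k\in\mathbb{R}$ the double integral $\sum_ja_jG_T^{1,H}(t_j)$ converges in law to $b(1,H)\sum_ja_jR^{H''}_{t_j}$; since both sides lie in the second chaos, and are therefore determined by their moments (Carleman's condition via hypercontractivity), this reduces to showing that each cumulant $\kappa_p$, $p\ge2$, converges.

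Using the circular formula $\kappa_p(I_2^B(f))=2^{p-1}(p-1)!\int_{\mathbb{R}^p}f(\xi_1,\xi_2)f(\xi_2,\xi_3)\cdots f(\xi_p,\xi_1)\,d\xi$, the linearity of $f_{T,t}$ in $t$, and the identity $\langle g_s,g_{s'}\rangle_{L^2(\mathbb{R})}=E[X^{1,H}_sX^{1,H}_{s'}]=H(2H-1)\int_0^s\!\!\int_0^{s'}x(u)x(v)|(s-s')-(u-v)|^{2H-2}\,du\,dv$, one obtains
\[
\kappa_p\!\left(\sum_ja_jG_T^{1,H}(t_j)\right)=\frac{2^{p-1}(p-1)!}{T^{p(2H-1)}}\sum_{j_1,\dots,j_p}a_{j_1}\cdots a_{j_p}\int_{[0,Tt_{j_1}]\times\cdots\times[0,Tt_{j_p}]}\ \prod_{i=1}^p\langle g_{s_i},g_{s_{i+1}}\rangle_{L^2(\mathbb{R})}\,ds
\]
(indices read cyclically). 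The core estimate is the rescaled behaviour of the covariance: setting $s=T\sigma$, $s'=T\sigma'$ and using that $x$ is integrable, together with (\ref{meq:1}) to control the error of replacing $|(s-s')-(u-v)|^{2H-2}$ by $|s-s'|^{2H-2}$, one shows $T^{2-2H}E[X^{1,H}_{T\sigma}X^{1,H}_{T\sigma'}]\to H(2H-1)\bar{x}^2|\sigma-\sigma'|^{2H-2}$ with $\bar{x}=\int_0^\infty x(u)\,du$, the remainder being dominated uniformly in $T$ by a constant times $|\sigma-\sigma'|^{2H-2}$. Substituting this and rescaling each $s_i$, the powers of $T$ cancel exactly (the exponent being $-p(2H-1)+p+p(2H-2)=0$, which is precisely why the normalising exponent $2H_0-1$ is the right one), and one gets
\[
\kappa_p\!\left(\sum_ja_jG_T^{1,H}(t_j)\right)\longrightarrow 2^{p-1}(p-1)!\big(H(2H-1)\bar{x}^2\big)^p\sum_{j_1,\dots,j_p}a_{j_1}\cdots a_{j_p}\int_{[0,t_{j_1}]\times\cdots\times[0,t_{j_p}]}\prod_{i=1}^p|\sigma_i-\sigma_{i+1}|^{2H-2}\,d\sigma.
\]

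It then remains to recognise the right-hand side as $\kappa_p\big(b(1,H)\sum_ja_jR^{H''}_{t_j}\big)$ with $H''=2H-1$. Applying the same circular formula to the kernel of the Rosenblatt process $R^{H''}$ and observing that its auxiliary parameter $H_0''=1+\frac{H''-1}{2}$ equals $H$ (so that the exponent $2H_0''-2$ is exactly $2H-2$) shows that the limit above is that cumulant once $b(1,H)$ is chosen to be the constant (\ref{meq:19}) specialised to $q=1$ — in which case $(q-1)(2H_0-2)=0$ and its double integral reduces to $\bar{x}^2$. The scalar is already pinned down at $p=2$: $\kappa_2(G_T^{1,H}(t))\to 2(H(2H-1)\bar{x}^2)^2\int_0^t\!\!\int_0^t|\sigma-\sigma'|^{4H-4}\,d\sigma\,d\sigma'=\frac{4(H(2H-1)\bar{x}^2)^2}{(4H-3)(4H-2)}\,t^{4H-2}$, which must coincide with $b(1,H)^2\,\mathrm{Var}(R^{H''}_t)=b(1,H)^2\,t^{2H''}=b(1,H)^2\,t^{4H-2}$, and a direct computation confirms equality with $b(1,H)$ as in (\ref{meq:19}).

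The delicate point — and the main obstacle — is the covariance estimate. Since $x$ is only assumed integrable, with no decay or support condition, the replacement of $|(s-s')-(u-v)|^{2H-2}$ by $|s-s'|^{2H-2}$ has to be controlled globally in $(u,v)$, which is exactly what hypothesis (\ref{meq:1}) affords; one must also treat separately the near-diagonal region where $T|\sigma-\sigma'|$ stays bounded (there the pointwise asymptotics fail, but this set has vanishing measure and $|\cdot|^{2H-2}$ is integrable because $2H-2>-1$) and exhibit a $T$-uniform dominating function so that dominated convergence legitimises passing to the limit inside the $p$-fold integral. This is also where $H>\tfrac{3}{4}$ becomes essential: the limiting circular integral $\int_{[0,t]^p}\prod_i|\sigma_i-\sigma_{i+1}|^{2H-2}\,d\sigma$ is finite only when iterated integration is allowed, and already the first step $\int|\sigma_i-\sigma_{i-1}|^{2H-2}|\sigma_i-\sigma_{i+1}|^{2H-2}\,d\sigma_i\sim|\sigma_{i-1}-\sigma_{i+1}|^{4H-3}$ requires $H>\tfrac{3}{4}$, equivalently $H''=2H-1>\tfrac12$, at $p=2$ (so the variance of the prospective limit is finite); for $H\le\tfrac{3}{4}$ the second-chaos term would need a Gaussian-type normalisation and a central limit theorem arises instead, consistently with \cite{mIvan2}. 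An alternative route, in the spirit of the proof of Theorem \ref{mTheorem1}, is to carry out the same computation in the spectral domain, where the identical exact cancellation of powers of $T$ surfaces after the change of variables $\lambda\mapsto\lambda/T$.
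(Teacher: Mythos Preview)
Your proposal is correct and takes a genuinely different route from the paper. The paper specialises the machinery already built for Theorem~\ref{mTheorem1}: since $q=1$ the chaos expansion of $G_T^{1,H}$ collapses to the single second-chaos term $F_{2,T}$, and the spectral transformation lemma (Lemma~\ref{mLemma2}) turns $F_{2,T}(t)$ into a double Wiener--It\^{o} integral with respect to the random spectral measure~$W$, whose symmetric kernel is written down explicitly; one then checks that this kernel forms a Cauchy sequence in $L^2(\mathbb{R}^2)$ (by dominated convergence on the difference $f_T-f_S$) and converges pointwise to the spectral kernel of $b(1,H)R^{H''}$. This yields $L^2(\Omega)$ convergence of the spectral integral for each fixed $t$, and finite-dimensional convergence follows at once. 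By contrast, you stay entirely in the time domain and invoke the method of moments via the circular cumulant formula, identifying the limiting cumulants with those of the Rosenblatt process. Your route is more self-contained --- it avoids the Fourier passage and the auxiliary Lemma~\ref{mLemma2} --- but it trades that for having to control \emph{every} cumulant $\kappa_p$, $p\ge 2$, rather than a single $L^2$ norm, and it delivers only convergence in law rather than the $L^2(\Omega)$ convergence the paper obtains. The ``delicate point'' you flag (a $T$-uniform majorant for $T^{2-2H}\langle g_{T\sigma},g_{T\sigma'}\rangle$ allowing dominated convergence inside the $p$-fold circular integral) is real and is precisely where hypotheses $x\in L^1$ and (\ref{meq:1}) must both be used; the paper sidesteps this by working with the Cauchy criterion on the spectral side, where the required domination is more transparent. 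Your closing remark about the spectral alternative is, in fact, exactly the paper's argument.
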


It is worth pointing out that, irrespective of the self-similarity parameter $H \in (\frac{1}{2}, 1)$, the normalized quadratic functionals of any non-Gaussian Hermite-driven long memory moving average processes $(q \geq 2)$ exhibits a convergence to a random variable belonging to the second Wiener chaos. It is in strong contrast with what happens in the Gaussian case $(q=1)$, where either central or non-central limit theorems may arise depending on the value of the self-similarity parameter.

We note that our Theorem \ref{mTheorem2} is pretty close to Taqqu's seminal result \cite{mTaqqu1975}, but cannot be obtained as a consequence of it. In contrast, the statement of Theorem \ref{mTheorem1} is completely new, and provides new hints on the importance and relevance of the Rosenblatt process in statistics.

Our proofs of Theorems \ref{mTheorem1} and \ref{mTheorem2} are based on the use of chaotic expansions into multiple Wiener-It\^{o} integrals and the key transformation lemma from the classical multiple Wiener-It\^{o} integrals into the one with respect to a random spectral measure (following a strategy initiated by Taqqu in \cite{mTaqqu}). Let us sketch them. Since the random variable $X_t^{q, H}$ is an element of the $q$-th Wiener chaos, we can firstly rely on the product formula for multiple integrals to obtain that the quadratic functional $G_T^{q, H}(t)$ can be decomposed into a sum of multiple integrals of even orders from $2$ to $2q$. Secondly, we prove that the projection onto the second Wiener chaos converges in $L^2(\Omega)$ to the Rosenblatt process: we do this by using its spectral representation of multiple Wiener-It\^{o} integrals and by checking the $L^2(\mathbb{R}^2)$ convergence of its kernel. Finally, we prove that all the remaining terms in the chaos expansion are asymptotically negligible. 

Our findings and the strategy we have followed to obtain them owe a lot and were influenced by several seminal papers on Non-Central Limit Theorems for functionals of Gaussian (or related) processes, including Dobrushin and Major \cite{mDobrushin Major}, Taqqu \cite{mTaqqu} and most recently, Clausel \textit{et al} \cite{mCATudor1, mCATudor2} and Neufcourt and Viens \cite{mViens}.
 
Our paper is organised as follows. Section $2$ contains preliminary key lemmas. The proofs of our two main results, namely Theorems \ref{mTheorem1} and \ref{mTheorem2}, are then provided in Section $3$ and Section $4$. 

\section{Preliminaries}
Here, we mainly follow Taqqu \cite{mTaqqu}. We describe a useful connection between multiple Wiener-It\^{o} integrals with respect to random spectral measure and the classical stochastic It\^{o} integrals. Stochastic representations of the Rosenblatt process are then provided at the end of the section.

\subsection{Multiple Wiener-It\^{o} integrals with respect to Brownian motion}

Let $f \in L^2(\mathbb{R}^q)$ and let us denote by $I_q^B(f)$ the $q$th multiple Wiener-It\^{o} integral of $f$ with respect to the standard two-sided Brownian motion $(B_t)_{t \in \mathbb{R}}$, in symbols
$$I_q^B(f) = \int_{\mathbb{R}^q} f(\xi_1, \ldots, \xi_q) dB(\xi_1)\ldots dB(\xi_q).$$
When $f$ is symmetric, we can see $I_q^B(f)$ as the following iterated adapted It\^{o} stochastic integral:
$$I_q^B(f) = q! \int_{-\infty}^{\infty}dB(\xi_1) \int_{-\infty}^{\xi_1}dB(\xi_2) \ldots \int_{-\infty}^{\xi_{q-1}}dB(\xi_q) f(\xi_1, \ldots, \xi_q).$$
Moreover, when $f$ is not necessarily symmetric one has $I_q^B(f) = I_q^B(\widetilde{f})$, where $\widetilde{f}$ is the symmetrization of $f$ defined by
\begin{equation}\label{meq:dola}
\widetilde{f}(\xi_1, \ldots, \xi_q) = \frac{1}{q!}\sum_{\sigma \in \mathfrak{S}_q} f(\xi_{\sigma(1)}, \ldots, \xi_{\sigma(q)}).
\end{equation}
The set of random variables of the form $I_q^B(f), f \in L^2(\mathbb{R}^q)$, is called the $q$th Wiener chaos of $B$. We refer to Nualart's book \cite{mNualart} (chapter 1 therein) or Nourdin and Peccati's books \cite{mIvan, mPeccatiIvan} for a detailed exposition of the construction and properties of multiple Wiener-It\^{o} integrals. Here, let us only recall the product formula between two multiple integrals: if $f \in L^2(\mathbb{R}^p)$ and $g \in L^2(\mathbb{R}^q)$ are two symmetric functions then
\begin{equation}\label{meq:P1}
I_p^B(f)I_q^B(g) = \sum_{r=0}^{p \wedge  q} r!\binom{p}{r}\binom{q}{r}I_{p+q-2r}^B(f \widetilde{\otimes}_r g),
\end{equation}
where the contraction $f \otimes_r g$, which belongs to $L^2(\mathbb{R}^{p+q-2r})$ for every $r = 0, 1, \ldots, p \wedge q$, is given by
\begin{align}\label{meq:P2}
f \otimes_r g& (y_1, \ldots, y_{p-r}, z_1, \ldots, z_{q-r}) \nonumber\\ 
& = \int_{\mathbb{R}^r} f(y_1, \ldots, y_{p-r}, \xi_1, \ldots, \xi_r) g(z_1, \ldots, z_{q-r}, \xi_1, \ldots, \xi_r) d\xi_1 \ldots d\xi_r
\end{align}
and where a tilde denotes the symmetrization, see (\ref{meq:dola}). Observe that
\begin{equation}\label{meq:3}
\| f \widetilde{\otimes}_r g\|_{L^2(\mathbb{R}^{p+q-2r})} \leq \| f \otimes_r g\|_{L^2(\mathbb{R}^{p+q-2r})} \leq \|f\|_{L^2(\mathbb{R}^p)}\|g\|_{L^2(\mathbb{R}^q)}, \quad r= 0, \ldots, p\wedge q
\end{equation}
by Cauchy-Schwarz inequality, and that $f \otimes_p  g = \left\langle f, g \right\rangle_{L^2(\mathbb{R}^p)}$ when $p=q$.
Furthermore, we have the orthogonality property
$$E[I_p^B(f)I_q^B(g)] =
\begin{cases}
 & p! \big\langle \widetilde{f}, \widetilde{g} \big\rangle_{L^2(\mathbb{R}^p)} \qquad\text{if } p=q\\
& 0 \qquad\qquad\qquad\quad \text{ if } p \ne q.
\end{cases}$$
\subsection{Multiple Wiener-It\^{o} integrals with respect to a random spectral measure}

Let $W$ be a Gaussian complex-valued random spectral measure that satisfies $E[W(A)] = 0, E[W(A)\overline{W(B)}] = \mu(A \cap B), W(A) = \overline{W(-A)} $ and $W(\bigcup_{j=1}^n A_j) = \sum_{j=1}^n W(A_j)$ for all disjoint Borel sets that have finite Lebesgue measure (denoted here by $\mu$). The Gaussian random variables $\text{Re}W(A)$ and $\text{Im}W(A)$ are then independent with expectation zero and variance $\mu(A)/2$. We now recall briefly the construction of multiple Wiener-It\^{o} integrals with respect to $W$, as defined in Major \cite{mMajor} or Section $4$ of Dobrushin \cite{mDobrushin}. To define such stochastic integrals let us introduce the real Hilbert space $\mathscr{H}_m$ of complex-valued symmetric functions $f(\lambda_1, \ldots, \lambda_m), \lambda_j \in \mathbb{R}, j=1, 2, \ldots, m$, which are even, i.e. $f(\lambda_1, \ldots, \lambda_m) = \overline{f(-\lambda_1, \ldots, -\lambda_m)}$, and square integrable, that is, 
$$\|f \|^2 = \int_{\mathbb{R}^m}|f(\lambda_1, \ldots, \lambda_m)|^2 d\lambda_1\ldots d\lambda_m < \infty.$$
The scalar product is similarly defined: namely, if $f, g \in \mathscr{H}_m$, then
$$\left\langle f, g \right\rangle_{\mathscr{H}_m} = \int f(\lambda_1, \ldots, \lambda_m)\overline{g(\lambda_1, \ldots, \lambda_m)}d\lambda_1 \ldots d\lambda_m.$$
The integrals $I_m^W$ are then defined through an isometric mapping from $\mathscr{H}_m$ to $L^2(\Omega)$:
$$ f \longmapsto I_m^W(f)  = \int_{\mathbb{R}}f(\lambda_1, \ldots, \lambda_m) W(d\lambda_1)\ldots W(d\lambda_m),$$
Following \emph{e.g.} the lecture notes of Major \cite{Major}, if $f \in \mathscr{H}_m$ and $g \in \mathscr{H}_n$, then $E[I_m^W(f)] = 0$ and
\begin{equation}\label{meq:21}
E[I_m^W(f)I_n^W(g)] =
\begin{cases}
 & m!\left\langle f, g \right\rangle_{\mathscr{H}_m} \text{ if } m=n\\
& 0 \qquad\qquad\quad\text{if } m \ne n.
\end{cases}
\end{equation}

\subsection{Preliminary lemmas}
We recall a connection between the classical Wiener-It\^{o} integral $I^B$ and the one with respect to a random spectral measure $I^W$ that will play an important role in our analysis.

\begin{lemma}\cite[Lemma 6.1]{mTaqqu}\label{mLemma6.1} Let $A(\xi_1, \ldots, \xi_m)$ be a real-valued symmetric function in $L^2(\mathbb{R}^m)$ and let
\begin{equation}\label{meq:Fourier}
\mathcal{F}A(\lambda_1, \ldots, \lambda_m) = \frac{1}{(2\pi)^{m/2}}\int_{\mathbb{R}^m}e^{i\sum_{j=1}^m \xi_j\lambda_j}A(\xi_1,\ldots, \xi_m)d\xi_1\ldots d\xi_m
\end{equation}
be its Fourier transform. Then
$$\int_{\mathbb{R}^m}A(\xi_1,\ldots, \xi_m)dB(\xi_1) \ldots dB(\xi_m) \overset{(d)}{=} \int_{\mathbb{R}^m}\mathcal{F}A(\lambda_1,\ldots, \lambda_m)W(d\lambda_1)\ldots W(d\lambda_m).$$
\end{lemma}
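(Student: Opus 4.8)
The plan is to regard both sides as elements of an $m$-th Wiener chaos and to exploit that the Fourier transform $\mathcal{F}$ is a bijective linear isometry intertwining the two multiple-integral constructions. The two structural ingredients are: (i) by Plancherel's theorem, together with the elementary fact that $A$ is real and symmetric if and only if $\mathcal{F}A$ is symmetric and even (i.e. $\mathcal{F}A(\lambda_1,\dots,\lambda_m)=\overline{\mathcal{F}A(-\lambda_1,\dots,-\lambda_m)}$), the map $A\mapsto\mathcal{F}A$ is a bijective isometry from the real symmetric subspace of $L^2(\mathbb{R}^m)$ onto $\mathscr{H}_m$, with $\|\mathcal{F}A\|_{\mathscr{H}_m}=\|A\|_{L^2(\mathbb{R}^m)}$; (ii) both $I_m^B$ and $I_m^W$ are, up to the factor $\sqrt{m!}$, isometries into $L^2(\Omega)$ obtained by continuous extension from a total set of elementary kernels. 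Hence it is enough to establish the identity in law on such a total set and then pass to the $L^2(\Omega)$-limit, provided the equality in law is obtained for a whole family of kernels simultaneously, so that the limiting argument is justified.

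First I would dispose of the case $m=1$. For real $A\in L^2(\mathbb{R})$ the variable $\int_{\mathbb{R}}\mathcal{F}A(\lambda)\,W(d\lambda)$ is real-valued (because $\mathcal{F}A$ is Hermitian and $W(-d\lambda)=\overline{W(d\lambda)}$) and centered Gaussian with variance $\|\mathcal{F}A\|_{L^2}^2=\|A\|_{L^2}^2=\mathbb{E}\big[(\int_{\mathbb{R}}A\,dB)^2\big]$; more generally, for real $h_1,\dots,h_k\in L^2(\mathbb{R})$ the Gaussian vectors $(\int_{\mathbb{R}}h_j\,dB)_{1\le j\le k}$ and $(\int_{\mathbb{R}}\mathcal{F}h_j\,dW)_{1\le j\le k}$ have the same covariance matrix, again by Plancherel. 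Therefore the Gaussian family $\{\int_{\mathbb{R}}\mathcal{F}h\,dW:\ h\in L^2(\mathbb{R})\ \text{real}\}$ is an isonormal Gaussian process over $L^2(\mathbb{R})$ isometric to $\{\int_{\mathbb{R}}h\,dB\}$, so that all their finite-dimensional laws coincide.

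Next I would bootstrap to general $m$. Tensor powers $h^{\otimes m}$, with $h\in L^2(\mathbb{R})$ real, form a total subset of the symmetric part of $L^2(\mathbb{R}^m)$, they satisfy $\mathcal{F}(h^{\otimes m})=(\mathcal{F}h)^{\otimes m}$, and for both $I_m^B$ and $I_m^W$ one has the classical identity $I_m(h^{\otimes m})=\|h\|^m\,H_m\big(I_1(h)/\|h\|\big)$, where $H_m$ is the $m$-th Hermite polynomial and $I_1(h)$ stands for $\int_{\mathbb{R}}h\,dB$, resp. $\int_{\mathbb{R}}\mathcal{F}h\,dW$. Since the two underlying isonormal processes are isometric, this yields simultaneously, for finitely many $h$'s, $I_m^B(h^{\otimes m})\overset{(d)}{=}I_m^W((\mathcal{F}h)^{\otimes m})=I_m^W(\mathcal{F}(h^{\otimes m}))$, and then the same for finite linear combinations (each being a fixed measurable functional of the common Gaussian family). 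An arbitrary symmetric $A\in L^2(\mathbb{R}^m)$ is an $L^2$-limit of such combinations; by the isometry of $I_m^B$, $I_m^W$ and $\mathcal{F}$, both sides of the asserted identity converge in $L^2(\Omega)$, and equality in law passes to the limit. This proves the lemma.

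The main obstacle is the passage from first to higher chaos: equality in distribution of a single integral does not, by itself, survive linear combinations or the nonlinear map to higher Wiener chaoses. The point is to argue not integral-by-integral but at the level of the whole Gaussian structure, recognising that $\{\int_{\mathbb{R}}h\,dB\}$ and $\{\int_{\mathbb{R}}\mathcal{F}h\,dW\}$ are isometric isonormal processes, hence produce identical chaos decompositions and identical joint laws of all their multiple integrals. A secondary technical point is to verify carefully that $\mathcal{F}$ maps the real symmetric part of $L^2(\mathbb{R}^m)$ \emph{onto}, not merely into, $\mathscr{H}_m$, so that the right-hand side is well defined and the isometry is used in both directions.
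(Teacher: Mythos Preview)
The paper does not give its own proof of this lemma: it is stated with a direct citation to \cite[Lemma~6.1]{mTaqqu} and then used as a black box in the proof of the subsequent Lemma~\ref{mLemma2}. So there is nothing in the paper to compare your argument against.

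That said, your argument is correct and is essentially the standard way to prove the result. The key structural observation --- that the families $\{I_1^B(h):h\in L^2(\mathbb{R})\ \text{real}\}$ and $\{I_1^W(\mathcal{F}h):h\in L^2(\mathbb{R})\ \text{real}\}$ are isometric isonormal Gaussian processes by Plancherel, so that the entire chaos decompositions built on top of them coincide in law --- is exactly the right level at which to work, and you correctly flag that arguing marginal-by-marginal would not suffice. The bootstrap via $I_m(h^{\otimes m})=\|h\|^m H_m(I_1(h)/\|h\|)$, followed by density of linear spans of tensor powers and the $L^2$-isometry of both multiple integrals, is the clean route. Your remark that $\mathcal{F}$ maps the real symmetric part of $L^2(\mathbb{R}^m)$ \emph{onto} $\mathscr{H}_m$ is also well placed: it guarantees the right-hand integral is defined and that the correspondence is genuinely two-sided.
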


Applying Lemma \ref{mLemma6.1}, we deduce the following lemma which is an extended result of Lemma 6.2 in \cite{mTaqqu}.

\begin{lemma}\label{mLemma2}
Let 
$$ A(\xi_1, \ldots, \xi_{m+n}) = \int_{\mathbb{R}^2} \phi(z_1, z_2)\prod_{j=1}^{m}(z_1 - \xi_j)_+^{H_0 -\frac{3}{2}}\prod_{k=m+1}^{m+n}(z_2 - \xi_k)_+^{H_0 -\frac{3}{2}}dz_1dz_2$$
where $\frac{1}{2} < H_0 < 1$ and where $\phi$ is an integrable function on $\mathbb{R}^2$  whose Fourier transform is given by (\ref{meq:Fourier}).
Let 
$$\widetilde{A}(\xi_1, \ldots, \xi_{m+n}) = \frac{1}{(m+n)!}\sum_{\sigma \in \mathfrak{S}_{m+n}}A(\xi_{\sigma(1)}, \ldots, \xi_{\sigma(m+n)})$$
be the symmetrization of $A$. Assume that 
$$\int_{\mathbb{R}^{m+n}}|\widetilde{A}(\xi_1, \ldots, \xi_{m+n})|^2d\xi_1\ldots d\xi_{m+n} < \infty.$$
 Then,
\begin{align*}
&\int_{\mathbb{R}^{m+n}}\widetilde{A}(\xi_1,\ldots, \xi_{m+n})dB(\xi_1)\ldots dB(\xi_{m+n})\\ 
&\overset{(d)}{=} \bigg(\frac{\Gamma(H_0 - \frac{1}{2})}{\sqrt{2\pi}}\bigg)^{m+n} \int_{\mathbb{R}^{m+n}}W(d\lambda_1) \ldots W(d\lambda_{m+n}) \prod_{j=1}^{m+n} |\lambda_j|^{\frac{1}{2} - H_0}\\
&\qquad\qquad\quad\qquad\times\frac{1}{(m+n)!}\sum_{\sigma \in \mathfrak{S}_{m+n}} 2\pi\mathcal{F}\phi(\lambda_{\sigma(1)}+\ldots+ \lambda_{\sigma(m)}, \lambda_{\sigma(m+1)} + \ldots + \lambda_{\sigma(m+n)}).
\end{align*}
\end{lemma}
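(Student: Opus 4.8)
The plan is to follow the argument Taqqu uses for \cite[Lemma~6.2]{mTaqqu}, the only genuinely new feature being that here the weight $\phi$ depends on two variables; this is precisely what forces the $m+n$ integration variables to split into a block of $m$ (attached to $z_1$) and a block of $n$ (attached to $z_2$), and hence produces the symmetrized sum $\frac1{(m+n)!}\sum_{\sigma}\mathcal F\phi\big(\lambda_{\sigma(1)}+\cdots+\lambda_{\sigma(m)},\,\lambda_{\sigma(m+1)}+\cdots+\lambda_{\sigma(m+n)}\big)$ instead of the single argument appearing in Taqqu's one-variable statement. First I would record that $\widetilde A$ is real-valued (since $\phi$ is), symmetric by construction, and square-integrable by hypothesis, so that Lemma \ref{mLemma6.1} applies and yields
\[
\int_{\mathbb R^{m+n}}\widetilde A(\xi_1,\dots,\xi_{m+n})\,dB(\xi_1)\cdots dB(\xi_{m+n})\ \overset{(d)}{=}\ \int_{\mathbb R^{m+n}}\mathcal F\widetilde A(\lambda_1,\dots,\lambda_{m+n})\,W(d\lambda_1)\cdots W(d\lambda_{m+n}).
\]
Since the Fourier transform (\ref{meq:Fourier}) commutes with permutations of its arguments, $\mathcal F\widetilde A=\widetilde{\mathcal F A}$, so everything reduces to computing $\mathcal F A$ and then symmetrizing.

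To evaluate $\mathcal F A$ I would substitute the definition of $A$ into (\ref{meq:Fourier}), interchange the $(z_1,z_2)$-integration with the $\xi$-integrations, and use the elementary one-dimensional identity
\[
\int_{\mathbb R}e^{i\xi\lambda}(z-\xi)_+^{H_0-\frac32}\,d\xi=\Gamma\!\Big(H_0-\tfrac12\Big)\,e^{iz\lambda}\,|\lambda|^{\frac12-H_0}\,e^{-\frac{i\pi}{2}\left(H_0-\frac12\right)\operatorname{sgn}\lambda},
\]
which comes from the change of variables $u=z-\xi$ together with $\int_0^\infty e^{-iu\lambda}u^{s-1}\,du=\Gamma(s)\,|\lambda|^{-s}e^{-\frac{i\pi}{2}s\operatorname{sgn}\lambda}$ valid for $s=H_0-\tfrac12\in(0,\tfrac12)$. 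Taking the product over the $m$ variables carrying $z_1$ and over the $n$ variables carrying $z_2$, and then integrating against $\phi$ (whose Fourier transform is well defined and bounded because $\phi$ is integrable), one obtains
\[
\mathcal F A(\lambda_1,\dots,\lambda_{m+n})=\Big(\tfrac{\Gamma(H_0-\frac12)}{\sqrt{2\pi}}\Big)^{m+n}\Big(\prod_{j=1}^{m+n}|\lambda_j|^{\frac12-H_0}\Big)\,e^{-\frac{i\pi}{2}(H_0-\frac12)\sum_{j=1}^{m+n}\operatorname{sgn}\lambda_j}\cdot 2\pi\,\mathcal F\phi\Big(\sum_{j\le m}\lambda_j,\ \sum_{k>m}\lambda_k\Big).
\]
Both $\prod_{j}|\lambda_j|^{\frac12-H_0}$ and $\exp(-\tfrac{i\pi}{2}(H_0-\tfrac12)\sum_j\operatorname{sgn}\lambda_j)$ are already invariant under permutations of $(\lambda_1,\dots,\lambda_{m+n})$, so symmetrization only acts on the $\mathcal F\phi$-factor; consequently $\mathcal F\widetilde A$ equals the kernel displayed in the statement, multiplied by the extra unimodular factor $\exp(-\tfrac{i\pi}{2}(H_0-\tfrac12)\sum_j\operatorname{sgn}\lambda_j)$.

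It then remains to erase that phase, which is the single point calling for a short independent argument. Put $c(\lambda):=e^{-\frac{i\pi}{2}(H_0-\frac12)\operatorname{sgn}\lambda}$, so that $|c|\equiv 1$ and $c(-\lambda)=\overline{c(\lambda)}$. Then $\widehat W(A):=\int_A c(\lambda)\,W(d\lambda)$ is again a complex Gaussian random spectral measure with the same control measure $\mu$ — one checks $E[\widehat W(A)\overline{\widehat W(B)}]=\mu(A\cap B)$ and $\widehat W(-A)=\overline{\widehat W(A)}$ — hence $\widehat W\overset{(d)}{=}W$; therefore, for every even, symmetric $h\in\mathscr H_{m+n}$,
\[
\int_{\mathbb R^{m+n}}h(\lambda)\prod_{j=1}^{m+n}c(\lambda_j)\,W(d\lambda_1)\cdots W(d\lambda_{m+n})=\int_{\mathbb R^{m+n}}h(\lambda)\,\widehat W(d\lambda_1)\cdots \widehat W(d\lambda_{m+n})\ \overset{(d)}{=}\ \int_{\mathbb R^{m+n}}h(\lambda)\,W(d\lambda_1)\cdots W(d\lambda_{m+n}).
\]
Applying this with $h$ the kernel from the statement — which is symmetric by construction, even because $\phi$ is real, and square-integrable since it has the same modulus as $\mathcal F\widetilde A\in L^2(\mathbb R^{m+n})$ (Plancherel) — concludes the proof. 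The only real obstacle is technical: making the Fubini interchange and the one-dimensional Fourier computation above rigorous despite the fact that $\xi\mapsto(z-\xi)_+^{H_0-3/2}$ is neither integrable nor square-integrable on $\mathbb R$, so those Fourier integrals are only oscillatory (conditionally) convergent, and checking at each stage that the kernels involved really do belong to $\mathscr H_{m+n}$; this is done exactly as in \cite{mTaqqu}.
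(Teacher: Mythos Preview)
Your proposal is correct and follows essentially the same route as the paper: apply Lemma~\ref{mLemma6.1}, compute $\mathcal F A$ via the one-dimensional identity $\int_0^\infty e^{-iu\lambda}u^{H_0-\frac32}du=\Gamma(H_0-\tfrac12)|\lambda|^{\frac12-H_0}e^{-\frac{i\pi}{2}(H_0-\frac12)\operatorname{sgn}\lambda}$, symmetrize, and absorb the unimodular phase into $W$. The only difference is one of presentation: the paper carries out the truncation argument explicitly (defining $A_T={\bf 1}_{\{|\xi_j|<T\ \forall j\}}A$, bounding the partial integrals $B_\lambda(a,b)=\frac{1}{\sqrt{2\pi}}\int_a^b e^{-iu\lambda}u^{H_0-\frac32}du$ uniformly in $T$, and passing to the limit) rather than deferring it to \cite{mTaqqu}, and it disposes of the phase in one line by citing \cite[Prop.~4.2]{mDobrushin} for $C(\lambda_j)W(d\lambda_j)\overset{(d)}{=}W(d\lambda_j)$ where you spell out the construction of $\widehat W$.
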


\begin{proof}
Thanks to Lemma \ref{mLemma6.1}, we first estimate the Fourier transform of $A(\xi_1, \ldots, \xi_{m+n})$. Because the function $u_+^{H_0 - \frac{3}{2}}$ belongs neither to $L^1(\mathbb{R})$ nor to $L^2(\mathbb{R})$, by similar arguments as in the proof of \cite[Lemma 6.2]{mTaqqu} let us introduce
$$A_T(\xi_1, \ldots, \xi_{m+n})= \begin{cases}
&A(\xi_1, \ldots \xi_{m+n}) \text{ if } |\xi_j| < T \text{ }\forall  j =1, \ldots, m+n.\\
&0 \qquad\qquad\qquad\text{ otherwise.}
\end{cases}
$$
Set
$$B_\lambda(a, b) = \frac{1}{\sqrt{2\pi}}\int_a^b e^{-iu\lambda}u^{H_0 - \frac{3}{2}}du$$
for $0 \leq a \leq b < \infty$, and $B_\lambda(a, \infty)  = \lim_{b \to \infty}B_\lambda(a, b)$. By \cite[page 80]{mTaqqu}, we get 
$$\sup_{0 \leq a \leq b}|B_\lambda(a, b)| \leq \frac{1}{\sqrt{2\pi}}\bigg(\frac{1}{H_0 - \frac{1}{2}} + \frac{2}{|\lambda|}\bigg).$$
Now, 
\begin{align*}
&\mathcal{F}A_T(\lambda_1,\ldots, \lambda_{m+n}) = \frac{1}{(\sqrt{2\pi})^{m+n}}\int_{\mathbb{R}^{m+n}}d\xi_1\ldots d\xi_{m+n} e^{i \sum_{j=1}^{m+n}\lambda_j\xi_j} \int_{\mathbb{R}^2}dz_1dz_2 \phi(z_1, z_2)\\
&\qquad\quad\qquad\qquad\qquad\qquad\times\prod_{j=1}^m(z_1 - \xi_j)_+^{H_0 - \frac{3}{2}}\prod_{j=m+1}^{m+n}(z_2 - \xi_j)_+^{H_0 - \frac{3}{2}}\mathbf{1}_{\{|\xi_j|<T, \forall j =1,\ldots, m+n\}}.
\end{align*}
The change of variables $\xi_j = z_1 - u_j$ for $j=1, \ldots, m$ and $\xi_j = z_2 - u_j$ for $j=m+1, \ldots, m+n$ yields
\begin{align*}
&\mathcal{F}A_T(\lambda_1,\ldots, \lambda_{m+n})\\
&= \frac{1}{(\sqrt{2\pi})^{m+n}}\int_{\mathbb{R}^{m+n}}du_1 \ldots du_{m+n} e^{-i \sum_{j=1}^{m+n}\lambda_j u_j} \int_{\mathbb{R}^2}dz_1dz_2 \phi(z_1, z_2)e^{i \sum_{j=1}^m\lambda_jz_1 }e^{i \sum_{j=m+1}^{m+n}\lambda_jz_2}\\
&\qquad\qquad\qquad\times \prod_{j=1}^{m} u_j^{H_0 - \frac{3}{2}}\mathbf{1}_{\{u_j > 0\}}\mathbf{1}_{\{ z_1 - T < u_j < z_1 + T\}}\prod_{j=m+1}^{m+n} u_j^{H_0 - \frac{3}{2}}\mathbf{1}_{\{u_j > 0\}}\mathbf{1}_{\{ z_2 - T < u_j < z_2 + T\}}.
\end{align*}
Suppose that $\lambda_1,\ldots, \lambda_{m+n}$ are different from zero. Since $\phi$ is integrable on $\mathbb{R}^2$ then
\begin{align*}
|\mathcal{F}A_T&(\lambda_1,\ldots, \lambda_{m+n})|\\
&\leq \int_{\mathbb{R}^2}dz_1dz_2 |\phi(z_1, z_2)| \prod_{j=1}^m B_{\lambda_j}(\max(0, z_1-T), \max(0, z_1+T))\\ 
&\qquad\qquad\qquad\quad\quad\times \prod_{j=m+1}^{m+n} B_{\lambda_j}(\max(0, z_2-T), \max(0, z_2+T))\\
& \leq \int_{\mathbb{R}^2}dz_1dz_2 |\phi(z_1, z_2)| \prod_{j=1}^{m+n}\frac{1}{\sqrt{2\pi}}\bigg(\frac{1}{H_0 -\frac{1}{2}} + \frac{2}{|\lambda_j|}\bigg),
\end{align*}
which is finite and uniformly bounded with respect to $T$. Thus, 
\begin{align*}
&\mathcal{F}A(\lambda_1,\ldots, \lambda_{m+n})= \lim_{T\to\infty}\mathcal{F}A_T(\lambda_1,\ldots, \lambda_{m+n})\\ 
& = 2\pi \mathcal{F}\phi(\lambda_1+ \ldots+ \lambda_m, \lambda_{m+1} + \ldots + \lambda_{m+n}) \prod_{j=1}^{m+n} \bigg(\frac{1}{\sqrt{2\pi}}\int_0^\infty e^{-iu\lambda_j}u^{H_0 -\frac{3}{2}}du \bigg).
\end{align*}
The integral inside the product is an improper Riemann integral. After the change of variables $v=u|\lambda_j|$, we get
\begin{align*}
\mathcal{F}A(&\lambda_1,\ldots, \lambda_{m+n})\\ 
& = 2\pi \mathcal{F}\phi(\lambda_1+ \ldots+ \lambda_m, \lambda_{m+1} + \ldots + \lambda_{m+n})\\
&\qquad\qquad\qquad\times \prod_{j=1}^{m+n} \bigg(|\lambda_j|^{\frac{1}{2}-H_0}\frac{1}{\sqrt{2\pi}}\int_0^\infty e^{-iu\text{sign} \lambda_j}u^{H_0 -\frac{3}{2}}du \bigg)\\
&= 2\pi \mathcal{F}\phi(\lambda_1+ \ldots+ \lambda_m, \lambda_{m+1} + \ldots + \lambda_{m+n}) \\
&\qquad\qquad\qquad\times\prod_{j=1}^{m+n} \bigg(|\lambda_j|^{\frac{1}{2}-H_0}\frac{1}{\sqrt{2\pi}}\Gamma(H_0 -\frac{1}{2})C(\lambda_j)\bigg),
\end{align*}
where $C(\lambda) = e^{-i\frac{\pi}{2}(H_0 - \frac{1}{2})}$ for $\lambda > 0, C(-\lambda) =\overline{C(\lambda)}$ and thus $|C(\lambda)|=1$ for all $\lambda \ne 0$, see appendix for the detailed computations. Applying Lemma \ref{mLemma6.1} by noticing that $C(\lambda_j)W(d\lambda_j) \overset{(d)}{=} W(d\lambda_j)$ (see \cite[Proposition 4.2]{mDobrushin}) and symmetrizing the Fourier transform of $A(\lambda_1, \ldots, \lambda_{m+n})$ lead to the desired conclusion.
\end{proof}

\subsection{Stochastic representations of the Rosenblatt process}
Let $(R^H(t))_{t \geq 0}$ be the Rosenblatt process of parameter $H \in (\frac{1}{2}, 1)$. The time representation of $R^H$ is 
\begin{align*}
R^H(t) &= a_1(D)\int_{\mathbb{R}^2}\bigg(\int_0^t (s-\xi_1)_+^{D-\frac{3}{2}}(s-\xi_2)_+^{D-\frac{3}{2}}ds\bigg)dB(\xi_1)dB(\xi_2)\\
 &=A_1(H)\int_{\mathbb{R}^2}\bigg(\int_0^t (s-\xi_1)_+^{\frac{H}{2}-1}(s-\xi_2)_+^{\frac{H}{2}-1}ds\bigg)dB(\xi_1)dB(\xi_2), 
\end{align*}
where $D = \frac{H+1}{2}$ and
$$a_1(D):= \frac{\sqrt{(D-1/2)(4D-3)}}{\beta(D-1/2, 2-2D)} = \frac{\sqrt{(H/2)(2H-1)}}{\beta(H/2, 1-H)}=:A_1(H).$$
Observe also that $1/2 < H<1 \Longleftrightarrow 3/4 < D < 1$. The corresponding spectral representation of this process, see for instance \cite{mTaqqu, mTaqqu2} or apply Lemma \ref{mLemma2}, is given by
\begin{align*}
R^H(t) &= a_2(D)\int_{\mathbb{R}^2}|\lambda_1|^{\frac{1}{2} - D}|\lambda_2|^{\frac{1}{2} - D}\frac{e^{i(\lambda_1+\lambda_2)t}-1}{i(\lambda_1+\lambda_2)}W(d\lambda_1)W(d\lambda_2)\\
 &=A_2(H)\int_{\mathbb{R}^2}|\lambda_1|^{-\frac{H}{2}}|\lambda_2|^{-\frac{H}{2}}\frac{e^{i(\lambda_1+\lambda_2)t}-1}{i(\lambda_1+\lambda_2)}W(d\lambda_1)W(d\lambda_2),
\end{align*}
where
$$a_2(D):= \sqrt{\frac{(2D-1)(4D-3)}{2[2\Gamma(2-2D)\sin (\pi (D-1/2))]^2}} = \sqrt{\frac{H(2H-1)}{2[2\Gamma(1-H)\sin (H\pi/2)]^2}}=:A_2(H).$$

\section{Proof of Theorem \ref{mTheorem1}}
We are now in a position to give the proof of our Theorem \ref{mTheorem1}. It is devided into four steps.

\subsection{Chaotic decomposition}

Using (\ref{meq:13}), we can write $X^{(q, H)}$ as a $q$-th Wiener-It\^{o} integral with respect to the standard two-sided Brownian motion $(B_t)_{t \in \mathbb{R}}$ as follows:
\begin{equation}\label{meq:5}
X^{(q, H)}_t = \int_{\mathbb{R}^q} L(x, t)(\xi_1,\ldots, \xi_q)dB(\xi_1)\ldots dB(\xi_q) = I_q^B(L(x, t)),
\end{equation}
where
\begin{equation}\label{meq:6}
L(x, t)(\xi_1,\ldots, \xi_q): = c(H, q) \int_{\mathbb{R}}\mathbf{1}_{[0, t]}(z) x(t -z) \prod_{j=1}^q (z - \xi_j)_+^{H_0 - \frac{3}{2}}dz,
\end{equation}
 with $c(H, q)$ and $H_0$ given by (\ref{meq:H2}). Applying the product formula (\ref{meq:P1}) for multiple Wiener-It\^{o} integrals, we easily obtain that
\begin{equation}\label{meq:dola2}
(X_t^{(q, H)})^2 - E[(X_t^{(q, H)})^2] = \sum_{r=0}^{q-1}r!\binom{q}{r}^2I_{2q-2r}^B(L(x, t) \widetilde{\otimes}_r L(x, t)).
\end{equation}
{\allowdisplaybreaks
Let us compute the contractions appearing in the right-hand side of (\ref{meq:dola2}). For every $ 0 \leq r \leq q-1$, by using Fubini's theorem we first have
\begin{align*}
L(x,& s) \otimes_r L(x, s) (\xi_1, \ldots, \xi_{2q-2r})\\ 
& = \int_{\mathbb{R}^r}dy_1 \ldots dy_r L(x, s)(\xi_1, \ldots, \xi_{q-r}, y_1, \ldots, y_r)L(x, s)(\xi_{q-r+1}, \ldots, \xi_{2q-2r}, y_1, \ldots, y_r)\\
&= c(H, q)^2 \int_{\mathbb{R}^r}dy_1 \ldots dy_r \int_0^s dz_1x(s-z_1) \prod_{j=1}^{q-r}(z_1 - \xi_j)_+^{H_0 - \frac{3}{2}}\prod_{i=1}^r(z_1 - y_i)_+^{H_0 - \frac{3}{2}}\\
& \qquad\qquad\quad\qquad\qquad\times  \int_0^s dz_2x(s-z_2) \prod_{j=q-r+1}^{2q-2r}(z_2 - \xi_j)_+^{H_0 - \frac{3}{2}}\prod_{i=1}^r(z_2 - y_i)_+^{H_0 - \frac{3}{2}}\\
& = c(H, q)^2 \int_{[0, s]^2}dz_1dz_2 x(s-z_1) x(s-z_2) \prod_{j=1}^{q-r}(z_1 - \xi_j)_+^{H_0 - \frac{3}{2}}\prod_{j=q-r+1}^{2q-2r}(z_2 - \xi_j)_+^{H_0 - \frac{3}{2}}\\
&\qquad\qquad\qquad\qquad\quad\times \bigg(\int_{\mathbb{R}}dy (z_1 - y)_+^{H_0 - \frac{3}{2}} (z_2 - y)_+^{H_0 - \frac{3}{2}}\bigg)^r,
\end{align*}
}and, since for any $z_1, z_2 \geq 0$
\begin{equation}\label{meq:2}
 \int_{\mathbb{R}}(z_1-y)_+^{H_0 - \frac{3}{2}}(z_2-y)_+^{H_0 -\frac{3}{2}}dy = \beta\Big(H_0 - \frac{1}{2}, 2-2H_0\Big)|z_1-z_2|^{2H_0-2},
\end{equation}
we end up with the following expression
 \begin{align}\label{m1}
&L(x, s) \otimes_r L(x, s) (\xi_1, \ldots, \xi_{2q-2r}) \nonumber \\ 
& = c(H, q)^2\beta\Big(H_0 - \frac{1}{2}, 2-2H_0\Big)^r\int_{[0, s]^2}dz_1dz_2 x(s-z_1) x(s-z_2)|z_1 -z_2|^{(2H_0-2)r} \nonumber \\
&\qquad\qquad\qquad\qquad\qquad\qquad\qquad\times \prod_{j=1}^{q-r}(z_1 - \xi_j)_+^{H_0 - \frac{3}{2}}\prod_{j=q-r+1}^{2q-2r}(z_2 - \xi_j)_+^{H_0 - \frac{3}{2}}.
 \end{align}
Recall $G_T^{(q, H)}$ from (\ref{meq:Gt}). As a consequence, we can write 
\begin{equation}\label{meq:8}
G_T^{(q, H)} (t)= F_{2q, T}(t) + c_{2q-2}F_{2q-2, T}(t) + \ldots + c_4F_{4, T}(t) + c_2F_{2, T}(t)
\end{equation}
where $c_{2q-2r}:= r!\binom{q}{r}^2$ and for $ 0 \leq r \leq q-1$,
\begin{equation}\label{meq:7}
F_{2q-2r, T}(t): = \frac{1}{T^{2H_0 -1}}\int_0^{Tt} I_{2q-2r}^B(L(x,s) \widetilde{\otimes}_r L(x, s))ds,
\end{equation}
where the kernels in each Wiener integral above are given explicitly in (\ref{m1}).

\subsection{Spectral representations}

Recall the expression of the contractions $L(x, s) \otimes_r L(x, s), 0\leq r \leq q-1$ given in (\ref{m1}). Set
\begin{align*}
\phi_r(s, z_1, z_2) :=&c(H, q)^2\beta\Big(H_0 - \frac{1}{2}, 2-2H_0\Big)^r\\
&\times  \mathbf{1}_{[0, s]}(z_1)\mathbf{1}_{[0, s]}(z_2)x(s-z_1)x(s-z_2) |z_1-z_2|^{(2H_0-2)r}.
\end{align*}
It is a symmetric function with respect to $z_1$ and $z_2$. Furthermore, by H\"{o}lder's inequality, we have
\begin{align*}
&\int_{\mathbb{R}^2}\Big|\mathbf{1}_{[0, s]}(z_1)\mathbf{1}_{[0, s]}(z_2)x(s-z_1)x(s-z_2) |z_1-z_2|^{(2H_0-2)r}\Big|dz_1dz_2\\
&\leq \int_{[0, s]^2} |x(s-z_1)| |x(s-z_2)| |z_1-z_2|^{(2H_0-2)r}dz_1dz_2 \\
&= \int_{[0, s]^2} |x(z_1)| |x(z_2)| |z_1-z_2|^{r\frac{(2H-2)}{q}}dz_1dz_2\\
&\leq \bigg(\int_{[0, \infty)^2}|x(z_1)||x(z_2)||z_1 -z_2|^{2H-2}dz_1dz_2 \bigg)^{\frac{r}{q}}\bigg(\int_0^\infty |x(z)|dz\bigg)^{2(1-\frac{r}{q})}.
\end{align*}
Using the integrability of $x$ together with the assumption (\ref{meq:1}), it turns out that $\phi_r(. , z_1, z_2) $ is integrable on $\mathbb{R}^2_+$. Applying Lemma \ref{mLemma2} with $m=n=q-r$, we get
\begin{align*}
F_{2q-2r, T}(t) &= \frac{1}{T^{2H_0 - 1}}\int_0^{Tt} I_{2q-2r}^B(L(x, s) \widetilde{\otimes}_r L(x, s))ds \nonumber\\
&  \overset{(d)}{=} A_r(H, q) \frac{1}{T^{2H_0 -1}} \int_{\mathbb{R}^{2q-2r}}W(d\lambda_1)\ldots W(d\lambda_{2q-2r}) \prod_{j=1}^{2q-2r}|\lambda_j|^{\frac{1}{2}-H_0} \nonumber\\
&\times \frac{1}{(2q-2r)!}\sum_{\sigma \in \mathfrak{S}_{2q-2r}}\int_0^{Tt}ds \int_{[0, s]^2}d\xi_1d\xi_2 x(s-\xi_1) x(s-\xi_2)|\xi_1 -\xi_2|^{(2H_0-2)r} \nonumber\\
&\qquad\qquad\qquad\qquad\qquad\qquad\times e^{i(\lambda_{\sigma(1)} + \ldots+ \lambda_{\sigma(q-r)})\xi_1}e^{i(\lambda_{\sigma(q-r+1)} + \ldots + \lambda_{\sigma(2q-2r)})\xi_2},
 \end{align*}
where
\begin{equation}\label{meq:Ar}
A_r(H,q) := c(H, q)^2 \beta(H_0 - \frac{1}{2}, 2- 2H_0)^r \bigg( \frac{\Gamma(H_0 - \frac{1}{2})}{\sqrt{2\pi}}\bigg)^{2q-2r}.
\end{equation}
The change of variable $s = Ts'$ yields
\begin{align*}
F_{2q-2r, T}(t) & \overset{(d)}{=} A_r(H, q)T^{2-2H_0} \int_{\mathbb{R}^{2q-2r}}W(d\lambda_1)\ldots W(d\lambda_{2q-2r}) \prod_{j=1}^{2q-2r}|\lambda_j|^{\frac{1}{2}-H_0}\nonumber\\
&\times \frac{1}{(2q-2r)!}\sum_{\sigma \in \mathfrak{S}_{2q-2r}}\int_0^{t}ds \int_{[0, Ts]^2}d\xi_1d\xi_2 x(Ts-\xi_1) x(Ts-\xi_2)|\xi_1 -\xi_2|^{(2H_0-2)r} \nonumber\\
&\qquad\qquad\qquad\qquad\qquad\qquad\times e^{i(\lambda_{\sigma(1)} + \ldots+ \lambda_{\sigma(q-r)})\xi_1}e^{i(\lambda_{\sigma(q-r+1)} + \ldots + \lambda_{\sigma(2q-2r)})\xi_2}.
 \end{align*}
Let us do a further change of variables: $\lambda'_{\sigma(j)} = T\lambda_{\sigma(j)}, j = 1, \ldots, 2q-2r $ and $\xi'_k = Ts -  \xi_k, k=1,2$. Thanks to the self-similarity of $W$ with index $1/2$ (that is, $W(T^{-1}d\lambda)$ has the same law as $T^{-1/2}W(d\lambda)$) we finally obtain that
\begin{align}\label{m4}
F_{2q-2r, T}(t) & \overset{(d)}{=} A_r(H, q)T^{-(2-2H_0)(q-1-r)} \nonumber \\
& \times\int_{\mathbb{R}^{2q-2r}}W(d\lambda_1)\ldots W(d\lambda_{2q-2r}) \prod_{j=1}^{2q-2r}|\lambda_j|^{\frac{1}{2}-H_0}\int_0^{t}ds e^{i(\lambda_1 + \ldots + \lambda_{2q-2r})s}\nonumber\\
&\times \frac{1}{(2q-2r)!}\sum_{\sigma \in \mathfrak{S}_{2q-2r}} \int_{[0, Ts]^2}d\xi_1d\xi_2 x(\xi_1) x(\xi_2)|\xi_1 -\xi_2|^{(2H_0-2)r}  \nonumber\\
&\qquad\qquad\qquad\qquad\quad\times e^{-i (\lambda_{\sigma(1)} + \ldots+ \lambda_{\sigma(q-r)})\frac{\xi_1}{T}}e^{-i (\lambda_{\sigma(q-r+1)} + \ldots + \lambda_{\sigma(2q-2r)})\frac{\xi_2}{T}}.
 \end{align}

\subsection{Reduction lemma}

\begin{lemma}\label{mReduction}
Fix $t$, fix $H \in (\frac{1}{2}, 1)$ and fix $q \geq 2$. Assume (\ref{meq:1}) and  the integrability of the kernel $x$. Then for any $r \in \{0, \ldots, q-2 \}$, one has
$$\lim_{T \to \infty} E[F_{2q-2r, T}(t)^2]= 0.$$ 
\end{lemma}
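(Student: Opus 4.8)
The plan is to use the spectral representation (\ref{m4}) for $F_{2q-2r,T}(t)$ together with the isometry property (\ref{meq:21}) for multiple Wiener-It\^{o} integrals with respect to the random spectral measure $W$, and then show that the resulting $L^2$-norm of the kernel is $O(T^{-2(2-2H_0)(q-1-r)})$, which tends to $0$ since $q-1-r \geq 1$ and $2-2H_0 > 0$. First I would fix $t$ and apply (\ref{meq:21}) to (\ref{m4}): writing the symmetrized kernel as $g_{r,T}$, we get $E[F_{2q-2r,T}(t)^2] = (2q-2r)!\,A_r(H,q)^2\,T^{-2(2-2H_0)(q-1-r)}\|g_{r,T}\|^2_{L^2(\mathbb{R}^{2q-2r})}$, so everything reduces to bounding $\|g_{r,T}\|^2$ uniformly in $T$. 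The factor $T^{-2(2-2H_0)(q-1-r)}$ is the gain, and it is strictly negative in the exponent precisely when $r \leq q-2$, which is exactly the range in the statement (the borderline case $r=q-1$ giving the nontrivial limit is excluded).

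The main work is to prove $\sup_T \|g_{r,T}\|^2_{L^2(\mathbb{R}^{2q-2r})} < \infty$. For this I would drop the symmetrization (it only decreases the $L^2$-norm, by the analogue of (\ref{meq:3})), expand the square $\|g_{r,T}\|^2$ as a double integral over $(\lambda_1,\dots,\lambda_{2q-2r})$ against its conjugate, and also expand the two $s$-integrals and the two pairs of $\xi$-integrals, using $|e^{\cdots}|$-type bounds. The key auxiliary estimate is the spectral identity
\[
\int_{\mathbb{R}}|\lambda|^{1-2H_0}e^{i\lambda(a-b)}\,d\lambda = K_{H_0}\,|a-b|^{2H_0-2}
\]
(valid since $\frac12 < H_0 < 1$ so $1-2H_0 \in (-1,0)$), which lets me integrate out each $\lambda_j$ variable and convert the product $\prod_j |\lambda_j|^{\frac12-H_0}$ together with the exponentials back into a product of kernels $|\cdot|^{2H_0-2}$ in the time variables. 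After integrating out all $\lambda$'s I am left with a deterministic multiple integral in the $s$, $s'$, $\xi$, $\xi'$ variables whose integrand is a product of $x(\xi_\cdot)x(\xi'_\cdot)$ factors, factors $|\xi_1-\xi_2|^{(2H_0-2)r}$, $|\xi'_1-\xi'_2|^{(2H_0-2)r}$, and cross factors of the form $|(\xi_i/T - \xi'_k/T) + (s-s')|^{2H_0-2}$ coming from the merged exponentials; this is essentially the covariance computation in the physical domain, so alternatively one could avoid the detour through $W$ and estimate $E[F_{2q-2r,T}^2]$ directly from (\ref{meq:7})–(\ref{m1}) using (\ref{meq:orthogonal}). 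Either way, the cross factors are uniformly controlled by combining Hölder's inequality in the style already used in Section 3.2 (splitting the exponents $(2H_0-2)$ into an $L^1$-part absorbed by $\int|x|$ and a long-range part absorbed by assumption (\ref{meq:1})) with the boundedness of the $s$-integral over the fixed compact $[0,t]$; the point is that each of the $2(q-1-r)$ ``extra'' frequency variables contributes a convergent $\int |\lambda|^{1-2H_0}(1+|\lambda|)^{-\text{something}}$-type integral rather than a divergent one, which is what makes $\|g_{r,T}\|^2$ finite and, in fact, bounded in $T$.

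The hard part will be the bookkeeping in the last step: keeping track of which frequency variables are ``paired'' across the two copies of the kernel and verifying that after integrating them out the remaining space-time integral is finite uniformly in $T$, using only the two hypotheses on $x$ (integrability and (\ref{meq:1})) — in particular making sure the scaling factors $\xi_i/T$ inside the exponentials cause no trouble (they only help, since $|e^{-i\lambda \xi/T}|=1$ and in the limit they disappear, but for the uniform bound one simply bounds these exponentials by $1$). Once $\sup_T\|g_{r,T}\|^2<\infty$ is established, the conclusion $\lim_{T\to\infty}E[F_{2q-2r,T}(t)^2]=0$ for $r\in\{0,\dots,q-2\}$ is immediate from the prefactor $T^{-2(2-2H_0)(q-1-r)}\to 0$.
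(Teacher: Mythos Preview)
Your strategy mirrors the paper's closely: pass to the spectral representation (\ref{m4}), isolate the prefactor $T^{-2(2-2H_0)(q-1-r)}\to 0$, and control what remains. The paper does the last step via a pointwise-limit argument on the inner integrand together with the bound $|\tfrac{e^{i\Lambda}-1}{i\Lambda}|^2\le\min(1,4\Lambda^{-2})$, whereas you aim for the cleaner statement $\sup_T\|g_{r,T}\|^2_{L^2(\mathbb{R}^{2q-2r})}<\infty$.

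That uniform bound, however, is \emph{false} in part of the parameter range, so the route cannot work as stated. Take $q=2$, $r=0$ (so $2q-2r=4$ and $2-2H_0=1-H$). As $T\to\infty$ the kernel $g_{0,T}$ converges pointwise to a constant multiple of $\prod_{j=1}^4|\lambda_j|^{1/2-H_0}\,\tfrac{e^{i\Lambda t}-1}{i\Lambda}$, and by Fatou $\liminf_T\|g_{0,T}\|^2\ge\|\text{limit}\|^2$. Using precisely the Fourier identity you invoke, this limiting $L^2$-norm equals a constant times $\int_0^t\int_0^t|s-s'|^{4(2H_0-2)}\,ds\,ds'=\int_0^t\int_0^t|s-s'|^{2(2H-2)}\,ds\,ds'$, which diverges for every $H\le\tfrac34$. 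More generally the uniform bound fails whenever $(2q-2r)(2-2H_0)=\tfrac{4(1-H)(q-r)}{q}\ge 1$, in particular always for $r=0$ when $H\le\tfrac34$. Bounding $|e^{-i\lambda\xi/T}|$ by $1$ does not rescue the argument: once you do so, the only source of decay in $\lambda$ is the $s$-integral, but the inner $\xi$-integral still depends on $s$ through its domain $[0,Ts]^2$, so $\int_0^t e^{i\Lambda s}\Psi_T(s)\,ds$ does not inherit the clean $|\Lambda|^{-1}$ decay. The lemma is true, but for small $H$ the rate is slower than the prefactor suggests (e.g.\ $E[F_{4,T}^2]\asymp T^{1-2H}$ for $q=2$, $r=0$, $H<\tfrac34$); a correct proof must show $\|g_{r,T}\|^2=o(T^{2(2-2H_0)(q-1-r)})$ rather than $O(1)$, or estimate $\int_0^{Tt}\!\int_0^{Tt}\langle L_s\widetilde\otimes_r L_s,\,L_{s'}\widetilde\otimes_r L_{s'}\rangle\,ds\,ds'$ directly in the time domain and split cases according to the integrability of the resulting $|s-s'|^\beta$ factors.
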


\begin{proof}
Without loss of generality, we may and will assume that $t=1$. From the spectral representation of multiple Wiener-It\^{o} integrals (\ref{m4}), one has 
\eject
\begin{align*}
&E[F_{2q-2r, T}(1)^2]   \\
&=  T^{-2(2-2H_0)(q-1-r)}  A_r^2(H, q)(2q-2r)!\int_{\mathbb{R}^{2q-2r}} d\lambda_1 \ldots d\lambda_{2q-2r} \prod_{j=1}^{2q-2r}|\lambda_j|^{1-2H_0}\\
&\times \bigg( \frac{1}{(2q-2r)!}\sum_{\sigma \in \mathfrak{S}_{2q-2r}}\int_0^{1}ds e^{i(\lambda_1 + \ldots + \lambda_{2q-2r})s} \int_{[0, Ts]^2}d\xi_1d\xi_2 x(\xi_1) x(\xi_2)|\xi_1 -\xi_2|^{(2H_0-2)r} \\
 &\hspace{6cm}\times e^{-i (\lambda_{\sigma(1)} + \ldots+ \lambda_{\sigma(q-r)})\frac{\xi_1}{T}}e^{-i (\lambda_{\sigma(q-r+1)} + \ldots + \lambda_{\sigma(2q-2r)})\frac{\xi_2}{T}} \bigg)^2.
\end{align*}
Since $x$ is a real-valued integrable function on $[0, \infty)$ satisfying assumption (\ref{meq:1}), we deduce from Lebesgue dominated convergence that, as $T \to \infty$,
\begin{align*}
& \frac{1}{(2q-2r)!}\sum_{\sigma \in \mathfrak{S}_{2q-2r}}\int_0^{1}ds e^{i(\lambda_1 + \ldots + \lambda_{2q-2r})s} \int_{[0, Ts]^2}d\xi_1d\xi_2 x(\xi_1) x(\xi_2)|\xi_1 -\xi_2|^{(2H_0-2)r} \\
 &\hspace{6cm} \times e^{-i (\lambda_{\sigma(1)} + \ldots+ \lambda_{\sigma(q-r)})\frac{\xi_1}{T}}e^{-i (\lambda_{\sigma(q-r+1)} + \ldots + \lambda_{\sigma(2q-2r)})\frac{\xi_2}{T}}\\
&\longrightarrow \int_{[0, \infty)^2}x(u)x(v)|u-v|^{(2H_0-2)r}dudv \int_0^1 e^{i(\lambda_1+\ldots + \lambda_{2q-2r})s}ds.
\end{align*}
Since $1- \frac{1}{2q} < H_0 < 1$ and $0 \leq r \leq q-2$, we have $ T^{-2(2-2H_0)(q-1-r)} \to 0 $ as $T \to \infty$. Moreover, since $\int_0^1 e^{i(\lambda_1+\ldots + \lambda_{2q-2r})\xi}d\xi = \frac{e^{i(\lambda_1+\ldots + \lambda_{2q-2r})} -1}{i(\lambda_1 + \ldots + \lambda_{2q-2r})}$,
\begin{align*}
&\int_{\mathbb{R}^{2q-2r}} d\lambda_1 \ldots d\lambda_{2q-2r} \prod_{j=1}^{2q-2r}|\lambda_j|^{1-2H_0} \bigg|\frac{e^{i(\lambda_1+\ldots + \lambda_{2q-2r})} -1}{i(\lambda_1 + \ldots + \lambda_{2q-2r})}\bigg|^2 \leq \bigg(\int_{\mathbb{R}} |\lambda|^{1-2H_0}d\lambda\bigg)^{2q-2r}
\end{align*}
which is integrable at zero, and
\begin{align*}
&\int_{\mathbb{R}^{2q-2r}} d\lambda_1 \ldots d\lambda_{2q-2r} \prod_{j=1}^{2q-2r}|\lambda_j|^{1-2H_0} \bigg|\frac{e^{i(\lambda_1+\ldots + \lambda_{2q-2r})} -1}{i(\lambda_1 + \ldots + \lambda_{2q-2r})}\bigg|^2\\ 
& \leq \int_{\mathbb{R}^{2q-2r}} d\lambda_1 \ldots d\lambda_{2q-2r} \prod_{j=1}^{2q-2r}|\lambda_j|^{1-2H_0} \frac{4}{(\lambda_1 + \ldots + \lambda_{2q-2r})^2}
\end{align*}
which is integrable at infinity, we have
$$\int_{\mathbb{R}^{2q-2r}} d\lambda_1 \ldots d\lambda_{2q-2r} \prod_{j=1}^{2q-2r}|\lambda_j|^{1-2H_0} \bigg|\frac{e^{i(\lambda_1+\ldots + \lambda_{2q-2r})} -1}{i(\lambda_1 + \ldots + \lambda_{2q-2r})}\bigg|^2 < \infty.$$
All these facts taken together imply 
\begin{equation}\label{meq:10}
E[F_{2q-2r, T}(1)^2] \longrightarrow 0, \text{ as } T \to \infty, \text{ for all } 0 \leq r \leq q-2,
\end{equation}
which proves the lemma.
\end{proof}

\subsection{Concluding the proof of Theorem \ref{mTheorem1}}
Thanks to Lemma \ref{mReduction}, we are left to concentrate on the convergence of the term $F_{2, T}$ (belonging to the second Wiener chaos) corresponding to $r=q-1$. Recall from (\ref{m4}) that $F_{2, T}(t)$ has the same law as the double Wiener integral with symmetric kernel given by
\begin{align}\label{meq:15}
f_T(t, \lambda_1, &\lambda_2):= A_{q-1}(H,q) |\lambda_1|^{\frac{1}{2} - H_0}|\lambda_2|^{\frac{1}{2} - H_0}\int_0^t ds e^{i(\lambda_1 +\lambda_2)s} \nonumber\\ 
&\qquad\quad  \times  \int_{[0, Ts]^2}d\xi_1d\xi_2 e^{-i(\lambda_1\frac{\xi_1}{T} + \lambda_2\frac{\xi_2}{T})}x(\xi_1)x(\xi_2)|\xi_1 - \xi_2|^{(q-1)(2H_0 - 2)}.
\end{align}
Observe that $f_T(t, .)$ is symmetric, so there is no need to care about symmetrization. By the isometry property of multiple Wiener-It\^{o} integrals with respect to the random spectral measure, in order to prove the $L^2(\Omega)$-convergence of $c_2F_{2, T}$ to $bR^{H'}$, we can equivalently prove that $c_2f_T(t, .)$ converges in $L^2(\mathbb{R}^2)$ to the kernel of $bR^{H'}(t)$ . First, by Lebesgue dominated convergence, as $T \to \infty$, we have
\begin{align*}
f_T(t, \lambda_1, \lambda_2)& \longrightarrow A_{q-1}(H, q) \int_{\mathbb{R}^2}x(u)x(v)|u-v|^{(q-1)(2H_0 - 2)}dudv\\ 
&\qquad\qquad\qquad\qquad\qquad\times  |\lambda_1|^{\frac{1}{2} - H_0}|\lambda_2|^{\frac{1}{2} - H_0} \frac{e^{i(\lambda_1 + \lambda_2)t} -1}{i(\lambda_1 + \lambda_2)}.
\end{align*}
This shows that $f_T(t, .)$ converges pointwise to the kernel of $R^{H'}(t)$, up to some constant. 
Moreover, for all $0 < S <T $,
\begin{align*}
&\|f_T(t, .) - f_S(t, .)\|_{L^2(\mathbb{R}^2)}^2\\
& = A_{q-1}^2(H, q) \int_{\mathbb{R}^2}d\lambda_1 d\lambda_2 |\lambda_1|^{1 - 2H_0}|\lambda_2|^{1 - 2H_0}\\ 
&\quad \times \bigg( \int_0^t ds e^{i(\lambda_1 +\lambda_2)s}\int_{[0, Ts]^2 \setminus  [0, Ss]^2}d\xi_1d\xi_2 e^{-i(\lambda_1\frac{\xi_1}{T} + \lambda_2\frac{\xi_2}{T})}x(\xi_1)x(\xi_2)|\xi_1 - \xi_2|^{(q-1)(2H_0 - 2)}\bigg)^2.
\end{align*}
By Lebesgue dominated convergence, it comes that $\|f_T(t, .) - f_S(t, .)\|_{L^2(\mathbb{R}^2)}^2 \longrightarrow 0$ as $T, S \to \infty$. It follows that $(f_T(t, .))_{T \geq 0}$ is a Cauchy sequence in $L^2(\mathbb{R}^2)$. Hence, the multiple Wiener integral $c_2F_{2, T}$ (with kernel (\ref{meq:15})) converges in $L^2(\Omega)$ to $b(H, q) \times R^{H'}$ with the explicit constant $b(H, q)$ as in (\ref{meq:19}). (Note that $c_2 =q!$). The finite-dimensional convergence then follows from (\ref{m4}). The proof of Theorem \ref{mTheorem1} is achieved.
\qed

\section{Proof of Theorem \ref{mTheorem2}}

We follow the same route as for the proof of Theorem \ref{mTheorem1}, with some slight modifications. Here, the chaos decomposition of $G_T^{1, H}$ contains uniquely the term $F_{2, T}$ obtained for $q=1$ and $ r = 0$. Its spectral representation is as follows:
\begin{align*}
F_{2, T}(t)&= \frac{H(2H-1)}{\beta(H-\frac{1}{2}, 2-2H)}\frac{\Gamma^2(H-\frac{1}{2})}{2\pi} \int_{\mathbb{R}^2}W(d\lambda_1)W(d\lambda_2) |\lambda_1|^{\frac{1}{2} - H}|\lambda_2|^{\frac{1}{2} - H}\nonumber\\ 
&\qquad\qquad\qquad  \times \int_0^t ds e^{i(\lambda_1 +\lambda_2)s}  \int_{[0, Ts]^2}d\xi_1d\xi_2 e^{-i(\lambda_1\frac{\xi_1}{T} + \lambda_2\frac{\xi_2}{T})}x(\xi_1)x(\xi_2).
\end{align*}
It is easily seen that that $F_{2, T}$ is well-defined if and only if $3/4 < H <1$. The same arguments as in the proof of Theorem \ref{mTheorem1} yield
\begin{equation}\label{meq:d}
G_T^{1, H}(t) = F_{2, T}(t) \longrightarrow \frac{H(2H-1)}{\sqrt{(H-1/2)(4H-3)}}\bigg(\int_0^\infty x(u)du\bigg)^2 \times R^{H''}(t)
\end{equation}
in $L^2(\Omega)$ as $T \to \infty$, thus completing the proof of the theorem. \qed

\section*{Acknowledgements} 
I would like to sincerely thank my supervisor Ivan Nourdin, who has led the way on this work. I did appreciate his advised tips and encouragement for my first research work. I also warmly thank Frederi Viens for interesting discussions and several helpful comments about this work. Also, I would like to  thank my friend, Nguyen Van Hoang, for his help in proving the identity about $I$ in the appendix. Finally, I deeply thank an anonymous referee for a very careful and thorough reading of this work, and for her/his constructive remarks.

\section*{Appendix}
 
The following identity has been used at the end of the proof of Lemma \ref{mLemma2} and also appeared in the proof of \cite[Lemma 6.2]{mTaqqu}. 

For all $H_0 \in (1/2, 1)$, we have
$$I: = \int_0^\infty e^{-iu}u^{H_0 - \frac{3}{2}}du =  e^{-i\frac{\pi}{2}(H_0 - \frac{1}{2})} \Gamma(H_0 - \frac{1}{2}).$$
\begin{proof}
First, observe that
$$u^{H_0 - \frac{3}{2}} = \frac{1}{\Gamma(\frac{3}{2} - H_0)} \int_0^\infty e^{-tu}t^{\frac{1}{2} - H_0}dt.$$
Then, Fubini's theorem yields
\begin{align*}
I& =\frac{1}{\Gamma(\frac{3}{2} - H_0)} \int_0^\infty du e^{-iu}  \int_0^\infty dt e^{-tu}t^{\frac{1}{2} - H_0}\\ 
& = \frac{1}{\Gamma(\frac{3}{2} - H_0)} \int_0^\infty dt \text{ } t^{\frac{1}{2} - H_0} \int_0^\infty du e^{-u(t+i)}\\
&=  \frac{1}{\Gamma(\frac{3}{2} - H_0)} \int_0^\infty  t^{\frac{1}{2} - H_0} \frac{1}{t+i} dt=  \frac{1}{\Gamma(\frac{3}{2} - H_0)} \int_0^\infty  \frac{t^{\frac{1}{2} - H_0}(t-i)}{t^2+1}dt\\
& = \frac{1}{\Gamma(\frac{3}{2} - H_0)} \bigg( \int_0^\infty  \frac{t^{\frac{3}{2} - H_0}}{t^2+1}dt - i \int_0^\infty  \frac{t^{\frac{1}{2} - H_0}}{t^2+1}dt \bigg).
\end{align*}
A change of variables $t = \sqrt{u}$ and $v= \frac{u}{u+1}$ leads to 
\begin{align*}
\int_0^\infty  \frac{t^{\frac{3}{2} - H_0}}{t^2+1}dt& = \frac{1}{2}\int_0^\infty \frac{u^{\frac{1-2H_0}{4}}}{u+1}du =  \frac{1}{2} \int_0^1 v^{\frac{1-2H_0}{4}}(1-v)^{\frac{2H_0 -  5}{4}} \\ 
& =  \frac{1}{2}  \beta \Big(\frac{5-2H_0 }{4}, \frac{2H_0 -  1}{4}\Big) = \frac{1}{2} \frac{\Gamma(\frac{5-2H_0 }{4})\Gamma(\frac{2H_0 -1}{4})}{\Gamma(1)}.
\end{align*}
Similarly, one also has,
$$\int_0^\infty  \frac{t^{\frac{1}{2} - H_0}}{t^2+1}dt = \frac{1}{2}  \beta \Big(\frac{3-2H_0 }{4}, \frac{2H_0 +  1}{4}\Big) = \frac{1}{2} \frac{\Gamma(\frac{3-2H_0 }{4})\Gamma(\frac{2H_0 +1}{4})}{\Gamma(1)}.$$
Furthermore, by using the identity $\Gamma(1-z)\Gamma(z) = \frac{\pi}{\sin(\pi z)}, 0<z<1$, we obtain
\begin{align*}
I&=\frac{1}{2 \Gamma(\frac{3}{2} - H_0)}   \bigg(\frac{\pi}{\sin (\frac{2H_0 -1}{4} \pi)} - i \frac{\pi}{\sin (\frac{3-2H_0}{4} \pi)}  \bigg)\\ 
& = \frac{1}{2 \Gamma(\frac{3}{2} - H_0)}   \bigg(\frac{\pi}{\sin (\frac{2H_0 -1}{4} \pi)} - i \frac{\pi}{\cos (\frac{2H_0-1}{4} \pi)}  \bigg)\\
&= \frac{\pi}{ \Gamma(\frac{3}{2} - H_0)} \frac{e^{-i\frac{\pi}{2}(H_0 - \frac{1}{2})}}{2\sin (\frac{2H_0 -1}{4} \pi)\cos(\frac{2H_0 -1}{4} \pi)}\\
& = \frac{e^{-i\frac{\pi}{2}(H_0 - \frac{1}{2})} \pi}{ \Gamma(\frac{3}{2} - H_0) \sin (\frac{2H_0 -1}{2} \pi)} = e^{-i\frac{\pi}{2}(H_0 - \frac{1}{2})} \Gamma(H_0 - \frac{1}{2}).
\end{align*}

\end{proof}
\chapter[Asymptotic behavior for quadratic variations of non-Gaussian multiparameter Hermite random fields] {Asymptotic behavior for quadratic variations of non-Gaussian multiparameter Hermite random fields}

\begin{center}
T.T. Diu Tran\\
University of Luxembourg
\end{center}

\begin{tcolorbox}
This article is under revision for \textit{Probability and Mathematical Statistics}, arXiv:1611.03674.
\end{tcolorbox}

\begin{center}
\textbf{Abstract }
\end{center}
Let $(Z^{q, \HH}_\t)_{\t \in [0, 1]^d}$ denote a $d$-parameter Hermite random field of order $q \geq 1$ and self-similarity parameter $\HH = (H_1, \ldots, H_d) \in (\frac{1}{2}, 1)^d$. This process is $\HH$-self-similar, has stationary increments and exhibits long-range dependence. Particular examples include fractional Brownian motion ($q=1$, $d=1$), fractional Brownian sheet $(q=1, d \geq 2)$, Rosenblatt process ($q=2$, $d=1$) as well as Rosenblatt sheet $(q=2, d \geq 2)$. For any $q \geq 2, d\geq 1$ and $\HH \in (\frac{1}{2}, 1)^d$  we show in this paper that a proper normalization of the quadratic variation of $Z^{q, \HH}$ converges in $L^2(\Omega)$ to a standard $d$-parameter Rosenblatt random variable with self-similarity index $\HH'' = 1+ (2\HH-2)/q$.

%

\section{Motivation and main results}

In recent years, analysing the asymptotic behaviour of power variations of self-similar stochastic processes has attracted a lot of attention. This is because they play an important role in various aspects, both in probability and statistics. As far as quadratic variations are concerned, a classical application  is to use them for the construction of efficient estimators for the self-similarity parameter (see e.g. \cite{hTudorViens1, hTudorViens2}). For a less conventional application, let us also mention the recent reference \cite{hIvanG}, in which the authors have used weighted power variations of fractional Brownian motion to compute exact rates of convergence of some approximating schemes associated to one-dimensional fractional stochastic differential equations.

In this paper, we deal with the quadratic variation in the context of  {\it multiparameter Hermite random fields}. To be more specific, let $Z^{q, \HH} = (Z^{q, \HH}_\t)_{\t \in [0, 1]^d}$ stand for the $d$-parameter Hermite random field of order $q \geq 1 $ and self-similarity parameter $\HH = (H_1,\ldots, H_d) \in (\frac{1}{2}, 1)^d$ (see Definition \ref{hDeforiginal} for the precise meaning), and consider a renormalized version of its quadratic variation, namely
\begin{equation}\label{heq:1}
V_{\NN}: = \frac{1}{\NN}\sum_{\ii=0}^{\NN-1}\bigg[\NN^{2\HH}\Big(\Delta Z^{q, \HH}_{[\frac{\ii}{\NN}, \frac{\ii +1}{\NN}]}\Big)^2 - 1 \bigg],
\end{equation} 
where $\Delta Z^{q, \HH}_{[\s, \t]}$ is the increments of $Z^{q, \HH}$ defined as
\begin{equation}\label{heq:incrementZ}
\Delta Z^{q, \HH}_{[\s, \t]} = \sum_{\rr \in \{0, 1\}^d} (-1)^{d- \sum_i r_i} Z^{q, \HH}_{\s + \rr.(\t - \s)},
\end{equation}
and where the bold notation is systematically used in presence of multi-indices (we refer to Section 2 for precise definitions). As illustrating examples, observe that (\ref{heq:incrementZ}) reduces to $\Delta Z^{q, H}_{[s, t]} = Z^{q, H}_t -Z^{q, H}_s$ when $d=1$, and to  $\Delta Z^{q, H_1, H_2}_{[\s, \t]} = Z^{q, H_1, H_2}_{t_1, t_2} - Z^{q, H_1, H_2}_{t_1, s_2} - Z^{q, H_1, H_2}_{s_1, t_2} + Z^{q, H_1, H_2}_{s_1, s_2}$ when $d=2$. 

It is well-known that each Hermite random field $Z^{q, \HH}$ is $\HH$-self-similar (that is, $ (Z^{q, \HH}_{\aa \t})_{\t \in \mathbb{R}^d} \overset{(d)}{=} (\aa^\HH Z^{q, \HH}_\t)_{\t \in \mathbb{R}^d}$ for any $\aa > 0$), has stationary increments (that is $(\Delta Z^{q, \HH}_{[0, \t]})_{\t \in \mathbb{R}^d} \overset{(d)}{=} (\Delta Z^{q, \HH}_{[\hh, \hh+ \t]})_{\t \in \mathbb{R}^d}$ for all $\hh \in\mathbb{R}^d$) and exhibits long-range dependence. Also, when $q=1$, observe that $Z^{1, \HH}$ is either the fractional Brownian motion (if $d=1$) or the fractional Brownian sheet (if $d\geq 2$); in particular, among all the Hermite random fields $Z^{q,\HH}$, it is the only one to be Gaussian. When $q=2$, we use the usual terminologies Rosenblatt process (if $d=1$) or Rosenblatt sheet (if $d \geq 2$). 

Before describing our results, let us give a brief overview of the current state of the art. Firstly, let us consider the case  $q=d=1$, that is, the case where $Z^{1, H} = B^H$ is a fractional Brownian motion with Hurst parameter $H$. The behavior of the quadratic variation of $B^H$ is well-known since the eighties, and dates back to the seminal works of Breuer and Major \cite{hBreuerMajor}, Dobrushin and Major \cite{hDobrushinMajor}, Giraitis and Surgailis \cite{hGiraitis} or Taqqu \cite{hTaqqu}. We have, as $N\to\infty$:
\begin{itemize}
\item If $H<3/4$, then 
$$N^{-1/2}\sum_{j=1}^N \bigg(N^{2H}\Big(B^H_{j/N} - B^H_{(j-1)/N}\Big)^2 - 1\bigg) \xrightarrow[]{(d)} \mathcal{N}(0, \sigma_H^2).$$
\item If $H =3/4$, then
$$(N\log{N})^{-1/2}\sum_{j=1}^N \bigg(N^{3/2}\Big(B^H_{j/N} - B^H_{(j-1)/N}\Big)^2 - 1\bigg) \xrightarrow[]{(d)} \mathcal{N}(0, \sigma_{3/4}^2).$$
\item If $H >3/4$, then
$$N^{1-2H}\sum_{j=1}^N \bigg(N^{2H}\Big(B^H_{j/N} - B^H_{(j-1)/N}\Big)^2 - 1\bigg) \xrightarrow[]{L^2(\Omega)} \text{``Rosenblatt r.v''},$$
where $\text{``Rosenblatt r.v''}$ denotes the random variable which is the value at time 1 of the Rosenblatt process.
\end{itemize}
Secondly, assume now that $q=1$ and $d =2$, that is, consider the case where $Z^{1, \HH}$ is this time a two-parameter fractional Brownian sheet with Hurst parameter $\HH = (H_1, H_2)$. According to R\'eveillac, Stauch and Tudor \cite{hReveillacTudor} and with $\varphi(N, \HH)$ a suitable scaling factor, the quadratic variation of $Z^{1, \HH}$ behaves as follows, as $N\to\infty$:
\begin{itemize}
\item If $\HH\notin (3/4, 1)^2$, then
$$\varphi (N, \HH)\sum_{i=1}^N\sum_{j=1}^N \bigg(N^{2H_1 + 2H_2}\Big(\Delta Z^{1, \HH}_{[\frac{\ii-1}{\NN}, \frac{\ii }{\NN}]}\Big)^2 - 1\bigg) \xrightarrow[]{(d)} \mathcal{N}(0, \sigma_{\HH}^2).$$
\item If $\HH \in (3/4, 1)^2$, then
\begin{align*}
\varphi (N, \HH)\sum_{i=1}^N\sum_{j=1}^N \bigg(N^{2H_1 + 2H_2}&\Big(\Delta Z^{1, \HH}_{[\frac{\ii-1}{\NN}, \frac{\ii }{\NN}]}\Big)^2 - 1\bigg)\\ 
&  \xrightarrow[]{L^2(\Omega)} \text{``two-parameter Rosenblatt r.v"},
\end{align*}
where $\text{``two-parameter Rosenblatt r.v''}$ means the value at point ${\bf 1}=(1, 1)$ of the two-parameter Rosenblatt sheet. 
\end{itemize}
Here, we observe the following interesting phenomenon: the limit law in the mixture case (that is, when $H_1 \leq 3/4$ and $H_2 > 3/4$) is Gaussian. For the simplicity of exposition, above we have only described what happens when $d=2$. But the asymptotic behaviour for the quadratic variation of $Z^{1, \HH}$ is actually known for any value of the dimension $d\geq 2$, and we refer to Pakkanen and R\'{e}veillac \cite{hReveillac2, hReveillac3, hReveillac1} for precise statements.

Let us finally review the existing literature about the quadratic variation of $Z^{q,\mathbf{H}}$ in the {\it non}-Gaussian case, that is, when $q\geq 2$. It is certainly because it is a more difficult case to deal with that only the case where $d=1$ has been studied so far. Chronopoulou, Tudor and Viens have shown in \cite{hTudorViens1} (see also \cite{hTudorViens2, hTudor}) that, properly renormalized, the quadratic variation of $Z^{q, H}$ converges in $L^2(\Omega)$, for \textit{any} $q \geq 2$ and \textit{any} value of $H \in (1/2, 1)$, to the Rosenblatt random variable. A consequence of this finding is that fractional Brownian motion is the only Hermite process ($d=1$) for which there exists a range of parameters such that its quadratic variation exhibits {\it normal} convergence; indeed, for all the other Hermite processes, \cite{hTudorViens1} shows that we have the convergence towards a non-Gaussian random variable belonging to the second Wiener chaos. 

In the present paper, we study what happens in the remaining cases, that is, when $q$ and $d$ are both bigger or equal than 2.
Thanks to our main result, Theorem \ref{hmain}, we now have a complete picture for the asymptotic behaviour of the quadratic variation of {\it any} Hermite random field.
\eject
\begin{Theorem} \label{hmain}
Fix $q \geq 2$, $d\geq 1$ and $\HH \in (\frac{1}{2}, 1)^d$. Let $Z^{q, \HH}$ be a $d$-parameter Hermite random field of order $q$ and self-similarity parameter $\HH$ (see Definition \ref{hDeforiginal}). Then $c_{1,\HH}^{-1/2}\NN^{(2-2\HH)/q}(q!q)^{-1} V_{\NN}$ converges, in $L^2(\Omega)$, to the standard $d$-parameter Rosenblatt sheet with self-similarity parameter $1+ (2\HH-2)/q$ evaluated at time $\bf{1}$, where $c_{1, \HH}$ given by (\ref{heq:3.5}).
\end{Theorem}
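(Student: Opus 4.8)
The strategy mirrors the one-parameter argument of Chronopoulou–Tudor–Viens \cite{dChronopoulouTudorViens2011} (which is recovered as the case $d=1$), and also the strategy used in the companion article [a]: expand $V_{\NN}$ into a chaotic sum, isolate the dominant chaos, and control the remainder. First I would write $\Delta Z^{q,\HH}_{[\frac{\ii}{\NN},\frac{\ii+1}{\NN}]} = I_q^B(f_{\ii,\NN})$ as a $q$-th multiple Wiener–It\^o integral with respect to the underlying Brownian sheet, using the time-indexed representation \eqref{dDefHermiterandomfield} (or the finite-time representation \eqref{dDefHermiterandomfield2}), where $f_{\ii,\NN}$ is the kernel supported on the increment box. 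The product formula for multiple integrals then gives
\begin{equation*}
\NN^{2\HH}\big(\Delta Z^{q,\HH}_{[\frac{\ii}{\NN},\frac{\ii+1}{\NN}]}\big)^2 - 1 = \sum_{r=0}^{q-1} r!\binom{q}{r}^2 \NN^{2\HH}\, I_{2q-2r}^B\big(f_{\ii,\NN}\widetilde{\otimes}_r f_{\ii,\NN}\big),
\end{equation*}
so that after summing over $\ii$ and renormalizing,
\begin{equation*}
c_{1,\HH}^{-1/2}\NN^{(2-2\HH)/q}(q!q)^{-1}V_{\NN} = \sum_{r=0}^{q-1} T_{2q-2r,\NN},
\end{equation*}
where $T_{2q-2r,\NN}$ lives in the $(2q-2r)$-th Wiener chaos. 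The claim is that $T_{2,\NN}$ (the term $r=q-1$, in the \emph{second} chaos) is the dominant one and converges in $L^2(\Omega)$ to the Rosenblatt sheet at $\bf 1$, while every other $T_{2q-2r,\NN}$, $0\le r\le q-2$, tends to $0$ in $L^2(\Omega)$.

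\textbf{The dominant term.} For $r=q-1$, the kernel of $T_{2,\NN}$ is an explicit double integral: using the contraction identity $\int_{\mathbb{R}}(a_1-u)_+^{H_0-3/2}(a_2-u)_+^{H_0-3/2}du = \beta(H_0-\tfrac12,2-2H_0)|a_1-a_2|^{2H_0-2}$ coordinatewise (as in \eqref{m1}–\eqref{meq:2} of article [a], but now in each of the $d$ directions), one finds that $T_{2,\NN}$ equals $I_2^B$ of a kernel which, after the change of variables that rescales the increment boxes, is a Riemann-sum approximation of the kernel defining the standard $d$-parameter Rosenblatt sheet with parameter $\HH'' = 1+(2\HH-2)/q$ evaluated at $\bf 1$ — the constant $c_{1,\HH}$ in \eqref{heq:3.5} being precisely what makes the limiting kernel correctly normalized. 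I would then invoke the isometry \eqref{deq:orthogonal}: proving $L^2(\Omega)$-convergence of $T_{2,\NN}$ reduces to proving $L^2((\mathbb{R}^d)^2)$-convergence of its kernel. This is done by establishing pointwise convergence of the kernels together with an $L^2$-Cauchy estimate (dominated convergence on the kernel level), exactly as in the concluding step of the proof of Theorem \ref{mTheorem1}. The multiparameter nature here only enters through tensorization: all the one-dimensional estimates are applied in each coordinate and multiplied, which is where the hypothesis $H_i>1/2$ for each $i$ (ensuring $|u_i-v_i|^{2H_i-2}$ is locally integrable) is used.

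\textbf{Negligibility of the higher chaoses.} For $0\le r\le q-2$ one must show $E[T_{2q-2r,\NN}^2]\to 0$. By the isometry this is a sum (over pairs of increment boxes $\ii,\jj$) of $L^2$-inner products of symmetrized contractions $f_{\ii,\NN}\widetilde\otimes_r f_{\ii,\NN}$ and $f_{\jj,\NN}\widetilde\otimes_r f_{\jj,\NN}$; each such inner product factorizes, direction by direction, into one-dimensional quantities of the form $\rho(\ii-\jj)$ where $\rho(k)\sim C|k|^{2H_0-2}$ is the correlation of the Hermite-type increment sequence. Collecting the powers of $\NN$ coming from the $\NN^{2\HH}$ prefactors, from the $\NN^{(2-2\HH)/q}$ renormalization, from the $1/\NN$ averaging, and from the decay of $\rho$, one checks that the exponent of $\NN$ is strictly negative precisely because $r\le q-2$ (the borderline exponent $0$ is attained only at $r=q-1$). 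This is the same mechanism as \eqref{meq:10}/Lemma \ref{mReduction} in article [a]. Finally, finite-dimensional / single-point convergence: since we target convergence of a single random variable $V_{\NN}$ (the quadratic variation evaluated on $[0,1]^d$) rather than a process, no extra finite-dimensional argument is needed beyond what is already used in the sheet representation.

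\textbf{Main obstacle.} The genuinely delicate point is the bookkeeping of the multi-index power counting in the second step combined with keeping the constant $c_{1,\HH}$ exact: one has to verify, in the $d$-dimensional setting, that the sum over the $\NN^d\times\NN^d$ pairs of boxes of the product over coordinates of $|\ii_m-\jj_m|^{2H_0^{(m)}-2}$-type terms has the right order $\NN^{-\text{(positive)}}$, uniformly, and that the surviving $r=q-1$ term converges with exactly the normalization claimed. The tensor-product structure makes this tractable — it is essentially $d$ independent copies of the one-parameter estimate — but the combinatorics of the symmetrizations $\widetilde\otimes_r$ over $\mathfrak{S}_{2q-2r}$, interacting with the $d$ coordinates, is where care is required. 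I expect no new ideas beyond \cite{dChronopoulouTudorViens2011} and article [a] are needed, only a disciplined tensorization of their estimates.
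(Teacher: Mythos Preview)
Your proposal is correct and follows essentially the same route as the paper: chaotic expansion via the product formula, tensorization over the $d$ coordinates to reduce all estimates to one-parameter ones, $L^2$-convergence of the second-chaos kernel to the Rosenblatt-sheet kernel, and negligibility of the chaoses of order $\ge 4$. The only notable technical difference is that the paper works throughout with the finite-time (Volterra-type) representation \eqref{hDef2} using the kernel $K^{H'}$ rather than the time-indexed one, and its negligibility step for $0\le r\le q-2$ proceeds by a short case analysis on whether $H\gtrless 3/4$ (the three cases give different rates but all vanish), a subtlety you gloss over but which does not affect the argument.
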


 Our proof of Theorem \ref{hmain} follows a strategy introduced by Tudor and Viens in \cite{hTudorViens2}, based on the use of chaotic expansion into multiple Wiener-It\^{o} integrals. Let us sketch it. Since the Hermite random field $Z^{q, \HH}$ is an element of the $q$-th Wiener chaos, we can firstly rely on the product formula for multiple integrals to obtain that the quadratic variation $V_{\NN}$ can be decomposed into a sum of multiple integrals of even orders from $2$ to $2q$, see Section 3.3.1. We are thus left to analyse the behavior of each chaos component. As we will prove in Section 3.3.2,  the dominant term of $V_\NN$ (after proper normalization) is the term in the second Wiener chaos, that is, all the other terms in the chaotic expansion are asymptotically negligible. Finally, by using the isometric property of multiple Wiener-It\^{o} integrals and checking the $L^2(([0, 1]^d)^2)$ convergence of its kernel, we will prove in Section 3.3.3 that the projection onto the second Wiener chaos converges in $L^2(\Omega)$ to the $d$-parameter Rosenblatt random variable, which will lead to the convergence of the normalization of $V_\NN$ to the same random variable.

In conclusion, it is worth pointing out that, irrespective of the self-similarity parameter, the (properly normalized) quadratic variation of any \textit{non-Gaussian} multiparameter Hermite random fields exhibits a convergence to a random variable belonging to the second Wiener chaos. It is in strong contrast with what happens in the Gaussian case ($q=1$), where either central or non-central limit theorems may arise, depending on the value of the self-similarity parameter.

The remainder of the paper is structured as follows. Section $3.2$ contains some preliminaries and useful notation. The proof of our main result, namely Theorem \ref{hmain}, is then provided in Section $3.3$.

\section{Preliminaries}

This section describes the notation and the mathematical objects (together with their main properties) that are used throughout this paper.

\subsection{Notation}

Fix an integer $d\geq 1$. In what follows,  we shall systematically use bold notation when dealing with multi-indexed quantities. We thus write $\aa = (a_1, a_2, \ldots, a_d)$, $\aa\bb = (a_1b_1, a_2b_2, \ldots, a_db_d)$ or $\aa/\bb = (a_1/b_1, a_2/b_2, \ldots, a_d/b_d)$. Similarly, $[\aa, \bb] = \prod_{i=1}^d[a_i, b_i],  (\aa, \bb) = \prod_{i=1}^d(a_i, b_i)$. Summation is as follows: $\sum_{\ii=1}^\NN a_\ii = \sum_{i_1 =1}^{N_1}\sum_{i_2 =1}^{N_2}\ldots \sum_{i_d =1}^{N_d}a_{i_1,i_2,\ldots,i_d}$ whereas, for products, we shall write $\aa^\bb = \prod_{i=1}^d a_i^{b_i}$. Finally, we shall write $ \aa < \bb$ (resp. $ \aa \leq \bb$) whenever $a_1 < b_1$, $a_2 < b_2$, $\ldots$, $a_d < b_d$
(resp. $a_1 \leq b_1$, $a_2 \leq b_2$, $\ldots$, $a_d \leq b_d$).

\subsection{Multiple Wiener-It\^{o} integrals}
We will now briefly review the theory of multiple Wiener-It\^{o} integrals with respect to the Brownian sheet, as described e.g. in Nualart's book \cite{hNualart} (chapter 1 therein) or in \cite[Section 3]{hReveillac2}. Let $f \in L^2((\mathbb{R}^{d})^q)$ and let us denote by $I_q^W(f)$ the $q$-fold multiple Wiener-It\^{o} integral of $f$ with respect to the standard two-sided Brownian sheet $(W_\t)_{\t \in \mathbb{R}^d}$. In symbols, such an integral is written
\begin{equation}\label{heq:abc}
I_q^W(f)  = \int_{(\mathbb{R}^d)^q}  dW_{\uu_1}\ldots dW_{\uu_q} f(\uu_1, \ldots, \uu_q).
\end{equation}
Moreover, one has $I_q^W(f) = I_q^W(\widetilde{f})$, where $\widetilde{f}$ is the symmetrization of $f$ defined by
\begin{equation}\label{heq:dola}
\widetilde{f}(\uu_1, \ldots, \uu_q) = \frac{1}{q!}\sum_{\sigma \in \mathfrak{S}_q} f(\uu_{\sigma(1)}, \ldots, \uu_{\sigma(q)}).
\end{equation}
The set of random variables of the form $I_q^W(f)$, when $f$ runs over $L^2((\mathbb{R}^{d})^q)$, is called the $q$th Wiener chaos of $W$. Furthermore, if $f \in L^2((\mathbb{R}^{d})^p)$ and $g \in L^2((\mathbb{R}^{d})^q)$ are two symmetric functions, then
\begin{equation}\label{heq:P1}
I_p^W(f)I_q^W(g) = \sum_{r=0}^{p \wedge  q} r!\binom{p}{r}\binom{q}{r}I_{p+q-2r}^W(f \widetilde{\otimes}_r g),
\end{equation}
where the contraction $f \otimes_r g$, which belongs to $L^2((\mathbb{R}^{d})^{p+q-2r})$ for every $r = 0, 1, \ldots, p \wedge q$, is given by
\begin{align}\label{heq:P2}
f \otimes_r g& (\uu_1, \ldots, \uu_{p-r}, \vv_1, \ldots, \vv_{q-r}) \nonumber\\ 
& = \int_{(\mathbb{R}^{d})^r} d\aa_1 \ldots d\aa_r f(\uu_1, \ldots, \uu_{p-r}, \aa_1, \ldots, \aa_r) g(\vv_1, \ldots, \vv_{q-r}, \aa_1, \ldots, \aa_r) 
\end{align}
and $f \widetilde{\otimes}_r g$ stands for the symmetrization of $f \otimes_r g$ (according to the notation introduced in (\ref{heq:dola})). 
For any $r= 0, \ldots, p\wedge q$, Cauchy-Schwarz inequality yields
\begin{equation}\label{heq:3}
\| f \widetilde{\otimes}_r g\|_{L^2((\mathbb{R}^{d})^{p+q-2r})} \leq \| f \otimes_r g\|_{L^2((\mathbb{R}^{d})^{p+q-2r})} \leq \|f\|_{L^2((\mathbb{R}^{d})^p)}\|g\|_{L^2((\mathbb{R}^{d})^{q})}.
\end{equation}
Also, $f \otimes_p  g = \left\langle f, g \right\rangle_{L^2((\mathbb{R}^{d})^p)}$ when $q=p$.
Furthermore, multiple Wiener-It\^{o} integrals satisfy the following isometry and orthogonality properties
$$E[I_p^W(f)I_q^W(g)] =
\begin{cases}
 & p! \big\langle \widetilde{f}, \widetilde{g} \big\rangle_{L^2((\mathbb{R}^{d})^{p})} \qquad\text{if } p=q\\
& 0 \qquad\qquad\qquad\quad\quad \text{if } p \ne q.
\end{cases}$$

\subsection{Multiparameter Hermite Random Fields}

Let us now introduce our main object of interest in this paper, the so-called multiparameter Hermite random field. We follow the definition given by Tudor in \cite[Chapter 4]{hTudor}.

\begin{Definition}\label{hDeforiginal}
{\it
Let $q,d \geq 1$ be two integers and let $\HH = (H_1, \ldots, H_d)$ be a vector belonging to $(\frac{1}{2}, 1)^d$.
The $d$-parameter Hermite random field of order $q$ and self-similarity parameter $\HH$ means any random field of the form
\begin{align}\label{hDef1}
Z^{q, \HH}(\t)&= c_{q, \HH} \int_{(\mathbb{R}^{d})^{ q}} dW_{u_{1,1}, \ldots, u_{1,d}} \ldots dW_{u_{q,1}, \ldots, u_{q,d}} \nonumber\\ 
&\qquad\times \bigg(\int_0^{t_1} da_1 \ldots \int_0^{t_d} da_d \prod_{j=1}^q (a_1-u_{j,1})_+^{-(\frac{1}{2} + \frac{1-H_1}{q})}\ldots (a_d-u_{j,d})_+^{-(\frac{1}{2} + \frac{1-H_d}{q})}\bigg)\nonumber\\
& = c_{q, \HH}\int_{(\mathbb{R}^{d})^{q}}dW_{\uu_1} \ldots dW_{\uu_q} \int_0^\t d\aa \prod_{j=1}^q (\aa - \uu_j)_+^{-(\frac{1}{2} + \frac{1-\HH}{q})},
\end{align}
where $x_+ = \max(x, 0)$, $W$ is a standard two-sided Brownian sheet, and $c(q, \HH)$ is the unique positive constant depending only on $q$ and $\HH$
chosen so that $E[Z^{q, \HH}(\textbf{1})^2] = 1$. }
\end{Definition}
The above integral  (\ref{hDef1}) represents a multiple Wiener-It\^{o} integral of the form (\ref{heq:abc}).

In many occasions (for instance when one wants to simulate $Z^{q,\HH}$, or when one looks for constructing a stochastic calculus with respect to it), the following finite-time representation for $Z^{q,\HH}$ may also be of interest:
\begin{align}\label{hDef2}
&Z^{q, \HH}(\t) \overset{(d)}{=} b_{q, \HH} \int_0^{t_1} \ldots \int_0^{t_d} dW_{u_{1,1}, \ldots, u_{1,d}} \ldots \int_0^{t_1}\ldots\int_0^{t_d} dW_{u_{q,1}, \ldots, u_{q,d}} \nonumber\\ 
&\hspace{2.5cm}\times \bigg(\int_{u_{1,1} \vee \ldots \vee u_{q,1}}^{t_1} da_1 \partial_1K^{H_1'}(a_1, u_{1,1}) \ldots \partial_1K^{H_1'}(a_1, u_{q,1})\bigg) \nonumber \\
& \hspace{6cm} \vdots \nonumber\\
& \hspace{2.5cm}\times \bigg(\int_{u_{1,d} \vee \ldots \vee u_{q,d}}^{t_d} da_d \partial_1K^{H_d'}(a_d, u_{1,d}) \ldots \partial_1K^{H_d'}(a_d, u_{q,d})\bigg) \nonumber\\
& = b_{q, \HH}\int_{[0, \t]^{q}}dW_{\uu_1}\ldots dW_{\uu_q} \prod_{j=1}^d\int_{u_{1,j} \vee \ldots \vee u_{q,j}}^{t_j} da \partial_1K^{H'_j}(a, u_{1,j}) \ldots \partial_1K^{H'_j}(a, u_{q,j}).
\end{align}
In (\ref{hDef2}), $K^{H}$ stands for the usual kernel appearing in the
classical expression of  the fractional Brownian motion $B^H$ as a Volterra integral with respect to Brownian motion (see e.g. \cite{hIvan, hIvan1}), that is,
$B^H_t = \int_0^t K^H(t, s)dB_s,$
whereas
\begin{equation}\label{heq:bqH}
b_{q, \HH} :=\frac{(\HH(2\HH-1))^{1/2}}{(q!(\HH'(2\HH' -1))^q)^{1/2}} =(\sqrt{q!})^{d-1} \prod_{j=1}^d \frac{(H_j(2H_j-1))^{1/2}}{(q!(H'_j(2H'_j -1))^q)^{1/2}} 
\end{equation}
is the unique positive constant ensuring that $E[Z^{q, \HH}(\textbf{1})^2] = 1$, where
\begin{equation}\label{heq:HH}
\HH' := 1+ \frac{\HH-1}{q}\quad\big(\Longleftrightarrow (2\HH' -2)q = 2\HH-2\big).
\end{equation}
For a proof of (\ref{hDef2}) when $d=2$, we refer to Tudor \cite[Chapter 4]{hTudor}. Extension to any value of $d$ as presented here is straightforward.

\section{Proof of Theorem \ref{hmain}}

We are now in a position to give the proof of our Theorem \ref{hmain}.
It is divided into three steps.

\subsection{Expanding into Wiener chaos}

In preparation of analysing the quadratic variation (\ref{heq:1}), let us find an explicit expression for the chaos decomposition of $V_\NN$. Using (\ref{hDef2}) and proceeding by induction on the dimension $d$, we can write $\Delta Z^{q, \HH}_{[\frac{\ii}{\NN}, \frac{\ii +1}{\NN}]}$ as a $q$-th Wiener It\^{o} integral with respect to the standard two-sided Brownian sheet $(W_\t)_{\t \in \mathbb{R}^d}$ as follows: for every $0 \leq \ii \leq \NN-1$, one has
\begin{equation}\label{heq:Wchaos}
\Delta Z^{q, \HH}_{[\frac{\ii}{\NN}, \frac{\ii +1}{\NN}]} = I_q(f_{\ii,\NN}),
\end{equation} 
where
\begin{equation}\label{heq:Wchaos1}
f_{\ii,\NN}(\xx_1, \ldots, \xx_q) = b_{q, \HH}\prod_{j=1}^d f_{i_j,N_j}(x_{1,j}, \ldots, x_{q,j}),
\end{equation}
with $f_{i,N}(x_{1}, \ldots, x_{q})$ denoting the expression
\begin{align}\label{hexpression}
&\mathbbm{1}_{[0, \frac{i+1}{N}] }(x_{1} \vee \ldots \vee x_{q}) \int_{x_{1} \vee \ldots \vee x_{q}}^{\frac{i+1}{N}}du \partial_1K^{H'}(u, x_{1}) \ldots \partial_1K^{H'}(u, x_{q}) \nonumber\\
& - \mathbbm{1}_{[0, \frac{i}{N}] }(x_{1} \vee \ldots \vee x_{q}) \int_{x_{1} \vee \ldots \vee x_{q}}^{\frac{i}{N}}du \partial_1K^{H'}(u, x_{1}) \ldots \partial_1K^{H'}(u, x_{q}),
\end{align}
and with $b_{q, \HH}$ and $\HH'$ given by (\ref{heq:bqH}) and (\ref{heq:HH}) respectively. Indeed, for $d=1$, see \cite[Section 3, p.8]{hTudorViens1}, it reduces to
$$\Delta Z^{q, H}_{[\frac{i}{N}, \frac{i +1}{N}]} = Z^{q, H}_{\frac{i+1}{N}} - Z^{q, H}_{\frac{i}{N}} =b_{q, H} I_q(f_{i,N}),$$
while for $d=2$, it is easy to verify that
$$\Delta Z^{q, \HH}_{[\frac{\ii}{\NN}, \frac{\ii +1}{\NN}]} = Z^{q, H_1, H_2}_{\frac{i+1}{N}, \frac{j+1}{M}} - Z^{q, H_1, H_2}_{\frac{i}{N}, \frac{j+1}{M}} - Z^{q, H_1, H_2}_{\frac{i+1}{N}, \frac{j}{M}} + Z^{q, H_1, H_2}_{\frac{i}{N}, \frac{j}{M}} = I_q(f_{i,j,N,M})$$
where
{\allowdisplaybreaks
\begin{align*}
&f_{i,j,N,M} (x_1, y_1, \ldots, x_q, y_q)\\ 
& = b_{q, H_1, H_2} \mathbbm{1}_{[0, \frac{i+1}{N}] }(x_1 \vee \ldots \vee x_q) \int_{x_1 \vee \ldots \vee x_q}^{\frac{i+1}{N}}du \partial_1K^{H_1'}(u, x_1) \ldots \partial_1K^{H_1'}(u, x_q) \\
& \qquad\qquad\times \mathbbm{1}_{[0, \frac{j+1}{M}] }( y_1 \vee \ldots \vee y_q) \int_{y_1 \vee \ldots \vee y_q}^{\frac{j+1}{M}} dv \partial_1K^{H_2'}(v, y_1) \ldots \partial_1K^{H_2'}(v, y_q)\\
& - b_{q, H_1, H_2} \mathbbm{1}_{[0, \frac{i+1}{N}] }(x_1 \vee \ldots \vee x_q) \int_{x_1 \vee \ldots \vee x_q}^{\frac{i+1}{N}}du \partial_1K^{H_1'}(u, x_1) \ldots \partial_1K^{H_1'}(u, x_q) \\
& \qquad\qquad\times \mathbbm{1}_{[0, \frac{j}{M}] }( y_1 \vee \ldots \vee y_q) \int_{y_1 \vee \ldots \vee y_q}^{\frac{j}{M}} dv \partial_1K^{H_2'}(v, y_1) \ldots \partial_1K^{H_2'}(v, y_q)\\
& - b_{q, H_1, H_2} \mathbbm{1}_{[0, \frac{i}{N}] }(x_1 \vee \ldots \vee x_q) \int_{x_1 \vee \ldots \vee x_q}^{\frac{i}{N}}du \partial_1K^{H_1'}(u, x_1) \ldots \partial_1K^{H_1'}(u, x_q) \\
& \qquad\qquad\times \mathbbm{1}_{[0, \frac{j+1}{M}] }( y_1 \vee \ldots \vee y_q) \int_{y_1 \vee \ldots \vee y_q}^{\frac{j+1}{M}} dv \partial_1K^{H_2'}(v, y_1) \ldots \partial_1K^{H_2'}(v, y_q)\\
& + b_{q, H_1, H_2} \mathbbm{1}_{[0, \frac{i}{N}] }(x_1 \vee \ldots \vee x_q) \int_{x_1 \vee \ldots \vee x_q}^{\frac{i}{N}}du \partial_1K^{H_1'}(u, x_1) \ldots \partial_1K^{H_1'}(u, x_q) \\
& \qquad\qquad\times \mathbbm{1}_{[0, \frac{j}{M}] }( y_1 \vee \ldots \vee y_q) \int_{y_1 \vee \ldots \vee y_q}^{\frac{j}{M}} dv \partial_1K^{H_2'}(v, y_1) \ldots \partial_1K^{H_2'}(v, y_q)\\
& =  b_{q, H_1, H_2}f_{i,N}(x_1, \ldots, x_q)f_{j, M}(y_1, \ldots, y_q).
\end{align*}
}The last equality above is obtained by grouping each term of $f_{i, j, N, M}$ together. Suppose that the expression (\ref{heq:Wchaos}), (\ref{heq:Wchaos1}) is true for $d$, that is, the kernel of $\Delta Z^{q, \HH}_{[\frac{\ii}{\NN}, \frac{\ii +1}{\NN}]}$ is equal to 
\begin{align*}
&b_{q, \HH} \sum_{(r_1, \ldots, r_d) \in \{0, 1\}^d} (-1)^{d - \sum_{i=1}^{d}r_i} \prod_{j=1}^d \mathbbm{1}_{[0, \frac{i_j + r_j}{N_j}] }(x_{1, j} \vee \ldots \vee x_{q, j}) \\
&\hspace{2cm} \times \int_{x_{1, j} \vee \ldots \vee x_{q, j}}^{\frac{i_j+r_j}{N_j}}du \partial_1K^{H_j'}(u, x_{1, j}) \ldots \partial_1K^{H_j'}(u, x_{q, j}) \\
&= b_{q, \HH} \prod_{j=1}^d f_{i_j, N_j}(x_{1, j}, \ldots, x_{q, j}).
\end{align*}
Then, for the case $d+1$ we have 
\begin{align*}
\Delta Z^{q, \HH}_{[\frac{\ii}{\NN}, \frac{\ii +1}{\NN}]}& = \sum_{\rr \in \{0, 1\}^{d+1}} (-1)^{d+1 - \sum_{i=1}^{d+1}r_i}Z^{q, \HH}_{\frac{\ii+\rr}{\NN}}\\ 
& = \sum_{(r_1, \ldots, r_d) \in \{0, 1\}^d} (-1)^{d - \sum_{i=1}^{d}r_i} Z^{q, \HH}_{\big(\frac{i_1+r_1}{N_1}, \ldots, \frac{i_d+r_d}{N_d}, \frac{i_{d+1}+1}{N_{d+1}} \big)} \\
& \qquad\qquad +  \sum_{(r_1, \ldots, r_d) \in \{0, 1\}^d} (-1)^{d +1 - \sum_{i=1}^{d}r_i} Z^{q, \HH}_{\big(\frac{i_1+r_1}{N_1}, \ldots, \frac{i_d+r_d}{N_d}, \frac{i_{d+1}}{N_{d+1}} \big)}\\
& = \sum_{(r_1, \ldots, r_d) \in \{0, 1\}^d} (-1)^{d - \sum_{i=1}^{d}r_i} \Big( Z^{q, \HH}_{\big(\frac{i_1+r_1}{N_1}, \ldots, \frac{i_d+r_d}{N_d}, \frac{i_{d+1}+1}{N_{d+1}} \big)} - Z^{q, \HH}_{\big(\frac{i_1+r_1}{N_1}, \ldots, \frac{i_d+r_d}{N_d}, \frac{i_{d+1}}{N_{d+1}} \big)}  \Big).
\end{align*}
It belongs to the $q$-Wiener chaos with the kernel  $f_{\ii, \NN}$ given by 
\begin{align*}
f_{\ii, \NN}& = b_{q, \HH} \sum_{(r_1, \ldots, r_d) \in \{0, 1\}^d} (-1)^{d - \sum_{i=1}^{d}r_i} \prod_{j=1}^d \mathbbm{1}_{[0, \frac{i_j + r_j}{N_j}] }(x_{1, j} \vee \ldots \vee x_{q, j})\\
& \hspace{5cm} \times \int_{x_{1, j} \vee \ldots \vee x_{q, j}}^{\frac{i_j+r_j}{N_j}}du \partial_1K^{H_j'}(u, x_{1, j}) \ldots \partial_1K^{H_j'}(u, x_{q, j}) \\ 
& \times \bigg(\int_{x_{1, d+1} \vee \ldots \vee x_{q, d+1}}^{\frac{i_{d+1}+ 1}{N_{d+1}}}du' \partial_1K^{H'_{d+1}}(u', x_{1, d+1}) \ldots \partial_1K^{H'_{d+1}}(u', x_{q, d+1}) \\
& \hspace{2.5cm} - \int_{x_{1, d+1} \vee \ldots \vee x_{q, d+1}}^{\frac{i_{d+1}}{N_{d+1}}}du' \partial_1K^{H'_{d+1}}(u', x_{1, d+1}) \ldots \partial_1K^{H'_{d+1}}(u', x_{q, d+1})\bigg).
\end{align*}
By the induction hypothesis, one gets
$f_{\ii, \NN} =  b_{q, \HH}  \prod_{j=1}^{d+1} f_{i_j, N_j}(x_{1, j}, \ldots, x_{q, j}),$
which is our desired expression.

Next, by applying the product formula (\ref{heq:P1}), we  can write
\begin{equation}\label{heq:dola1}
\Big(\Delta Z^{q, \HH}_{[\frac{\ii}{\NN}, \frac{\ii +1}{\NN}]}\Big)^2 - E\Big[\Big(\Delta Z^{q, \HH}_{[\frac{\ii}{\NN}, \frac{\ii +1}{\NN}]}\Big)^2 \Big]= \sum_{r=0}^{q-1} r!\binom{q}{r}^2 I_{2q-2r}(f_{\ii,\NN} \widetilde{\otimes}_r f_{\ii,\NN}).
\end{equation}
Let us compute the contractions appearing in the right-hand side of (\ref{heq:dola1}). For every $0 \leq r \leq q-1$, we have
{\allowdisplaybreaks
\begin{align}\label{heq:3.1.1}
&(f_{\ii,\NN} \otimes_r f_{\ii,\NN})(\xx_1, \ldots, \xx_{2q-2r}) \nonumber \\ 
& = \int_{([0, 1]^{d})^{  r}}d\aa_1\ldots d\aa_r f_{\ii,\NN}(\xx_1, \ldots, \xx_{q-r}, \aa_1, \ldots, \aa_r) \nonumber\\
&\qquad\qquad\qquad\qquad \times f_{\ii,\NN}(\xx_{q-r+1}, \ldots, \xx_{2q-2r}, \aa_1, \ldots, \aa_r) \nonumber\\
&= b_{q, \HH}^2 \int_{([0, 1]^{d })^{ r}}d\aa_1\ldots d\aa_r \prod_{j=1}^d f_{i_j, N_j}(x_{1,j}, \ldots, x_{q-r,j}, a_{1,j}, \ldots, a_{r,j}) \nonumber\\
&\qquad\qquad\qquad\qquad \times  \prod_{j=1}^d f_{i_j, N_j}(x_{q-r+1,j}, \ldots, x_{2q-2r,j}, a_{1,j}, \ldots, a_{r,j}) \nonumber\\
& = b_{q, \HH}^2 \prod_{j=1}^d (f_{i_j ,N_j} \otimes_r f_{i_j ,N_j})(x_{1,j}, \ldots, x_{2q-2r,j}),
\end{align}
}where 
\begin{align}\label{heq:3.1}
&(f_{i, N} \otimes_r f_{i, N})(x_{1}, \ldots, x_{2q-2r})= (H'(2H' -1))^r \nonumber\\
& \times \bigg\{\mathbbm{1}_{[0, \frac{i_+1}{N}] }(x_{1}\vee \ldots x_{q-r}) \int_{x_{1}\vee \ldots x_{q-r}}^{\frac{i+1}{N}}du \partial_1K^{H'}(u, x_{1})\ldots \partial_1K^{H'}(u, x_{q-r}) \nonumber\\
&\qquad\times\mathbbm{1}_{[0, \frac{i+1}{N}]}(x_{q-r+1}\vee \ldots x_{2q-2r}) \int_{x_{q-r+1}\vee \ldots x_{2q-2r}}^{\frac{i+1}{N}}du' \partial_1K^{H'}(u', x_{q-r+1})\ldots \nonumber \\
&\hspace{8cm}\ldots \partial_1K^{H'}(u', x_{2q-2r}) |u-u'|^{(2H'-2)r} \nonumber\\
&\quad -\mathbbm{1}_{[0, \frac{i+1}{N}] }(x_{1}\vee \ldots x_{q-r}) \int_{x_{1}\vee \ldots x_{q-r}}^{\frac{i+1}{N}}du \partial_1K^{H'}(u, x_{1})\ldots \partial_1K^{H'}(u, x_{q-r})\nonumber\\
&\qquad\times\mathbbm{1}_{[0, \frac{i}{N}]}(x_{q-r+1}\vee \ldots x_{2q-2r}) \int_{x_{q-r+1}\vee \ldots x_{2q-2r}}^{\frac{i}{N}}du' \partial_1K^{H'}(u', x_{q-r+1})\ldots  \nonumber\\
&\hspace{8cm} \ldots \partial_1K^{H'}(u', x_{2q-2r}) |u-u'|^{(2H'-2)r} \nonumber\\
&\quad -  \mathbbm{1}_{[0, \frac{i}{N}] }(x_{1}\vee \ldots x_{q-r}) \int_{x_{1}\vee \ldots x_{q-r}}^{\frac{i}{N}}du \partial_1K^{H'}(u, x_{1})\ldots \partial_1K^{H'}(u, x_{q-r}) \nonumber\\
&\qquad\times\mathbbm{1}_{[0, \frac{i+1}{N}]}(x_{q-r+1}\vee \ldots x_{2q-2r}) \int_{x_{q-r+1}\vee \ldots x_{2q-2r}}^{\frac{i+1}{N}}du' \partial_1K^{H'}(u', x_{q-r+1})\ldots \nonumber \\
&\hspace{8cm}\ldots \partial_1K^{H'}(u', x_{2q-2r}) |u-u'|^{(2H'-2)r} \nonumber\\
&\quad+\mathbbm{1}_{[0, \frac{i}{N}] }(x_{1}\vee \ldots x_{q-r}) \int_{x_{1}\vee \ldots x_{q-r}}^{\frac{i}{N}}du \partial_1K^{H'}(u, x_{1})\ldots \partial_1K^{H'}(u, x_{q-r}) \nonumber\\
&\qquad\times\mathbbm{1}_{[0, \frac{i}{N}]}(x_{q-r+1}\vee \ldots x_{2q-2r}) \int_{x_{q-r+1}\vee \ldots x_{2q-2r}}^{\frac{i}{N}}du' \partial_1K^{H'}(u', x_{q-r+1})\ldots \nonumber\\
&\hspace{8cm}\ldots \partial_1K^{H'}(u', x_{2q-2r}) |u-u'|^{(2H'-2)r}\bigg\}.
\end{align}
(See \cite[page 10]{hTudorViens1} for a detailed computation of the expression (\ref{heq:3.1}).) Moreover, since $Z^{q,\HH}$ is $\HH$-self-similar and has stationary increments, one has
$$\Delta Z^{q, \HH}_{[\frac{\ii}{\NN}, \frac{\ii +1}{\NN}]} \overset{(d)}{=} \NN^{-\HH} \Delta Z^{q, \HH}_{[\ii, \ii+1]}\overset{(d)}{=} \NN^{-\HH}Z^{q, \HH}_{[0,\bf{1}]}.$$
It follows that 
$$E\bigg[\NN^{2\HH}\Big(\Delta Z^{q, \HH}_{[\frac{\ii}{\NN}, \frac{\ii +1}{\NN}]}\Big)^2\bigg] = E[Z^{q, \HH}(\textbf{1})^2] = 1.$$
As a consequence, we have 
\begin{equation}\label{heq:VN}
V_{\NN} = F_{2q, \NN} + c_{2q-2}F_{2q-2, \NN} + \ldots + c_4F_{4, \NN} + c_2F_{2, \NN}.
\end{equation}
where
$c_{2q-2r} = r! \binom{q}{r}^2$, $r=0,\ldots, q-1$,
are the combinator constants coming from the product formula, and
\begin{equation}\label{heq:3.4}
F_{2q-2r, \NN}: = \NN^{2\HH -1}I_{2q-2r}\bigg(\sum_{\ii=0}^{\NN-1} f_{\ii,\NN} \widetilde{\otimes}_r f_{\ii,\NN}\bigg),
\end{equation}
for the kernels $f_{\ii,\NN} \otimes_r f_{\ii,\NN}$ computed in (\ref{heq:3.1.1})-(\ref{heq:3.1}).

\subsection{Evaluating the $L^2(\Omega)$-norm}

Set
\begin{equation}\label{heq:3.5}
c_{1, \HH}= \frac{2! 2^d b_{q, \HH}^4 (\HH'(2\HH'-1))^{2q}}{(4\HH' - 3)(4\HH'-2)[(2\HH'-2)(q-1)+1]^2[(\HH'-1)(q-1) + 1]^2}.
\end{equation}
We claim that 
\begin{equation}\label{heq:30}
\lim_{\NN \to \infty} E[c_{1, \HH}^{-1}\NN^{2(2-2\HH')}c_2^{-2}V_{\NN}^2] = 1.
\end{equation}

Let us prove (\ref{heq:30}). Due to the orthogonality property for Wiener chaoses of different orders, it is sufficient to evaluate the $L^2(\Omega)$-norm of each multiple Wiener-It\^{o} integrals appearing in the chaotic decomposition (\ref{heq:VN}) of $V_\NN$. Let us start with the double integral:
$$F_{2, \NN}= \NN^{2\HH-1}I_2\bigg(\sum_{\ii=0}^{\NN-1} f_{\ii,\NN} \otimes_{q-1} f_{\ii,\NN}\bigg).$$
Since the kernel $\sum_{\ii=0}^{\NN-1} f_{\ii,\NN} \otimes_{q-1} f_{\ii,\NN}$ is symmetric, one has
\begin{align*}
E[F_{2, \NN}^2]& = 2!N^{4\HH-2} \bigg\| \sum_{\ii=0}^{\NN-1} f_{\ii,\NN} \otimes_{q-1} f_{\ii,\NN}\bigg\|_{L^2(([0, 1]^{d})^{ 2})}^2\\ 
&=2!\NN^{4\HH-2} \sum_{\ii, \kk=0}^{\NN-1}\left\langle  f_{\ii,\NN} \otimes_{q-1} f_{\ii,\NN},  f_{\kk,\NN} \otimes_{q-1} f_{\kk,\NN} \right\rangle_{L^2(([0, 1]^{d})^{ 2})}.
\end{align*}
Let us now compute the scalar products in the above expression. By using (\ref{heq:3.1.1}), (\ref{heq:3.1}), by applying Fubini's theorem and by noting that $\int_0^{u \wedge v} \partial_1 K^{H'}(u, a) \partial_1 K^{H'}(v, a)da = H'(2H' -1)|u-v|^{2H'-2}$, it is easy to verify that
\eject
\begin{align*}
&\left\langle  f_{\ii,\NN} \otimes_{q-1} f_{\ii,\NN},  f_{\kk,\NN} \otimes_{q-1} f_{\kk,\NN} \right\rangle_{L^2(([0, 1]^{d})^{ 2})}\\ 
&=b_{q, \HH}^4 \prod_{j=1}^d \left\langle f_{i_j,N_j} \otimes_{q-1} f_{i_j,N_j}, f_{k_j,N_j} \otimes_{q-1} f_{k_j,N_j} \right\rangle_{L^2([0, 1]^2)}\\
& = b^4_{q, \HH} (\HH'(2\HH'-1))^{2q} \prod_{j=1}^d \int_{\frac{i_j}{N_j}}^{\frac{i_j+1}{N_j}}du_j\int_{\frac{i_j}{N_j}}^{\frac{i_j+1}{N_j}}dv_j \int_{\frac{k_j}{N_j}}^{\frac{k_j+1}{N_j}}du'_j \int_{\frac{k_j}{N_j}}^{\frac{k_j+1}{N_j}}dv'_j \\
& \hspace{4.5cm}\times |u_j-v_j|^{(2H'_j-2)(q-1)}|u'_j-v'_j|^{(2H'_j-2)(q-1)}\\
&\hspace{4.5cm}\times|u_j-u'_j|^{2H'_j-2}|v_j-v'_j|^{2H'_j-2},
\end{align*}
(see, e.g., \cite[page 11]{hTudorViens1}). The change of variables $u' = (u-\frac{i}{N})N$ for each $u_j, u'_j, v_j, v'_j$ with $j$ from $1$ to $d$ yields
\begin{align}\label{hf1}
E[F_{2, \NN}^2] &= 2 b^4_{q, \HH} (\HH'(2\HH'-1))^{2q}\NN^{4\HH-2}\NN^{-4}\NN^{-(2\HH'-2)2q} \nonumber\\ 
&\times  \sum_{\ii, \kk=0}^{\NN-1}\prod_{j=1}^d \int_0^1du_j\int_0^1dv_j \int_0^1du'_j \int_0^1dv'_j  
|u_j-v_j|^{(2H'_j-2)(q-1)}|u'_j-v'_j|^{(2H'_j-2)(q-1)} \nonumber\\
&\hspace{5cm}\times|u_j-u'_j + i_j -k_j|^{2H'_j-2}|v_j-v'_j + i_j -k_j|^{2H'_j-2}.
\end{align}
Now, we split the sum $\sum_{\ii, \kk=0}^{\NN-1}$ appearing in $E[F_{2, \NN}^2]$ just above into
\begin{equation}\label{hsum}
\sum_{\ii, \kk=0}^{\NN-1} = \sum_{\substack{\ii, \kk= 0 \\ \exists  1\leq j \leq d: i_j = k_j}}^{\NN-1} + \sum_{\substack{\ii, \kk= 0 \\ \forall  j: i_j \ne k_j}}^{\NN-1} .
\end{equation}
For the first term in the right-hand side of (\ref{hsum}), without loss of generality, let us assume that $ i_1 = k_1, \ldots, i_m = k_m$ for some $1 \leq m < d$ and $i_j \ne k_j$ for all $ m+1 \leq j \leq d$. Then,
\eject
\begin{align*}
&\NN^{-2}\sum_{\substack{\ii, \kk= 0 \\ i_1 = k_1, \ldots, i_m = k_m}}^{\NN-1}\prod_{j=1}^d \int_{[0, 1]^4}du_jdv_jdu'_jdv'_j |u_j-v_j|^{(2H'_j-2)(q-1)}|u'_j-v'_j|^{(2H'_j-2)(q-1)}\\
&\hspace{5.5cm}\times|u_j-u'_j + i_j -k_j|^{2H'_j-2}|v_j-v'_j + i_j -k_j|^{2H'_j-2}\\
&= \prod_{j=1}^m N_j^{-1}  \int_{[0,1]^4}du_jdv_jdu'_jdv'_j (|u_j-v_j||u'_j-v'_j|)^{(2H'_j-2)(q-1)}(|u_j-u'_j ||v_j-v'_j |)^{2H'_j-2}\\
& \qquad\times \sum_{\substack{i_{m+1}, k_{m+1} =0\\ i_{m+1} \ne k_{m+1}}}^{N_j-1} \ldots \sum_{\substack{i_d, k_d =0\\ i_d \ne k_d}}^{N_j-1} \prod_{j=m+1}^d \int_{[0,1]^4}du_jdv_jdu'_jdv'_j (|u_j-v_j||u'_j-v'_j|)^{(2H'_j-2)(q-1)}\\
& \hspace{5cm}\times N_j^{-2}|u_j-u'_j + i_j -k_j|^{2H'_j-2}|v_j-v'_j + i_j -k_j|^{2H'_j-2}.
\end{align*}
By switching sum and product in the above expression, we arrive 
{\allowdisplaybreaks
\begin{align*}
& \prod_{j=1}^m N_j^{-1}  \int_{[0,1]^4}du_jdv_jdu'_jdv'_j (|u_j-v_j||u'_j-v'_j|)^{(2H'_j-2)(q-1)}(|u_j-u'_j ||v_j-v'_j |)^{2H'_j-2}\\
& \qquad\times \prod_{j= m+1}^d \bigg(\sum_{\substack{i_j, k_j = 0 \\i_j \ne k_j}}^{N_j-1} \int_{[0,1]^4}du_jdv_jdu'_jdv'_j |u_j-v_j|^{(2H'_j-2)(q-1)}|u'_j-v'_j|^{(2H'_j-2)(q-1)}\\
& \qquad\qquad\qquad\times N_j^{-2}|u_j-u'_j + i_j -k_j|^{2H'_j-2}|v_j-v'_j + i_j -k_j|^{2H'_j-2}\bigg)\\
&=  \prod_{j=1}^m N_j^{-1}  \int_{[0,1]^4}du_jdv_jdu'_jdv'_j (|u_j-v_j||u'_j-v'_j|)^{(2H'_j-2)(q-1)}(|u_j-u'_j ||v_j-v'_j |)^{2H'_j-2}\\
& \qquad\times \prod_{j=m+1}^d \bigg( \int_{[0,1]^4}du_jdv_jdu'_jdv'_j |u_j-v_j|^{(2H'_j-2)(q-1)}|u'_j-v'_j|^{(2H'_j-2)(q-1)}\\
& \qquad\qquad\qquad\times 2N_j^{-2}\sum_{\substack{i_j, k_j= 0 \\ i_j > k_j}}^{N_j-1}|u_j-u'_j + i_j -k_j|^{2H'_j-2}|v_j-v'_j + i_j -k_j|^{2H'_j-2}\bigg).
\end{align*}
}One has that
\begin{align*}
&N^{-2}\sum_{\substack{i, k= 0 \\ i > k}}^{N-1}|u-u' + i -k|^{2H'-2}|v-v' + i -k|^{2H'-2}\\ 
&= N^{2(2H'-2)}\frac{1}{N}\sum_{n=1}^{N}\Big(1-\frac{n}{N}\Big)\Big|\frac{u-u'}{N}+ \frac{n}{N}\Big|^{2H'-2}\Big|\frac{v-v'}{N}+\frac{n}{N}\Big|^{2H'-2}
\end{align*}
 is asymptotically equivalent to 
$N^{2(2H'-2)}\int_0^1(1-x)x^{4H'-4}dx = N^{2(2H'-2)}\frac{1}{(4H' - 3)(4H'-2)}.$
It follows that
\begin{align*}
&\NN^{-2}\sum_{\substack{\ii, \kk= 0 \\ i_1 = k_1, \ldots, i_m = k_m}}^{\NN-1}\prod_{j=1}^d \int_{[0, 1]^4}du_jdv_jdu'_jdv'_j |u_j-v_j|^{(2H'_j-2)(q-1)}|u'_j-v'_j|^{(2H'_j-2)(q-1)}\\
&\hspace{4.5cm}\times|u_j-u'_j + i_j -k_j|^{2H'_j-2}|v_j-v'_j + i_j -k_j|^{2H'_j-2}\\
 &\approx \prod_{j=1}^m N_j^{-1}  \int_{[0,1]^4}du_jdv_jdu'_jdv'_j |u_j-v_j|^{(2H'_j-2)(q-1)}|u'_j-v'_j|^{(2H'_j-2)(q-1)}\\
&\hspace{5.5cm}\times|u_j-u'_j |^{2H'_j-2}|v_j-v'_j |^{2H'_j-2}\\
& \times \prod_{j=m+1}^d 2N_j^{2(2H'_j-2)}\frac{1}{(4H'_j - 3)(4H'_j-2)} \bigg( \int_{[0,1]^2}du_jdv_j |u_j-v_j|^{(2H'_j-2)(q-1)}\bigg)^2.
\end{align*}
Since $2(2-2H'_j) - 1 < 0$ for all $j$, one gets, as $\NN \to \infty$,
\begin{align}\label{hf2}
&\NN^{2(2-2\HH'_j)} \times \NN^{-2}\sum_{\substack{\ii, \kk= 0 \\ \exists  1\leq j \leq d: i_j = k_j}}^{\NN-1}\prod_{j=1}^d \int_{[0, 1]^4}du_jdv_jdu'_jdv'_j |u_j-v_j|^{(2H'_j-2)(q-1)}|u'_j-v'_j|^{(2H'_j-2)(q-1)}\nonumber\\
&\hspace{4.5cm}\times |u_j-u'_j + i_j -k_j|^{2H'_j-2}|v_j-v'_j + i_j -k_j|^{2H'_j-2} \longrightarrow  0.
 \end{align}
Similarly for the second term in (\ref{hsum}), that is, when $i_j \ne k_j$ for all $1 \leq j \leq d$, we have
\begin{align*}
&\NN^{-2}\sum_{\substack{\ii, \kk= 0 \\ i_j \ne k_j, \text{ } \forall j}}^{\NN-1}\prod_{j=1}^d \int_{[0, 1]^4}du_jdv_jdu'_jdv'_j |u_j-v_j|^{(2H'_j-2)(q-1)}|u'_j-v'_j|^{(2H'_j-2)(q-1)}\\
&\hspace{4.5cm}\times|u_j-u'_j + i_j -k_j|^{2H'_j-2}|v_j-v'_j + i_j -k_j|^{2H'_j-2}\\
 &\approx \prod_{j=1}^d  N_j^{2(2H'_j-2)}\frac{2}{(4H'_j - 3)(4H'_j-2)}\bigg( \int_{[0,1]^2}du_jdv_j |u_j-v_j|^{(2H'_j-2)(q-1)}\bigg)^2\\
 &= \prod_{j=1}^d N_j^{2(2H'_j-2)}\frac{2}{(4H'_j - 3)(4H'_j-2)[(2H'_j-2)(q-1)+1]^2[(H'_j-1)(q-1) + 1]^2}.
\end{align*}
It follows that
\begin{align}\label{hf3}
&\NN^{2(2-2\HH')} \times \NN^{-2}\sum_{\substack{\ii, \kk= 0 \\ i_j \ne k_j, \text{ } \forall j}}^{\NN-1}\prod_{j=1}^d \int_{[0, 1]^4}du_jdv_jdu'_jdv'_j |u_j-v_j|^{(2H'_j-2)(q-1)}|u'_j-v'_j|^{(2H'_j-2)(q-1)}\nonumber\\
&\hspace{5.5cm}\times|u_j-u'_j + i_j -k_j|^{2H'_j-2}|v_j-v'_j + i_j -k_j|^{2H'_j-2}\nonumber\\
 &\longrightarrow \prod_{j=1}^d  \frac{2}{(4H'_j - 3)(4H'_j-2)[(2H'_j-2)(q-1)+1]^2[(H'_j-1)(q-1) + 1]^2}.
\end{align}

To conclude that 
\begin{equation}\label{heq:3.6}
\lim_{\NN \to \infty} E[c_{1, \HH}^{-1}\NN^{2(2-2\HH')}F_{2, \NN}^2] = 1,
\end{equation}
we use the expression  (\ref{hf1}) for $E[F_{2,\NN}^2]$.
The first sum in (\ref{hsum}) goes to zero according to (\ref{hf2}), whereas the second sum goes to
the quantity in (\ref{hf3}).
Going back to the definition (\ref{heq:3.5}) of $c_{1,{\bf H}}$, we arrive to the desired conclusion (\ref{heq:3.6}).

Let us now consider the remaining terms $F_{4, \NN}, \ldots, F_{2q, \NN}$ in the chaos decomposition (\ref{heq:VN}). 
Using that $\|\widetilde{g}\|_{L^2} \leq \|g\|_{L^2}$ for any square integrable function $g$, one can write, for every $ 0 \leq r \leq q-2$,
\begin{align*}
E[F_{2q-2r, \NN}^2]& = \NN^{4\HH-2}(2q-2r)! \,\bigg\| \sum_{\ii=0}^{\NN-1} f_{\ii,\NN} \widetilde{\otimes}_r f_{\ii,\NN}\bigg\|^2_{L^2(([0, 1]^{d})^{2q-2r})}\\ 
 & \leq \NN^{4\HH-2}(2q-2r)!\,\bigg\| \sum_{\ii=0}^{\NN-1}f_{\ii,\NN} \otimes_r f_{\ii,\NN}\bigg\|^2_{L^2(([0, 1]^{d})^{2q-2r})}\\ 
&= (2q-2r)!\NN^{4\HH-2} \sum_{\ii, \kk=0}^{\NN-1}\left\langle  f_{\ii,\NN} \otimes_r f_{\ii,\NN},  f_{\kk,\NN} \otimes_r f_{\kk,\NN} \right\rangle_{L^2(([0, 1]^{d})^{2q-2r})}.
\end{align*}
Proceeding as above, we obtain 
\begin{align*}
&\left\langle  f_{\ii,\NN} \otimes_r f_{\ii,\NN},  f_{\kk,\NN} \otimes_r f_{\kk,\NN} \right\rangle_{L^2([0, 1]^{d \cdot (2q-2r)})}\\ 
& = b^4_{q, \HH} (\HH'(2\HH'-1))^{2q} \prod_{j=1}^d \int_{\frac{i_j}{N_j}}^{\frac{i_j+1}{N_j}}du_j\int_{\frac{i_j}{N_j}}^{\frac{i_j+1}{N_j}}dv_j \int_{\frac{k_j}{N_j}}^{\frac{k_j+1}{N_j}}du'_j \int_{\frac{k_j}{N_j}}^{\frac{k_j+1}{N_j}}dv'_j \\
& \hspace{4.5cm}\times |u_j-v_j|^{(2H'_j-2)r}|u'_j-v'_j|^{(2H'_j-2)r}\\
&\hspace{4.5cm}\times|u_j-u'_j|^{(2H'_j-2)(q-r)}|v_j-v'_j|^{(2H'_j-2)(q-r)}.
\end{align*}
Using the change of variables $u' = (u-\frac{i}{N})N$ for each $u_j, u_j, v_j, v'_j$ with $j=1, \ldots, d$, one obtains
\begin{align*}
E[F_{2q-2r, \NN}^2] &\leq (2q-2r)! b^4_{q, \HH} (\HH'(2\HH'-1))^{2q}\NN^{4\HH-2}\NN^{-4}\NN^{-(2\HH'-2)2q}\\ 
&\times  \sum_{\ii, \kk=0}^{\NN-1}\bigg(\prod_{j=1}^d \int_0^1du_j\int_0^1dv_j \int_0^1du'_j \int_0^1dv'_j |u_j-v_j|^{(2H'_j-2)r}|u'_j-v'_j|^{(2H'_j-2)r}\\
&\hspace{2cm}\times|u_j-u'_j + i_j -k_j|^{(2H'_j-2)(q-r)}|v_j-v'_j + i_j -k_j|^{(2H'_j-2)(q-r)}\bigg).
\end{align*}
Switching sum and product in the above expression, one obtains
\begin{align}\label{hKhochiu}
E[F_{2q-2r, \NN}^2] &\leq (2q-2r)! b^4_{q, \HH} (\HH'(2\HH' -1))^{2q}\NN^{-2} \nonumber\\
&\qquad\times\prod_{j=1}^d\int_{[0, 1]^4}du_jdv_jdu'_jdv'_j |u_j-v_j|^{(2H'_j-2)r}|u'_j-v'_j|^{(2H'_j-2)r}\nonumber\\ 
&\qquad\times \bigg( \sum_{i_j, k_j=0}^{N_j-1}|u_j-u'_j + i_j -k_j|^{(2H'_j-2)(q-r)}|v_j-v'_j + i_j -k_j|^{(2H'_j-2)(q-r)}\bigg).
\end{align}
Note that the above sum $\sum_{i_j, k_j=0}^{N_j-1}$ can be divided into two parts: the diagonal part with $i_j=k_j$ and the non-diagonal part with $i_j\ne k_j$. It is easily seen that the non-diagonal part is dominant. Indeed, the diagonal part in the right-hand side of (\ref{hKhochiu}) is equal to
\begin{align*}
&(2q-2r)! b^4_{q, \HH} (\HH'(2\HH' -1))^{2q}\NN^{-1}\prod_{j=1}^d\int_{[0, 1]^4}du_jdv_jdu'_jdv'_j\\
&\qquad\times |u_j-v_j|^{(2H'_j-2)r}|u'_j-v'_j|^{(2H'_j-2)r}|u_j-u'_j |^{(2H'_j-2)(q-r)}|v_j-v'_j|^{(2H'_j-2)(q-r)}.
\end{align*}
and it tends to zero since $(2H'_j-2)r >- 1$ and $(2H'_j-2)(q-r) > -1$. Thus, in order to find a bound of $E[F_{2q-2r, \NN}^2]$ in (\ref{hKhochiu}), we have to study the following sum
\begin{equation}\label{hKhochiu1}
\frac{1}{N^2}\sum_{\substack{i, k=0 \\ i \ne k}}^{N-1}|u-u' + i -k|^{(2H'-2)(q-r)}|v-v' + i -k|^{(2H'-2)(q-r)}
\end{equation}
for all $q \geq 2$ and $r =0, \ldots, q-2$, when $u, u', v, v' \in [0, 1]$. In (\ref{hKhochiu1}), one has set $H' = 1 + \frac{H-1}{q}$ with $H > \frac12$. We now analyse the behavior of (\ref{hKhochiu1}) according to the following three cases: $H > \frac34, H <\frac34$ and $H =\frac34$.

$\bullet$ If $H > \frac34$, then (\ref{hKhochiu1}) is equal to
$$ N^{(2H'-2)(2q-2r)}\frac{2}{N}\sum_{n=1}^{N}\Big(1-\frac{n}{N}\Big)\Big|\frac{u-u'}{N}+ \frac{n}{N}\Big|^{(2H'-2)(q-r)}\Big|\frac{v-v'}{N}+\frac{n}{N}\Big|^{(2H'-2)(q-r)}.$$
By multiplying (\ref{hKhochiu1}) by $N^{(2-2H')(2q-2r)}$ one has
\begin{align*}
&N^{(2-2H')(2q-2r)}\times \frac{1}{N^2}\sum_{\substack{i, k=0 \\ i \ne k}}^{N-1}|u-u' + i -k|^{(2H'-2)(q-r)}|v-v' + i -k|^{(2H'-2)(q-r)}\\ 
& = \frac{2}{N}\sum_{n=1}^{N}\Big(1-\frac{n}{N}\Big)\Big|\frac{u-u'}{N}+ \frac{n}{N}\Big|^{(2H'-2)(q-r)}\Big|\frac{v-v'}{N}+\frac{n}{N}\Big|^{(2H'-2)(q-r)}\\
&\approx 2 \int_0^1 (1-x)x^{2(2H' -2)(q-r)}dx < \infty \qquad \text{since } H  > \frac34.
\end{align*}

$\bullet$ If $H <\frac34$, (\ref{hKhochiu1}) is bounded by
$$\frac{1}{N}\sum_{r\in\mathbb{Z}\setminus\{0\}} |u-u' + r|^{(2H'-2)(2q-2r)}|v-v' + r|^{(2H'-2)(2q-2r)} = O(\frac{1}{N}).$$

$\bullet$ If $H =\frac34$, following the same route as in the case $H < \frac34$, we arrive to $(\ref{hKhochiu1}) = O(\frac{\log{\NN}}{\NN})$. 

Now, we go back to (\ref{hKhochiu}). From the analysis of (\ref{hKhochiu1}), we conclude that
$$E[F_{2q-2r, \NN}^2] = \begin{cases}
O(\NN^{-(2\HH' -2)(2q-2r)})& \text{if } H \in (\frac34, 1)\\
O(\NN^{-1})& \text{if } H \in (\frac12, \frac34)\\
O(\frac{\log{\NN}}{\NN})& \text{if } H =\frac34\\
\end{cases}
$$
Therefore, for all $0 \leq r \leq q-2$ and as $\NN \to \infty$, one has
\begin{equation}\label{heq:3.7}
\lim_{\NN \to \infty} E[\NN^{2(2-2\HH')}F_{2q-2r, \NN}^2] = 0.
\end{equation}
Thus, from (\ref{heq:3.6}), (\ref{heq:3.7}) and the orthogonality of Wiener chaos, we obtain (\ref{heq:30}).

\subsection{Concluding the proof of Theorem \ref{hmain}}

Thanks to (\ref{heq:3.7}), in order to understand the asymptotic behavior of  the normalized sequence of $V_{\NN}$, it is enough to analyse the convergence of the term

\begin{equation}\label{heq:3.12}
\NN^{2-2\HH'}F_{2, \NN} = I_2\bigg(\NN^{2\HH-1}\NN^{2-2\HH'}\sum_{\ii=0}^{\NN-1}f_{\ii,\NN} \otimes_{q-1} f_{\ii,\NN} \bigg),
\end{equation}
with
{\allowdisplaybreaks
\begin{align*}
&f_{\ii,\NN} \otimes_{q-1} f_{\ii,\NN} (\xx_1, \xx_2)= b_{q, \HH}^2 \prod_{j=1}^d(f_{i_j, N_j} \otimes_{q-1} f_{i_j, N_j})(x_{1,j}, x_{2,j})\\
& = b_{q, \HH}^2 (\HH'(2\HH'-1))^{q-1}\\
&\qquad \times \prod_{j=1}^d\bigg(\mathbbm{1}_{[0, \frac{i_j}{N_j}]}(x_{1,j}) \mathbbm{1}_{[0, \frac{i_j}{N_j}]}(x_{2,j}) \int_{\frac{i_j}{N_j}}^{\frac{i_j+1}{N_j}}du\int_{\frac{i_j}{N_j}}^{\frac{i_j+1}{N_j}}du'  \partial_1K^{H'_j}(u, x_{1,j}) \\
&\hspace{7cm} \times \partial_1K^{H'_j}(u', x_{2,j})|u-u'|^{(2H'_j-2)(q-1)}\\
&\hspace{1.2cm}+ \mathbbm{1}_{[0, \frac{i_j}{N_j}]}(x_{1,j}) \mathbbm{1}_{[\frac{i_j}{N_j}, \frac{i_j+1}{N_j}]}(x_{2,j}) \int_{\frac{i_j}{N_j}}^{\frac{i_j+1}{N_j}}du\int_{x_{2,j}}^{\frac{i_j+1}{N_j}}du'  \partial_1K^{H'_j}(u, x_{1,j})\\
&\hspace{7cm}\times \partial_1K^{H'_j}(u', x_{2,j})|u-u'|^{(2H'_j-2)(q-1)}\\
&\hspace{1.2cm}+\mathbbm{1}_{[\frac{i_j}{N_j}, \frac{i_j+1}{N_j}]}(x_{1,j}) \mathbbm{1}_{[0, \frac{i_j+1}{N_j}]}(x_{2,j}) \int_{x_{1,j}}^{\frac{i_j+1}{N_j}}du\int_{\frac{i_j}{N_j}}^{\frac{i_j+1}{N_j}}du'  \partial_1K^{H'_j}(u, x_{1,j})\\
&\hspace{7cm} \times\partial_1K^{H'_j}(u', x_{2,j})|u-u'|^{(2H'_j-2)(q-1)}\\
&\hspace{1.2cm}+\mathbbm{1}_{[\frac{i_j}{N_j}, \frac{i_j}{N_j}]}(x_{1,j}) \mathbbm{1}_{[\frac{i_j}{N_j}, \frac{i_j+1}{N_j}]}(x_{2,j}) \int_{x_{1,j}}^{\frac{i_j+1}{N_j}}du\int_{x_{2,j}}^{\frac{i_j+1}{N_j}}du'  \partial_1K^{H'_j}(u, x_{1,j}) \\
&\hspace{7cm} \times\partial_1K^{H'_j}(u', x_{2,j})|u-u'|^{(2H'_j-2)(q-1)}\bigg).
\end{align*}
}Among the four terms in the right-hand side of the above expression, only the first one is not asymptotically negligible in $L^2(\Omega)$ as $\NN \to \infty$, see \cite[page 14 and 15]{hTudorViens1} or follow the lines of \cite{hTudorViens2} for details. Furthermore, by the isometry property for multiple Wiener-It\^{o} integrals, evaluating the $L^2(\Omega)$-limit  of a sequence belonging to the second Wiener chaos is equivalent to evaluating the $L^2(([0, 1]^{d})^{2})$-limit of the sequence of their corresponding symmetric kernels. Therefore, we are left to find the limit of $f_2^{\NN}$ in $L^2(([0, 1]^{d})^2)$, where
\eject
\begin{align*}
f_2^{\NN}(\xx_1, \xx_2):&= \NN^{2\HH-1} \NN^{2-2\HH'}b_{q, \HH}^2 (\HH'(2\HH'-1))^{q-1}\\
& \times \sum_{\ii =0}^{\NN -1}\bigg(  \prod_{j=1}^d \mathbbm{1}_{[0, \frac{i_j}{N_j}]}(x_{1,j}) \mathbbm{1}_{[0, \frac{i_j}{N_j}]}(x_{2,j}) \int_{\frac{i_j}{N_j}}^{\frac{i_j+1}{N_j}}du\int_{\frac{i_j}{N_j}}^{\frac{i_j+1}{N_j}}du'  \partial_1K^{H'_j}(u, x_{1,j}) \\
&\hspace{6cm} \times \partial_1K^{H'_j}(u', x_{2,j})|u-u'|^{(2H'_j-2)(q-1)}\bigg)\\
&= \NN^{2\HH-1} \NN^{2-2\HH'}b_{q, \HH}^2 (\HH'(2\HH'-1))^{q-1}\\
& \times \prod_{j=1}^d\bigg(   \sum_{i_j =0}^{N_j-1} \mathbbm{1}_{[0, \frac{i_j}{N_j}]}(x_{1,j}) \mathbbm{1}_{[0, \frac{i_j}{N_j}]}(x_{2,j}) \int_{\frac{i_j}{N_j}}^{\frac{i_j+1}{N_j}}du\int_{\frac{i_j}{N_j}}^{\frac{i_j+1}{N_j}}du'  \partial_1K^{H'_j}(u, x_{1,j}) \\
&\hspace{6cm} \times \partial_1K^{H'_j}(u', x_{2,j})|u-u'|^{(2H'_j-2)(q-1)}\bigg).
\end{align*}
According to \cite[Theorem 3.2]{hTudorViens1}, it is shown that for each $j$ from $1$ to $d$, the following quantity
\begin{align*}
&N_j^{2H_j-1} N_j^{2-2H'_j} \sum_{i_j=1}^{N_j-1} \mathbbm{1}_{[0, \frac{i_j}{N_j}]}(x_{1,j}) \mathbbm{1}_{[0, \frac{i_j}{N_j}]}(x_{2,j}) \int_{\frac{i_j}{N_j}}^{\frac{i_j+1}{N_j}}du\int_{\frac{i_j}{N_j}}^{\frac{i_j+1}{N_j}}du'  \partial_1K^{H'_j}(u, x_{1,j}) \\
&\hspace{6cm} \times \partial_1K^{H'_j}(u', x_{2,j})|u-u'|^{(2H'_j-2)(q-1)}
\end{align*}
converges in $L^2(\mathbb{R}^2)$  to the kernel of a standard Rosenblatt process with self-similarity $2H'_j-1$ at time $1$ (up to an explicit multiplicative constant). Since the kernel of the Rosenblatt sheet has the form of a tensor product from $1$ to $d$ of the kernel of the Rosenblatt process, (see (\ref{hDef2})), 
it follows that $f_2^{\NN}$ converges to the kernel of a Rosenblatt sheet with self-similarity parameter $2\HH'-1$ evaluated at time $\textbf{1}$ up to a constant.
Therefore, the double Wiener-It\^{o} integral $\NN^{2-2\HH'}F_{2, \NN}$ in (\ref{heq:3.12}) converges in $L^2(\Omega)$ to a Rosenblatt sheet $R^{2\HH'-1}_{\textbf{1}}$ with self-similarity parameter $2\HH'-1$ evaluated at time $\textbf{1}$, which leads to the convergence of $\NN^{2-2\HH'}c_2^{-1}V_\NN$ to the same limit (up to a constant). In order to find the explicit constant, we use the fact that $\lim_{\NN \to \infty}E[(c_{1, \HH}^{-\frac12} \NN^{2-2\HH'}c_2^{-1}V_\NN)^2] = E[(R^{2\HH'-1}_{\textbf{1}})^2] =1$ to eventually obtain that $c_{1, \HH}^{-\frac12} \NN^{2-2\HH'}c_2^{-1}V_\NN$ converges in $L^2(\Omega)$ to the Rosenblatt sheet $R^{2\HH'-1}_{\textbf{1}}$ as $\NN \to \infty$ with $c_2 = q!q$.

\section*{Acknowledgements} 
I thank the authors of \cite{hReveillac3} for sharing their paper in progress with me. Another thank goes to my advisor, Prof. Ivan Nourdin, for his very careful review of the paper as well as for his comments and corrections. Finally, I deeply thank an anonymous referee for a very careful and thorough reading of this work, and for her/his
constructive remarks.

 \chapter[Statistical inference for Vasicek-type model driven by Hermite processes]{Statistical inference for Vasicek-type model driven by Hermite processes}

\begin{center}
Ivan Nourdin, T. T. Diu Tran\\
Universit\'{e} du Luxembourg
\end{center}

\begin{tcolorbox}
This article is submitted for publication to \textit{Stochastic Process and Applications}. arXiv: 1712.05915.
\end{tcolorbox}

\begin{center}
\textbf{Abstract}
\end{center}

Let $(Z^{q, H}_t)_{t \geq 0}$ denote a Hermite process of order $q \geq 1$ and self-similarity parameter $H \in (\frac{1}{2}, 1)$. This process is $H$-self-similar, has stationary increments and exhibits long-range dependence. When $q=1$, it corresponds to the fractional Brownian motion,  whereas it is not Gaussian as soon as $q\geq 2$. 

In this paper, we deal with the following Vasicek-type model driven by $Z^{q, H}$:
\[
X_0=0,\quad dX_t = a(b - X_t)dt +dZ_t^{q, H}, \qquad t \geq 0,
\]
where $a > 0$ and $b \in \mathbb{R}$ are considered as unknown drift parameters. We provide estimators for $a$ and $b$ based on continuous-time observations. For all possible values of $H$ and $q$, we prove strong consistency and we 
analyze the asymptotic fluctuations. 

\section{Introduction}

Our aim in this paper is to introduce and analyze a {\it non-Gaussian} extension of the fractional model considered in the seminal paper \cite{bCR} of Comte and Renault  (see also Chronopoulou and Viens \cite{bCV}, as well as the motivations and references therein) and used by these authors to model a situation where, unlike the classical Black-Scholes-Merton model, the volatility exhibits long-memory. More precisely, we deal with the parameter estimation problem for a Vasicek-type process $X$,  defined as the unique (pathwise) solution to
\begin{equation}\label{bfracV}
X_0=0,\quad dX_t = a(b-X_t)dt + dZ^{q, H}_t, \,\,t\geq 0,
\end{equation}
where $Z^{q,H}$ is a Hermite process of order $q\geq 1$ and Hurst parameter $H\in (\frac12,1)$.
Equivalently, $X$ is the process given explicitly by 
\begin{equation}\label{bfracV2}
X_t = b(1-e^{-at}) +  \int_0^t e^{-a(t-s)}dZ^{q, H}_s,
\end{equation}
where the integral with respect to $Z^{q, H}$ must be understood in the Riemann-Stieltjes sense.
In (\ref{bfracV}) and (\ref{bfracV2}), parameters $a>0$ and $b\in\R$ are considered as  (unknown) real parameters. 

Hermite processes $Z^{q, H}$ of order $q\geq 2$ form a class of genuine non-Gaussian generalizations of the celebrated fractional Brownian motion (fBm), this latter corresponding to the case $q=1$. Like the fBm, they are self-similar, have stationary increments and  exhibit long-range dependence. Their main noticeable difference with respect to fBm is that they are {\it not} Gaussian.
For more details about this family of processes, we refer the reader to Section \ref{bsec:hermite}.

As we said, the goal of the present paper is to propose suitable estimators for $a$ and $b$ in (\ref{bfracV})-(\ref{bfracV2}), and  to study their asymptotic properties (that is, their consistency as well as their fluctuations around the true value of the parameter) when a continuous record of observation for $X$ is available. 
Our main motivation behind this study is to understand whether the {\it Gaussian feature} of the fractional Brownian motion $B^H$ really matters  when estimating the unknown parameters in the fractional Vasicek model considered in Comte-Renault \cite{bCR} and given by
\begin{equation}\label{bFBM-V}
X_0=0,\quad dX_t = a(b-X_t)dt + dB^H_t, \,\,t\geq 0.
\end{equation}
More precisely, we look for an answer to the following question.
Do our estimators for $a$ and $b$ have the same asymptotic behavior when $q=1$ (fBm case, model (\ref{bFBM-V})) and $q\geq 2$ (non-Gaussian case, model (\ref{bfracV}))?
If the answer to this question appears to be no, it means that assuming the noise is Gaussian (like done in Comte-Renault \cite{bCR}) is not an insignificant hypothesis. On the contrary, if the results obtained for $q=1$ 
and $q\geq 2$ happen to be of the same nature, one could conclude that the Hermite Vasicek model (\ref{bfracV}) displays some kind of universality with respect to the order $q$, and then working under the Gaussian assumption for the noise is actually not a loss of generality, as far as statistical inference for parameters $a$ and $b$ is concerned.

Let us now describe in more details the results we have obtained.
\begin{Definition}
Recall from (\ref{bfracV})-(\ref{bfracV2}) the definition of the Vasicek-type process $X=(X_t)_{t\geq 0}$ driven by the Hermite process $Z^{q, H}$.  
Assume that $q\geq 1$ and $H\in(\frac12,1)$ are known, whereas $a>0$ and $b\in\R$
are unknown.
Suppose that we continuously observe $X$ over the time interval $[0,T]$, $T>0$.
Then, we define estimators for $a$ and $b$ as follows:
\begin{eqnarray}
\widehat{a}_T&=&\left(\frac{\alpha_T}{H\Gamma(2H)}\right)^{-\frac1{2H}}, \quad\mbox{where $\alpha_T=\frac{1}{T}\int_0^T X_t^2dt - \left(\frac{1}{T}\int_0^T X_tdt\right)^2$},\label{balpha_T}\\
\widehat{b}_{T}&=&\frac{1}{T}\int_0^T X_tdt.\notag
\end{eqnarray}
\end{Definition}

In order to describe the asymptotic behavior of $(\widehat{a}_T,\widehat{b}_T)$ when $T\to\infty$, we first need to define a random variable, called $G_\infty$, which is $Z^{q,H}$-measurable. It is the object of the following proposition.
\begin{Proposition}\label{bfinfini}
Assume either ($q=1$ and $H>\frac34$) or $q\geq 2$. Fix $T>0$, and let
$U_T=(U_T(t))_{t\geq 0}$ be the process defined as $U_T(t)=\int_0^t e^{-T(t-u)}dZ^{q, H}_u$.
Finally, define the random variable $G_T$ by
\[
G_T = T^{\frac{2}{q}(1-H)+2H}\int_0^1 \big(U_T(t)^2 - \E[U_T(t)^2])dt.
\]
Then $G_T$ converges in $L^2(\Omega)$ to a limit written $G_\infty$.
Moreover, $G_\infty/B_{H,q}$ is distributed according to the Rosenblatt distribution of parameter $1-\frac{2}{q}(1-H)$, where
\begin{equation}\label{bbeta}
B_{H,q}=\frac{H(2H-1)}{\sqrt{(H_0-\frac{1}{2})(4H_0-3)}}\,
\times \frac{\Gamma(2H+\frac2q(1-H))}{2H+\frac2q(1-H)-1},\quad\mbox{with }
H_0=1-\frac{1-H}{q}.
\end{equation}
(The definition of the Rosenblatt distribution is recalled in Definition \ref{brosenblatt}.)
\end{Proposition}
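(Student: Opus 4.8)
\textbf{Proof plan for Proposition \ref{bfinfini}.} The process $U_T$ is a Hermite Ornstein--Uhlenbeck process, so by the construction of Wiener integrals with respect to $Z^{q,H}$ recalled in Section 1.1.5 (equation (\ref{meq:13}) applied with the kernel $u\mapsto e^{-T(t-u)}\mathbbm{1}_{[0,t]}(u)$), we may write $U_T(t)=I_q^B(g_{T,t})$ where $g_{T,t}(\xi_1,\dots,\xi_q)=c(H,q)\int_0^t e^{-T(t-z)}\prod_{j=1}^q(z-\xi_j)_+^{H_0-3/2}dz$, and $H_0=1-\frac{1-H}{q}$. The first step is to rescale time by $t\mapsto Tt$ and use the self-similarity of $B$ (as in the proof of Proposition 1.1.2) to express $G_T=T^{\frac2q(1-H)+2H}\int_0^1(U_T(t)^2-\E[U_T(t)^2])dt$ as $G_T=I_{2q}^B(\Phi_T)+\sum_{r=1}^{q-1}c_r I_{2q-2r}^B(\Psi_{r,T})$ via the product formula (\ref{meq:P1}), exactly in the spirit of the chaotic decomposition carried out in article [a] (Section 3.1 of the reprinted paper). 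Here the term $r=q-1$ lives in the second Wiener chaos and will be the one producing the Rosenblatt limit.

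\textbf{Key steps.} First I would compute the contractions $g_{T,Tt}\otimes_r g_{T,Tt}$ explicitly using the identity (\ref{meq:2}) for $\int_\R (z_1-y)_+^{H_0-3/2}(z_2-y)_+^{H_0-3/2}dy=\beta(H_0-\tfrac12,2-2H_0)|z_1-z_2|^{2H_0-2}$, which reduces each contraction, after integrating the exponential factors, to an explicit kernel involving $|z_1-z_2|^{(2H_0-2)r}$ integrated against $e^{-T|\cdots|}$-type weights. Second, following exactly the strategy of article [a], I would pass to the spectral representation (Lemma \ref{mLemma2}) to obtain a clean $L^2$-convergence statement; alternatively, one can work directly in the time domain as in article [c]'s companion computations. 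Third, I would show that, after normalization by $T^{\frac2q(1-H)+2H}$, the terms with $0\le r\le q-2$ are asymptotically negligible in $L^2(\Omega)$ (this is the analogue of Lemma \ref{mReduction}: the exponent of $T$ attached to the $(2q-2r)$-th chaos is strictly negative because $H_0>1-\frac1{2q}$ and $r\le q-2$, while the remaining integral converges). Fourth, for the dominant term ($r=q-1$, living in the second chaos) I would check that the rescaled kernel converges in $L^2$ to the kernel of the Rosenblatt process at time $1$ with parameter $H''=2H_0-1=1-\frac2q(1-H)$, by dominated convergence (the exponential factors $e^{-T(\cdots)}$ concentrate and, after the substitutions $\xi_k\mapsto T^{-1}\xi_k$ or $\lambda_k\mapsto T\lambda_k$, produce the factor $\int_0^\infty e^{-u}u^{H_0-3/2}du=\Gamma(H_0-\tfrac12)$ raised to the appropriate power and $\int_0^1 e^{i(\lambda_1+\lambda_2)s}ds$), which gives a Cauchy-sequence argument and hence $L^2(\Omega)$-convergence of $G_T$ to a limit $G_\infty$. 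Finally, to identify the constant $B_{H,q}$ in (\ref{bbeta}), I would compare the limiting kernel with the normalized Rosenblatt kernel recalled in Definition \ref{brosenblatt}: the factor $\frac{H(2H-1)}{\sqrt{(H_0-1/2)(4H_0-3)}}$ comes from the contraction/normalization constants $c(H,q)^2\beta(H_0-\tfrac12,2-2H_0)^{q-1}$ together with the Rosenblatt normalization $a_1$, while $\frac{\Gamma(2H+\frac2q(1-H))}{2H+\frac2q(1-H)-1}$ comes from computing $\int_0^\infty e^{-u}u^{2H_0-2}du$-type integrals arising from $\int\int e^{-u-v}|u-v|^{(q-1)(2H_0-2)}dudv$ reorganized against the remaining exponential; since $2H_0-1=1-\frac2q(1-H)$ and $2H+\frac2q(1-H)=2H_0\cdot\frac{?}{?}$ must be reconciled carefully, this bookkeeping is where care is needed.

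\textbf{Main obstacle.} The genuinely delicate part is not the chaotic decomposition itself (which is standard and mirrors article [a]), but the \emph{exact} identification of the multiplicative constant $B_{H,q}$: one must track every normalizing constant through the contraction formula, the spectral transform (Lemma \ref{mLemma2} with its $(\Gamma(H_0-\tfrac12)/\sqrt{2\pi})^{2}$ and $C(\lambda)$ factors, the latter disappearing because $C(\lambda)W(d\lambda)\overset{(d)}{=}W(d\lambda)$), the time rescaling, and finally the normalization built into the definition of the Rosenblatt \emph{distribution}. In practice one can shortcut part of this by computing $\E[G_\infty^2]$ directly from the limiting kernel and matching it against $B_{H,q}^2\,\E[(R_1^{H''})^2]=B_{H,q}^2$, which both provides a consistency check and pins down $|B_{H,q}|$; the sign and the precise Gamma-ratio then follow from the explicit form of the kernel. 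The case distinction ``($q=1$ and $H>\tfrac34$) or $q\ge2$'' enters precisely at the point where one needs the exponent $2(2-2H_0)(q-1-r)>0$ for $r\le q-2$ and, when $q=1$, the single chaos term (second chaos, $r=0$) to be square-integrable, which forces $H_0>\tfrac34$, i.e. $H>\tfrac34$.
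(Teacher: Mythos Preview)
Your plan is correct and would succeed, but the paper takes a genuinely different and shorter route on both halves of the argument.

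For the identification of the limiting distribution and of $B_{H,q}$, the paper does not redo any chaotic analysis. It observes a scaling identity: with $\widetilde Y_t=U_1(t)=\int_0^t e^{-(t-u)}dZ^{q,H}_u$ (Hermite OU with rate $1$), self-similarity of $Z^{q,H}$ gives $(\widetilde Y_{tT})_{t\ge0}\overset{\rm law}{=}T^H(U_T(t))_{t\ge0}$, whence
\[
G_T\ \overset{\rm law}{=}\ T^{\frac2q(1-H)-1}\int_0^T\bigl(\widetilde Y_t^{\,2}-\E[\widetilde Y_t^{\,2}]\bigr)\,dt.
\]
The right-hand side is precisely the normalized quadratic functional of article~[a] with kernel $x(u)=e^{-u}$, so the already-proven limit (\ref{bdiu}) delivers the Rosenblatt law and the constant immediately: $B_{H,q}$ is just $b(H,q)$ from (\ref{meq:19}) evaluated at $x(u)=e^{-u}$, using $\int_{\R_+^2}e^{-(u+v)}|u-v|^{(q-1)(2H_0-2)}dudv=\Gamma\big(2H-1+\tfrac2q(1-H)\big)=\frac{\Gamma(2H+\frac2q(1-H))}{2H+\frac2q(1-H)-1}$. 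So the ``delicate bookkeeping'' you flag as the main obstacle simply does not arise.

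For the $L^2(\Omega)$ convergence, note that the scaling identity above is only in law, so it cannot by itself give $L^2$ convergence of $G_T$ (this is also why your ``first step: rescale and use self-similarity of $B$'' must be kept for the distributional part only). The paper handles $L^2$ convergence by a direct Cauchy argument in the time domain: it writes $\E[G_TG_S]$ via the Nourdin--Rosi\'nski formula for ${\rm Cov}(I_q(g_S(s,\cdot))^2,I_q(g_T(t,\cdot))^2)$ as a sum over $r$ of contraction norms $\|g_S(s,\cdot)\otimes_r g_T(t,\cdot)\|^2$ (and their symmetrizations), and shows by explicit change of variables that each such term has a finite limit as $S,T\to\infty$ (nonzero only for $r=1$ with a specific pairing). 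Your route---proving that each $(2q{-}2r)$-th chaos kernel converges in $L^2(\R^{2q-2r})$, then invoking isometry---would also work and is a legitimate alternative; it is essentially reproving article~[a] inside this proposition rather than citing it.
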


We can now describe the asymptotic behavior of $(\widehat{a}_T,\widehat{b}_T)$ as
$T\to\infty$.
\begin{Theorem}\label{bmain}
Let $X=(X_t)_{t\geq 0}$ be given by (\ref{bfracV})-(\ref{bfracV2}), where $Z^{q, H}=(Z^{q, H}_t)_{t\geq 0}$ is a Hermite process of order $q\geq 1$ and parameter $H\in(\frac12,1)$, and where $a>0$ and $b\in\R$
are (unknown) real parameters.
The following convergences take place as $T\to\infty$.
\begin{enumerate}
\item{} {\rm [Consistency]} 
$
(\widehat{a}_T,\widehat{b}_{T})\overset{\rm a.s.}{\to} (a,b).
$
\item{} {\rm [Fluctuations]}
They depend on the values of $q$ and $H$.
\begin{itemize}
\item {\rm (Case $q=1$ and $H<\frac34$)} 
\begin{eqnarray}
\left(\sqrt{T}\{\widehat{a}_T-a\},
T^{1-H}\{\widehat{b}_{T}-b\}
\right)&\overset{\rm law}{\to}&\left(-\frac{a^{1+4H}\sigma_H}{2H^2\Gamma(2H)}\,N,
\frac{1}{a}N'\right),\label{bG1}
\end{eqnarray}
where $N,N'\sim \mathcal{N}(0,1)$ are independent and $\sigma_H$ is given by 
\begin{equation}\label{bsigma}
\sigma_H=\frac{2H-1}{H\Gamma(2H)^2}\,\sqrt{\int_\R\left(
\int_{\R_+^2}e^{-(u+v)}|u-v-x|^{2H-2}dudv
\right)^2dx}.
\end{equation}
\item {\rm (Case $q=1$ and $H=\frac34$)} 
\begin{equation}\label{34}
\left(\sqrt{\frac{T}{\log T}}\{\widehat{a}_T-a\},T^{\frac14}\big\{\widehat{b}_T - b\} \right)
\to
\left(\frac{3}4\sqrt{\frac{a}\pi}\, N,\frac{1}{a}N'\right),
\end{equation}
where $N,N'\sim \mathcal{N}(0,1)$ are independent.
\item {\rm (Case $q=1$ and $H>\frac34$)} 
\begin{eqnarray}
\left(T^{2(1-H)}\{\widehat{a}_T-a\},T^{1-H}\big\{\widehat{b}_T - b\} \right)\overset{\rm law}{\to}
\left(-\frac{a^{2H-1}}{2H^2\Gamma(2H)}\Big(G_\infty-(B^H_1)^2\Big),\frac{1}{a}B^H_1\right),\notag\\
\label{bR4}
\end{eqnarray}
where $B^H=Z^{1,H}$ is the fractional Brownian motion and $G_\infty$ is defined in Proposition \ref{bfinfini}.

\item {\rm (Case $q\geq 2$ and any $H$)} 
\begin{eqnarray}
\left(T^{\frac2q(1-H)}\{\widehat{a}_T-a\},T^{1-H}\big\{\widehat{b}_T - b\} \right)
\overset{\rm law}{\to}
\left(-\frac{a^{1-\frac2q(1-H)}}{2H^2\Gamma(2H)}\,G_\infty,\frac{1}{a}Z^{q, H}_1\right),
\label{bR}
\end{eqnarray}
where $G_\infty$ is defined in Proposition \ref{bfinfini}.
\end{itemize}
\end{enumerate}
\end{Theorem}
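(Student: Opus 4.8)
Write $c_t=b(1-e^{-at})$ and $V_t=\int_0^t e^{-a(t-s)}dZ^{q,H}_s$, so that $X_t=c_t+V_t$ by \eqref{bfracV2}. A stochastic Fubini argument gives $\int_0^T V_t\,dt=\frac1a(Z^{q,H}_T-V_T)$, whence, with $\bar V_T:=\frac1T\int_0^T V_t\,dt=\frac1{aT}(Z^{q,H}_T-V_T)$ and $\frac1T\int_0^T c_t\,dt=b+O(1/T)$,
\begin{align*}
\widehat b_T-b&=\bar V_T+O(1/T),\\
\alpha_T&=\frac1T\int_0^T\big(V_t^2-\E[V_t^2]\big)dt+\frac1T\int_0^T\big(\E[V_t^2]-\alpha_\infty\big)dt+\alpha_\infty-\bar V_T^2+R_T,
\end{align*}
where $\alpha_\infty:=H\Gamma(2H)\,a^{-2H}$ and $R_T$ gathers the cross terms produced by $X_t=c_t+V_t$, all of which vanish (a.s. and in every $L^p$) faster than every rate occurring below. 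The constant $\alpha_\infty$ is obtained from the Wiener-integral isometry $\E[(\int f\,dZ^{q,H})^2]=H(2H-1)\iint f(u)f(v)|u-v|^{2H-2}dudv$ applied to $f(s)=e^{-a(t-s)}\mathbf 1_{[0,t]}(s)$ as $t\to\infty$ (using $(2H-1)\Gamma(2H-1)=\Gamma(2H)$); the same computation shows $\alpha_\infty-\E[V_t^2]=O(e^{-at}\,\mathrm{poly}(t))$, so the second integral above is $O(1/T)$. Finally $g(x):=(x/(H\Gamma(2H)))^{-1/(2H)}$ is $C^1$ near $\alpha_\infty$ with $g(\alpha_\infty)=a$ and $g'(\alpha_\infty)=-\frac{a^{1+2H}}{2H^2\Gamma(2H)}$, so by the delta method $\widehat a_T-a=g'(\alpha_\infty)(\alpha_T-\alpha_\infty)+o_P(|\alpha_T-\alpha_\infty|)$; hence everything reduces to the joint behaviour of $\alpha_T-\alpha_\infty$ and $\bar V_T$.

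\textbf{Consistency.} Since $Z^{q,H}$ has stationary increments, lies in a fixed Wiener chaos and has finite moments of all orders, a Borel--Cantelli argument along integer times combined with an $L^p$-maximal estimate on each $[n,n+1]$ gives $T^{-1}Z^{q,H}_T\to0$ and $T^{-1}V_T\to0$ a.s., so $\bar V_T\to0$ a.s. and $\widehat b_T\to b$ a.s. For $\alpha_T$, note $V_t=Y_t-e^{-at}Y_0$ where $Y_t=\int_{-\infty}^t e^{-a(t-s)}dZ^{q,H}_s$ is the \emph{stationary} Hermite Ornstein--Uhlenbeck process; $Y$ is ergodic, so Birkhoff's theorem gives $\frac1T\int_0^T(Y_t^2-\E[Y_0^2])dt\to0$ a.s., the $e^{-at}$-terms being negligible. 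With the decomposition above this yields $\alpha_T\to\alpha_\infty$ a.s., and continuity of $g$ at $\alpha_\infty$ gives $\widehat a_T\to a$ a.s.

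\textbf{Fluctuations.} The key device is a self-similar rescaling: for fixed $T>0$ the process $\widetilde Z^{(T)}_s:=T^{-H}Z^{q,H}_{Ts}$ has the same law as $Z^{q,H}$, and the pathwise change of variables $t=Ts$, $u=Tv$ gives the exact identities
\begin{align*}
\frac1T\int_0^T\big(V_t^2-\E[V_t^2]\big)dt&=a^{-\frac2q(1-H)-2H}\,T^{-\frac2q(1-H)}\,G_{aT}^{(T)},\\
T^{1-H}\bar V_T&=\tfrac1a\widetilde Z^{(T)}_1+o_{L^2}(1),
\end{align*}
where $G_{aT}^{(T)}$ is the variable $G_{aT}$ of Proposition \ref{bfinfini} built from $\widetilde Z^{(T)}$. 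If $q\ge 2$ or ($q=1$, $H>\frac34$), Proposition \ref{bfinfini} gives $G_{aT}^{(T)}-G_\infty^{(T)}\to0$ in $L^2$ (this $L^2$-distance equals the one for $Z^{q,H}$ by equality in law), while $(G_\infty^{(T)},\widetilde Z^{(T)}_1)\overset{(d)}{=}(G_\infty,Z^{q,H}_1)$ for every $T$; hence $\big(T^{\frac2q(1-H)}\frac1T\int_0^T(V_t^2-\E[V_t^2])dt,\ T^{1-H}\bar V_T\big)$ converges in law to $\big(a^{-\frac2q(1-H)-2H}G_\infty,\ \tfrac1a Z^{q,H}_1\big)$. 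When $q\ge2$ one has $\frac2q(1-H)<\min(1,2-2H)$, so $\bar V_T^2=O_P(T^{2H-2})$, the $O(1/T)$ bias and $R_T$ are negligible at scale $T^{-\frac2q(1-H)}$, and the delta method yields \eqref{bR}. When $q=1,\ H>\frac34$ one has $\frac2q(1-H)=2-2H$, so $T^{2(1-H)}\bar V_T^2=(T^{1-H}\bar V_T)^2$ survives and $T^{2(1-H)}(\alpha_T-\alpha_\infty)=\frac1{a^2}\big(G_\infty^{(T)}-(\widetilde Z^{(T)}_1)^2\big)+o_P(1)$ converges in law to $\frac1{a^2}(G_\infty-(B^H_1)^2)$ (recall $B^H=Z^{1,H}$), jointly with $T^{1-H}(\widehat b_T-b)\to\frac1a B^H_1$, giving \eqref{bR4}. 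Finally, if $q=1$ and $H\le\frac34$, Proposition \ref{bfinfini} does not apply: instead $\frac1T\int_0^T(V_t^2-\E[V_t^2])dt$, normalized by $T^{-1/2}$ when $H<\frac34$ and by $(\log T/T)^{1/2}$ when $H=\frac34$, obeys the classical Breuer--Major central limit theorem for the quadratic functional of the stationary fractional Ornstein--Uhlenbeck process (whose covariance decays like $|s|^{2H-2}$, hence is square-integrable exactly for $H<\frac34$); since $2H-2\le-\frac12$ the terms $\bar V_T^2,R_T$ and the bias are negligible at this scale, so $\alpha_T-\alpha_\infty$ is asymptotically centered Gaussian and a direct variance computation matches $\sigma_H$ in \eqref{bsigma} (resp.\ the constant in \eqref{34}). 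For the second coordinate, $T^{1-H}(\widehat b_T-b)=\frac1a T^{-H}B^H_T+o_P(1)$ with $T^{-H}B^H_T\sim\mathcal N(0,1)$. The joint limit is Gaussian with \emph{diagonal} covariance because $\int_0^T(V_t^2-\E[V_t^2])dt$ and $B^H_T$ belong to the second and the first Wiener chaos of $B^H$ respectively, so their covariance is $0$; the multivariate fourth moment theorem of Peccati--Tudor then upgrades componentwise Gaussian convergence plus vanishing covariance to joint convergence with \emph{independent} coordinates, which is \eqref{bG1} and \eqref{34}.

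\textbf{Main obstacle.} The conceptual skeleton is short, but the technical work is twofold. First, one must control \emph{uniformly in $T$} all the remainders --- $R_T$, the deterministic bias $\frac1T\int_0^T(\E[V_t^2]-\alpha_\infty)dt$, the term $\bar V_T^2$ in every regime where it is not part of the limit, and the stochastic Fubini / pathwise change-of-variables steps --- showing they vanish at the correct rate, both almost surely (for consistency) and in probability (for the fluctuations). Second, one must pin down the explicit constants, in particular $\sigma_H$ of \eqref{bsigma}, whose determination requires carrying out in full the Breuer--Major variance computation for the quadratic functional of the fractional Ornstein--Uhlenbeck process. I expect this last computation, together with the careful handling of joint convergence (the common rescaling by $\widetilde Z^{(T)}$ in the non-central regimes and the Peccati--Tudor argument in the Gaussian regime), to be the delicate part.
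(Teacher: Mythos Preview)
Your proposal is correct and follows essentially the same architecture as the paper: the decomposition $X_t=c_t+V_t$, the identity $\int_0^T V_t\,dt=\frac1a(Z^{q,H}_T-V_T)$, the scaling reduction of $A_T$ to $G_{aT}$ built from the rescaled process, the Taylor/delta step around $\alpha_\infty$, and the Peccati--Tudor argument for joint normality when $q=1$, $H\le\frac34$ all match the paper's proof exactly.

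There is one genuine local difference worth flagging. For consistency of $\frac1T\int_0^T V_t^2\,dt$, the paper does \emph{not} pass to the stationary Hermite Ornstein--Uhlenbeck process and invoke Birkhoff; instead it works directly with the non-stationary $V_t$, bounds $\E\big[(\frac1n\int_0^n(V_t^2-\E[V_t^2])dt)^2\big]$ via the $L^2$-convergence already established in Proposition~\ref{bfinfini} (resp.\ the CLT variance bounds for $q=1$), uses hypercontractivity to raise this to $L^p$, and concludes by Borel--Cantelli along integers plus an interpolation on $[n,n+1]$. Your ergodicity route is shorter, but you should supply the one-line justification that the stationary process $Y_t=\int_{-\infty}^t e^{-a(t-s)}dZ^{q,H}_s$ is ergodic (it is, being a fixed measurable functional of the two-sided Brownian motion $B$ composed with the ergodic shift), since for $q\ge2$ this is not quite as standard as for the Gaussian case. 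The paper's route avoids this by staying inside moment/hypercontractivity estimates throughout.

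Two minor points. First, the paper does not claim the pointwise exponential decay $\alpha_\infty-\E[V_t^2]=O(e^{-at}\mathrm{poly}(t))$; it only needs, and proves, that the \emph{average} $B_T=\frac1T\int_0^T(\E[V_t^2]-\alpha_\infty)dt=O(1/T)$, which is weaker and easier. Second, for $q=1$, $H\le\frac34$ the paper does not recompute the Breuer--Major variance but quotes the constants directly from \cite{bNNZ} and \cite{bHuNualartZhou}; your ``Main obstacle'' paragraph correctly anticipates this as the place where the explicit $\sigma_H$ would have to be extracted if done from scratch.
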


As we see from our Theorem \ref{bmain}, strong consistency for $\widehat{a}_T$ and $\widehat{b}_T$ always holds, {\it irrespective} of the values of $q$ (and $H$). That is, when one is only interested in the first order approximation for $a$ and $b$, Vasicek-type model (\ref{bfracV})-(\ref{bfracV2}) displays a kind of universality with respect to the order $q$ of the underlying Hermite process.
But, as point 2 shows, the situation becomes different when one looks at the fluctuations, that is, when one seeks to construct asymptotic confidence intervals: they heavily depend on $q$ (and $H$).

The rest of the paper is structured as follows. Section 4.2 presents some
basic results about multiple Wiener-It\^o integrals and Hermite processes, as well as some other facts which are used throughout the paper. The proof of Proposition 4.1.2 is then given in Section 4.3. 
Section 4.4 is devoted to the proof of 
the consistency part of Theorem \ref{bmain}, whereas
the fluctuations are analyzed in Section 4.5.

\section{Preliminaries}

\subsection{Multiple Wiener-It\^{o} integrals}

Let $B=\big\{B(h),\,h\in L^2(\R)\big\}$ be a Brownian field defined on a probability space $(\Omega,\mathcal{F},\mathbb{P})$, that is, a centered Gaussian family
satisfying $\E[B(h)B(g)]=\langle h,g\rangle_{L^2(\R)}$ for any $h,g\in L^2(\R)$.

For every $q\geq 1$, the $q$th Wiener chaos $\mathcal{H}_q$ is defined as the closed linear subspace of $L^2(\Omega)$ generated by the family of random variables $\{H_q(B(h)),\,h\in L^2(\R),\,\|h\|_{L^2(\R)}=1\}$, where $H_q$ is the $q$th Hermite polynomial ($H_1(x)=x$, $H_2(x)=x^2-1$, $H_3(x)=x^3-3x$, and so on).

The mapping $I^B_q(h^{\otimes q})=H_q(B(h))$ can be extended to a linear isometry between $L^2_s(\R^q)$ (= the space of symmetric square integrable functions of $\R^q$, equipped with the modified
norm $\sqrt{q!}\|\cdot\|_{L^2(\R^q)}$) and
the $q$th Wiener chaos  $\mathcal{H}_q$. When $f\in L^2_s(\R^q)$, the random variable $I^B_q(f)$ is called the {\it multiple Wiener-It\^o integral of $f$ of order $q$}; equivalently, 
one may write
\begin{equation}\label{bmultipleiqbf}
I_q^B(f) = \int_{\mathbb{R}^q} f(\xi_1, \ldots, \xi_q) dB_{\xi_1}\ldots dB_{\xi_q}.
\end{equation}

Multiple Wiener-It\^o integrals enjoy many nice properties. We refer to \cite{bNourdinPeccatibook} or \cite{bNualartbook} for a comprehensive list of them. Here, we only recall the orthogonality relationship, the isometry formula and the hypercontractivity property. 

First, the {\it orthogonality relationship} (when $p\neq q$) or {\it isometry formula} (when $p=q$) states that, if $f \in L^2_s(\mathbb{R}^p)$ and $g \in L^2_s(\mathbb{R}^q)$ with $p,q\geq 1$, then
\begin{equation}\label{bisom**}
\E[I_p^B(f)I_q^B(g)] =
\begin{cases}
 & p! \big\langle f, g \big\rangle_{L^2(\mathbb{R}^p)} \qquad\text{if } p=q\\
& 0 \qquad\qquad\qquad\quad \text{ if } p \ne q.
\end{cases}
\end{equation}

Second, the {\it hypercontractivity property} reads as follows: for any $q\geq 1$, any $k\in[2,\infty)$ and any $f\in L^2_s(\R^q)$,
 \begin{equation}\label{beq:hypercontractivity1}
\E[|I_q^B(f)|^k]^{1/k} \leq (k-1)^{q/2}\E[|I_q^B(f)|^2]^{1/2}.
 \end{equation}
As a consequence, for any $q\geq 1$ and any $k\in[2,\infty)$, there exists a constant $C_{k,q}>0$ such that, for any $F \in \oplus_{l=1}^q \mathcal{H}_l$, we have
 \begin{equation}\label{beq:hypercontractivity2}
\E[|F|^k]^{1/k} \leq C_{k, q}\,\sqrt{\E[F^2]}.
 \end{equation}

\subsection{Hermite processes}\label{bsec:hermite}

We now give the definition and present some basic properties of Hermite processes. We refer the reader to the recent book \cite{bTudorbook} for any missing proof and/or any unexplained notion.

\begin{Definition}
\textit{The Hermite process} $(Z_t^{q, H})_{t \geq 0}$ of order $q \geq 1$ and self-similarity parameter $H \in (\frac{1}{2}, 1)$ is defined as
\begin{equation}\label{bH1}
Z^{q, H}_t= c(H, q) \int_{\mathbb{R}^q} \bigg( \int_0^t \prod_{j=1}^q(s- \xi_j)_+^{H_0 - \frac{3}{2}}ds\bigg) dB_{\xi_1}\ldots dB_{\xi_q},
\end{equation}
where 
\begin{equation}\label{bH2}
c(H, q) = \sqrt{\frac{H(2H - 1)}{q! \beta^q(H_0 - \frac{1}{2}, 2-2H_0)}} \quad \text{and} \quad H_0 = 1+\frac{H-1}{q} \in \left(1-\frac{1}{2q}, 1\right).
\end{equation}
(The integral (\ref{bH1}) is a multiple Wiener-It\^{o} integral of order $q$ of the form (\ref{bmultipleiqbf}).)
\end{Definition}

The positive constant $c(H, q)$ in (\ref{bH2}) has been chosen to ensure that $\E[(Z_1^{q, H})^2] = 1$. 

\begin{Definition}
A random variable which has the same law as $Z^{q, H}_1$ is called a \textit{Hermite random variable} of order $q$ and parameter $H$.
\end{Definition}

Hermite process of order $q=1$ is nothing but the fractional Brownian motion. 
It is the only Hermite process to be Gaussian (and that one could have defined for $H\leq\frac12$ as well). Hermite process of order $q=2$ is called \textit{the Rosenblatt process}.

\begin{Definition}\label{brosenblatt}
A random variable which has the same law as $Z^{2, H}_1$ is called a \textit{Rosenblatt random variable} of parameter $H$.
\end{Definition}

Except for Gaussianity, Hermite processes of order $q \geq 2$ share many properties with the fractional Brownian motion (corresponding to $q=1$). We list some of them in the next statement.

\begin{Proposition}
The Hermite process $Z^{q,H}$ of order $q\geq 1$ and Hurst parameter $H\in (\frac12,1)$ enjoys the following properties.
\begin{itemize} 
\item{} {\rm [Self-similarity]} For all  $c > 0, (Z^{q, H}_{ct})_{t \geq 0} \overset{law}{=}  (c^H Z^{q, H}_t)_{t \geq 0}$. 
\item {}{\rm [Stationarity of increments]} For any $h >0$, $(Z^{q, H}_{t+h} - Z^{q, H}_h)_{t \geq 0} \overset{law}{=} (Z^{q, H}_t)_{t \geq 0}$.
\item {}{\rm [Covariance function]} For all $s, t \geq 0$, $\E[Z_t^{q, H}Z_s^{q,H}]= \frac{1}{2}(t^{2H} + s^{2H} - |t-s|^{2H})$.
\item  {}{\rm [Long-range dependence]} $\sum_{n=0}^\infty |\E[Z_1^{q, H}(Z_{n+1}^{q, H} - Z_n^{q, H})]| = \infty$.
\item {}{\rm [H\"{o}lder continuity]} For any $\zeta \in (0, H)$ and any compact interval $[0,T]\subset\R_+$, $(Z^{q, H}_t)_{t\in [0,T]}$ admits a version with H\"{o}lder continuous sample paths of order $\zeta$.
\item {}{\rm [Finite moments]} For every $p \geq 1$, 
there exists a constant $C_{p,q}>0$ such that
$\E[|Z^{q, H}_t|^p] \leq C_{p, q} t^{pH}$ for all $t\geq 0$.
\end{itemize}
\end{Proposition}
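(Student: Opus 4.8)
The final statement to prove is the Proposition listing six properties of the Hermite process $Z^{q,H}$. My plan is to establish each property in turn, mostly by translating the defining multiple Wiener-It\^o integral representation (\ref{bH1}) into the relevant symmetry or estimate.

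\textbf{Self-similarity and stationarity of increments.} For self-similarity I would perform the change of variables $\xi_j\mapsto c\xi_j$ inside (\ref{bH1}), then use that the Brownian field is $1/2$-self-similar, i.e.\ $(dB_{c\xi_j})\overset{(d)}{=}c^{1/2}(dB_{\xi_j})$, exactly as carried out in the proof of the earlier Proposition on basic properties in Section~1.1.3. Collecting the powers of $c$ coming from the inner $ds$-integral, from the kernel $(s-\xi_j)_+^{H_0-3/2}$, and from the $q$ Brownian differentials yields $c^{1+q(H_0-3/2)+q/2}=c^H$ since $qH_0=q+H-1$. Stationarity of increments follows by the same device: write $Z^{q,H}_{t+h}-Z^{q,H}_h$ with inner integral $\int_h^{t+h}$, substitute $s\mapsto s+h$, then shift the integration variables $\xi_j\mapsto \xi_j+h$ and use that $(dB_{\xi_j+h})\overset{(d)}{=}(dB_{\xi_j})$ because $B$ has stationary increments.

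\textbf{Covariance function and long-range dependence.} Once we know $Z^{q,H}$ is $H$-self-similar with stationary increments and $\E[(Z^{q,H}_1)^2]=1$, the covariance is forced: any such process satisfies $\E[Z^{q,H}_tZ^{q,H}_s]=\frac12\E[(Z^{q,H}_1)^2](t^{2H}+s^{2H}-|t-s|^{2H})$, see e.g.\ \cite[Prop.~A.1]{bTudorbook} (this is the same argument used earlier in the thesis). From the covariance, $\E[Z^{q,H}_1(Z^{q,H}_{n+1}-Z^{q,H}_n)]=\frac12\big((n+1)^{2H}+(n-1)^{2H}-2n^{2H}\big)\sim H(2H-1)n^{2H-2}$ as $n\to\infty$ by a Taylor expansion of $x\mapsto x^{2H}$; since $2H-2>-1$ when $H>\frac12$, the series $\sum_n n^{2H-2}$ diverges, giving long-range dependence.

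\textbf{H\"older continuity and finite moments.} For the moment bound, by self-similarity and stationarity of increments $Z^{q,H}_t\overset{(d)}{=}t^H Z^{q,H}_1$, and $Z^{q,H}_1$ lies in a fixed Wiener chaos, so the hypercontractivity estimate (\ref{beq:hypercontractivity1}) (equivalently (\ref{beq:hypercontractivity2})) gives $\E[|Z^{q,H}_t|^p]=t^{pH}\E[|Z^{q,H}_1|^p]\le C_{p,q}t^{pH}$. The same reasoning applied to the increment yields $\E[|Z^{q,H}_t-Z^{q,H}_s|^p]\overset{(d)}{=}|t-s|^{pH}\E[|Z^{q,H}_1|^p]\le C_{p,q}|t-s|^{pH}$; taking $p$ large enough that $pH>1$ and invoking the Kolmogorov continuity criterion produces a version with H\"older sample paths of any order $\zeta<H$ on each compact $[0,T]$. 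I expect essentially no genuine obstacle here: every step reduces to a routine change of variables in a Wiener-It\^o integral or to the self-similarity/stationarity already proved, plus standard hypercontractivity and Kolmogorov arguments. The only point requiring a little care is the bookkeeping of exponents of $c$ (resp.\ the shift) in the self-similarity (resp.\ stationary-increment) computation, and checking that $1-\frac1{2q}<H_0<1$ makes the kernel $(s-\xi_j)_+^{H_0-3/2}$ square-integrable against $\int_0^t ds$ so that (\ref{bH1}) is well defined in the first place.
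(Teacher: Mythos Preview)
Your proposal is correct and follows essentially the same approach as the paper's own proof (given in Section~1.1.3 of the thesis): self-similarity and stationarity via change of variables in the Wiener--It\^o representation together with the scaling/shift invariance of Brownian motion, the covariance from the general fact for $H$-self-similar processes with stationary increments, long-range dependence from the asymptotic $\tfrac12((n+1)^{2H}+(n-1)^{2H}-2n^{2H})\sim H(2H-1)n^{2H-2}$, and finally hypercontractivity plus Kolmogorov for the moment bound and H\"older regularity. There is nothing to add.
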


\subsection{Wiener integral with respect to Hermite processes}

The Wiener integral of a deterministic function $f$ with respect to a Hermite process $Z^{q, H}$, which we denote by $\int_{\mathbb{R}}f(u)dZ^{q, H}_u$, has been constructed by Maejima and Tudor in \cite{bMaejimaTudor}. 

Below is a very short summary of what will is needed in the paper about those integrals.
The stochastic integral $\int_{\mathbb{R}}f(u)dZ^{q, H}_u$  is  well-defined for any $f$ belonging to the space $|\mathcal{H}|$
of functions $f: \mathbb{R} \to \mathbb{R}$ such that 
\[
\int_{\mathbb{R}}\int_{\mathbb{R}} |f(u)f(v)||u-v|^{2H-2}dudv < \infty.
\]
We then have, for any  $f,g\in|\mathcal{H}|$, that
\begin{equation}\label{bisometry}
\E\bigg[\int_{\mathbb{R}}f(u)dZ^{q, H}_u\int_{\mathbb{R}}g(v)dZ^{q, H}_u\bigg] = H(2H-1)\int_{\mathbb{R}}\int_{\mathbb{R}}f(u)g(v)|u - v|^{2H-2}dudv.
\end{equation}
Another important and useful property is that, whenever $f \in |\mathcal{H}|$, the stochastic integral $\int_{\mathbb{R}}f(u)dZ^{q, H}_u$ admits the following representation as a multiple Wiener-It\^{o} integral of the form (\ref{bmultipleiqbf}):
\begin{equation}\label{beq:13}
\int_{\mathbb{R}}f(u)dZ^{q, H}_u = c(H, q) \int_{\mathbb{R}^q}\bigg(\int_{\mathbb{R}}f(u) \prod_{j=1}^q(u- \xi_j)_+^{H_0 - \frac{3}{2}}du\bigg)dB_{\xi_1}\ldots dB_{\xi_q},
\end{equation}
with $c(H, q)$ and $H_0$ given in (\ref{bH2}). 

\subsection{Existing limit theorems}

To the best of our knowledge, only a few limit theorems have been already obtained in the litterature for quadratic functionals of the Hermite process. Here we mainly focus on one of them, because it is the one that we will need in order to study the fluctuations of $(\widehat{a}_T,\widehat{b}_T)$ in Theorem \ref{bmain}. 
To state it, we define 
\begin{equation}\label{beq:X}
Y_t = \int_0^t e^{-a(t-u)}dZ^{q, H}_u, \qquad t \geq 0.
\end{equation}

The following result has been obtained by the second-named author in \cite{bDiu}.
\begin{Proposition}
Let $Y$ be given by (\ref{beq:X}), with either $q\geq 2$ or ($q=1$ and $H>\frac34$). Then, 
as $T\to\infty$,
\begin{equation}\label{bdiu}
T^{\frac{2}{q}(1-H)-1}\int_0^T \big(Y_t^2-\E[Y_t^2]\big)dt
\overset{\rm law}{\to} B_{H,q}\, a^{-2H-\frac2q(1-H)}\times  R^{H'},
\end{equation}
where $R^{H'}$ is distributed according to a Rosenblatt random variable of parameter $H'=1-\frac{2}{q}(1-H)$ and $B_{H,q}$ is given by (\ref{bbeta}).
\end{Proposition}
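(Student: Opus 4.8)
The plan is to reduce the statement to the non-central limit theorem for quadratic functionals of Hermite-driven moving average processes (Theorem \ref{mTheorem1} for $q\geq 2$ and Theorem \ref{mTheorem2} for $q=1$, $H>\frac34$), applied with the specific kernel $x(u)=e^{-au}\mathbf 1_{[0,\infty)}(u)$. First I would observe that $Y_t=\int_0^t e^{-a(t-u)}dZ^{q,H}_u = X_t^{q,H}$ in the notation of (\ref{meq:Xt}) with that exponential kernel, and that the integrability hypotheses are satisfied: $x$ is integrable on $[0,\infty)$ since $\int_0^\infty e^{-au}du=1/a<\infty$, and condition (\ref{meq:1}) holds because $\int_{\R_+^2}e^{-a(u+v)}|u-v|^{2H-2}dudv<\infty$ (the singularity along $u=v$ is integrable as $2H-2>-1$, and the exponential weights control the behavior at infinity). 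So all hypotheses of Theorems \ref{mTheorem1}--\ref{mTheorem2} are met.

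Next, I would match the normalizations. The quantity on the left of (\ref{bdiu}) is $T^{\frac2q(1-H)-1}\int_0^T(Y_t^2-\E[Y_t^2])dt$. Recalling $H_0=1+\frac{H-1}{q}$, we have $2H_0-1 = 2+\frac{2H-2}{q}-1 = 1+\frac{2H-2}{q} = 1-\frac2q(1-H)$, so $T^{\frac2q(1-H)-1} = T^{-(2H_0-1)}$, and therefore $T^{\frac2q(1-H)-1}\int_0^T(Y_t^2-\E[Y_t^2])dt = G_T^{q,H}(1)$ in the notation of (\ref{meq:Gt}). Thus Theorem \ref{mTheorem1} (resp. Theorem \ref{mTheorem2}) gives immediately that $G_T^{q,H}(1)$ converges in the sense of finite-dimensional distributions — in particular at $t=1$ — to $b(H,q)R^{H'}$, where $R^{H'}$ is the Rosenblatt process with $H'=1+\frac{2H-2}{q}=1-\frac2q(1-H)$ for $q\geq 2$, respectively $R^{H''}$ with $H''=2H-1 = 1-\frac2q(1-H)$ when $q=1$ (these two expressions coincide with $q=1$), and $b(H,q)$ is the explicit constant in (\ref{meq:19}).

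The remaining task is to verify that the multiplicative constant obtained this way, namely $b(H,q)$ evaluated at the kernel $x(u)=e^{-au}$, equals $B_{H,q}\,a^{-2H-\frac2q(1-H)}$. From (\ref{meq:19}),
\begin{equation*}
b(H,q)=\frac{H(2H-1)}{\sqrt{(H_0-\frac12)(4H_0-3)}}\int_{\R_+^2}e^{-a(u+v)}|u-v|^{(q-1)(2H_0-2)}dudv.
\end{equation*}
Since $(q-1)(2H_0-2) = (q-1)\cdot\frac{2H-2}{q}$, and more conveniently $2H_0-2 = \frac{2H-2}{q}$ so $(q-1)(2H_0-2) = 2H-2-\frac{2H-2}{q} = 2H-2-(2H_0-2)$. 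I would compute the double integral by the substitution $u=s/a$, $v=t/a$, which produces a factor $a^{-2}\cdot a^{-(q-1)(2H_0-2)}$ times $\int_{\R_+^2}e^{-(s+t)}|s-t|^{(q-1)(2H_0-2)}dsdt$; the latter integral is evaluated via the change of variables to $(s+t,s-t)$ or recognized as a Beta/Gamma expression, giving $\frac{\Gamma((q-1)(2H_0-2)+1)}{(q-1)(2H_0-2)+1}$ after the standard reduction. Collecting the powers of $a$: $-2-(q-1)(2H_0-2) = -2-(2H-2)+\frac{2H-2}{q} = -2H-\frac2q(1-H)$ (using $\frac{2H-2}{q}=-\frac2q(1-H)$), which is exactly the exponent claimed. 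Comparing the remaining constant with (\ref{bbeta}) — noting $(q-1)(2H_0-2)+1 = 2H-1-(2H_0-2) = 2H+\frac2q(1-H)-1$ and $\Gamma((q-1)(2H_0-2)+1)=\Gamma(2H+\frac2q(1-H))$, together with $\sqrt{(H_0-\frac12)(4H_0-3)}$ matching the denominator in $B_{H,q}$ after writing $H_0=1-\frac{1-H}{q}$ — yields precisely $b(H,q)=B_{H,q}\,a^{-2H-\frac2q(1-H)}$, which completes the proof. The main obstacle, and the only genuinely computational part, is this bookkeeping of Gamma functions and powers of $a$; conceptually everything follows from the already-established Theorems \ref{mTheorem1} and \ref{mTheorem2}.
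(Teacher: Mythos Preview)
Your approach is exactly what the paper does: the proposition is stated without proof in Chapter~4 and simply cited as a consequence of \cite{bDiu}, i.e.\ of Theorems~\ref{mTheorem1} and~\ref{mTheorem2} specialized to the kernel $x(u)=e^{-au}$, which is precisely the reduction you carry out.

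One small arithmetic slip in the constant bookkeeping: the double integral $\int_{\R_+^2}e^{-(s+t)}|s-t|^{\alpha}\,ds\,dt$ equals $\dfrac{\Gamma(\alpha+2)}{\alpha+1}$, not $\dfrac{\Gamma(\alpha+1)}{\alpha+1}$ (do the change of variables $x=s+t$, $y=s-t$). With $\alpha=(q-1)(2H_0-2)$ you have $\alpha+2=2H+\tfrac2q(1-H)$ and $\alpha+1=2H+\tfrac2q(1-H)-1$, which then matches $B_{H,q}$ in (\ref{bbeta}) exactly. Your final identification $b(H,q)=B_{H,q}\,a^{-2H-\frac2q(1-H)}$ is correct; just fix the intermediate $\Gamma$-argument.
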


Along the proof of Theorem \ref{bmain}, we will also make use of another result, which has been shown in \cite{bNNZ}.
\begin{Proposition}
Let $Y$ be given by (\ref{beq:X}), with $q=1$ and $H\in\big(\frac12,\frac34\big)$. Then, 
as $T\to\infty$,
\begin{equation}\label{bivan}
T^{-\frac12}\int_0^T \big(Y_t^2-\E[Y_t^2]\big)dt
\overset{\rm law}{\to} a^{2H}\sigma_H\,N,
\end{equation}
where $\sigma_H$ is given by (\ref{bsigma}) and $N\sim \mathcal{N}(0,1)$.
\end{Proposition}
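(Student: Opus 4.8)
The plan is to place the functional in the second Wiener chaos and then invoke the Fourth Moment Theorem. Since $q=1$, the driving process $Z^{1,H}=B^H$ is Gaussian, so by the representation (\ref{beq:13}) each $Y_t$ is a first-chaos element $Y_t=I_1^B(g_t)$, with $g_t(\xi)=c(H,1)\int_0^t e^{-a(t-u)}(u-\xi)_+^{H-\frac32}du$. The product formula for multiple integrals then gives $Y_t^2-\E[Y_t^2]=I_2^B(g_t\otimes g_t)$, whence
\begin{equation*}
F_T:=T^{-\frac12}\int_0^T\big(Y_t^2-\E[Y_t^2]\big)dt=I_2^B(h_T),\qquad h_T=T^{-\frac12}\int_0^T g_t\otimes g_t\,dt .
\end{equation*}
Thus $(F_T)$ lives in the second chaos, and it suffices to prove two things: (i) $\E[F_T^2]\to a^{4H}\sigma_H^2$, and (ii) the single relevant contraction $\|h_T\otimes_1 h_T\|_{L^2(\R^2)}\to 0$. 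By the Fourth Moment Theorem (see \cite{bNourdinPeccatibook}), these two facts together force $F_T\overset{\rm law}{\to}\mathcal N(0,a^{4H}\sigma_H^2)=a^{2H}\sigma_H N$, which is exactly (\ref{bivan}).

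Before carrying this out I would reduce to the stationary regime. Writing $\overline Y_t=\int_{-\infty}^t e^{-a(t-u)}dB^H_u$ (available since $B$ is two-sided) and $\xi=\int_{-\infty}^0 e^{au}dB^H_u$, one has $Y_t=\overline Y_t-e^{-at}\xi$. Expanding $Y_t^2$ and using that $\int_0^T e^{-at}dt$ and $\int_0^T e^{-2at}dt$ stay bounded, the transient and cross contributions to $F_T$ are $O(T^{-1/2})$ in $L^2(\Omega)$, hence asymptotically negligible; this lets me replace $Y$ by the stationary $\overline Y$. Its covariance is genuinely stationary: by the isometry (\ref{bisometry}), $\rho(s,t):=\E[\overline Y_s\overline Y_t]=H(2H-1)\int_{-\infty}^s\int_{-\infty}^t e^{-a(s-u)-a(t-v)}|u-v|^{2H-2}dudv$ depends only on $t-s$, and $\rho(s,t)=\overline\rho(t-s)$ with $\overline\rho(\tau)\sim C_{a,H}\,|\tau|^{2H-2}$ as $|\tau|\to\infty$.

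For step (i), Isserlis' formula gives $\E[F_T^2]=\frac{2}{T}\int_0^T\int_0^T\rho(s,t)^2\,ds\,dt$, and a Ces\`aro/dominated-convergence argument yields $\E[F_T^2]\to 2\int_\R\overline\rho(\tau)^2\,d\tau$. This integral is finite precisely when $H<\frac34$, since $\overline\rho(\tau)^2\sim|\tau|^{2(2H-2)}$ and $\int^\infty|\tau|^{4H-4}d\tau<\infty\iff H<\frac34$; this is exactly the $L^2$-summability threshold of Breuer--Major, and it explains both the normalization $T^{-1/2}$ and the range of $H$. The change of variables $u\mapsto s-u$, $v\mapsto t-v$ and $\tau=t-s$ turns $\overline\rho$ into the kernel $\int_{\R_+^2}e^{-(u+v)}|u-v-x|^{2H-2}dudv$ appearing in (\ref{bsigma}), and a routine constant bookkeeping identifies the limiting variance with $a^{4H}\sigma_H^2$, i.e. the variance of the right-hand side of (\ref{bivan}).

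Step (ii) is where the real work lies and is the main obstacle. The fourth cumulant of $F_T$ equals, up to a positive constant, $\|h_T\otimes_1 h_T\|^2$, which unfolds into the ``circular'' fourfold integral $T^{-2}\int_{[0,T]^4}\overline\rho(t_1-t_2)\overline\rho(t_2-t_3)\overline\rho(t_3-t_4)\overline\rho(t_4-t_1)\,dt$. Passing to difference variables $\tau_i=t_i-t_{i+1}$ and integrating out the free variable produces a factor $O(T)$, so after the $T^{-2}$ normalization the quantity is $O(T^{-1})$ times $\int_{\R^3}|\overline\rho(\tau_1)\overline\rho(\tau_2)\overline\rho(\tau_3)\overline\rho(\tau_1+\tau_2+\tau_3)|\,d\tau$. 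The delicate point is that the naive absolute bound via Young's inequality only closes for $H<\frac58$; to reach the full range $H\in(\frac12,\frac34)$ one must control the contraction norm itself rather than its absolute version, exactly as in the sharp proof of the Breuer--Major theorem, exploiting cancellations together with the precise power-law decay of $\overline\rho$. Establishing $\|h_T\otimes_1 h_T\|\to 0$ throughout $(\frac12,\frac34)$ is therefore the crux; once it is in hand, the Fourth Moment Theorem delivers the asymptotic normality and, combined with step (i), the precise limit $a^{2H}\sigma_H N$ of (\ref{bivan}).
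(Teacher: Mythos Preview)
The paper does not prove this proposition itself; it is simply quoted from \cite{bNNZ} (Nourdin--Nualart--Zintout). Your sketch is in fact the strategy used there: place the functional in the second Wiener chaos via the product formula, verify convergence of the variance, and apply the Nualart--Peccati Fourth Moment criterion by showing that the single contraction $\|h_T\otimes_1 h_T\|$ vanishes. Your reduction to the stationary Ornstein--Uhlenbeck $\overline Y$ and the variance computation in step~(i) are both correct, including the identification of $H<\tfrac34$ as the $L^2$-summability threshold for $\overline\rho$.

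Your diagnosis of step~(ii) is accurate but your proposed cure is not. It is true that the naive bound
\[
\|h_T\otimes_1 h_T\|^2\leq \frac{C}{T}\int_{\R^3}\big|\overline\rho(\tau_1)\overline\rho(\tau_2)\overline\rho(\tau_3)\overline\rho(\tau_1+\tau_2+\tau_3)\big|\,d\tau
\]
fails for $H\geq \tfrac58$ (equivalently $\widehat{\overline\rho}\notin L^4$). However, the resolution in \cite{bNNZ} does not rely on cancellations. One keeps the integration over the compact region $[0,T]^4$ and uses the pointwise bound $|\overline\rho(\tau)|\leq C(1+|\tau|)^{2H-2}$ directly: after the change of variables $\tau_i=t_i-t_{i+1}$ and the scaling $\tau_i=T w_i$, the cyclic integral is $O(T^{8H-4})$ for $H\in[\tfrac58,\tfrac34)$ (the rescaled integrand $\prod|w_i|^{2H-2}$ being locally integrable on $[-1,1]^3$ precisely in that range), so that
\[
\|h_T\otimes_1 h_T\|^2 = O\big(T^{8H-6}\big)\longrightarrow 0\qquad (H<\tfrac34).
\]
For $H<\tfrac58$ the same quantity is $O(T^{-1})$, and at $H=\tfrac58$ one picks up a logarithm. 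In all cases the argument is purely an absolute-value power-counting estimate; no sign cancellations of $\overline\rho$ are invoked. Replacing your last paragraph by this direct estimate closes the gap and matches the proof in \cite{bNNZ}.
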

Relying on the seminal Peccati-Tudor criterion on asymptotic {\it joint} normality (see, e.g., \cite[Theorem 6.2.3]{bNourdinPeccatibook}) and since $T^{-\frac12}\int_0^T \big(Y_t^2-\E[Y_t^2]\big)dt$ (resp. $T^{-H}B^H_T$) belongs to the second (resp. first) Wiener chaos, we have even more than (\ref{bivan}) for free, namely
\begin{equation}\label{bivanbis}
\left(T^{-\frac12}\int_0^T \big(Y_t^2-\E[Y_t^2]\big)dt, T^{-H}B^H_T\right)
\overset{\rm law}{\to} (a^{2H}\sigma_H\,N, N'),
\end{equation}
where $N,N'\sim N(0,1)$ are independent.

Finally, in the critical case $q=1$ and $H=\frac34$, we will rely on the following result,
 established by Hu, Nualart and Zhou in \cite[Theorem 5.4]{bHuNualartZhou}.

\begin{Proposition}
Let $Y$ be given by (\ref{beq:X}), with $q=1$ and $H=\frac34$. Then, 
as $T\to\infty$,
\begin{equation}\label{bnunu}
(T\log T)^{-\frac12}\int_0^T \big(Y_t^2-\E[Y_t^2]\big)dt
\overset{\rm law}{\to} \frac{27}{64a^2}\,N,
\end{equation}
where  $N\sim N(0,1)$.
\end{Proposition}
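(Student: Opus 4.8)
The plan is to exploit that, when $q=1$, the driving noise is Gaussian, so that $Y$ is the (Gaussian) fractional Ornstein--Uhlenbeck process given by (\ref{beq:X}) and the functional $\int_0^T(Y_t^2-\E[Y_t^2])dt$ lives entirely in the second Wiener chaos. The statement is then a \emph{borderline} Breuer--Major central limit theorem, which I would establish via the Fourth Moment Theorem, the logarithmic normalization being the signature of the critical exponent $H=\tfrac34$.

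\emph{Step 1 (chaotic representation).} Using (\ref{beq:13}) with $q=1$ and $H_0=H$, write $Y_t=I_1^B(g_t)$ with $g_t(\xi)=c(H,1)\int_0^t e^{-a(t-v)}(v-\xi)_+^{H-3/2}dv$. The identity $I_1^B(g)^2=I_2^B(g\otimes g)+\|g\|_{L^2}^2$ together with $\E[Y_t^2]=\|g_t\|_{L^2}^2$ gives $Y_t^2-\E[Y_t^2]=I_2^B(g_t\otimes g_t)$, whence, after integrating in $t$,
\[
F_T:=\int_0^T\big(Y_t^2-\E[Y_t^2]\big)dt=I_2^B(f_T),\qquad f_T=\int_0^T g_t\otimes g_t\,dt,
\]
so that $(T\log T)^{-1/2}F_T$ is a second-chaos random variable.

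\emph{Step 2 (variance asymptotics).} Since $f_T$ is symmetric, the isometry (\ref{bisom**}) gives
\[
\mathrm{Var}(F_T)=2\|f_T\|_{L^2(\R^2)}^2=2\int_0^T\!\!\int_0^T\rho(t,s)^2\,dt\,ds,\qquad \rho(t,s):=\E[Y_tY_s]=\langle g_t,g_s\rangle_{L^2};
\]
equivalently $\mathrm{Cov}(Y_t^2,Y_s^2)=2\rho(t,s)^2$ by Wick's formula. The heart of the matter is the asymptotics of $\rho$. As $t,s\to\infty$ with $t-s$ fixed, $\rho(t,s)$ converges to the stationary fOU covariance $r(t-s)$, and a direct computation gives $r(\tau)\sim \kappa_H\,a^{-2}|\tau|^{2H-2}$ as $|\tau|\to\infty$. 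At $H=\tfrac34$ one has $2H-2=-\tfrac12$, so $r(\tau)^2$ is of exact order $|\tau|^{-1}$: this is the borderline non-integrable regime, and it forces $\int_0^T\!\int_0^T\rho^2\sim C\,T\log T$, producing the logarithm. A precise evaluation of $C$ yields $(T\log T)^{-1}\mathrm{Var}(F_T)\to(27/(64a^2))^2=:\sigma^2$.

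\emph{Step 3 (asymptotic normality).} As $(T\log T)^{-1/2}F_T$ lies in the second chaos with variance converging to $\sigma^2$, by the Fourth Moment Theorem (\cite{bNourdinPeccatibook}) it suffices to prove $\kappa_4(F_T)=o\big((T\log T)^2\big)$. The cumulant formula for double Wiener--It\^o integrals gives the circular expression
\[
\kappa_4(F_T)=48\int_{[0,T]^4}\rho(t_1,t_2)\rho(t_2,t_3)\rho(t_3,t_4)\rho(t_4,t_1)\,dt_1dt_2dt_3dt_4,
\]
which, using the bound $|\rho(t,s)|\le C(1+|t-s|)^{2H-2}=C(1+|t-s|)^{-1/2}$, I would estimate by $O(T^2)$ (up to logarithmic corrections): freezing $t_1$ contributes a factor $T$, while integrating the three differences over $[-T,T]$ against an integrand of homogeneity degree $-2$ contributes, after the cutoff at scale $T$, a factor $T^{3-2}=T$. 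Since $\mathrm{Var}(F_T)^2\sim\sigma^4\,T^2(\log T)^2$, the ratio $\kappa_4(F_T)/\mathrm{Var}(F_T)^2=O((\log T)^{-2})\to0$, yielding $(T\log T)^{-1/2}F_T\overset{\mathrm{law}}{\to}\mathcal N(0,\sigma^2)=\tfrac{27}{64a^2}N$. The main obstacle is the bookkeeping at the critical exponent: one must show that replacing the non-stationary covariance $\rho(t,s)$ by its stationary limit $r(t-s)$ is harmless for the leading $T\log T$ term, pin down the constant $\kappa_H$ (hence the exact $27/64$), and verify that the quadruple circular integral grows strictly slower than $(T\log T)^2$ — the extra logarithm in the normalization being precisely what renders the fourth cumulant asymptotically negligible.
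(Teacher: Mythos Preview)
The paper does not actually prove this proposition: it is quoted as \cite[Theorem~5.4]{bHuNualartZhou} and used as a black box in the analysis of the fluctuations of $\widehat{a}_T$ at the critical exponent. Your proposal therefore supplies an argument that the paper deliberately outsources.

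Your route is the standard one and is essentially that of Hu--Nualart--Zhou: place the functional in the second Wiener chaos, extract the $T\log T$ variance from the borderline decay $r(\tau)^2\asymp|\tau|^{-1}$ of the fractional Ornstein--Uhlenbeck autocovariance at $H=\tfrac34$, and conclude via the Fourth Moment Theorem. The obstacles you list are exactly the places where real work is needed: (i) the replacement of the non-stationary covariance $\rho(t,s)$ by the stationary $r(t-s)$ requires a quantitative remainder (for instance $|\rho(t,s)-r(t-s)|\le Ce^{-a\,(t\wedge s)}$) so that edge effects do not disturb the leading logarithm; (ii) the constant $27/(64a^2)$ comes from the precise coefficient in $r(\tau)\sim H(2H-1)a^{-2}|\tau|^{2H-2}$ and must be tracked through the double integral; (iii) your homogeneity argument for $\kappa_4(F_T)$ is only heuristic---one must verify that the circular integral picks up strictly fewer than two powers of $\log T$, so that $\kappa_4(F_T)/\mathrm{Var}(F_T)^2\to 0$. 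None of these is a conceptual gap, but each is genuine work that your sketch defers.
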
 
Similarly to (\ref{bivanbis}) and for exactly the same reason, we have even more than (\ref{bnunu}) for free, namely: 
\begin{equation}\label{bnunubis}
\left((T\log T)^{-\frac12}\int_0^T \big(Y_t^2-\E[Y_t^2]\big)dt
, T^{-\frac34}B^{\frac34}_T\right)
\overset{\rm law}{\to} (\frac{27}{64a^2}\,N, N'),
\end{equation}
where $N,N'\sim N(0,1)$ are independent.

\subsection{A few other useful facts}
In this section, we let $X$ be given by (\ref{bfracV2}), with $a>0$, $b\in\R$ and $Z^{q, H}$ a Hermite process of order $q\geq 1$ and Hurst parameter $H\in(\frac12,1)$.
We can write
\begin{equation}\label{bdecompoX}
X_t=h(t)+Y_t,
\quad\mbox{
 where
$h(t)=b(1-e^{-at})$ and $Y$ is given by (\ref{beq:X}).}
\end{equation}
The following limit, obtained as a consequence of the isometry property (\ref{bisometry}),  will be used many times throughout the sequel:
\begin{eqnarray}
\E[Y_T^2]&=&H(2H-1)\int_{[0,T]^2} e^{-a(T-u)}e^{-a(T-v)}|u-v|^{2H-2}dudv\notag\\
&=&H(2H-1)\int_{[0,T]^2} e^{-a\,u}e^{-a\,v}|u-v|^{2H-2}dudv\notag\\
&\to&H(2H-1)\int_{[0,\infty)^2} e^{-a\,u}e^{-a\,v}|u-v|^{2H-2}dudv\notag\\
&&=a^{-2H}H\Gamma(2H)<\infty. 
\label{byt2}
\end{eqnarray}
Identity (\ref{byt2}) comes from
\begin{eqnarray}\label{beq:sao}
&&(2H-1)\int_{[0,\infty)^2}e^{-a(t+s)}|t-s|^{2H-2}dsdt \nonumber \\
&=&
a^{-2H}(2H-1)\int_{[0,\infty)^2}e^{-(t+s)}|t-s|^{2H-2}dsdt
= a^{-2H}\Gamma(2H) ,
\end{eqnarray}
see, e.g., Lemma 5.1 in Hu-Nualart \cite{bHN} for the second equality.
In particular, we note that 
\begin{equation}\label{bO1}
\E[Y_T^2]=O(1)\quad\mbox{ as $T\to\infty$}.
\end{equation}
Another simple but important fact that will be used is the following identity:
\begin{equation}\label{bivan2}
\int_0^T Y_tdt = \frac{1}{a}(Z^{q, H}_T-Y_T),
\end{equation}
which holds true since
\begin{eqnarray*}
\int_0^T Y_tdt&=&\int_0^T \left( \int_0^t e^{-a (t-u)}dZ^{q, H}_u\right)dt=\int_0^T \left(\int_u^T e^{-a(t-u)}dt\right)dZ^{q, H}_u=\frac{1}{a}(Z^{q, H}_T-Y_T).
\end{eqnarray*}

\section{Proof of Proposition \ref{bfinfini}}

We are now ready to prove Proposition \ref{bfinfini}.

We start by showing that $G_T$ converges well in $L^2(\Omega)$. In order to do so, we will check that the Cauchy
criterion is satisfied.
According to (\ref{beq:13}), we can write $U_T(t)=c(H,q)I_q(g_T(t,\cdot))$, where
\[
g_T(t,\xi_1,\ldots,\xi_q)=\int_0^t e^{-T(t-v)}\prod_{j=1}^q (v-\xi_j)_+^{H_0-\frac32}dv. 
\]
As a result,  we can write, thanks to \cite[identity (3.25)]{bNourdinRosinski},
\begin{eqnarray*}
&&{\rm Cov}(U_S(s)^2,U_T(t)^2) \\
&=& c(H,q)^4
\sum_{r=1}^q \binom{q}{r}^2\bigg\{
q!^2\|g_S(s,\cdot)\otimes_r g_T(t,\cdot)\|^2
+r!^2(2q-2r)!\|g_S(s,\cdot)\widetilde{\otimes}_r g_T(t,\cdot)\|^2\bigg\},
\end{eqnarray*}
implying in turn that
\begin{eqnarray*}
&&\E[G_TG_S]\\
&=&(ST)^{\frac{2}{q}(1-H)+2H}
\int_{[0,1]^2}{\rm Cov}(U_S(s)^2,U_T(t)^2)dsdt\\
&=&c(H,q)^4(ST)^{\frac{2}{q}(1-H)+2H}
\sum_{r=1}^q \binom{q}{r}^2
q!^2 \int_{[0,1]^2}\|g_S(s,\cdot)\otimes_r g_T(t,\cdot)\|^2dsdt\\
&&+c(H,q)^4(ST)^{\frac{2}{q}(1-H)+2H}
\sum_{r=1}^q \binom{q}{r}^2 r!^2(2q-2r)! \int_{[0,1]^2}\|g_S(s,\cdot)\widetilde{\otimes}_r g_T(t,\cdot)\|^2dsdt.
\end{eqnarray*}
To check the Cauchy criterion for $G_T$, we are thus left to show the existence, 
for any $r\in\{1,\ldots,q\}$, of
\begin{eqnarray}
&&\lim_{S,T\to\infty}(ST)^{\frac{2}{q}(1-H)+2H}
 \int_{[0,1]^2}\|g_S(s,\cdot)\otimes_r g_T(t,\cdot)\|^2dsdt \label{blimit1}\\
 &\mbox{and}&\lim_{S,T\to\infty}(ST)^{\frac{2}{q}(1-H)+2H}
 \int_{[0,1]^2}\|g_S(s,\cdot)\widetilde{\otimes}_r g_T(t,\cdot)\|^2dsdt. \label{blimit2}
\end{eqnarray}
Using that $\int_{\mathbb{R}} (u-x)_+^{H_0-\frac32}(v-x)_+^{H_0-\frac32}du =c_H\,|v-u|^{2H_0-2}$ with $c_H$ a constant depending only on $H$ and whose value can change from one line to another,  we have
\begin{eqnarray*}
&&\big(g_S(s,\cdot)\otimes_r g_T(t,\cdot)\big)(x_1,\ldots,x_{2q-2r})\\
&=&c_{H}\int_{0}^s \int_0^t  |v-u|^{(2H_0-2)r}
e^{-S(s-u)}e^{-T(t-v)}\prod_{j=1}^{q-r} (u-x_j)_+^{H_0-\frac32}\prod_{j=q-r+1}^{2q-2r}(v-y_j)_+^{H_0-\frac32}dudv.
\end{eqnarray*}
Now, let $\sigma,\gamma$ be two permutations of $\mathfrak{S}_{2q-2r}$, and
write $g_S(s,\cdot)\otimes_{\sigma,r} g_T(t,\cdot)$ to indicate
the function 
\[(x_1,\ldots,x_{2q-2r})\mapsto 
\big(g_S(s,\cdot)\otimes_r g_T(t,\cdot)\big)(x_{\sigma(1)},\ldots,x_{\sigma(2q-2r)}).
\]
We can write, for some integers $a_1,\ldots,a_4$ satisfying $a_1+a_2=a_3+a_4=q-r$ (and whose exact value is useless in what follows),
\begin{eqnarray*}
&&\big\langle
g_S(s,\cdot)\otimes_{\sigma,r} g_T(t,\cdot),
g_S(s,\cdot)\otimes_{\gamma,r} g_T(t,\cdot)
\big\rangle
\\
&=&c_{H}\int_{0}^s \int_0^t
\int_{0}^s \int_0^t 
 |v-u|^{(2H_0-2)r}
 |z-w|^{(2H_0-2)r}
 |u-w|^{(2H_0-2)a_1}
\\
 &&
\hskip2cm \times 
 |u-z|^{(2H_0-2)a_2}
 |v-w|^{(2H_0-2)a_3}
 |u-z|^{(2H_0-2)a_4}  
  \\
 &&\hskip2cm \times
e^{-S(s-u)}e^{-T(t-v)}
e^{-S(s-w)}e^{-T(t-z)}
dudvdwdz.
\end{eqnarray*}
We deduce that
\begin{eqnarray*}
&&(ST)^{\frac{2}{q}(1-H)+2H}
 \int_{[0,1]^2}\big\langle
g_S(s,\cdot)\otimes_{\sigma,r} g_T(t,\cdot),
g_S(s,\cdot)\otimes_{\gamma,r} g_T(t,\cdot)
\big\rangle dsdt\\
&=&c_H(ST)^{\frac{2}{q}(1-H)+2H}
 \int_{[0,1]^2}\left(
\int_{0}^s \int_0^t
\int_{0}^s \int_0^t 
 |v-u|^{(2H_0-2)r}
 |z-w|^{(2H_0-2)r}
\right.
\\
 &&
\hskip3cm \times 
 |u-w|^{(2H_0-2)a_1}
  |u-z|^{(2H_0-2)a_2}
 |v-w|^{(2H_0-2)a_3}
 |v-z|^{(2H_0-2)a_4}  
  \\
 &&\left.
 \hskip3cm \times
e^{-S(s-u)}e^{-T(t-v)}
e^{-S(s-w)}e^{-T(t-z)}
dudvdwdz\right)
dsdt\\
&=&c_H(ST)^{\frac{2}{q}(1-H)+2H}
 \int_{[0,1]^2}\left(
\int_{0}^s \int_0^t
\int_{0}^s \int_0^t 
 |v-u-t+s|^{(2H_0-2)r}
 |z-w+t-s|^{(2H_0-2)r}
\right.
\\
 &&
\hskip3cm \times 
 |u-w|^{(2H_0-2)a_1}
  |u-z+t-s|^{(2H_0-2)a_2}
 |v-w-t+s|^{(2H_0-2)a_3}
  \\
 &&\left.
 \hskip3cm \times
 |v-z|^{(2H_0-2)a_4}  
 e^{-Su}e^{-Tv}
e^{-Sw}e^{-Tz}
dudvdwdz\right)
dsdt\\
&=&c_HS^{\frac{2}q(1-H)(1+a_1-q)}
T^{\frac{2}q(1-H)(1+a_4-q)}\\
&&\hskip1.5cm\times
 \int_{[0,1]^2}\left(
\int_{0}^{Ss} \int_0^{Tt}
\int_{0}^{Ss} \int_0^{Tt} 
\left|\frac{v}T-\frac{u}{S}-t+s\right|^{(2H_0-2)r}
\left|\frac{z}T-\frac{w}S+t-s\right|^{(2H_0-2)r}
\right.
\\
 &&
\hskip3cm \times 
 |u-w|^{(2H_0-2)a_1}
 \left| \frac{u}S-\frac{z}T+t-s\right|^{(2H_0-2)a_2}
 \left| \frac{v}T-\frac{w}S-t+s\right|^{(2H_0-2)a_3}
  \\
 &&\left.
 \hskip3cm \times
 |v-z|^{(2H_0-2)a_4}  
 e^{-u}e^{-v}
e^{-w}e^{-z}
dudvdwdz\right)
dsdt.
\end{eqnarray*}
It follows that
\[
\lim_{S,T\to\infty} (ST)^{\frac{2}{q}(1-H)+2H}
 \int_{[0,1]^2}\big\langle
g_S(s,\cdot)\otimes_{\sigma,r} g_T(t,\cdot),
g_S(s,\cdot)\otimes_{\gamma,r} g_T(t,\cdot)
\big\rangle dsdt
\]
exists whatever $r$ and $a_1,\ldots,a_4$ such that $a_1+a_2=a_3+a_4=q-r$. Note that this limit is always zero, except when $r=1$, $a_1=a_4=q-1$ and $a_2=a_3=0$, in which case it is given by
\[
c_H \int_{[0,1]^2}
\left|t-s\right|^{4H_0-4}dtds\times 
\left(\int_{\R_+^2}
 |u-w|^{(2H_0-2)(q-1)}e^{-(u+w)}dudw\right)^2<\infty.
\]
Since
\[
g_S(s,\cdot)\widetilde{\otimes}_{r} g_T(t,\cdot) =
\frac{1}{(2q-2r)!}\sum_{\sigma\in\mathfrak{S}_{2q-2r}}
g_S(s,\cdot)\otimes_{\sigma,r} g_T(t,\cdot)
\]
the existence of the two limits (\ref{blimit1})-(\ref{blimit2}) follow, implying in turn the existence of $G_\infty$.
\medskip

Now, let us check the claim about the distribution of $G_\infty$.
Let $\widetilde{Y}_t=U_1(t)$, that is,
$\widetilde{Y}_t = \int_0^t e^{-(t-u)}dZ^{q, H}_u$, $t\geq 0$.
 By a scaling argument,
it is straightforward to check that
$(\widetilde{Y}_{tT})_{t\geq 0} \overset{\rm law}{=} T^H (U_T(t))_{t\geq 0}$
for any fixed $T>0$. As a result,
\[
T^{\frac{2}{q}(1-H)-1}\int_0^T (\widetilde{Y}_t^2-\E[\widetilde{Y}_t^2])dt = 
T^{\frac{2}{q}(1-H)}\int_0^1  (\widetilde{Y}_{tT}^2-\E[\widetilde{Y}_{tT}^2])dt 
\overset{\rm law}{=} G_T.
\]
Using (\ref{bdiu}), we deduce that $G_T/B_{H,q}$ converges in law
to the Rosenblatt distribution of parameter $1-\frac{2}{q}(1-H)$, hence the claim.\qed

\section{Proof of the consistency part in Theorem \ref{bmain}}

The consistency part of Theorem \ref{bmain} is directly obtained as a consequence of the following two propositions.

\begin{prop}
Let $X$ be given by (\ref{bfracV})-(\ref{bfracV2}) with $a>0$, $b\in\R$, $q\geq 1$ and $H\in(\frac12,1)$.
As $T\to\infty$, one has
\begin{equation}\label{blm41}
\frac{1}{T}\int_0^T X_tdt \to b\quad \mbox{a.s.}
\end{equation}
\end{prop}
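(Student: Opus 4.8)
The plan is to use the decomposition (\ref{bdecompoX}), namely $X_t = h(t) + Y_t$ with $h(t) = b(1-e^{-at})$ and $Y_t = \int_0^t e^{-a(t-u)}dZ^{q,H}_u$, and to treat the deterministic and stochastic contributions separately. For the deterministic part, since $h(t) \to b$ as $t\to\infty$, an elementary Cesàro argument gives $\frac1T\int_0^T h(t)\,dt = b - \frac{b}{aT}(1-e^{-aT}) \to b$. Hence (\ref{blm41}) will follow once we show that $\frac1T\int_0^T Y_t\,dt \to 0$ almost surely.

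First I would invoke the exact identity (\ref{bivan2}), which gives $\frac1T\int_0^T Y_t\,dt = \frac1{aT}\big(Z^{q,H}_T - Y_T\big)$. So it suffices to prove that $\frac1T Z^{q,H}_T \to 0$ a.s. and $\frac1T Y_T \to 0$ a.s. For the second term, recall from (\ref{byt2})–(\ref{bO1}) that $\E[Y_T^2] = O(1)$; combined with the hypercontractivity estimate (\ref{beq:hypercontractivity2}) (which controls all $L^p$-norms of $Y_T$ by its $L^2$-norm, $Y_T$ living in a fixed sum of Wiener chaoses), one gets $\sup_{T\geq 0}\E[|Y_T|^p] < \infty$ for every $p\geq 1$. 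Evaluating along the integers $T=n$ and applying Markov's inequality with a large $p$ together with Borel–Cantelli yields $\frac1n Y_n \to 0$ a.s.; the passage from integers to the continuum is handled by the a.s. boundedness of $\sup_{t\in[n,n+1]}|Y_t|$, which again follows from moment bounds and Borel–Cantelli (or directly from the a.s. continuity of $Y$ plus a maximal-type estimate). For the term $\frac1T Z^{q,H}_T$: by $H$-self-similarity $Z^{q,H}_T \overset{law}{=} T^H Z^{q,H}_1$, so $\E[|Z^{q,H}_T/T|^p] = T^{p(H-1)}\E[|Z^{q,H}_1|^p]$, and since $H<1$ this tends to $0$; using the finite-moments property of the Proposition on Hermite processes together with Markov + Borel–Cantelli along $T=n$, and controlling the oscillation on $[n,n+1]$ via Hölder continuity (or again moment bounds on increments), one obtains $\frac1T Z^{q,H}_T \to 0$ a.s.

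Putting these pieces together, $\frac1T\int_0^T Y_t\,dt = \frac1{aT}(Z^{q,H}_T - Y_T) \to 0$ a.s., and hence $\frac1T\int_0^T X_tdt = \frac1T\int_0^T h(t)dt + \frac1T\int_0^T Y_t dt \to b$ a.s., which is (\ref{blm41}). The only mildly delicate point is the standard Borel–Cantelli bridge from the discrete sequence $T=n$ to all real $T$, i.e. showing the fluctuations of $Y_t$ and $Z^{q,H}_t$ over unit intervals are negligible after dividing by $T$; this is routine given the uniform moment bounds and the Hölder regularity recalled in the preliminaries, so there is no serious obstacle here.
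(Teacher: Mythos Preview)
Your proposal is correct and follows essentially the same route as the paper: decompose $X_t=h(t)+Y_t$, handle the deterministic part directly, reduce the stochastic part via the identity (\ref{bivan2}), then use hypercontractivity plus Borel--Cantelli along integers followed by a discrete-to-continuous bridge. The only organizational difference is that you split $\frac{1}{aT}(Z^{q,H}_T-Y_T)$ into two separate almost-sure convergences, whereas the paper keeps $\frac{1}{n}\int_0^n Y_t\,dt$ as a single quantity (with second moment $O(n^{2H-2})$) and runs Borel--Cantelli on that.

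One small point worth flagging: for the bridge from $T=n$ to general $T$, you rely on controlling $\sup_{t\in[n,n+1]}|Y_t|$ and $\sup_{t\in[n,n+1]}|Z^{q,H}_t|$, which is correct but needs a maximal-type argument (e.g.\ via H\"older continuity and stationarity of increments). The paper sidesteps this by writing
\[
\frac1T\int_0^T Y_t\,dt=\frac1n\int_0^n Y_t\,dt+\frac1T\int_n^T Y_t\,dt+\Big(\frac1T-\frac1n\Big)\int_0^n Y_t\,dt,
\]
and bounding the middle term by $\frac1n\int_n^{n+1}|Y_t|\,dt$, whose second moment is controlled by \emph{pointwise} moment bounds only (Cauchy--Schwarz under the integral), yielding $O(n^{-2})$ directly. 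This is a slightly cleaner way to close the bridge, but your version works too.
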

{\it Proof}.
We use (\ref{bfracV2}) to write
\[
\frac{1}{T}\int_0^T X_t dt = \frac{b}{T}\int_0^T (1-e^{-at}) dt + \frac{1}{T}\int_0^T Y_t dt.
\]
Since it is straightforward that $\frac{b}{T}\int_0^T (1-e^{-at}) dt\to b$, we are left to show that $\frac{1}{T}\int_0^T Y_t dt\to 0$ almost surely.

By (\ref{bivan2}), one can write, for any integer $n\geq 1$,
\[
\E\left[\left(\frac{1}{n}\int_0^n Y_t dt\right)^2\right]\leq \frac{2}{a^2n^2}\big(\E[(Z^{q, H}_n)^2]+\E[Y_n^2]\big) =O(n^{2H-2}),
\]
where the last equality comes from the $H$-selfsimilarity property of $Z^{q, H}$ as well as (\ref{bO1}). 
Since $\frac{1}{T}\int_0^T Y_t dt$ belongs to the $q$th Wiener chaos, it enjoys the 
hypercontractivity property (\ref{beq:hypercontractivity1}). As a result, 
for all $p >\frac{1}{1-H}$ and $\lambda > 0$,
\begin{align*}
\sum_{n=1}^\infty \Prob\bigg(\Big|\frac{1}{n}\int_0^n Y_t dt\Big| > \lambda \bigg) 
&\leq \frac{1}{\lambda^p}\sum_{n=1}^\infty \E\bigg[\Big|\frac{1}{n}\int_0^n Y_t dt\Big|^p\bigg]
& \leq \frac{{\rm cst}(p)}{\lambda^p}\sum_{n=1}^\infty 
\E\bigg[\Big(\frac{1}{n}\int_0^nY_t dt\Big)^2\bigg]^{p/2}\\
& \leq   \frac{{\rm cst}(p)}{\lambda^p}\sum_{n=1}^\infty n^{-(1-H)p}<\infty.
\end{align*}
We deduce from the Borel-Cantelli lemma that
$
\frac{1}{n}\int_0^nY_t dt \rightarrow 0
$
almost surely as $n \to \infty$. 

Finally, fix $T > 0$ and let $n=\lfloor T\rfloor$ be its integer part. We can write
\begin{equation}\label{bbla}
\frac{1}{T}\int_0^TY_t dt = \frac{1}{n}\int_0^nY_t dt + \frac{1}{T}\int_n^TY_t dt + \bigg(\frac{1}{T} - \frac{1}{n}\bigg)\int_0^nY_t dt.
\end{equation}
We have just proved above that $\frac{1}{n}\int_0^nY_t dt$ tends to zero almost surely as $n \to \infty$. We now consider the second and third terms in (\ref{bbla}). 
We have, almost surely as $T\to\infty$,
 \[
 \bigg|\bigg(\frac{1}{T} - \frac{1}{n}\bigg)\int_0^nY_t dt \bigg| = \bigg(1- \frac{n}{T}\bigg) \bigg|\frac{1}{n}\int_0^nY_t dt \bigg| \leq \bigg|\frac{1}{n}\int_0^nY_t dt \bigg| \rightarrow 0,
 \]
and
\[
\bigg|\frac{1}{T}\int_n^TY_t dt\bigg| \leq \frac{1}{n} \int_n^{n+1}|Y_t|dt.
\]

To conclude, it remains to prove that $\frac{1}{n} \int_n^{n+1}|Y_t|dt \to 0$ almost surely as $n \to \infty$. Using (\ref{bO1}) we have, for all fixed $\lambda > 0$, 
\begin{align*}
\mathbb{P} \bigg\{ \frac{1}{n} \int_n^{n+1}|Y_t|dt > \lambda \bigg\} & \leq \frac{1}{\lambda^2} \E\bigg[\bigg( \frac{1}{n} \int_n^{n+1}|Y_t|dt\bigg)^2\bigg] \\ 
& \leq \frac{1}{\lambda^2n^2}  \int_n^{n+1} \int_n^{n+1}  \sqrt{\E[Y_s^2]}\sqrt{\E[Y_t^2]}dsdt =O(n^{-2}).
\end{align*}
Hence, as $n \to \infty$, the Borel-Cantelli lemma applies and implies that $\frac{1}{n} \int_n^{n+1}|Y_t|dt$ goes to zero almost surely. 
This completes the proof of (\ref{blm41}).\qed

\begin{prop}
Let $X$ be given by (\ref{bfracV})-(\ref{bfracV2}) with $a>0$, $b\in\R, q\geq 1$ and $H \in (\frac12, 1)$. As $T\to\infty$, one has
\begin{equation}\label{blm42}
\frac{1}{T}\int_0^T X_t^2dt \to b^2+a^{-2H}H\Gamma(2H)\quad \mbox{a.s.}
\end{equation}
\end{prop}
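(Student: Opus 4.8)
The strategy is to use the decomposition $X_t = h(t) + Y_t$ from (\ref{bdecompoX}), with $h(t) = b(1-e^{-at})$ and $Y_t = \int_0^t e^{-a(t-u)}dZ^{q,H}_u$, and to expand the square. This gives
\[
\frac1T\int_0^T X_t^2 dt = \frac1T\int_0^T h(t)^2 dt + \frac2T\int_0^T h(t)Y_t dt + \frac1T\int_0^T Y_t^2 dt.
\]
The first term is deterministic and converges to $b^2$ by elementary calculus, since $h(t)^2 \to b^2$. So the proof reduces to showing that the two remaining (random) terms behave as expected: the cross term $\frac1T\int_0^T h(t)Y_t dt \to 0$ a.s., and $\frac1T\int_0^T Y_t^2 dt \to a^{-2H}H\Gamma(2H)$ a.s.

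For the cross term, since $h$ is bounded (by $2|b|$ for large $t$), it suffices to control $\frac1T\int_0^T |Y_t|\,dt$, and this was already essentially done in the previous proposition: one shows $\frac1n\int_0^n |Y_t| dt \to 0$ a.s.\ along integers via the moment bound $\E[Y_t^2] = O(1)$ from (\ref{bO1}), Jensen/Cauchy--Schwarz, hypercontractivity (\ref{beq:hypercontractivity1}) to pass to high $L^p$ norms, and Borel--Cantelli; then one fills the gap between $\lfloor T\rfloor$ and $T$ exactly as in (\ref{bbla}). So this term requires no genuinely new idea.

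The main work is the term $\frac1T\int_0^T Y_t^2\,dt$. First I would identify the limit of its mean: $\E[Y_t^2] \to a^{-2H}H\Gamma(2H)$ as $t\to\infty$ by (\ref{byt2}), hence by Cesàro $\frac1T\int_0^T \E[Y_t^2]\,dt \to a^{-2H}H\Gamma(2H)$. Thus it remains to prove the a.s.\ convergence to zero of the centered quantity $\frac1T\int_0^T (Y_t^2 - \E[Y_t^2])\,dt$. The key input is the $L^2$ estimate on this centered functional: depending on $q$ and $H$, the results (\ref{bdiu}), (\ref{bivan}), (\ref{bnunu}) (and the underlying computations) give that $\int_0^T(Y_t^2-\E[Y_t^2])dt$, properly normalized, converges in law, which in particular yields $\E\big[\big(\int_0^n (Y_t^2-\E[Y_t^2])dt\big)^2\big] = O(n^{\max(1,\,2 - \frac4q(1-H))}\log n)$ or better — in all cases an exponent strictly less than $2$. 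Consequently $\E\big[\big(\frac1n\int_0^n (Y_t^2-\E[Y_t^2])dt\big)^2\big] = O(n^{-\delta})$ for some $\delta>0$. Since $Y_t^2 - \E[Y_t^2]$ lives in a finite sum of Wiener chaoses (orders $0$ to $2q$), the functional $\frac1n\int_0^n(Y_t^2-\E[Y_t^2])dt$ does too, so hypercontractivity (\ref{beq:hypercontractivity2}) upgrades the $L^2$ bound to an $L^p$ bound for every $p$; choosing $p$ large enough that $\frac{\delta p}{2}>1$, Markov's inequality plus Borel--Cantelli give a.s.\ convergence to zero along integers. Finally one passes from integers to real $T$ by the same three-term splitting as in (\ref{bbla}), bounding the overshoot $\frac1n\int_n^{n+1} Y_t^2\,dt$ in $L^1$ using $\E[Y_t^2]=O(1)$ and Borel--Cantelli once more.

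\textbf{Main obstacle.} The delicate point is obtaining, uniformly across the four regimes of $(q,H)$, an $L^2$ (or higher-moment) rate for $\frac1T\int_0^T (Y_t^2-\E[Y_t^2])dt$ that decays fast enough for a Borel--Cantelli argument — i.e.\ verifying that the normalizations appearing in (\ref{bdiu})--(\ref{bnunu}) all correspond to a variance growing strictly slower than $T^2$, so that after dividing by $T$ one gets a negative power of $T$ (up to logarithmic factors). Once that estimate is in hand, the rest is the routine chaos-expansion + hypercontractivity + Borel--Cantelli + integer-to-real interpolation machinery already deployed in the consistency proof for $\frac1T\int_0^T X_t\,dt$.
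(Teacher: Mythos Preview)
Your treatment of the quadratic term $\frac1T\int_0^T Y_t^2\,dt$ is correct and matches the paper's approach: identify the limit of the mean via (\ref{byt2}) and Ces\`aro averaging, then kill the centered part along integers using the $L^2$ rates implicit in (\ref{bdiu})--(\ref{bnunu}), hypercontractivity (\ref{beq:hypercontractivity2}), and Borel--Cantelli, then interpolate to real $T$.

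There is, however, a genuine gap in your handling of the cross term. You claim that since $h$ is bounded it suffices to show $\frac1T\int_0^T |Y_t|\,dt \to 0$ a.s., and that this ``was already essentially done in the previous proposition''. Neither part is correct. The previous proposition shows $\frac1T\int_0^T Y_t\,dt \to 0$ a.s., \emph{without} absolute values, and the proof relies crucially on the explicit identity (\ref{bivan2}), $\int_0^T Y_t\,dt = \frac1a(Z^{q,H}_T - Y_T)$, which exploits cancellation. With absolute values there is no cancellation: since $\E[Y_t^2]\to a^{-2H}H\Gamma(2H)>0$ by (\ref{byt2}), the quantity $\E[|Y_t|]$ is bounded below by a positive constant for large $t$, so $\frac1T\int_0^T \E[|Y_t|]\,dt$ is bounded away from zero and $\frac1T\int_0^T |Y_t|\,dt$ cannot converge to $0$ a.s. Your moment argument fails at the first step.

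The fix, which is what the paper does, is to split $h(t)=b - be^{-at}$ and write
\[
\frac1T\int_0^T h(t)Y_t\,dt = \frac{b}{T}\int_0^T Y_t\,dt - \frac{b}{T}\int_0^T e^{-at}Y_t\,dt.
\]
The first piece goes to $0$ a.s.\ by the previous proposition (via (\ref{bivan2})). For the second piece one computes directly (Fubini) that $\int_0^T e^{-at}Y_t\,dt$ converges in $L^2(\Omega)$ to a finite random variable, so dividing by $T$ gives $0$; a.s.\ convergence then follows by the usual hypercontractivity plus Borel--Cantelli routine.
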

\noindent
{\it Proof}. 
We first use (\ref{bfracV2}) to write
\[
\frac{1}{T}\int_0^T X_t^2dt = \frac{1}{T}\int_0^T h(t)^2dt + \frac{2}{T}\int_0^T h(t)Y_tdt+\frac{1}{T}\int_0^T Y_t^2dt.
\]
We now study separately the three terms in the previous decomposition.
More precisely we will prove that,  as $T\to\infty$,
\begin{eqnarray}
\frac{1}{T}\int_0^T h(t)^2dt&\to& b^2,\label{bdollar1}\\ 
\frac{1}{T}\int_0^T h(t)Y_tdt&\to& 0\quad\mbox{a.s.}\label{bdollar2}\\ 
\frac{1}{T}\int_0^T Y_t^2dt&\to& a^{-2H}H\Gamma(2H)\quad\mbox{a.s.}\label{bdollar3},
\end{eqnarray}
from which (\ref{blm42}) follows immediately.\\

\underline{First term}.
By Lebesgue dominated convergence, one has
\[
\frac{1}{T}\int_0^T h(t)^2dt = \int_0^1 h(Tt)^2dt =  b^2 \int_0^1 (1- e^{-aTt})^2 dt \rightarrow b^2,
\]
that is, (\ref{bdollar1}) holds.

\bigskip

\underline{Second term}. 
First, we claim that
\begin{equation}\label{bconv}
T^{-H}\int_0^T h(t)Y_tdt \overset{\rm law}{\to} \frac{b}{a} Z^{q, H}_1.
\end{equation}
Indeed, let us decompose:
\begin{eqnarray*}
\int_0^T h(t)Y_tdt=b\int_0^T (1-e^{-a t})Y_tdt
=b\int_0^T Y_tdt - b\int_0^T e^{-a t}\,Y_tdt.
\end{eqnarray*}
Using (\ref{bO1}) in the last line, we can write
\begin{eqnarray}
&&\int_0^T e^{-at}\,Y_tdt = \int_0^T e^{-at}\left(\int_0^t e^{-a(t-s)}dZ^{q, H}_s\right)dt \notag\\
&=& \int_0^T \left(\int_s^T e^{-a(2t-s)}dt\right)dZ^{q, H}_s 
= \frac{1}{2a}\int_0^T (e^{-a(2T- s)}-e^{-as})dZ^{q, H}_s \notag\\
&=&\frac{1}{2a}\left( e^{-a T}Y_T - \int_0^T e^{-as}dZ^{q, H}_s\right)\notag\\
&\to&-\frac{1}{2a}\int_0^\infty e^{-a s}dZ^{q, H}_s\quad\mbox{in $L^2(\Omega)$ as $T\to\infty$.}\label{bblabla}
\end{eqnarray}
The announced convergence (\ref{bconv}) is a consequence of (\ref{bivan2}), (\ref{bblabla}) and the selfsimilarity of $Z^{q, H}$.
Now, relying on the Borel-Cantelli lemma and the fact that $\int_0^T h(t)Y_tdt$ enjoys the
hypercontractivity property, it is not difficult to deduce from (\ref{bconv}) that
(\ref{bdollar2}) holds.\\
 
\underline{Third term}. Firstly, let us write, as $T\to\infty$,
\begin{align}\label{bhou1}
\frac{1}{T}\int_0^T \E[Y_t^2]dt& = H(2H-1) \frac{1}{T}\int_0^Tdt \int_0^t\int_0^t dudv e^{-a u}e^{-a v}|u-v|^{2H-2} \nonumber \\
& = H(2H-1) \int_0^1 dt \int_0^{Tt}\int_0^{Tt} dudv e^{-a u}e^{-a v}|u-v|^{2H-2} \nonumber\\
&\longrightarrow  H(2H-1)  \int_0^\infty \int_0^\infty dudv e^{-a u}e^{-a v}|u-v|^{2H-2} \nonumber\\
&\quad\quad=a^{-2H}H\Gamma(2H). 
\end{align}
To conclude the proof of (\ref{bdollar3}), we are thus left to show that :
\begin{equation}\label{beq:hou2}
\frac{1}{T}\int_0^T (Y_t^2 - \E[Y_t^2])dt \rightarrow 0 \text{ a.s.}
\end{equation}

First, we claim that, as $n \in \mathbb{N}^*$ goes to infinity,
\begin{equation}\label{beq:as1}
G_n:=\frac{1}{n}\int_0^n (Y_t^2 - \E[Y_t^2])dt \to 0 \text{ a.s.}
\end{equation}

Indeed, for all fixed $\lambda > 0$ and $p \geq 1$ we have, by the hypercontractivity property (\ref{beq:hypercontractivity2}) for $G_n$ belonging to a finite sum of Wiener chaoses,
\begin{align*}
\mathbb{P}\{ |G_n| > \lambda \}&\leq \frac{1}{\lambda^p} \E[ |G_n|^p ] \leq  \frac{\text{cst}(p)}{\lambda^p} \E[ G_n^2 ]^{p/2}.
\end{align*}

If ($q \geq 1$ and $H>\frac34$) or $q\geq 2$, combining (\ref{bdiu}) with, e.g., \cite[Lemma 2.4]{bNourdinPoly} leads to
\begin{equation}\label{bazerty}
\sup_{T >0}  
\E\bigg[ 
\bigg(
T^{\frac2q(1-H)-1}\int_0^T (Y_t^2 - \E[Y_t^2])dt 
\bigg)^2 
\bigg] < \infty
\end{equation}
(note that one could also prove (\ref{bazerty}) directly),
implying in turn that 
$
\mathbb{P}\{ |G_n| > \lambda \}
=O(n^{-\frac{2p}{q}(1-H)});$
choosing $p$ so that $\frac{2p}{q}(1-H) > 1$ leads to 
$ \sum_{n=1}^\infty  \mathbb{P}\{ |G_n| > \lambda \} < \infty$,
and so our claim (\ref{beq:as1}) follows from Borel-Cantelli lemma.

If $q=1$ and $H<\frac34$, the same reasoning (but using this time (\ref{bivan}) instead of (\ref{bdiu})) leads exactly to the same conclusion (\ref{beq:as1}).

Now, fix $T > 0$ and consider its integer part $n=\lfloor T\rfloor$. One has
\begin{equation}\label{bblablabla}
G_T = G_n + \frac{1}{T}\int_n^T(Y_t^2 - \E[Y_t^2])dt + \bigg(\frac{1}{T} - \frac{1}{n}\bigg)\int_0^n(Y_t^2 - \E[Y_t^2])dt.
\end{equation}
We have just proved above that $G_n$ tends to zero almost surely as $n \to \infty$. We now consider the third term in (\ref{bblablabla}). We have, using (\ref{beq:as1}):
\begin{align*}
\bigg|\frac{1}{T}- \frac{1}{n}\bigg|\bigg|\int_0^n(Y_t^2 - \E[Y_t^2])dt\bigg|&= \bigg(1-\frac{n}{T}\bigg)\bigg|\frac{1}{n}\int_0^n(Y_t^2 - \E[Y_t^2])dt\bigg|
\leq  |G_n| \rightarrow 0 \quad \text{a.s}.
\end{align*}
Finally, as far as the second term in (\ref{bblablabla}) is concerned, we have
\[
\bigg|\frac{1}{T}\int_n^T(Y_t^2- \E[Y_t^2])dt\bigg|  \leq \frac{1}{n}\int_n^{n+1}|Y_t^2 - \E[Y_t^2]|dt.
\]
To conclude, it thus remains to prove that, as $n \to \infty$,
\begin{equation}\label{beq:10}
F_n:=\frac{1}{n}\int_n^{n+1}|Y_t^2 - \E[Y_t^2]|dt \to 0 \qquad \text{a.s}.
\end{equation}
By hypercontractivity and (\ref{byt2}), one can write
\begin{align*}
 \text{Var}(Y_t^2)  \leq \text{cst}(q)(\E[Y_t^2])^2 \leq \text{cst}(q)  a^{-4H}H^2\Gamma(2H)^2.
\end{align*}
Thus, $\sup_{t} \text{Var}(Y_t^2) < \infty,$ and it follows that
\[
\E[F_n^2] = \frac{1}{n^2}\int_n^{n+1}\int_n^{n+1}\E\big[ \big|Y_t^2 - \E[Y_t^2]\big|\big|Y_s^2 - \E[Y_s^2]\big| \big]dsdt =O(n^{-2}).
\]
Hence $ \sum_{n=1}^\infty  \mathbb{P}\{ |F_n| > \lambda \} \leq \sum_{n=1}^\infty \frac{1}{\lambda^2}E[F_n^2] < \infty$ for all $\lambda > 0$,
and Borel-Cantelli lemma leads to (\ref{beq:10}) and concludes the proof of (\ref{bdollar3}).\qed

\section{Proof of the fluctuation part in Theorem \ref{bmain}}

We now turn to
the proof of the part of Theorem \ref{bmain} related to fluctuations.
We start with the fluctuations of $\widehat{b}_T$, which are easier compared to
$\widehat{a}_T$.\\

\underline{Fluctuations of $\widehat{b}_T$}. Using first (\ref{bdecompoX}) and then (\ref{bivan2}),
we can write
\begin{equation}\label{bbt}
T^{1-H}\big\{\widehat{b}_T - b\} =T^{1-H}\left\{\frac1T\int_0^T Y_tdt - \frac{b}T\int_0^T e^{-at}dt \right\} 
=\frac{Z^{q, H}_T}{aT^H} +O(T^{-H}),
\end{equation}
which will be enough to conclude, see the end of the present section.

\medskip

\underline{Fluctuations of $\widehat{a}_T$}. 
As a preliminary step, we first concentrate on the asymptotic behavior, as $T\to\infty$, of the random quantity 
\[
\ell_T:=\alpha_T -a^{-2H}H\Gamma(2H),
\] 
where $\alpha_T$ is given by (\ref{balpha_T}).
Since $X_t=h(t)+Y_t$, see (\ref{bdecompoX}),  we have
\begin{eqnarray*}
\ell_T 
=A_T+B_T+2C_T+D_T-E_T^2-2E_TF_T-F_T^2,
\end{eqnarray*}
where
\begin{eqnarray*}
A_T&=&\frac{1}{T}\int_0^T (Y_t^2-\E[Y_t^2])dt,\quad
B_T=\frac{1}{T}\int_0^T \E[Y_t^2]dt - a^{-2H}H\Gamma(2H)\\
C_T&=&\frac{1}{T}\int_0^T Y_t h(t)dt,\quad D_T=\frac{1}{T}\int_0^T h^2(t)dt,\quad
E_T=\frac{1}{T}\int_0^T Y_tdt,\quad F_T=\frac{1}{T}\int_0^T h(t)dt.
\end{eqnarray*}
We now treat each of these terms separately.
\medskip

\noindent
{\it Term $B_T$}. Recall from (\ref{byt2}) that
\begin{eqnarray*}
\E[Y_t^2]&=&H(2H-1)\int_{[0,T]^2}e^{-a(u+v)}|u-v|^{2H-2}dudv\\
&\to& H(2H-1) \int_{[0,\infty)^2}e^{-a(u+v)}|u-v|^{2H-2}dudv = a^{-2H}H\Gamma(2H).
\end{eqnarray*} 
As a result,
\begin{eqnarray*}
|B_T|&=&\left|\frac{1}{T}\int_0^T \E[Y_t^2]dt - a^{-2H}H\Gamma(2H)\right|\\
&\leq&\frac{H(2H-1)}{T}\int_0^T dt \int_{[0,\infty)^2\setminus[0,t]^2} dudv\,e^{-a (u+v)}|u-v|^{2H-2}\\
&\leq&\frac{2H(2H-1)}{T}\int_0^T dt\int_t^\infty dv\,e^{-a v}\int_0^\infty du\,e^{-a u}{\bf 1}_{\{v\geq u\}}(v-u)^{2H-2}\\
&\leq&\frac{2H(2H-1)}{T}\int_0^\infty dt\int_t^\infty dv\,e^{-a v}\int_0^v du\,u^{2H-2}\\
&=&\frac{2H}{T}\int_0^\infty dt\int_t^\infty dv\,e^{-a v}v^{2H-1}
=\frac{2H}{T}\int_0^\infty e^{-a v}v^{2H}dv=O(\frac1T).
\end{eqnarray*} 

\medskip
\noindent
{\it Term $C_T$}. We can write
\[
C_T=\frac{1}{T}\int_0^T Y_t h(t)dt =\frac{b}T\int_0^T (1-e^{-a t})Y_tdt
=b\left(\frac1T\int_0^T Y_tdt - \frac1T\int_0^T e^{-a t}Y_tdt\right).
\]
But
\begin{eqnarray*}
\frac1T\int_0^T e^{-a t}Y_tdt&=&
\frac1T\int_0^T e^{-a t}\left(\int_0^t e^{-a(t-s)}dZ^{q, H}_s\right)dt\\
&=&\frac1T\int_0^T e^{a s}\left(
\int_s^T e^{-2a t}dt
\right)dZ^{q, H}_s=\frac1{2a T}\left(\int_0^T e^{-a s}dZ^{q, H}_s - e^{-a T}Y_T\right).
\end{eqnarray*}
Using  (\ref{bO1}) and $\int_0^T e^{-a s}dZ^{q, H}_s\to \int_0^\infty e^{-a s}dZ^{q, H}_s$ in $L^2(\Omega)$, we deduce that
\[
C_T = b\,E_T+O(\frac1T).
\]
\medskip
\noindent
{\it Term $D_T$}. It is straightforward to check that 
\[
D_T=\frac{1}{T}\int_0^T h^2(t)dt = \frac{b^2}{T}\int_0^T (1-e^{-a t})^2dt=b^2+O(\frac1T).
\]

\medskip
\noindent
{\it Term $E_T$}. Thanks to (\ref{bO1}) and (\ref{bivan2}), we have
\[
E_T = \frac1T\int_0^T Y_Tdt =  \frac{1}{a\,T}(Z^{q, H}_T-Y_T)=
\frac{Z^{q, H}_T}{a\,T} + O(\frac1T).
\]
Since $Z^{q, H}_T\overset{\rm law}{=}T^HZ^{q, H}_1$ by selfsimilarity, we deduce
\[
E_T^2
=\frac{(Z^{q, H}_T)^2}{a^2T^2}+ O(T^{H-2}). 
\]
\medskip
\noindent
{\it Term $F_T$}. Similarly to $D_T$, it is straightforward to check that
\[
F_T=\frac{1}{T}\int_0^T h(t)dt = \frac{b}{ T}\int_0^T (1-e^{-a t})dt=b+O(\frac1T).
\]

\medskip
\noindent
Combining everything together, we eventually obtain that
\begin{equation}\label{bcomb}
\ell_T = A_T-\frac{(Z^{q, H}_T)^2}{a^2T^2}+O(T^{-1}).
\end{equation}

\medskip

\underline{Fluctuations of $(\widehat{a}_T,\widehat{b}_T)$}. 
We first rely on the scaling property satisfied by $Z^{q, H}$ to obtain that
\[
\Big( T^{\frac2q(1-H)}A_T, 
T^{-H}Z^{q, H}_T \Big)
\overset{\rm law}{=}
\left(
a^{-\frac2q(1-H)-2H}\,G_{aT},
Z^{q, H}_1
\right)\overset{L^2}{\to}
\left(a^{-\frac2q(1-H)-2H}\,G_{\infty},
Z^{q, H}_1\right).
\]
If ($q=1$ and $H>\frac34$) or $q\geq 2$, a Taylor expansion yields
\begin{eqnarray*}
T^{\frac2q(1-H)}\{\widehat{a}_T-a\}&=&T^{\frac2q(1-H)}\,a\left[\left(1+\frac{a^{2H}\,
\ell_T }{H\Gamma(2H)}\right)^{-\frac1{2H}}-1\right]\\
&=& -\frac{a^{1+2H}}{2H^2\Gamma(2H)}\,\left(T^{\frac2q(1-H)}A_T
-T^{\frac2q(1-H)-2}\frac{(Z^{q, H}_T)^2}{a^2}\right)+o(1),
\end{eqnarray*}
implying in turn
\begin{eqnarray*}
&&\left(T^{\frac2q(1-H)}\{\widehat{a}_T-a\},T^{1-H}\big\{\widehat{b}_T - b\} \right)\\
&=&
\left(-\frac{a^{1+2H}}{2H^2\Gamma(2H)}\,\left[T^{\frac2q(1-H)}A_T
-T^{2(\frac1q-1)(1-H)}\frac{(T^{-H}Z^{q, H}_T)^2}{a^2}\right],\frac{T^{-H}Z^{q, H}_T}{a}\right)+o(1),
\end{eqnarray*}
so that
\begin{eqnarray*}
&&\left(T^{\frac2q(1-H)}\{\widehat{a}_T-a\},T^{1-H}\big\{\widehat{b}_T - b\} \right)\\
& \overset{\rm law}{\to}&
\left\{
\begin{array}{lll}
\left(-\frac{a^{1-\frac2q(1-H)}}{2H^2\Gamma(2H)}\,G_\infty,\frac{Z^{q, H}_1}{a}\right)&\mbox{ if $q\geq 2$}\\
\quad\\
\left(-\frac{a^{2H-1}}{2H^2\Gamma(2H)}\,(G_\infty- (B^H_1)^2),\frac{B^H_1}{a}\right)&\mbox{ if $q=1$ and $H>\frac34$}
\end{array}
\right.,
\end{eqnarray*}
as claimed.

If $q=1$ and $H< \frac34$, we write $B^H$ instead of $Z^{1,H}$ for simplicity. 
We deduce from (\ref{bcomb}) and $T^{-2}(B^{H}_T)^2\overset{\rm law}{=} T^{2H-2}(B^{ H}_1)^2$ that 
$
\sqrt{T}\ell_T = \sqrt{T}A_T+o(1),
$
so that
\begin{eqnarray*}
\left(\sqrt{T}\{\widehat{a}_T-a\},T^{1-H}\big\{\widehat{b}_T - b\} \right)
=
\left(-\frac{a^{1+2H}}{2H^2\Gamma(2H)}\,\sqrt{T}A_T,\frac{T^{-H}B^H_T}{a}\right)+o(1),
\end{eqnarray*}
implying in turn by (\ref{bivanbis}) that
\begin{eqnarray*}
\left(\sqrt{T}\{\widehat{a}_T-a\},T^{1-H}\big\{\widehat{b}_T - b\} \right)\to
\left(-\frac{a^{1+4H}\sigma_H}{2H^2\Gamma(2H)}\, N,\frac{N'}{a}\right),
\end{eqnarray*}
where $N,N'\sim N(0,1)$ are independent, as claimed.

Finally, if $q=1$ and $H= \frac34$,
we deduce again from (\ref{bcomb}) and $T^{-2}(B^{3/4}_T)^2\overset{\rm law}{=} T^{-\frac12}(B^{3/4}_1)^2$ that 
\[
\sqrt{\frac{T}{\log T}}\ell_T = \sqrt{\frac{T}{\log T}}A_T+o(1),
\]
so that, using (\ref{bnunubis}),
\begin{eqnarray*}
\left(\sqrt{\frac{T}{\log T}}\{\widehat{a}_T-a\},T^{\frac14}\big\{\widehat{b}_T - b\} \right)
&=&
\left(-\frac{16a^{\frac52}}{9\sqrt{\pi}}\,\sqrt{\frac{T}{\log T}}A_T,\frac{T^{-\frac34}B^H_T}{a}\right)+o(1)\\
&\to&
\left(\frac{3}4\sqrt{\frac{a}\pi}\, N,\frac{N'}{a}\right),
\end{eqnarray*}
where $N,N'\sim N(0,1)$ are independent.

\qed

 \chapter[Fisher information and multivariate Fourth Moment Theorem] {Fisher information and multivariate Fourth Moment Theorem} 

\begin{center}
T. T. Diu Tran\\
Universit\'{e} du Luxembourg
\end{center}

\begin{center}
\textbf{Abstract}
\end{center}

Under the \textit{non-degenerate} condition in the sense of Malliavin calculus, the convergence in Fisher information distance of random vectors whose components are multiple stochastic integrals to any Gaussian random vectors is actually equivalent to convergence of only the fourth moments of each component. Furthermore, by using the multivariate de Bruijn's identity and score vector-function, we show that the relative entropy is bounded from above by Fisher information, thus extending the result in \cite{tIvanPS}. The equivalence between several forms of convergence (namely, convergence in law, total variation distance, relative entropy, Fisher information distance and uniform convergence for densities) for sequences of \textit{uniformly non-degenerate} random vectors having chaotic components is then given.

\section{Preliminaries}

Fix an integer $d \geq 1$. Throughout the paper, we consider a $d$-dimensional square-integrable and centered random vector $F = (F_1, \ldots, F_d )$ with invertible covariance matrix $C > 0$ (i.e $a^TCa > 0, \forall a \in \mathbb{R}^d \setminus \{0\}$). Assume that the law of $F$ admits a density $f = f_F$ with respect to the Lebesgue measure with support in $\mathbb{R}^d$. Let $Z = (Z_1, \ldots, Z_d)$ be a $d$-dimensional centered Gaussian vector which has the same covariance matrix as $F$ and admits the density $\phi = \phi_d( .; C)$. Without loss of generality, we may and will assume that the vectors $F$ and $Z$ are stochastically independent. 

\subsection{Malliavin operators}

Let $\mathfrak{H}$ be a real separable Hilbert space with inner product $\left\langle ., . \right\rangle$. A centered Gaussian family $X = \{X(h): h \in \mathfrak{H}\}$, defined on a probability space $(\Omega, \mathcal{F}, P)$, is called \textit{isonormal Gaussian process} over $\mathfrak{H}$ if $E[X(h_1)X(h_2)] = \left\langle h, g \right\rangle_{\mathfrak{H}}$ for every $h_1, h_2 \in \mathfrak{H}$.

Let $\mathcal{S}$ be the set of all smooth cylindrical random variables of the form
$$F = g(X(h_1), \ldots, X(h_n)),$$
where $n \geq 1, h_i \in \mathfrak{H}$, and $g$ is infinitely differentiable such that all its partial derivatives have at most polynomial growth. The \textit{Malliavin derivative} of $F$ with respect to $X$ is the element of $L^2(\Omega; \mathfrak{H})$ defined by
$$DF = \sum_{i=1}^n \frac{\partial g}{\partial x_i}(X(h_1), \ldots, X(h_n))h_i.$$
In particular, $DX(h) = h$. For any $m \geq 1$ and $p \geq 1$, we denote by $\mathbb{D}^{m ,p}$ the closure of $\mathcal{S}$ with respect to the norm
$$\| F \|_{m, p}^p = E[|F|^p] + \sum_{j=1}^m E[\| D^jF\|_{\mathfrak{H}^{\otimes j}}^p].$$
The Malliavin derivaive $D$ satisfies the \textit{chain rule}: if $\varphi: \mathbb{R}^n \to \mathbb{R}$ is in $\mathcal{C}^1_b$ and if $F_1, \ldots, F_n$ are in $\mathbb{D}^{1, 2}$, then $\varphi(F_1, \ldots, F_n) \in \mathbb{D}^{1, 2}$ and we have
$$ D\varphi(F_1, \ldots, F_n) = \sum_{i=1}^n \frac{\partial \varphi}{\partial x_i} (F_1, \ldots, F_n)DF_i.$$
The \textit{divergence operator} $\delta$ is defined as the adjoint of the derivaive operator $D$. A random element $u \in L^2(\Omega, \mathfrak{H})$ belongs to the domain of the divergence operator $\delta$, denoted $Dom(\delta)$, if and only if it satisfies
$$|E[\left\langle DF, u \right\rangle_{\mathfrak{H}}]| \leq c_u \sqrt{E[F^2]} \qquad\text{for any} F \in \mathcal{S}.$$
If $u \in Dom(\delta)$, then $\delta(u)$ is defined by the so-called \textit{integration by parts formula}:
$$E[F\delta(u)] = E[\left\langle DF, u \right\rangle_{\mathfrak{H}}],$$
for every $F\in \mathbb{D}^{1, 2}$.

\subsection{Peccati-Tudor theorem for vector-valued multiple stochastic integrals}

Let us state the following very useful result roughly asserting that, for vectors of multiple integrals, joint convergence is actually equivalent to componentwise convergence.

\begin{Theorem} (\cite{tPeccatiTudor2005}) 
Let $d \geq 2$ and $q_1, \ldots, q_d \geq 1$ be some fixed integers. Consider vectors
$$F_n = (F_{n, 1}, \ldots F_{n, d}) = (I_{q_1}(f_{n,1}), \ldots I_{q_d}(f_{n,d})), \quad n \geq 1,$$
with $f_{n, i} \in \mathfrak{H}^{\odot q_i}$. Let $N \sim \mathcal{N}_d(0, C)$ with $\text{det }(C) > 0$ and assume that
$$\lim_{n \to \infty} E[F_{n,i}F_{n,j}] = C_{i,j}, \quad 1 \geq i,j \leq d.$$
Then, as $n \to \infty$, the following two assertions are equivalent:
\begin{enumerate}[a)]
\item $F_n$ converges in law to $N$;
\item For every $1 \leq i \leq d$, $F_{n, i}$ converges in law to $\mathcal{N}(0, C_{i,i})$.
\end{enumerate}
\end{Theorem}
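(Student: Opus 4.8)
The plan is to deduce the equivalence from the so-called Fourth Moment Theorem in its multivariate form, combined with the contraction criterion for joint normal convergence of vectors of multiple Wiener--It\^{o} integrals. Since implication (a) $\Rightarrow$ (b) is trivial (convergence in law of a vector forces convergence in law of each marginal, by composing with the coordinate projections, which are continuous), the entire content is in (b) $\Rightarrow$ (a). First I would recall the one-dimensional Fourth Moment Theorem of Nourdin--Peccati (see \cite{dNourdinPeccatibook}, Theorem 5.2.7): for a fixed integer $q \geq 1$ and a sequence $G_n = I_q(g_n)$ with $E[G_n^2] \to \sigma^2 > 0$, one has $G_n \xrightarrow{(d)} \mathcal{N}(0,\sigma^2)$ if and only if $E[G_n^4] \to 3\sigma^4$, and moreover, when $q \geq 2$, this is further equivalent to $\|g_n \otimes_r g_n\|_{\mathfrak{H}^{\otimes 2(q-r)}} \to 0$ for every $r = 1, \ldots, q-1$. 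Thus from hypothesis (b) we obtain, for each $i$ with $q_i \geq 2$, that all the contraction norms $\|f_{n,i} \otimes_r f_{n,i}\|$ tend to zero; for $q_i = 1$, $F_{n,i} = X(f_{n,i})$ is exactly Gaussian with variance $\|f_{n,i}\|^2 \to C_{i,i}$ and there is nothing to prove at the marginal level.

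The key step is then to invoke the criterion for joint convergence: by \cite[Theorem 6.2.3]{dNourdinPeccatibook} (the statement attributed here to Peccati and Tudor in \cite{tPeccatiTudor2005}), under the covariance hypothesis $\lim_n E[F_{n,i}F_{n,j}] = C_{i,j}$, the vector $F_n$ converges in law to $\mathcal{N}_d(0,C)$ provided that for each $i = 1, \ldots, d$ and each $r = 1, \ldots, q_i - 1$ one has $\|f_{n,i} \widetilde{\otimes}_r f_{n,i}\|_{\mathfrak{H}^{\otimes 2(q_i-r)}} \to 0$. Using the elementary bound $\|f \widetilde{\otimes}_r f\| \leq \|f \otimes_r f\|$, the contraction conditions extracted from (b) in the previous paragraph are precisely what is needed. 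Hence I would: (i) fix $i$; (ii) if $q_i = 1$ note there are no intermediate contractions and skip; (iii) if $q_i \geq 2$, translate $F_{n,i} \xrightarrow{(d)} \mathcal{N}(0,C_{i,i})$ into $E[F_{n,i}^4] \to 3C_{i,i}^2$ via the one-dimensional theorem, and then into the vanishing of $\|f_{n,i} \otimes_r f_{n,i}\|$; (iv) feed these, together with the given covariance convergence, into the joint criterion to conclude $F_n \xrightarrow{(d)} N$.

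I do not expect a genuine obstacle here, as the result is classical and the argument is essentially a repackaging of two cited theorems; the only mild subtlety is organizational, namely keeping track of the case $q_i = 1$ separately (where the marginal is already exactly Gaussian and carries no fourth-moment information) versus $q_i \geq 2$, and making sure the covariance hypothesis is used to handle the cross terms that the marginal hypotheses say nothing about. One should also note for completeness that the hypothesis $\det C > 0$ guarantees each $C_{i,i} > 0$, so the one-dimensional Fourth Moment Theorem applies with a nondegenerate limit. A remark worth including is that the theorem in fact shows a striking \emph{absence of interaction}: no joint fourth-moment or cross-contraction condition is needed, only the componentwise ones plus the covariance limit. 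This completes the plan.
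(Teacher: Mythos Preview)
The paper does not supply its own proof of this statement: it is quoted in the preliminaries of Chapter~5 as a known result from \cite{tPeccatiTudor2005}, with no argument given. So there is no in-paper proof to compare your proposal against.

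That said, your proposal has a circularity issue worth flagging. You reduce (b)$\Rightarrow$(a) to two steps: first, the one-dimensional Fourth Moment Theorem turns marginal CLTs into vanishing self-contractions $\|f_{n,i}\otimes_r f_{n,i}\|\to 0$; second, you invoke \cite[Theorem 6.2.3]{dNourdinPeccatibook} to pass from vanishing contractions plus covariance convergence to joint Gaussian convergence. But Theorem 6.2.3 in the Nourdin--Peccati book \emph{is} the Peccati--Tudor theorem (it lists (a), (b), the fourth-moment condition, and the contraction condition as equivalent). You even acknowledge this parenthetically. So the ``key step'' you cite is the very result under discussion, merely stated with extra equivalent clauses. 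The genuine content of Peccati--Tudor is precisely that componentwise contraction conditions suffice for \emph{joint} normality --- this is what needs an argument (in the original paper via characteristic functions and the product formula; in later treatments via multivariate Stein's method), and it is what you have black-boxed.

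If the intent was simply to explain why the theorem holds by unpacking it into its standard equivalent forms, that is fine as exposition, and your handling of the $q_i=1$ case and the remark on the absence of cross-contraction conditions are both correct and well observed. But as a self-contained proof it does not stand, and in any case the thesis treats the result as a citation rather than something to be proved.
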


\section{Main results}

We now state and prove our main results, already presented in the Introduction of this thesis. We keep and use the same notation and definitions as in the Introduction.
 
\subsection{Multivariate de Bruijn's identity and upper bound for relative entropy}

Let us recall first the following elementary inequality: if $A, B$ are two $d\times d$ symmetric matrices, and if $A$ is semi-positive definite, then
\begin{equation}\label{teq:2.40}
\lambda_{\min}(B) \times \text{tr}(A) \leq \text{tr}(AB) \leq \lambda_{\max}(B) \times \text{tr}(A),
\end{equation}
where $\lambda_{\min}(B)$ and $\lambda_{\max}(B)$ stand for the minimum and maximum eigenvalues of $B$, respectively. Observe that $\lambda_{\max}(B) = \|B\|_{op}$, the operator norm of $B$. 

\begin{Proposition} Let $F$ and $Z$ be independent random vectors with the same covariance matrix $C$. Then,
\begin{equation}\label{teq:new1}
D(F\| Z) \leq \|C\|_{op} \times \frac{1}{2} \text{tr}(C^{-1}J_{st}(F)) = \|C\|_{op} \times \frac{1}{2}\Big(\text{tr} (J(F)) - \text{tr}(J(Z))\Big).
\end{equation}
As a consequence,
\begin{equation}\label{teq:new2}
\|f - \phi \|^{2}_{L^1(\mathbb{R}^d)} = 4(d_{TV}(F, Z))^2 \leq 2D(F\|Z) \leq \| C\|_{op}\text{tr}(C^{-1}J_{st}(F)).
\end{equation}
\end{Proposition}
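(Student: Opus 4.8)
The plan is to combine the multivariate de Bruijn identity (Lemma~\ref{dLemma2.31}) with the subadditivity of standardised Fisher information along convolutions, but to keep track of the covariance matrix $C$ instead of assuming $C = \mathrm{Id}$ as was done to obtain (\ref{deq:8}). First I would recall that for $F_t = \sqrt{t}\,F + \sqrt{1-t}\,Z$ (with $F, Z$ independent and both of covariance $C$), the vector $F_t$ also has covariance $C$, and de Bruijn's identity gives
\[
D(F\|Z) = \int_0^1 \frac{\mathrm{tr}\big(J_{st}(F_t)\big)}{2t}\,dt.
\]
The key analytic input is the matrix Stam-type inequality $J_{st}(F_t) \preceq t\,J_{st}(F) + (1-t)\,J_{st}(Z) = t\,J_{st}(F)$ in the positive-semidefinite order (using $J_{st}(Z)=0$), which is the multivariate extension of \cite[Lemma 1.21]{dJohnson2004}; I would cite this as a known fact (it is precisely the inequality quoted just before (\ref{deq:8}), valid here because the definition $J_{st}(F_t) = C\,\mathbb{E}[(\rho_{F_t}(F_t)+C^{-1}F_t)(\rho_{F_t}(F_t)+C^{-1}F_t)^T]$ depends on $C$ only, not on the normalisation).

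Next I would pass from $J_{st}(F_t)$ to its trace in a way that produces the factor $\|C\|_{op}$. From the definition $J_{st}(F) = C\big(J(F)-C^{-1}\big)$, write $\mathrm{tr}(J_{st}(F_t)) = \mathrm{tr}\big(C\,(J(F_t)-C^{-1})\big)$. Here $J(F_t) - C^{-1}$ is positive semidefinite (it is $C^{-1}J_{st}(F_t)$ conjugated, equivalently $\mathbb{E}[(\rho_{F_t}+C^{-1}F_t)(\cdots)^T]$ up to $C$-congruence), so applying the elementary bound (\ref{teq:2.40}) with $A = J(F_t)-C^{-1} \succeq 0$ and $B = C$ yields
\[
\mathrm{tr}(J_{st}(F_t)) = \mathrm{tr}\big(C\,(J(F_t)-C^{-1})\big) \le \|C\|_{op}\,\mathrm{tr}\big(J(F_t)-C^{-1}\big) = \|C\|_{op}\,\mathrm{tr}\big(C^{-1}J_{st}(F_t)\big).
\]
Then I would use that $\mathrm{tr}(C^{-1}J_{st}(F_t)) \le t\,\mathrm{tr}(C^{-1}J_{st}(F))$ — which follows by taking $\mathrm{tr}(C^{-1}\cdot)$ of the semidefinite Stam inequality above, since $C^{-1}\succeq 0$ and (\ref{teq:2.40}) preserves the ordering — and integrate: $\int_0^1 \frac{1}{2t}\cdot t\,\mathrm{tr}(C^{-1}J_{st}(F))\,\|C\|_{op}\,dt = \tfrac12\|C\|_{op}\,\mathrm{tr}(C^{-1}J_{st}(F))$. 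This gives the first inequality in (\ref{teq:new1}); the stated equality $\tfrac12\mathrm{tr}(C^{-1}J_{st}(F)) = \tfrac12(\mathrm{tr}(J(F))-\mathrm{tr}(J(Z)))$ is immediate from $C^{-1}J_{st}(F) = J(F)-C^{-1}$ together with $J(Z) = C^{-1}$.

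Finally, the consequence (\ref{teq:new2}) is obtained by chaining known inequalities: the identity $\|f-\phi\|_{L^1(\mathbb{R}^d)} = 2\,d_{TV}(F,Z)$ recalled in the multivariate discussion, then the Csiszár--Kullback--Pinsker inequality $2(d_{TV}(F,Z))^2 \le D(F\|Z)$ (the multivariate version, Proposition with (\ref{deq:114})), and then (\ref{teq:new1}); multiplying through gives $\|f-\phi\|_{L^1}^2 = 4(d_{TV})^2 \le 2D(F\|Z) \le \|C\|_{op}\,\mathrm{tr}(C^{-1}J_{st}(F))$. The only genuine subtlety — the part I would treat most carefully rather than routinely — is justifying that $F_t$ genuinely has an absolutely continuous law with a differentiable density and finite Fisher information for $t\in(0,1)$ so that de Bruijn's identity and the Stam inequality both apply; this is standard because convolving with the nondegenerate Gaussian $\sqrt{1-t}\,Z$ smooths $F$, but it should be stated, and one should note the integral $\int_0^1 \frac{1}{2t}\mathrm{tr}(C^{-1}J_{st}(F_t))\,dt$ is finite precisely because of the bound by $t\,\mathrm{tr}(C^{-1}J_{st}(F))$ which cancels the $1/t$ singularity at the origin.
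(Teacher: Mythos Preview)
Your strategy is the same as the paper's: de Bruijn's identity, the operator-norm bound $\mathrm{tr}(J_{st}(F_t)) \le \|C\|_{op}\,\mathrm{tr}(C^{-1}J_{st}(F_t))$ via inequality (\ref{teq:2.40}), a Stam-type subadditivity step, and integration. The difference lies in how the subadditivity is handled.

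Your formulation of the matrix Stam inequality as $J_{st}(F_t) \preceq t\,J_{st}(F)$ in the positive-semidefinite order is ill-posed: $J_{st}(F) = C\big(J(F)-C^{-1}\big)$ is in general not symmetric, so the Loewner order does not apply to it, and the reference you give (the line before (\ref{deq:8})) is only a \emph{trace} statement in the identity-covariance case. What you actually need, and what is correct, is the symmetric version $J(F_t) \preceq t\,J(F) + (1-t)\,C^{-1}$; taking traces then gives $\mathrm{tr}(J(F_t)) - \mathrm{tr}(C^{-1}) \le t\big(\mathrm{tr}(J(F))-\mathrm{tr}(C^{-1})\big)$, which is exactly $\mathrm{tr}(C^{-1}J_{st}(F_t)) \le t\,\mathrm{tr}(C^{-1}J_{st}(F))$.

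The paper does not cite this as a black box. Instead it proves the trace inequality directly: using the score representation $\rho_t(F_t) = E\big[\sqrt{t}\,\rho_F(F) + \sqrt{1-t}\,\rho_Z(Z)\,\big|\,F_t\big]$ (derived in the appendix) and the conditional Jensen inequality applied componentwise, together with independence of $F$ and $Z$ and $E[\rho_F(F)]=0$, one gets $\mathrm{tr}(J(F_t)) \le t\,\mathrm{tr}(J(F)) + (1-t)\,\mathrm{tr}(J(Z))$. This is of course precisely the standard proof of the Stam-type inequality, so the two arguments coincide in substance; the paper is just more explicit and avoids the symmetry pitfall.
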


\begin{proof}
Write $\text{tr}(J_{st}(F_t)) = \text{tr}(C^{-1}J_{st}(F_t)C)$ and apply (\ref{teq:2.40}) to $A = C^{-1}J_{st}(F_t)$ and $B =C$, in order to obtain that
$$\text{tr}(J_{st}(F_t)) \leq \| C\|_{op} \times \text{tr}(C^{-1}J_{st}(F_t)) = \| C\|_{op} \Big(\text{tr}(J(F_t)) - \text{tr}(C^{-1}) \Big).$$
Furthermore, from definition (\ref{deq:3.50}), it is easily seen that (see appendix for details)
$$ \rho_t(F_t) = E[\sqrt{t}\rho_F(F) + \sqrt{1-t}\rho_Z(Z) | F_t].$$
Then, by Jensen's inequality, the independence of $F$ and $Z$ and the fact that $E[\rho_F(F)] = 0$ and $ J(Z) = C^{-1}$, we have
\begin{align*}
\text{tr}(J(F_t))& = \sum_{j=1}^d E[(\rho_{t, j}(F_t))^2]\\ 
& = \sum_{j=1}^d E\bigg[\Big(  E[\sqrt{t}\rho_{F, j}(F) + \sqrt{1-t}\rho_{Z, j}(Z) | F_t]  \Big)^2\bigg]\\
& \leq \sum_{j=1}^d E\bigg[  E[\Big(\sqrt{t}\rho_{F, j}(F) + \sqrt{1-t}\rho_{Z, j}(Z)\Big)^2 | F_t]  \bigg] \qquad\text{(Jensen)}\\
& = \sum_{j=1}^d E\bigg[  \Big(\sqrt{t}\rho_{F, j}(F) + \sqrt{1-t}\rho_{Z, j}(Z)\Big)^2   \bigg]\\
& =  \sum_{j=1}^d E\bigg[  \Big(\sqrt{t}\rho_{F, j}(F) \Big)^2   \bigg] +  \sum_{j=1}^d E\bigg[  \Big( \sqrt{1-t}\rho_{Z, j}(Z)\Big)^2   \bigg]\\
& = t \text{ tr}(J(F)) + (1-t) \text{ tr}(J(Z))\\
& = t (\text{ tr}(J(F)) - \text{ tr}(C^{-1})) + \text{ tr}(C^{-1}).
\end{align*}
Putting everything together, we obtain
\begin{align*}
D(F \| Z)& = \int_0^1 \frac{\text{tr}(J_{st}(F_t))}{2t}dt \leq \| C\|_{op}\frac{1}{2} \int_0^1 \frac{\text{tr}(J(F_t)) - \text{tr}(C^{-1})}{t}dt \\
& \leq \| C\|_{op}\frac{1}{2}(\text{ tr}(J(F)) - \text{ tr}(C^{-1})) = \| C\|_{op}\frac{1}{2} \text{tr}(C^{-1}J_{st}(F)).
\end{align*}
Our proof is complete.
\end{proof}

%

\subsection{Convergence in the sense of Fisher information in the multivariate Fourth Moment Theorem}

The next theorem (Theorem \ref{tThm1}) will rely on the following notion of non-degeneracy.

\begin{Definition}(\cite[Sec.5]{tHLNualart})
A random vector $F = (F_1, \ldots, F_d)$ in $\mathbb{D}^\infty$ is called \textit{non-degenerate} if its \textit{Malliavin matrix} $\gamma_F = (\left\langle DF_i, DF_j \right\rangle_{\mathfrak{H}})_{1 \leq i,j \leq d}$ is invertible a.s. and $(\text{det}\gamma_F)^{-1} \in \cap_{p \geq 1}L^p(\Omega)$.
\end{Definition}
Recall the notation: $C = (E[F_iF_j])_{1 \leq i, j \leq d}$ covariance matrix of $F$ and $Q: = \text{diag}(q_1, \ldots, q_d)$.

\begin{Theorem} \label{tThm1}
Let $F = (F_1, \ldots, F_d) = (I_{q_1}(f_1), \ldots, I_{q_d}(f_d))$ be a non-degenerate random vector with $1 \leq q_1 \leq \ldots \leq q_d$ and $f_i \in \mathfrak{H}^{\odot q_i}$. Let $\gamma_F$ be the Malliavin matrix of $F$. Then, for any real number $p > 12$,
\begin{equation}\label{teq:result1}
\text{tr}(C^{-1}J_{st}(F)) \leq \text{cst}(C, Q, d) \|(\text{det}\gamma_F)^{-1}\|_{p}^4 \sum_{j=1}^d \Big\| \|DF_j\|_{\mathfrak{H}}^2 - q_jc_{jj}\Big\|_{L^2(\Omega)}.
\end{equation}
\end{Theorem}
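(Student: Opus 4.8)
The strategy is to reduce the bound on $\text{tr}(C^{-1}J_{st}(F))$ to quantities that can be controlled by the fourth-moment-type discrepancies $\|\|DF_j\|_{\mathfrak{H}}^2 - q_jc_{jj}\|_{L^2(\Omega)}$, by first identifying the score vector $\rho_F$ of a non-degenerate vector of multiple integrals via a Malliavin-type Stein equation, and then estimating $J(F) - C^{-1}$ using the integration by parts formula. First I would recall that, since $F$ is non-degenerate in the sense of the Definition above, $F$ admits a density and its score function can be represented in terms of Malliavin operators: following the standard computation (see Nourdin-Peccati-Swan and Hu-Lu-Nualart), there is an explicit $\mathfrak{H}$-valued random vector $u = (u_1,\dots,u_d)$ built from $DF$, $\gamma_F^{-1}$ and $\delta$ such that $\rho_{F,i}(F) = -\delta(\text{something involving }\gamma_F^{-1}DF)$, equivalently $\rho_F(F)$ is characterized by $E[\rho_{F,i}(F)g(F)] = -E[\partial_i g(F)]$ for all test $g$. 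The key point is that $C^{-1}F$ is the score of the Gaussian $Z$, so $\rho_F(F) + C^{-1}F$ — whose covariance is exactly $J_{st}(F)$ up to the $C$ factor, cf. \eqref{deq:3.46} — measures the defect of $F$ from Gaussianity, and it can be written as a Malliavin divergence of an explicit vector that vanishes when $\|DF_j\|^2_{\mathfrak{H}} = q_jc_{jj}$ deterministically for all $j$.

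Next I would carry out the estimation in three steps. Step 1: express $\text{tr}(C^{-1}J_{st}(F)) = \text{tr}(C^{-1}C\,E[(\rho_F(F)+C^{-1}F)(\rho_F(F)+C^{-1}F)^T]) = E[\|\rho_F(F)+C^{-1}F\|^2]$ (using $C^{-1}C = I$), so it suffices to bound the $L^2(\Omega)$-norm of each component $\rho_{F,i}(F) + (C^{-1}F)_i$. Step 2: using the Malliavin representation of $\rho_F$ together with the identity $L F_i = -q_i F_i$ (where $L$ is the Ornstein-Uhlenbeck generator), rewrite $\rho_{F,i}(F)+(C^{-1}F)_i$ as $\delta(v_i)$ for an explicit $v_i \in \text{Dom}(\delta)$ that is a product/contraction of $\gamma_F^{-1}$, the matrix $Q$, $C^{-1}$ and the vectors $\{DF_j\}$; the crucial algebraic fact is that the ``main term'' of $v_i$ is proportional to $\sum_j (C^{-1})_{ij}q_j^{-1}(\langle DF_j, \cdot\rangle_{\mathfrak{H}} \text{-type object})$ and that subtracting $C^{-1}F$ exactly cancels the part corresponding to $\gamma_F = QC$ (which is the Gaussian value), leaving a remainder governed by $\gamma_{F,jk} - q_j c_{jk}$. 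Step 3: apply the $L^2$ continuity of $\delta$ (the Meyer-type inequality $\|\delta(v)\|_{L^2} \le \text{cst}\,\|v\|_{\mathbb{D}^{1,2}(\mathfrak{H})}$), then Hölder's inequality with the conjugate exponents determined by $p>12$ to split off $\|(\det\gamma_F)^{-1}\|_p$ (which enters to the fourth power because $\gamma_F^{-1} = \frac{1}{\det\gamma_F}\,\text{adj}(\gamma_F)$ appears in $v_i$ and its derivative, roughly contributing two factors each, or one applies it twice through Cauchy-Schwarz on $v$ and $Dv$), and bound the remaining polynomial-in-$DF$ factors by a constant using hypercontractivity (since all components live in fixed Wiener chaoses, all their $L^p$ norms are comparable to $L^2$ norms, hence to the covariance entries). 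Collecting the pieces, every remainder term is bounded by $\|(\det\gamma_F)^{-1}\|_p^4$ times a constant depending only on $d$, $C$, $Q$, times $\sum_j \|\langle DF_j,DF_k\rangle_{\mathfrak{H}} - q_jc_{jk}\|_{L^2}$; finally one uses the standard fact (e.g. from Nourdin-Peccati's book) that for $j \le k$ one has $\|\langle DF_j,DF_k\rangle_{\mathfrak{H}} - q_jc_{jk}\|_{L^2} \le \text{cst}\big(\|\|DF_j\|^2_{\mathfrak{H}}-q_jc_{jj}\|_{L^2} + \|\|DF_k\|^2_{\mathfrak{H}}-q_kc_{kk}\|_{L^2}\big)$, which reduces the off-diagonal terms to the diagonal ones appearing in \eqref{teq:result1}.

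The main obstacle I anticipate is Step 2: getting the Malliavin representation of $\rho_{F,i}(F) + (C^{-1}F)_i$ as a divergence $\delta(v_i)$ in which the Gaussian part cancels cleanly, so that $v_i$ is manifestly small when the contractions $\langle DF_j,DF_k\rangle_{\mathfrak{H}}$ are close to their Gaussian values $q_j c_{jk}$. This requires carefully tracking the matrix algebra: one starts from $F_i = -\frac{1}{q_i}LF_i = \frac{1}{q_i}\delta(DF_i)$, inverts $\gamma_F$ to write the score, and must show that $\gamma_F^{-1}$ can be replaced by $(QC)^{-1}$ up to a controlled error — this is where $\|(\det\gamma_F)^{-1}\|_p$ and the resolvent-type expansion $\gamma_F^{-1} - (QC)^{-1} = -\gamma_F^{-1}(\gamma_F - QC)(QC)^{-1}$ come in, and one must also differentiate this expression to handle the $\mathbb{D}^{1,2}$-norm needed for Meyer's inequality, producing terms with $D\gamma_F$ and $D(\det\gamma_F)^{-1}$ that again must be absorbed via Hölder and hypercontractivity. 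A secondary technical point is ensuring $v_i \in \text{Dom}(\delta)$ and justifying all the integration-by-parts manipulations, which is where the full strength of non-degeneracy ($(\det\gamma_F)^{-1}\in\cap_p L^p$) and $F\in\mathbb{D}^\infty$ is used.
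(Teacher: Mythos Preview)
Your proposal is essentially the same argument as the paper's, with the same architecture: express $\rho_{F,i}(F)$ via Malliavin integration by parts using $\gamma_F^{-1}$, write $(C^{-1}F)_i=\delta\big(\sum_j c^{-1}_{ij}q_j^{-1}DF_j\big)$ from $\delta DF_j=q_jF_j$, bound $\text{tr}(C^{-1}J_{st}(F))=\sum_i E[(\rho_{F,i}(F)+(C^{-1}F)_i)^2]$ by Meyer's inequality, then H\"older plus hypercontractivity, and finally control $\gamma_F^{-1}-(QC)^{-1}=C^{-1}Q^{-1}$ in $\mathbb{D}^{1,2p}$ by $\|(\det\gamma_F)^{-1}\|_p^4\sum_j\|\|DF_j\|^2-q_jc_{jj}\|_{L^2}$. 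The paper packages this last step as a direct citation of \cite[Lemma 5.7]{tHLNualart}, which already contains both the resolvent expansion and the reduction of the off-diagonal $\|\langle DF_j,DF_k\rangle-q_jc_{jk}\|_{L^2}$ to the diagonal terms, so you would simply be reproving that lemma.

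One technical correction: you write $\rho_{F,i}(F)=-\delta(\cdot)$, but $\delta\big(\sum_j(\gamma_F^{-1})_{ij}DF_j\big)$ is in general not $\sigma(F)$-measurable; the correct identity (and the one the paper uses) is $\rho_{F,i}(F)=-E\big[\delta\big(\sum_j(\gamma_F^{-1})_{ij}DF_j\big)\,\big|\,F\big]$. This does not affect the estimate, since Jensen's inequality for the conditional expectation gives $E[(\rho_{F,i}(F)+(C^{-1}F)_i)^2]\le E[\delta(v_i)^2]$, after which Meyer's inequality applies exactly as you describe.
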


\begin{proof}
Denote by $\gamma_F^{-1} = ((\gamma_F^{-1})_{ij})_{1 \leq i, j \leq d}$ the inverse matrix of the Malliavin matrix $\gamma_F$. Note that
 $$D((\gamma_F^{-1})_{ij}DF_j) = D(\gamma_F^{-1})_{ij} \otimes DF_j + (\gamma_F^{-1})_{ij}D^2F_j.$$
Applying H\"{o}lder's inequality with $\frac{1}{p} + \frac{1}{q} = 1$ we have
$$E\Big[\| (\gamma_F^{-1})_{ij}DF_j \|_{\mathfrak{H}}\Big] \leq E\Big[| (\gamma_F^{-1})_{ij}| \|DF_j \|_{\mathfrak{H}}\Big]  \leq E\Big[| (\gamma_F^{-1})_{ij}|^p\Big]^{\frac{1}{p}}E\Big[\|DF_j \|_{\mathfrak{H}}^q\Big]^{\frac{1}{q}}$$
and 
\begin{align*}
&E\Big[\| D((\gamma_F^{-1})_{ij}DF_j) \|_{\mathfrak{H}^{\otimes 2}}^2\Big]\\
& \leq E\Big[\| D(\gamma_F^{-1})_{ij}\|_{\mathfrak{H}}^2\|DF_j \|_{\mathfrak{H}}^2\Big] + E\Big[ |(\gamma_F^{-1})_{ij}|^2\|D^2F_j \|^2_{\mathfrak{H}^{\otimes 2}}\Big] \\ 
& \leq E\Big[\| D(\gamma_F^{-1})_{ij}\|_{\mathfrak{H}}^{2p}\Big]^{\frac{1}{p}}E\Big[\|DF_j \|_{\mathfrak{H}}^{2q}\Big]^{\frac{1}{q}} +E\Big[| (\gamma_F^{-1})_{ij}|^{2p}\Big]^{\frac{1}{p}}E\Big[\|D^2F_j \|_{\mathfrak{H}^{\otimes 2}}^{2q}\Big]^{\frac{1}{q}}.
\end{align*}
Recall from \cite[Lemma 5.6]{tHLNualart} that, for any real number $p >1, \|\gamma_F^{-1}\|_p \leq c \|(\text{det} \gamma)^{-1}\|_{2p}$ where the constant $c$ depends on $q_1, \ldots, q_d$ and $C$. Combining this with the hypercontractivity property of Wiener chaos,
 we conclude that $(\gamma_F^{-1})_{ij}DF_j \in \mathbb{D}^{1, 2}$. The Meyer inequality eventually yields that $(\gamma_F^{-1})_{ij}DF_j \in \text{Dom}\delta$.

Let $ \varphi : \mathbb{R}^n \to \mathbb{R}$ be a test function. We have, on one hand, for all $1 \leq i \leq d$
$$\left\langle DF_i, D\varphi(F) \right\rangle_{\mathfrak{H}} = \sum_{j=1}^d \partial_j \varphi(F) \left\langle DF_i, DF_j \right\rangle_{\mathfrak{H}}.$$
It follows that
\begin{equation}\label{teq:31}
\partial_j \varphi(F) = \sum_{k=1}^d (\gamma_F^{-1})_{jk}\left\langle DF_k, D\varphi(F) \right\rangle_{\mathfrak{H}}.
\end{equation}
Therefore,
\begin{align*}
 - E[\rho_{F, i}(F)\varphi (F)] =  E[\partial_i \varphi(F)] &=\sum_{j=1}^d E\Big[ \left\langle D\varphi(F), (\gamma_F^{-1})_{ij}DF_j  \right\rangle_{\mathfrak{H}}\Big]\\
& =  \sum_{j=1}^d E\Big[ \delta\Big((\gamma_F^{-1})_{ij}DF_j\Big)\varphi(F) \Big]
\end{align*}
Hence
$$\rho_{F, i}(F) = -E\bigg[\delta\Big(\sum_{j=1}^d (\gamma_F^{-1})_{ij}DF_j\Big) \Big| F\bigg].$$
On the other hand, denote by $C^{-1} = (c^{-1}_{ij})_{1\leq i, j \leq d}$ the inverse matrix of $C = (E[F_iF_j])_{1 \leq i,j \leq d}$. Since $\delta DF_j = q_jF_j$, we can write
$$(C^{-1}F)_i= \sum_{j=1}^d c^{-1}_{ij}F_j= \sum_{j=1}^d c^{-1}_{ij} \frac{1}{q_j}\delta DF_j = \delta \Big(\sum_{j=1}^d c^{-1}_{ij} \frac{1}{q_j}DF_j\Big).$$
Therefore,
$$\rho_{F, i}(F) + (C^{-1}F)_i = - \sum_{j=1}^d E\bigg[\delta\bigg( (\gamma_F^{-1})_{ij}DF_j - c^{-1}_{ij}\frac{1}{q_j}DF_j\bigg)\bigg|F\bigg].$$
From (\ref{deq:3.46}) and by Jensen inequality, we have
\begin{align*}
\text{tr}(C^{-1}J_{st}(F))& = \sum_{i=1}^d E\Big[\big(\rho_{F, i}(F) + (C^{-1}F)_i \big)^2\Big]\\ 
& = \sum_{i=1}^d E\bigg[\bigg(  \sum_{j=1}^d E\bigg[\delta\bigg( (\gamma_F^{-1})_{ij}DF_j - c^{-1}_{ij}\frac{1}{q_j}DF_j\bigg)\bigg|F\bigg]  \bigg)^2\bigg]\\
& \leq d \sum_{i=1}^d E\bigg[ \sum_{j=1}^d \bigg( E\bigg[\delta\bigg( (\gamma_F^{-1})_{ij}DF_j - c^{-1}_{ij}\frac{1}{q_j}DF_j\bigg)\bigg|F\bigg]  \bigg)^2\bigg]\\
& = d \sum_{i, j=1}^d E\bigg[ \delta\bigg( (\gamma_F^{-1})_{ij}DF_j - c^{-1}_{ij}\frac{1}{q_j}DF_j\bigg)^2 \bigg]\\
& = d \sum_{i, j=1}^d E\bigg[ \delta\bigg(DF_j\Big( (\gamma_F^{-1})_{ij} - \frac{c^{-1}_{ij}}{q_j}\Big)\bigg)^2 \bigg].
\end{align*}
Now, use the Meyer inequality to get that
\begin{align*}
&E\bigg[ \delta\bigg(DF_j\Big( (\gamma_F^{-1})_{ij} - \frac{c^{-1}_{ij}}{q_j}\Big)\bigg)^2 \bigg]  \leq  \text{cst} \bigg\|\Big((\gamma_F^{-1})_{ij} -  \frac{c^{-1}_{ij}}{q_j}\Big)DF_j\bigg{\|}_{\mathbb{D}^{1,2}}^2\\
& \leq \text{cst} \bigg(E\bigg[\Big\|\Big((\gamma_F^{-1})_{ij} -  \frac{c^{-1}_{ij}}{q_j}\Big)DF_j \Big\|_{\mathfrak{H}}^2\bigg] + E\bigg[\Big\|D\Big(\Big((\gamma_F^{-1})_{ij} -  \frac{c^{-1}_{ij}}{q_j}\Big)DF_j\Big)\Big\|_{\mathfrak{H}^{\otimes 2}}^2\bigg] \bigg).
\end{align*}
Notice that
\begin{align*}
&E\bigg[\Big\|\Big((\gamma_F^{-1})_{ij} -  \frac{c^{-1}_{ij}}{q_j}\Big)DF_j \Big\|_{\mathfrak{H}}^2\bigg]\\
&= E\bigg[\Big| (\gamma_F^{-1})_{ij} - \frac{c^{-1}_{ij}}{q_j}\Big|^2\|DF_j \|_{\mathfrak{H}}^2\bigg]   \leq E\Big[\Big| (\gamma_F^{-1})_{ij}- \frac{c^{-1}_{ij}}{q_j}\Big|^{2p}\Big]^{\frac{1}{p}}E\Big[\|DF_j \|_{\mathfrak{H}}^{2q}\Big]^{\frac{1}{q}}
\end{align*}
and 
\begin{align*}
& E\bigg[\Big\|D\Big(\Big((\gamma_F^{-1})_{ij} -  \frac{c^{-1}_{ij}}{q_j}\Big)DF_j\Big)\Big\|_{\mathfrak{H}^{\otimes 2}}^2\bigg]\\
& \leq 2 E\bigg[\Big\| D\Big((\gamma_F^{-1})_{ij}- \frac{c^{-1}_{ij}}{q_j}\Big)\Big\|_{\mathfrak{H}}^2\|DF_j \|_{\mathfrak{H}}^2\bigg] + 2 E\bigg[ \Big| \Big((\gamma_F^{-1})_{ij}- \frac{c^{-1}_{ij}}{q_j}\Big)\Big|^2\|D^2F_j \|_{\mathfrak{H}^{\otimes 2}}^2\bigg] \\ 
& \leq 2 E\bigg[\Big\| D((\gamma_F^{-1})_{ij}- \frac{c^{-1}_{ij}}{q_j})\Big\|_{\mathfrak{H}}^{2p}\bigg]^{\frac{1}{p}}E\Big[\|DF_j \|_{\mathfrak{H}}^{2q}\Big]^{\frac{1}{q}} \\
& \hspace{5cm} + 2 E\bigg[\Big| \big((\gamma_F^{-1})_{ij}- \frac{c^{-1}_{ij}}{q_j}\big)\Big|^{2p}\Big]^{\frac{1}{p}}E\Big[\|D^2F_j \|_{\mathfrak{H}^{\otimes 2}}^{2q}\Big]^{\frac{1}{q}}.
\end{align*}
Applying the hypercontractivity for $\|DF_j \|_{\mathfrak{H}}^{2q}$ and $\|D^2F_j \|_{\mathfrak{H}^{\otimes 2}}^{2q}$ yields
$$\text{tr}(C^{-1}J_{st}(F)) \leq c\sum_{i,j=1}^d \Big\|(\gamma_F^{-1})_{ij} -  \frac{c^{-1}_{ij}}{q_j}\Big\|_{\mathbb{D}^{1, 2p}}^2 = c \big\|\gamma_F^{-1} - C^{-1}Q^{-1}\big\|_{1, 2p}^2.$$
Finally, recall from \cite[Lemma 5.7]{tHLNualart} that for all $p > 12$,
$$\big\|\gamma_F^{-1} - C^{-1}Q^{-1}\big\|_{1, 2p} \leq c \|(\text{det}\gamma_F)^{-1}\|^4_{p} \sum_{j=1}^d \Big\| \|DF_j\|_{\mathfrak{H}}^2 - q_jc_{jj}\Big\|_{L^2(\Omega)}, $$
where the constant $c$ depends only on $C, Q$ and $d$. This completes the proof.
\end{proof}
%

\subsection{Equivalence between several forms of convergence}

The following statement contains a set of equivalence between several forms of convergence for vectors of multiple integrals, under a uniform non-degeneracy condition.

\begin{corollary} 
Let $d \geq 2$ and let $q_1, \ldots, q_d \geq 1$ be some fixed integers. Consider vectors
$$F_n = (F_{1, n}, \ldots, F_{d, n}) = (I_{q_1}(f_{1, n}), \ldots, I_{q_d}(f_{d, n}) ), \quad n \geq 1,$$
with $f_{i, n} \in \mathfrak{H}^{\odot q_i}$. Let $C = (c_{ij})_{1 \leq i, j \leq d}$ be a symmetric non-negative definite matrix, and let $Z \sim \mathcal{N}_d(0, C)$. Assume that $F_n$ is \emph{uniformly non-degenerate} (that is, $\gamma_{F_n}$ is invertible a.s. for all $n$ and $\limsup_{n \to \infty} \|(\text{det }\gamma_{F_n})^{-1}\|_{L^p} < \infty $ for all $p >12$) and that
$$\lim_{n\to \infty} E[F_{i,n}F_{j,n}] =c_{ij}, \qquad 1 \leq i, j \leq d.$$
Then, as $n \to \infty$, the following assertions are equivalent:
\begin{enumerate}[(a)]
\item $F_n$ converges in law to $Z$;
\item For every $1 \leq i \leq d$, $F_{i,n}$ converges in law to $\mathcal{N}(0, c_{ii})$;
\item $tr (J(F_n)) \to tr (J(Z))$, that is $F_n$ converges to $Z$ in the sense of Fisher information distance;
\item $D (F_n \| Z) \to 0$;
\item $d_{TV}(F_n, Z) \to 0$;
\item  $\|f_{F_n} - \phi\|_\infty \to 0$, where $f_{F_n}$ and $\phi$ are densities of $F_n$ and $Z$ respectively, that is the uniform convergence of densities.
\end{enumerate}
\end{corollary}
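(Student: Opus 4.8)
The strategy is a cyclic chain of implications, with Peccati--Tudor supplying the degenerate part and Theorem \ref{tThm1} (together with the entropy estimates of the preceding subsections) supplying the ``hard direction''. I would organise it as
$(a)\Leftrightarrow(b)$, then $(a)\Rightarrow(c)$, then $(c)\Rightarrow(d)\Rightarrow(e)\Rightarrow(a)$, and finally $(c)\Rightarrow(f)$ using Shimizu's inequality in its multivariate form; since all the implications close up, every listed assertion becomes equivalent.

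First, $(a)\Leftrightarrow(b)$ is exactly the Peccati--Tudor theorem for vector-valued multiple integrals recalled in Section 2.2, applied to the given sequence $F_n$ with limiting covariance $C$; no non-degeneracy is needed here. Next, $(a)\Rightarrow(c)$: convergence in law of $F_n$ to $Z$ together with the Fourth Moment Theorem forces, for each $i$, $\|\,\|DF_{i,n}\|_{\mathfrak H}^2 - q_i c_{ii}\,\|_{L^2(\Omega)}\to 0$ (this is the Malliavin-calculus characterisation of fourth-moment convergence; it is where I would cite \cite{tPeccatiTudor2005} or the standard references). Feeding this into the bound \eqref{teq:result1} of Theorem \ref{tThm1}, and using the uniform non-degeneracy hypothesis $\limsup_n \|(\det\gamma_{F_n})^{-1}\|_{L^p}<\infty$ for $p>12$, gives $\mathrm{tr}(C^{-1}J_{st}(F_n))\to 0$; since $C_n:=(E[F_{i,n}F_{j,n}])\to C$ is invertible, $\mathrm{tr}(J(F_n))\to\mathrm{tr}(C^{-1})=\mathrm{tr}(J(Z))$, which is $(c)$. (A small technical point to dispatch: the covariance of $F_n$ is $C_n$, not exactly $C$; one applies Theorem \ref{tThm1} with $C_n$ and then uses $C_n\to C$ to pass to the limit, absorbing the $n$-dependence of $\mathrm{cst}(C_n,Q,d)$ by continuity.) Then $(c)\Rightarrow(d)$ is inequality \eqref{teq:new1}/\eqref{teq:new2}: $D(F_n\|Z)\le \tfrac12\|C\|_{op}\,\mathrm{tr}(C^{-1}J_{st}(F_n))\to0$; here again one should first replace $Z$ by a Gaussian vector $Z_n$ with covariance $C_n$ and then note that $D(F_n\|Z)$ and $D(F_n\|Z_n)$ differ by a quantity controlled by $\|C_n-C\|\to0$. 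The implication $(d)\Rightarrow(e)$ is the Csisz\'ar--Kullback--Pinsker inequality ($2d_{TV}(F_n,Z)^2\le D(F_n\|Z)$), and $(e)\Rightarrow(a)$ holds because total variation convergence is stronger than convergence in distribution. Finally $(c)\Rightarrow(f)$: from $\mathrm{tr}(J(F_n))\to\mathrm{tr}(J(Z))$ one gets $\mathrm{tr}(C^{-1}J_{st}(F_n))\to0$, hence $J_{st}(F_n)\to0$ as a matrix; the multivariate analogue of Shimizu's inequality \eqref{dE.24} (which bounds $\sup_{\mathbf x}|f_{F_n}(\mathbf x)-\phi_n(\mathbf x)|$ by a quantity vanishing with $J_{st}(F_n)$) then gives $\|f_{F_n}-\phi\|_\infty\to0$ after again accounting for $\phi_n\to\phi$ uniformly; this in particular re-proves $(f)\Rightarrow(a)$, closing the loop (or one simply observes $(f)\Rightarrow(e)$ by Scheff\'e, which already re-enters the cycle).

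\textbf{Main obstacle.} The delicate point is \emph{not} any single implication but the uniform-in-$n$ bookkeeping: Theorem \ref{tThm1} is stated for a fixed non-degenerate vector with a constant $\mathrm{cst}(C,Q,d)$ depending on the covariance, whereas here $F_n$ has the moving covariance $C_n\to C$ and only a \emph{uniform} negative-moment bound on $\det\gamma_{F_n}$. One must therefore check that $\mathrm{cst}(C_n,Q,d)$ stays bounded (continuity of the constant in $C$, valid since $C$ is invertible so $C_n$ is invertible for large $n$), that the operator norms $\|C_n\|_{op}$ stay bounded, and that the Gaussian reference measure can be taken with covariance $C_n$ and then compared to the one with covariance $C$ at negligible cost in each of the distances involved. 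Once this uniform control is in place, every arrow in the cycle is a direct application of a result already available in the excerpt, so the proof is essentially assembly. I would also explicitly record the harmless reduction ``by replacing $Z$ with $Z_n\sim\mathcal N_d(0,C_n)$ we may assume $F_n$ and the reference Gaussian share the exact same covariance'' at the very beginning, so that \eqref{teq:new1}, \eqref{teq:result1} and the multivariate de Bruijn identity apply verbatim.
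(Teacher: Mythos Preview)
Your cycle $(a)\Leftrightarrow(b)$ and $(a)\Rightarrow(c)\Rightarrow(d)\Rightarrow(e)\Rightarrow(a)$ matches the paper's proof essentially step for step, including the use of Peccati--Tudor, Theorem~\ref{tThm1} with the uniform non-degeneracy assumption, inequality~\eqref{teq:new1}, and Csisz\'ar--Kullback--Pinsker. Your discussion of the moving covariance $C_n\to C$ is more careful than the paper, which simply glosses over this point.

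The one place where your argument diverges, and where there is a gap, is the treatment of $(f)$. You propose $(c)\Rightarrow(f)$ via ``the multivariate analogue of Shimizu's inequality~\eqref{dE.24}''. But the paper only states Shimizu's inequality in dimension one, and no multivariate version is established or cited anywhere in the text; you would need to supply a reference or a proof, and such a pointwise sup-norm bound controlled solely by $J_{st}(F)$ in $\mathbb{R}^d$ is not obvious. The paper instead handles $(f)$ by invoking \cite[Theorem~5.2]{tHLNualart}, which gives the direct estimate
\[
\sup_{x\in\mathbb{R}^d}|f_{F}(x)-\phi(x)| \leq \mathrm{cst}\,\Big(\big|E[(F_iF_j)_{i,j}]-C\big| + \sum_{j=1}^d \sqrt{E[F_j^4]-3(E[F_j^2])^2}\,\Big),
\]
so that $(b)$ (via the Fourth Moment Theorem on each component) yields $(f)$ immediately, and $(f)\Rightarrow(a)$ since uniform density convergence implies convergence in law. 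This route avoids any need for a multivariate Shimizu inequality and stays entirely within results already available from \cite{tHLNualart}.
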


\begin{proof}
Equivalence between $(a)$ and $(b)$ corresponds to the Peccati-Tudor theorem (Theorem 5.1.1) for vector-valued multiple stochastic integrals. Moreover, it follows from $(b)$ that $\|DF_{i, n}\|_{\mathfrak{H}}^2 \to q_ic_{ii}$ in $L^2(\Omega)$ as $n \to \infty$ for all $i = 1, \ldots, n$. On the other hand, since $F_n$ is uniformly non-degenerate, then $\sup_n \|(\text{det }\gamma_{F_n})^{-1}\|^4_p < \infty$. Applying Theorem \ref{tThm1}, we have the convergence in the sense of Fisher information $(c)$. The proof of $(c) \Rightarrow (d)$ follows immediately from the estimation between relative entropy and Fisher information (\ref{teq:new1}). Implication $(d) \Rightarrow (e)$ is proved via the Csisz\'{a}r-Kullback-Pinsker inequality, whereas $(e) \Rightarrow (a)$ comes from the fact that convergence in total variation is stronger then convergence in law.

Finally, the equivalence between $(a), (b)$ and $(f)$ comes from \cite[Theorem 5.2]{tHLNualart}, which asserts that
$$\sup_{x \in \mathbb{R}^d}| f_F(x) - \phi(x) | \leq \text{cst} \bigg(\big| (E[(F_{i}F_{j})_{i,j}] - C\big|+ \sum_{j=1}^d \sqrt{E[F_j^4] - 3(E[F_j^2])^2}\bigg),$$
together with the fact that uniform convergence of densities is stronger than convergence in distribution. Our proof is finished.
\end{proof}

\subsection*{Appendix}

If $U, V$ are independent random variables with score functions $\rho_U$ and $\rho_V$, and if $W=U+V$ with sore function $\rho_W$, then for all $\varphi \in \mathcal{C}_c^\infty(\mathbb{R}^d)$
\begin{align*}
E[\rho_W(W)\varphi(W)]& = -E[\varphi'(W)]\\ 
& = - E[\varphi'(U+V)] = E[\rho_U(U) \varphi(U+V)].
\end{align*}
It follows that
$$\rho_W(W) = E[\rho_U(U)| W].$$
Similarly, from the definition of score function (\ref{deq:3.50}), it is easy to give an exact expression for the score vector-function of $F_t$, see e.g. \cite[Lemma V.2]{tIvanPS}.

\begin{Proposition} One has
\begin{equation}\label{teq:rhoFt}
\rho_t(F_t) = E[\sqrt{t}\rho_F(F) + \sqrt{1-t}\rho_Z(Z) | F_t].
\end{equation}
\end{Proposition}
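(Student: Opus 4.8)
The plan is to deduce (\ref{teq:rhoFt}) from the uniqueness of the score vector-function, as encoded in the integration-by-parts characterization (\ref{deq:3.50})--(\ref{deq:3.49}). Recall that $\rho_t$ is the \emph{only} $\mathbb{R}^d$-valued function for which $E[\rho_{t,i}(F_t)\varphi(F_t)] = -E[(\partial_i\varphi)(F_t)]$ holds for every $1 \leq i \leq d$ and every test function $\varphi$. Consequently, it suffices to verify that the candidate $G := E[\sqrt{t}\rho_F(F) + \sqrt{1-t}\rho_Z(Z)\,|\,F_t]$ satisfies this very same identity componentwise; once that is established, the asserted equality $\rho_t(F_t) = G$ follows immediately.

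First I would fix an index $i$ and a function $\varphi \in \mathcal{C}_c^\infty(\mathbb{R}^d)$, and use the tower property together with the $F_t$-measurability of $\varphi(F_t)$ to reduce the target to
\[
E[G_i\,\varphi(F_t)] = E\big[(\sqrt{t}\,\rho_{F,i}(F) + \sqrt{1-t}\,\rho_{Z,i}(Z))\,\varphi(F_t)\big].
\]
I would then split this into the two terms and treat the $F$-term, the $Z$-term being entirely symmetric. Conditioning on $Z=z$ and exploiting the independence of $F$ and $Z$, I apply the Stein identity (\ref{deq:3.49}) for $F$ to the shifted test function $g_z(\mathbf{x}) := \varphi(\sqrt{t}\,\mathbf{x} + \sqrt{1-t}\,z)$. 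Since $(\partial_i g_z)(\mathbf{x}) = \sqrt{t}\,(\partial_i\varphi)(\sqrt{t}\,\mathbf{x} + \sqrt{1-t}\,z)$, the chain rule produces exactly the factor $\sqrt{t}$ which, combined with the explicit $\sqrt{t}$ in front of $\rho_{F,i}$, yields $-t\,E[(\partial_i\varphi)(F_t)]$ after integrating back over $z$ against the law of $Z$. The analogous computation for the $Z$-term gives $-(1-t)\,E[(\partial_i\varphi)(F_t)]$, and summing the two collapses the coefficients $t + (1-t) = 1$ into precisely $-E[(\partial_i\varphi)(F_t)]$. This is the characterizing identity for $\rho_{t,i}$, so uniqueness closes the argument.

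The genuinely delicate points here are analytic rather than algebraic. I would first need to ensure that $F_t$ admits a differentiable density, so that $\rho_t$ is well-defined: for $t<1$ this holds because the Gaussian summand $\sqrt{1-t}\,Z$ (with invertible covariance $C$) regularizes the convolution, while $t=1$ reduces to $F$ itself, which carries a density by the standing hypothesis. Beyond this, the only care required is in justifying the application of the Stein identity to $g_z$ and the Fubini interchange used when integrating over the law of $Z$; both are routine once one notes that $\varphi \in \mathcal{C}_c^\infty$ keeps $g_z$ and its derivatives uniformly bounded, and that $\rho_F(F)$ and $\rho_Z(Z)$ are square-integrable, the latter being built into the very definition of Fisher information. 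A clean way to organize the write-up is to mirror the one-dimensional computation already recorded at the start of the Appendix, extending it to $\mathbb{R}^d$ and tracking the scaling factors $\sqrt{t}$ and $\sqrt{1-t}$ component by component.
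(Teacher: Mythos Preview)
Your proposal is correct and follows essentially the same approach as the paper: both rely on the Stein-identity characterization of the score, apply the identities for $F$ and $Z$ separately to $\varphi(\sqrt{t}F+\sqrt{1-t}Z)$ via independence and the chain rule (which produces the factors $\sqrt{t}$ and $\sqrt{1-t}$), and then combine using $t+(1-t)=1$. The only cosmetic difference is that the paper first records the two individual identities $\rho_t(F_t)=\tfrac{1}{\sqrt{t}}E[\rho_F(F)\mid F_t]=\tfrac{1}{\sqrt{1-t}}E[\rho_Z(Z)\mid F_t]$ and then forms the convex combination, whereas you verify the combined candidate directly.
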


\begin{proof}
$\forall \varphi \in \mathcal{C}_c^\infty(\mathbb{R}^d)$,
\begin{align*}
E[\rho_t(F_t)\varphi(F_t)]& = - E[\nabla \varphi(F_t)]\\ 
& = -E[\nabla \varphi(\sqrt{t}F + \sqrt{1-t}Z)]\\
& = \frac{1}{\sqrt{t}}E[\rho_F(F) \varphi (\sqrt{t}F + \sqrt{1-t}Z)]\\
\text{or}& = \frac{1}{\sqrt{1-t}}E[\rho_N(N) \varphi (\sqrt{t}F + \sqrt{1-t}Z)].
\end{align*}
It follows that
$$\rho_t(F_t) =  \frac{1}{\sqrt{t}}E[\rho_F(F) | F_t] =  \frac{1}{\sqrt{1-t}}E[\rho_Z(Z) | F_t].$$
Then, 
$$\rho_t(F_t) = t\rho_t(F_t) + (1-t)\rho_t(F_t) = E[\sqrt{t}\rho_F(F) + \sqrt{1-t}\rho_Z(Z) | F_t].$$
\end{proof}

\begin{thebibliography}{PhD}

\bibitem[a]{dTran} T.T. Diu Tran (2017): Non-central limit theorem for quadratic functionals of Hermite-driven long memory moving average processes. \textit{Stoch. Dyn.} \textbf{18}, no. 4.

\bibitem[b]{dTran1} T.T. Diu Tran (2016): Asymptotic behavior for quadratic variations of non-Gaussian multiparameter Hermite random fields. Under revision for \textit{Probability and Mathematical Statistics}.

\bibitem[c]{dNourdinTran} I. Nourdin, T.T. Diu Tran (2017): Statistical inference for Vasicek-type model driven by Hermite processes. Submitted to \textit{Stochastic Process and their Applications}.

\bibitem[d]{dTran3} T.T. Diu Tran (2017+): Fisher information and multivariate Fouth Moment Theorem. Main results have already been obtained. It should be submitted soon.



\bibitem{dAdler} R.J. Adler (1981): \textit{The geometry of random fields}, Weiley, New York.

\bibitem{dBarron} A. R. Barron (1986): Entropy and the central limit theorem. \textit{Ann. Probab.}, \textbf{14}, no. 1, 336-342.

\bibitem{dBarronJohnson} A. R. Barron, O. Johnson (2004): Fisher information inequalities and the central limit theorem. \textit{Probab. Theory Relat. Fields}, \textbf{129}, no. 3, 391-409.

\bibitem{dBertinTorresTudor} K. Bertin, S. Torres, C.A. Tudor (2011): Maximum likelihood estimators and random walks in long memory models, \textit{Statistics}, \textbf{45} , no. 4.

\bibitem{dBolleyVillani} F. Bolley, C. Villani (2005): Weighted Csisz\'{a}r-Kullback-Pinsker inequalies and applications to transportation inequalities. \textit{Ann. Fac. Sci. Toulouse Math. (6)}, \textbf{14}, no. 3: 331-352.

\bibitem{dBreton} J.C. Breton (2011): On the rate of convergence in non-central asymptotics of the Hermite variations of fractional Brownian sheet. \textit{Probab. Math. Stat.} \textbf{31}, no. 2, 301-311.

\bibitem{dBreuerMajor} P. Breuer, P. Major (1983): Central limit theorems for nonlinear functionals of Gaussian fields. \textit{J. Multivariate Anal.} \textbf{13}, 425-441.


 \bibitem{dChronopoulouTudorViens2009} A. Chronopoulou, C.A. Tudor, F.G. Viens (2009): Variations and Hurst index estimation for a Rosenblatt process using longer filters. \textit{ Electron. J. Statist.} 3, 1393-1435.

 \bibitem{dChronopoulouTudorViens2011} A. Chronopoulou, C. A. Tudor, F. G. Viens (2011): Self-similarity parameter estimation and reproduction property for non-Gaussian Hermite processes. \textit{Commun. Stoch. Anal.} \textbf{5}, no. 1, 161-185.

\bibitem{dClarkeDelaCerdaTudor} J. Clarke De la Cerda, C.A. Tudor (2014): Wiener integrals with respect to the Hermite random field and applications to the wave equation. \textit{Collectanea Mathematica}, \textbf{65}, no. 3, 341-356.
 
\bibitem{dClauselRoueffTaqquTudor2013} M. Clausel, F. Roueff, M. S. Taqqu, C. A. Tudor (2013): High order chaotic limits of wavelet scalograms under long-range dependence. \textit{ALEA, Lat. Am. J. Probab. Math. Stat.} \textbf{10}, no. 2, 979-1011.

\bibitem{dClauselRoueffTaqquTudor2014} M. Clausel, F. Roueff, M. S. Taqqu, C. A. Tudor (2014): Asymptotic behavior of the quadratic variation of the sum of two Hermite processes of consecutive orders. \textit{Stoch. Proc. Appl.} \textbf{124}, no. 7, 2517-2541.

 \bibitem{dDobrushinMajor} R.L. Dobrushin, P. Major (1979): Non-central limit theorems for nonlinear functionals of Gaussian fields. \textit{Z. Wahrsch. Verw. Gebiete.} \textbf{50}, no. 1, 27-52.

\bibitem{dGiraitisSurgailis} L. Giraitis, D. Surgailis (1985): CLT and other limit theorems for functionals of Gaussian processes. \textit{Z. Wahrsch. Verw. Gebiete.} \textbf{70}, 191-212.

 \bibitem{dHuLuNualart2014} Y. Hu, F. Lu, D. Nualart (2014): Convergence of densities of some functionals of Gaussian processes. \textit{J. Funct. Anal.} \textbf{266}, no. 2, 814-875.

 \bibitem{dHuNualart} Y. Hu, D. Nualart (2010): Parameter estimation for fractional Ornstein-Uhlenbeck processes. \textit{ Stat. Probab. Lett.} \textbf{80}, no. 11, 1030-1038.

  \bibitem{dHuNualartZhou} Y. Hu, D. Nualart, H. Zhou (2017): Parameter estimation for fractional Ornstein-Uhlenbeck processes of general Hurst parameter, preprint.

%

\bibitem{dJohnson2004} O. Johnson (2004): \textit{Information theory and the central limit theorem.} Imperial College Press, London.

\bibitem{dJohnsonSuhov} O. Johnson, Y. Suhov (2001): Entropy and random vectors. \textit{J. Statist. Phys.}, \textbf{104}, no. 1-2, 145-192.

 \bibitem{dKleptsynaLeBreton} M.L. Kleptsyna, A. Le Breton (2002): Statistical analysis of the fractional Ornstein-Uhlenbeck type process. \textit{Stat. Inference Stoch. Process.} \textbf{5}, 229-248.

\bibitem{dLeySwan} C. Ley, Y. Swan (2013): Stein's density approach and information inequalities. \textit{Electron. Commun. Probab.} \textbf{18}, no. 7, 1-14.

 \bibitem{dMaejimaTudor} M. Maejima, C. A. Tudor (2007): Wiener integrals with respect to the Hermite process and a non-central limit theorem. \textit{Stoch. Anal. Appl.} \textbf{25}, no. 5, 1043-1056.
%
 \bibitem{dMaejimaTudor2013} M. Maejima, C.A. Tudor (2013): On the distribution of the Rosenblatt process. \textit{ Stat. Probab. Lett.} \textbf{83}, 1490-1495.

 \bibitem{dNourdinbook} I. Nourdin (2012): \textit{Selected aspects of fractional Brownian motion.} Bocconi \& Springer Series, 4. \textit{Springer, Milan; Bocconi University Press, Milan}.

\bibitem{dNourdinNualart2013} I. Nourdin, D. Nualart (2013): Fisher information and the Fourth Moment Theorem. \textit{Ann. Inst. H. Poincar\'{e} Probab. Statist.} \textbf{52}, no. 2, 849-867.

\bibitem{dNourdinNualartTudor} I. Nourdin, D. Nualart, C.A. Tudor (2010): Central and non-central limit theorems for weighted power variations of fractional Brownian motion. \textit{Ann. Inst. H. Poincar\'{e} Probab. Statist.} \textbf{46}, no. 4, 1055-1079.

 \bibitem{dNourdinNualartZintout} I. Nourdin, D. Nualart, R. Zintout (2016): Multivariate central limit theorems for averages of fractional Volterra processes and applications to parameter estimation. \textit{Statist. Infer. Stoch. Proc.} \textbf{19}, no. 2, 219-234.

\bibitem{dNourdinPeccatibook} I. Nourdin, G. Peccati (2012): \textit{Normal approximations with Malliavin calculus: from Stein's method to universality}. Cambridge Tracts in Mathematics 192. Cambridge University Press, Cambridge, xiv+239 pp. 

\bibitem{dNourdinPeccatiSwan} I . Nourdin, G. Peccati and Y. Swan (2014): Entropy and the fourth moment phenomenon. \textit{J. Funct. Anal.} \textbf{266}, no. 5, 3170-3207.



 \bibitem{dNualartbook} D. Nualart (2006): \textit{The Malliavin calculus and related topics (Probability and Its Applications)} . Second edition. Springer.

\bibitem{dPakkanenReveillac1} M. S. Pakkanen, A. R\'{e}veillac (2016): Functional limit theorems for generalized variations of the fractional Brownian sheet. \textit{Bernoulli.} \textbf{22}, no. 3, 1671-1708.

\bibitem{dPakkanenReveillac2} M. S. Pakkanen, A. R\'{e}veillac (2017+): Functional limit theorems for weighted quadratic variations of fractional Brownian sheets. \textit{Work in progress, personal communication by the authors.}

 \bibitem{dPipirasTaqqu} V. Pipiras, M.S. Taqqu (2001): Integration questions related to the fractional Brownian motion. \textit{Probab. Theory Relat. Fields.} \textbf{118}, no. 2, 251-281.

 \bibitem{dPipirasTaqqu2} V. Pipiras, M.S. Taqqu (2010): Regularization and integral representations of Hermite processes. \textit{ Stat. Probab. Lett.} \textbf{80}, no. 23-24, 2014-2023.

 \bibitem{dReveillac1} A. R\'{e}veillac (2009): Convergence of finite-dimensional laws of the weighted quadratic variations process for some fractional Brownian sheets. \textit{Stoch. Anal. Appl.} \textbf{27}, no. 1, 51-73.
%
\bibitem{dReveillacStauchTudor} A. R\'{e}veillac, M. Stauch, C. A. Tudor (2012): Hermite variations of the fractional Brownian sheet. \text{\it Stoch. Dyn.} \textbf{12}, no. 3.

 \bibitem{dRosenblatt} M. Rosenblatt (1961): Independence and dependence. \textit{4th Berkeley Symposium on Math. Stat.} vol. II, 431-443.

\bibitem{dShimizu} R. Shimizu (1975): On Fisher's amount of information for location family. In: Patil G.P., Kotz S., Ord J.K. (eds) \textit{A Modern Course on Statistical Distributions in Scientific Work}. NATO Advanced Study Institutes Series (Series C - Mathematical and Physical Sciences), \textbf{17}. Springer, Dordrecht.

 \bibitem{dTaqqu1975} M.S. Taqqu (1975): Weak convergence to fractional Brownian motion and to the Rosenblatt process. \textit{Z. Wahrsch. Verw. Gebiete}. {\bf 31}, 287-302.

 \bibitem{dTaqqu1978} M. Taqqu (1978): A representation of self-similar processes. \textit{Stoch. Process. Appl.} \textbf{7}, 55-64.

 \bibitem{dTaqqu1979} M.S. Taqqu (1979): Convergence of integrated processes of arbitrary Hermite rank. \textit{Z. Wahrsch. Verw. Gebiete.} \textbf{50}, no. 1, 53 - 83.

\bibitem{dTaqqu2014} M.S. Taqqu (2014): Self-Similarity beyond Gaussian processes: Hermite processes and more. Presentation in the workshop Banff International Research Station for Mathematical Innovation and Discovery 2014 in Canada. 

 \bibitem{dTorresTudor} S. Torres, C.A. Tudor (2009): Donsker type theorem for the Rosenblatt process and a binary market model. \textit{Stoch. Anal. Appl.} \textbf{27}, no. 3, 555.

\bibitem{dTran2} D. Tran (2017+): Central and non-central limit theorems for weighted power variations of fractional Brownian sheet, work in progress.

\bibitem{dTudorbook} C.A. Tudor (2013): \textit{Analysis of variations for self-similar processes: A stochastic calculus approach}. Probability and its Applications. \textit{Springer}.

 \bibitem{dTudor2008} C.A. Tudor (2008): Analysis of the Rosenblatt process. \textit{ESAIM Probab. Stat.} \textbf{12}, 230-257.

 \bibitem{dTudorViens2007} C.A. Tudor, F.G. Viens (2007): Statistical aspects of the fractional stochastic calculus. \textit{Ann. Stat.} \textbf{35}, no. 3, 1183-1212.

  \bibitem{dTudorViens} C.A. Tudor, F.G. Viens (2009): Variations and estimators for self-similarity parameters via Malliavin calculus. \textit{Ann. Probab.} \textbf{37}, no. 6, 2093-2134.

\bibitem{dVeilletteTaqqu} M.S. Veillette, M.S. Taqqu (2013): Properties and numerical evaluation of the Rosenblatt distribution. \textit{Bernoulli}, \textbf{19}, no. 3, 982-1005.

\bibitem{dXiaoYu} W. Xiao, J. Yu (2017): Asymptotic theory for estimating drift parameters in the fractional Vasicek model, preprint.

%


%




%
%


%


%
%
%



%
%
%
%

%
%
%

%
%

%
%

%
%
%
%

%
%
%
%
%
\end{thebibliography}

\begin{thebibliography}{9}


\bibitem{mViens1} A. Chronopoulou, C. A. Tudor, F. G. Viens (2011): Self-similarity parameter estimation and reproduction property for non-Gaussian Hermite processes. \textit{Commun. Stoch. Anal.} \textbf{5}, no. 1, 161-185.

\bibitem{mCATudor1} M. Clausel, F. Roueff, M. S. Taqqu, C. A. Tudor (2013): High order chaotic limits of wavelet scalograms under long-range dependence. \textit{ALEA, Lat. Am. J. Probab. Math. Stat.} \textbf{10}, no. 2, 979-1011.

\bibitem{mCATudor2} M. Clausel, F. Roueff, M. S. Taqqu, C. A. Tudor (2014): Asymptotic behavior of the quadratic variation of the sum of two Hermite processes of consecutive orders. \textit{Stoch. Proc. Appl.} \textbf{124}, no. 7, 2517-2541.

\bibitem{mDobrushin} R. L. Dobrushin (1979): Gaussian and their subordinated self-similar random generalized fields. \textit{Ann. Probab}. \textbf{7}, 1-28.

\bibitem{mDobrushin Major} R. L. Dobrushin, P. Major (1979): Non-central limit theorems for nonlinear functionals of Gaussian fields. \textit{Z. Wahrsch. Verw. Gebiete}. \textbf{50} , no. 1, 27-52.

\bibitem{mTudor1} M. Maejima, C. A. Tudor (2007): Wiener integrals with respect to the Hermite process and a non-central limit theorem. \textit{Stoch. Anal. Appl.} \textbf{25}, no. 5, 1043-1056.

 \bibitem{mMajor}P. Major (2014): \textit{Multiple Wiener-It\^{o} integrals. With applications to limit theorems.} Second edition. Lecture Notes in Mathematics, \textbf{849}. {\it Springer, Cham}.

\bibitem{mViens} L. Neufcourt, F. G. Viens (2016): A third-moment theorem and precise asymptotics for variations of stationary Gaussian sequences. \textit{ALEA, Lat. Am. J. Probab. Math. Stat.} \textbf{13}, no. 1, 239-264.

\bibitem{mIvan} I. Nourdin (2012): \textit{Selected aspects of fractional Brownian motion.} Bocconi \& Springer Series, 4. \textit{Springer, Milan; Bocconi University Press, Milan}.

\bibitem{mIvan2} I. Nourdin, D. Nualart, R. Zintout (2016): Multivariate central limit theorems for averages of fractional Volterra processes and applications to parameter estimation. \textit{Statist. Infer. Stoch. Proc.} \textbf{19}, no. 2, 219-234.

\bibitem{mPeccatiIvan} I. Nourdin, G. Peccati (2012): \textit{Normal approximations with Malliavin calculus: From Stein's method to universality.} Cambridge Tracts in Mathematics, \textbf{192}. \textit{Cambridge University Press, Cambridge}.

\bibitem{mNualart} D. Nualart (2006): \textit{The Malliavin calculus and related topics (Probability and Its Applications)
.} Second edition. Springer.

\bibitem{mTaqqu1975} M. S. Taqqu (1975): Weak convergence to fractional Brownian motion and to the Rosenblatt process. \textit{Z. Wahrsch. Verw. Gebiete}. {\bf 31}, 287-302.

\bibitem{mTaqqu} M. S. Taqqu (1979): Convergence of integrated processes of arbitrary Hermite rank. \textit{Z. Wahrsch. Verw. Gebiete}. \textbf{50}, no. 1, 53 - 83.

\bibitem{mTaqqu2} M. S. Taqqu (2011): The Rosenblatt process. \textit{Selected Works of Murray Rosenblatt}. Part of the series Selected Works in Probability and Statistics, 29-45.

\bibitem{mTudor2} C. A. Tudor (2008): Analysis of the Rosenblatt process. \textit{ESAIM Probab. Stat.} \textbf{12}, 230-257.

\bibitem{mTudor} C. A. Tudor (2013): \textit{Analysis of variations for self-similar processes: A stochastic calculus approach}. Probability and its Applications. \textit{Springer}.

\bibitem{mViens2} C. A. Tudor, F. G. Viens (2009): Variations and estimators for self-similarity parameters via Malliavin calculus. \textit{Ann. Probab.} \textbf{37}, no. 6, 2093-2134.


\end{thebibliography}

\begin{thebibliography}{9}

\bibitem{hBreuerMajor} P. Breuer, P. Major (1983): Central limit theorems for nonlinear functionals of Gaussian fields. \textit{J. Multivariate Anal.} \textbf{13}, 425-441.

\bibitem{hTudorViens1} A. Chronopoulou, C. A. Tudor, F. G. Viens (2011): Self-similarity parameter estimation and reproduction property for non-Gaussian Hermite processes. \textit{Commun. Stoch. Anal.} \textbf{5}, no. 1, 161-185.

\bibitem{hDobrushinMajor} R. L. Dobrushin, P. Major (1979): Non-central limit theorems for nonlinear functionals of Gaussian fields. \textit{Z. Wahrsch. Verw. Gebiete.} \textbf{50}, no. 1, 27-52.

\bibitem{hGiraitis} L. Giraitis, D. Surgailis (1985): CLT and other limit theorems for functionals of Gaussian processes. \textit{Z. Wahrsch. Verw. Gebiete.} \textbf{70}, 191-212.

\bibitem{hIvanG} M. Gradinaru and I. Nourdin (2009): Milstein's type schemes for fractional SDEs. \textit{Ann. Inst. H. Poincar\'{e} Probab. Statist.} \textbf{45}, no. 4, 1085-1098. 

\bibitem{hIvan} I. Nourdin (2008): Asymptotic behavior of weighted quadratic and cubic variations of fractional Brownian motion. \textit{Ann. Probab.} \textbf{36}, no. 6, 2159-2175.

\bibitem{hIvan1} I. Nourdin (2012): \textit{Selected aspects of fractional Brownian motion.} Bocconi \& Springer Series, 4. \textit{Springer, Milan; Bocconi University Press, Milan}.

\bibitem{hNualart} D. Nualart (2006): \textit{The Malliavin calculus and related topics.} Second edition. Springer, New York.

\bibitem{hReveillac2} M. S. Pakkanen, A. R\'{e}veillac (2016): Functional limit theorems for generalized variations of the fractional Brownian sheet. \textit{Bernoulli.} \textbf{22}, no. 3, 1671-1708.

\bibitem{hReveillac3} M. S. Pakkanen, A. R\'{e}veillac (2017+): Functional limit theorems for weighted quadratic variations of fractional Brownian sheets. \textit{Work in progress, personal communication by the authors.}

\bibitem{hReveillac1} A. R\'{e}veillac (2009): Convergence of finite-dimensional laws of the weighted quadratic variations process for some fractional Brownian sheets. \textit{Stoch. Anal. Appl.} \textbf{27}, no. 1, 51-73.

\bibitem{hReveillacTudor} A. R\'{e}veillac, M. Stauch, C. A. Tudor (2012): Hermite variations of the fractional Brownian sheet. \text{\it Stoch. Dyn.} \textbf{12}, no. 3.

\bibitem{hTaqqu} M. Taqqu (1979): Convergence of integrated processes of arbitrary Hermite rank. \textit{Z. Wahrsch. Verw. Gebiete.} \textbf{50}, no. 1, 53-83.

\bibitem{hTudor} C. A. Tudor (2013): \textit{Analysis of variations for self-similar processes: A stochastic calculus approach}. Probability and its Applications. \textit{Springer}.

\bibitem{hTudorViens2} C. A. Tudor, F. G. Viens (2009): Variations and estimators for self-similarity parameters via Malliavin calculus. \textit{Ann. Probab.} \textbf{37}, no. 6, 2093-2134.

\end{thebibliography}

\begin{thebibliography}{9}



\bibitem{bCR}
\rm F. Comte and E. Renault (1998):
\rm Long memory in continuous-time stochastic volatility models.
\it Math. Finance \rm {\bf 8}, no. 4, pp. 291-323.

\bibitem{bCV}
\rm A. Chronopoulou and F.G. Viens (2012):
\rm Estimation and pricing under long-memory stochastic volatility.
\it Ann. Finance \rm {\bf 8}, no. 2-3, pp. 379-403.


\bibitem{bHN}
\rm Y. Hu and D. Nualart (2010):
\rm Parameter estimation for fractional Ornstein-Uhlenbeck processes.
\it Stat. Probab. Lett. \rm {\bf 80}, no. 11-12, pp. 1030-1038.

\bibitem{bHuNualartZhou} \rm Y. Hu, D. Nualart, H. Zhou (2017): \rm Parameter estimation for fractional Ornstein-Uhlenbeck processes of general Hurst parameter, preprint.

 \bibitem{bMaejimaTudor} \rm M. Maejima, C. A. Tudor (2007): \rm Wiener integrals with respect to the Hermite process and a non-central limit theorem. \it Stoch. Anal. Appl. \rm {\bf 25}, no. 5, 1043-1056.


\bibitem{bNNZ}
\rm I. Nourdin, D. Nualart and R. Zintout (2016): 
\rm Multivariate central limit theorems for averages of fractional Volterra processes and applications to parameter estimation. 
\it Statist. Infer. Stoch. Proc. \rm {\bf 19}, no. 2, pp. 219-234.

 \bibitem{bNourdinPeccatibook} \rm I. Nourdin, G. Peccati (2012): \it Normal approximations with Malliavin calculus: From Stein's method to universality. \rm Cambridge Tracts in Mathematics,  \rm {\bf 192}. {\it Cambridge University Press, Cambridge}.

\bibitem{bNourdinPoly} \rm I. Nourdin, G. Poly (2013): \rm Convergence in total variation on Wiener chaos. \it Stoch. Proc. Appl. \rm {\bf 123}, 651-674.

 \bibitem{bNourdinRosinski} \rm I. Nourdin and J. Rosinski (2014): \rm Asymptotic independence of multiple Wiener-Itô integrals and the resulting limit laws. \it Ann. Probab. \rm {\bf 42}, no. 2, 497-526.

\bibitem{bNualartbook} \rm D. Nualart (2006): {\it The Malliavin calculus and related topics (Probability and Its Applications)}. \rm Second edition. Springer.

\bibitem{bDiu}
\rm D. Tran (2017):
\rm Non-central limit theorems for quadratic functionals of Hermite-driven long memory moving average processes.
{\it Stoch. Dyn.} \textbf{18}, no. 4.

\bibitem{bTudorbook} \rm C. A. Tudor (2013): {\it Analysis of variations for self-similar processes: A stochastic calculus approach}. \rm Probability and its Applications. {\it Springer}.

\end{thebibliography}

\begin{thebibliography}{9}

%
%

\bibitem{tHLNualart} Y. Hu, F. Lu, D. Nualart (2014): Convergence of densities of some functionals of Gaussian processes. \textit{J. Funct. Anal.} \textbf{266}, no. 2, 814-875.

%
%
%
%
%
\bibitem{tIvanPS} I . Nourdin, G. Peccati and Y. Swan (2014): Entropy and the fourth moment phenomenon. \textit{J. Funct. Anal.} \textbf{266}, no. 5, 3170-3207.
 


\bibitem{tPeccatiTudor2005} G. Peccati, C.A. Tudor (2005): Gaussian limits for vector-valued multiple stochastic integrals. \textit{S\'{e}minaire de Probabilit\'{e}s XXXVIII,} LNM 1857. Springer-Verlag, pp. 247-262.


\end{thebibliography}
\end{document}